\documentclass[11pt,reqno]{amsart}

\usepackage{amsmath,amsfonts,amssymb,amscd,amsthm,amsbsy,bbm, epsf,calc,  comment, caption, enumerate}
\usepackage{color}
\usepackage[usenames,dvipsnames]{xcolor}
\usepackage{datetime}
\usepackage{soul}

\usepackage{thmtools, thm-restate}
\usepackage{thm-patch, thm-kv, thm-autoref}
\usepackage[colorlinks=true, urlcolor=blue, linkcolor=blue, citecolor=black]{hyperref}

\usepackage[pdftex]{graphicx}
\usepackage{wrapfig}
\usepackage{geometry}
\geometry{ margin=0.9in}


\numberwithin{equation}{section}
\setcounter{secnumdepth}{3}
\setcounter{tocdepth}{2}

\newcounter{hours}\newcounter{minutes}


\theoremstyle{plain}
\declaretheorem[title=Theorem, parent=section]{theorem}
\declaretheorem[title=Lemma,sibling=theorem]{lemma}
\declaretheorem[title=Corollary,sibling=theorem]{corollary}
\declaretheorem[title=Proposition,sibling=theorem]{proposition}
\declaretheorem[title=Definition,sibling=theorem]{definition}
\declaretheorem[title=Remark,sibling=theorem]{rem}

\newtheorem*{prop*}{Proposition}


\makeatletter
\newcommand\RedeclareMathOperator{%
  \@ifstar{\def\rmo@s{m}\rmo@redeclare}{\def\rmo@s{o}\rmo@redeclare}%
}
\newcommand\rmo@redeclare[2]{%
  \begingroup \escapechar\m@ne\xdef\@gtempa{{\string#1}}\endgroup
  \expandafter\@ifundefined\@gtempa
     {\@latex@error{\noexpand#1undefined}\@ehc}%
     \relax
  \expandafter\rmo@declmathop\rmo@s{#1}{#2}}
\newcommand\rmo@declmathop[3]{%
  \DeclareRobustCommand{#2}{\qopname\newmcodes@#1{#3}}%
}
\@onlypreamble\RedeclareMathOperator
\makeatother


\def\ep{\varepsilon}
\def\al{\alpha}
\def\del{\delta}

\def\om{\omega}

\def\gam{\gamma} 
\def\Gam{\Gamma}

\def\grad{\nabla}

\def\real{\mathbb R}

\def\I{\mathcal I}
\def\K{\mathcal K}

\def\R{\mathbb R}
\def\Rd{{\mathbb R}^d}
\def\Rmc{\mathcal R}
\def\S{\mathcal S}

\def\Id{\textnormal{Id}}

\def\Union{\bigcup}
\def\intersect{\cap}

\def\Indicator{{\mathbbm{1}}}

\def\Id{\textnormal{Id}}
\def\dini{\textnormal{Dini}}

\DeclareMathOperator*{\osc}{osc}
\RedeclareMathOperator{\div}{\textnormal{div}}

\def\polhk#1{\setbox0=\hbox{#1}{\ooalign{\hidewidth
	    \lower1.5ex\hbox{`}\hidewidth\crcr\unhbox0}}}

\newcommand{\abs}[1]{\left| #1 \right|}

\newcommand{\norm}[1]{\lVert#1\rVert}


\begin{document}

\title{Well-posedness for viscosity solutions of the one-phase Muskat problem in all dimensions}

\author{Russell Schwab}
\author{Son Tu} 
\author{Olga Turanova}

\address{Department of Mathematics\\
Michigan State University\\
619 Red Cedar Road \\
East Lansing, MI 48824}
\email{rschwab@math.msu.edu, tuson@msu.edu, turanova@msu.edu}

\begin{abstract}
In this article, we apply the viscosity solutions theory for integro-differential equations to the \emph{one-phase} Muskat equation (also known as the Hele-Shaw problem with gravity). We prove global well-posedness for the corresponding Hamilton-Jacobi-Bellmann equation with bounded, uniformly continuous initial data, in all dimensions.

\end{abstract}

\date{\today}

\thanks{R. Schwab acknowledges support from the Simons Foundation for a Travel Support for Mathematicians grant. O. Turanova  and S. Tu acknowledge support from NSF DMS grant 2204722.}
\keywords{Global Comparison Property, Integro-differential Operators, Dirichlet-to-Neumann, Free Boundaries, Hele-Shaw, Fully Nonlinear Equations, Viscosity Solutions, Muskat}
\subjclass[2020]{
35B51, 
35R09,  	
35R35, 
45K05,  	
47G20,      
49L25,  	
76D27, 
76S05 
%
35Q35 
76B03 
35D35 
35D40 
}

\maketitle

\markboth{Well-posedness for viscosity solutions of the one-phase Muskat problem}{Well-posedness for viscosity solutions of the one-phase Muskat problem}




\section{Introduction}\label{sec:Intro}

We establish global well-posedness for viscosity solutions to the integro-differential  equation that governs the \emph{one-phase} Muskat problem when the fluid region is the subgraph of a function. 
The one-phase Muskat problem, which is a well know and long studied equation, describes the evolution of a fluid under the effect of gravity.  

We will use the following notation. For a function $f:\Rd\rightarrow \R$, we denote its subgraph and graph, respectively, by
\begin{align}\label{eqIntro:DfAndGamf}
	D_f = \{ (x,x_{d+1})\in\real^{d+1}\ :\ x_{d+1}<f(x) \}\ \ \ \textnormal{with}\ \ \ 
	\Gam_f = \partial D_f.
\end{align}
The outward unit normal to $\Gam_f$ at a point $(x,f(x))$ is denoted $n(x)$ and is given by
\begin{align}\label{eqIntro:NormalVector}
	n(x) = \frac{N(x)}{|N(x)|}\ \textnormal{ where }  N(x) = (-\grad f(x),1).
\end{align}
With this in hand, we proceed to describe the one-phase Muskat problem. For each time $t$, the fluid region and free boundary, respectively, are given as the subgraph $D_{f(\cdot, t)}$ and graph $\Gam_{f(\cdot, t)}$ of a function $f:\Rd\times[0,T)\rightarrow \real$.  
Darcy's law and the influence of gravity are encoded in  the assumption that the fluid velocity $u$ and the pressure $p$ (which is taken to be zero on $\partial D_{f(\cdot, t)}$) satisfy

\begin{equation}
\label{eqIntro:rhograv}
\nabla \cdot u = 0, \quad u= -\nabla(p+\rho_{\mathrm{grav}} x_{d+1}) \text{ in }D_{f(\cdot, t)},
\end{equation}
where $\rho_{\mathrm{grav}}$ is the gravitational constant and from now on will be taken to equal 1.   The normal velocity $V$ of the free boundary at time $t$ and at the point $(x,f(x,t))$ is taken to equal the normal component of the velocity:
\begin{equation}
\label{eq:V}
 V= u\cdot n. 
 \end{equation}
Letting  $\phi$ denote the associated potential, so that $u=\nabla\phi$, yields that $\phi(\cdot, t)$ is harmonic in $D_{f(\cdot, t)}$ for each $t$. Moreover,  for $(x,x_{d+1})\in \partial D_{f(\cdot, t)}$ the equality $\phi(x,x_{d+1},t) = x_{d+1} =f(x,t)$ holds. Thus,  the one-phase Muskat problem can be summarized as
\begin{equation}
\label{eqIntro:MuskatInTermsOfOmegatNormalVelocity}
\begin{cases}
\Delta \phi = 0  &\text{ in }  D_{f(\cdot, t)},\\
\phi = f  &\text{ on } \Gamma_{f(\cdot, t)},\\
V = \partial_n\phi, &
\end{cases}
\end{equation}
where the third line is obtained by combining (\ref{eq:V}) with $u\cdot n = \nabla \phi\cdot n = \partial_n\phi$.

This problem can be phrased entirely in terms of a nonlocal evolution equation for $f$. To see this, we use the nonlinear Dirichlet-to-Neumann operator, denoted $G(f)$, which is defined for $f\in C^{1,\gamma}(\R^d)$ as follows.  Given $g\in C^{1,\gamma}(\R^d)$, consider  the unique bounded classical solution $\Phi_{f,g}\in C^2(D_f)\intersect C^{1,\gam}(\overline{D}_f)$ of
 \begin{align}\label{eqIntro:DtoNPotential}
\begin{cases}
         \Delta \Phi_{f,g}= 0    &\textnormal{in}\;D_f,\\
		     \Phi_{f,g} =g  &\textnormal{on}\;\Gamma_f,\\
			 \norm{\Phi_{f,g}}_{L^\infty(D_f)}<\infty, & 
\end{cases}
\end{align}
and, for any $x\in\real^d$, define $[G(f)g](x)$ by
\begin{align}\label{eqIntro:G-NonlinDtoN-NonNormalized}
	[G(f)g](x) &= \lim_{h\to0^-} \frac{1}{h}
	\left[ \Phi_{f,g}\left( (x,f(x)) +hN(x) \right) - \Phi_{f,g}\left( (x,f(x)) \right) \right] 
	\\&= 	\left(\sqrt{1+\abs{\grad f(x)}^2}\right)\partial_{n} \Phi_{f,g}(x,f(x)). \nonumber	
\end{align}
Furthermore, we denote, for $f\in C^{1,\gamma}(\R^d)$,
\begin{align}
\label{eqIntro:M def}
M(f)=-G(f)f.
\end{align}

Returning to the one-phase Muskat problem (\ref{eqIntro:MuskatInTermsOfOmegatNormalVelocity}), we note that, by the graph assumption, the velocity $V$ of the free boundary can be expressed explicitly in terms of $f$; combined with the third line in (\ref{eqIntro:MuskatInTermsOfOmegatNormalVelocity}), this yields
\[
\frac{\partial_t f}{\sqrt{1+|\nabla_x f|^2}} =V = \partial_n\phi.
\]
Noticing that $\phi$ is the unique solution of (\ref{eqIntro:DtoNPotential}) and recalling the definition of the Dirichlet-to-Neumann operator as well as of the operator $M$ allows the previous line to be expressed as:
\begin{align}
\label{eqIntro:MuskatHJB}
	\begin{cases}
		\partial_t f =  M(f)\ &\text{in}\ \real^d\times (0,T)\\
		f(\cdot,0) = f_0\ &\text{on}\ \real^d\times\{ 0 \},
	\end{cases}
&\end{align}
 for an initial datum $f_0$. The relationship between (\ref{eqIntro:MuskatInTermsOfOmegatNormalVelocity}) and  (\ref{eqIntro:MuskatHJB}) has been utilized in \cite{Alazard-2021ConvexityAndHeleShaw-WaterWaves},
\cite{AlazardMeunierSmets-2020LyapounovCauchyProbHeleShaw-CMP},
\cite{DongGancedoNguyen-2023MuskatWP3D-arXiv}, \cite{DongGancedoNguyen-2023MuskatOnePhase2DWellPose-CPAM},
\cite{NguyenPausader-2020ParadifferentialWellPoseMuskatARMA}.

The main result of our paper is the well-posedness of viscosity solutions (see Definition \ref{defViscSol:ViscositySolutions}) to (\ref{eqIntro:MuskatHJB}) for initial datum $f_0$ that is bounded and uniformly continuous on $\real^d$, which we will denote as $f_0\in BUC(\real^d)$.

\begin{theorem}\label{thmIntro:MuskatExistUnique}
	Given any $f_0\in BUC(\real^d)$, there exists a unique viscosity solution, $f\in C^0(\real^d\times [0,T))$, to (\ref{eqIntro:MuskatHJB}).  Furthermore, if $f_0$ has a modulus of continuity, $\om$, then for each $t\in[0,T)$, $f(\cdot, t)$ has the same modulus of continuity.
\end{theorem}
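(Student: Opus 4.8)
The plan is to establish Theorem \ref{thmIntro:MuskatExistUnique} by the Perron method within the viscosity solutions framework, with the crucial input being a \emph{comparison principle} for \eqref{eqIntro:MuskatHJB}. The structural reason to expect comparison is that $M(f) = -G(f)f$ is built from the Dirichlet-to-Neumann operator $G(f)$, which enjoys the \emph{Global Comparison Property}: if $f - g$ attains a global maximum at $x_0$ with $f(x_0) = g(x_0)$, then $[G(f)f](x_0) \le [G(g)g](x_0)$ — equivalently, $M$ is a degenerate elliptic nonlocal operator. First I would make this precise: show that $M$ admits a min-max (Bellman/Isaacs) representation as $M(f)(x) = \inf_a \sup_b \big( \int (f(x+y) - f(x) - \nabla f(x)\cdot y \Indicator_{B_1}(y))\, \mu_{ab}(x,dy) + c_{ab}(x) f(x) + \cdots \big)$ with nonnegative Lévy kernels $\mu_{ab}$, so that \eqref{eqIntro:MuskatHJB} is a nonlocal Hamilton-Jacobi-Bellman equation of the type treated by the viscosity theory (Barles--Imbert and related). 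This representation, or a direct argument via the maximum principle for harmonic functions and the Hopf lemma applied to $\Phi_{f,g}$, is what I expect to be the \textbf{main obstacle} — it is the heart of the paper — but I may assume whatever version of it is proven earlier in the paper.

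Granting the Global Comparison Property for $M$ together with appropriate continuity/stability estimates for $M$ on $C^{1,\gamma}$ inputs, the comparison principle proceeds as follows. Suppose $u$ is a viscosity subsolution and $v$ a viscosity supersolution of \eqref{eqIntro:MuskatHJB} with $u(\cdot,0) \le v(\cdot,0)$. I would run the standard doubling-of-variables argument in the spatial variable, penalizing with $\frac{1}{2\delta}|x-y|^2$, adding the time localization $-\eta/(T-t)$ and spatial localization $\beta\langle x\rangle$ (or a bounded variant, since $u,v$ are only bounded), to produce a maximum point $(\bar t, \bar x, \bar y)$ with $\bar t > 0$. At that point one touches $u$ from above and $v$ from below with smooth test functions; the nonlocal nature of $M$ forces one to use the Jensen--Ishii lemma for integro-differential operators (splitting the integral into a singular part near the origin, controlled by the second-order jet, and a bounded tail, controlled by the functions themselves). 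Combining the inequalities and sending $\delta\to 0$, then $\eta,\beta\to 0$, yields $u \le v$. A subtlety here is that $M(f)$ is only defined classically for $f\in C^{1,\gamma}$; one must check that the definition of viscosity solution (Definition \ref{defViscSol:ViscositySolutions}) evaluates $M$ on test functions (which are smooth) plus a nonlocal remainder, so the argument stays within the regime where $M$ makes sense.

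With comparison in hand, uniqueness is immediate. For existence I would use Perron's method: exhibit a subsolution and a supersolution that agree at $t=0$ with $f_0$, take the supremum of all subsolutions lying between them, and verify via the standard bump-construction argument that its upper and lower semicontinuous envelopes are respectively a sub- and supersolution; comparison then forces them to coincide, giving a continuous solution. The barriers can be built from spatial translates and from moduli-of-continuity comparison functions: since constants are stationary solutions (as $M(\text{const}) = 0$, because the bounded harmonic extension of a constant is that constant and its normal derivative vanishes), and since $M$ commutes with vertical translations and with horizontal translations, $f_0 + \|f_0 - f_0(\cdot + z)\|_\infty$ type functions give sub/supersolution barriers. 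Finally, the modulus-of-continuity propagation: fix $z\in\real^d$ and apply comparison to $f(\cdot,t)$ and $f(\cdot + z, t) + \omega(|z|)$; horizontal-translation invariance of $M$ makes the latter a solution with initial data dominating $f_0(\cdot + z) + \omega(|z|) \ge f_0$, so $f(x,t) \le f(x+z,t) + \omega(|z|)$ for all $t$, i.e. $f(\cdot,t)$ inherits the modulus $\omega$ uniformly in $t$. The one point requiring care throughout is that all these barriers and translates must themselves be admissible sub/supersolutions in the viscosity sense on $\real^d$, which follows from translation invariance plus the comparison framework but should be stated cleanly.
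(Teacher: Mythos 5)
Your proposal captures the correct high-level architecture---comparison via the GCP, Perron existence, modulus propagation via translation invariance of $M$---and the observation $M(\text{const})=0$ is correct and used in the paper (via $H(\text{const})=1$). However, you take a genuinely different route to the comparison principle, and there is a gap in it. You propose either (a) a min-max/Isaacs representation of $M$ with explicit L\'evy kernels $\mu_{ab}$, or (b) a doubling-of-variables argument with the Jensen--Ishii lemma for integro-differential operators. Neither is available off the shelf here: the paper never establishes a min-max representation for $M$ or $H$ (it only remarks that the abstract GCP characterization results \cite{GuSc-2019MinMaxNonlocalCALCVARPDE}, \cite{GuSc-2019MinMaxEuclideanNATMA} suggest one exists), and the Jensen--Ishii machinery as you describe it relies precisely on splitting the nonlocal term into a singular near-part plus a tail, which requires exactly the kernel structure you are assuming. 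So as written, the heart of the comparison argument rests on a representation that would itself be a substantial theorem.

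The paper sidesteps this entirely. Instead of doubling variables, it regularizes via \emph{inf/sup convolutions} (Definition \ref{defViscSol:InfSupConvoution}, Proposition \ref{propViscSol:subsolution-of-sup-convolution}): because $H$ is translation-invariant and invariant under addition of constants, $u^\varepsilon$ and $v_\varepsilon$ remain sub/supersolutions, and they are semi-convex/semi-concave. At a maximum of $u^\varepsilon - v_\varepsilon - \psi$ (for a smooth bump $\psi$), the functions are punctually $C^{1,1}$, and the crucial technical input---which your proposal does not mention and which is the hard analytic core of the paper---is the \emph{pointwise evaluation} result (Lemma \ref{LemPtWise:NonTanBehavior}, Corollary \ref{corPtWise:NormalDeriv}, Proposition \ref{propViscSol:pointwise-testing}): $H$ (hence $M$) can be classically evaluated on functions that are merely punctually $C^{1,\gamma}$ at a point, and satisfies a punctual GCP there (Lemma \ref{LemPtWise:PunctialGCP}). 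This is what replaces the Jensen--Ishii lemma. It is established via careful barrier constructions and a non-tangential asymptotic expansion of the harmonic potential $W_f$, following \cite[Lemma 11.17]{CaffarelliSalsa-2005GeometricApproachtoFB} rather than nonlocal Bellman machinery. The contradiction at the max point is then closed using the \emph{splitting property} (Corollary \ref{corAuxOp:SplittingTheorem-OscillationVersion}) to send $R\to\infty$, and the local-Lipschitz property of $H$ on $C^{1,\gamma}$ (Theorem \ref{thmAuxOp:HPreciseLipStatementC1Gam}). Because $H$ is only \emph{locally} Lipschitz, the comparison result also requires the extra initial-time condition \eqref{eqViscSol:InitialDataLocalUniformOrdering}, which you do not anticipate and which must be threaded through the Perron construction (condition \eqref{eqViscSol:Perron-SubSolInitialDataModulus}). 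Finally, the paper proves everything for $H$ and transfers to $M$ via $M=H-1$ and the change of variable $g=f+t$ (Proposition \ref{propViscSol:EquivalenceSolsHeleShawMuskat}); your direct treatment of $M$ is an equivalent bookkeeping choice and is fine.
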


This is the first well-posedness result for viscosity solutions of the one-phase Muskat problem --- more precisely,   to equation (\ref{eqIntro:MuskatHJB}) --- with bounded and uniformly continuous initial datum that holds in general dimension $d$ and with no periodicity assumptions. 
A key advantage of working with viscosity solutions is that the equation holds pointwise wherever the solution is \emph{locally} regular enough. We refer to this as the pointwise evaluation property; see  Section \ref{sec:PointwiseEvaluationH}, Proposition \ref{propViscSol:pointwise-testing}, and Section \ref{ss:punctually}.

We note that    a classical solution,  $f\in C^1(\real^d\times (0,T))$ with a Dini modulus for its first order spacial derivatives, satisfies \eqref{eqIntro:MuskatHJB} if and only if the pair $(\phi,f)$ satisfies  the original Muskat problem \eqref{eqIntro:MuskatInTermsOfOmegatNormalVelocity}.
In this work we only treat (\ref{eqIntro:MuskatHJB}), and leave the connection between weak solutions of (\ref{eqIntro:MuskatHJB}) and (\ref{eqIntro:MuskatInTermsOfOmegatNormalVelocity}),   for future work.

\begin{rem}
\label{remark}
\normalfont
We now address the most immediate and pertinent relationships between Theorem \ref{thmIntro:MuskatExistUnique} and existing works, with a focus on the notion of solution used in each.  We provide further details on this in  Section \ref{sec:ComparisonWithOtherResults};  specifically, in  Subsections \ref{ss:potential} and \ref{ss:NotionsOfSolutions}.
\begin{itemize} 
\item The works 
\cite{Alazard-2021ConvexityAndHeleShaw-WaterWaves}, \cite{AlazardMeunierSmets-2020LyapounovCauchyProbHeleShaw-CMP}, \cite{NguyenPausader-2020ParadifferentialWellPoseMuskatARMA}
 concern \eqref{eqIntro:MuskatHJB} in the setting of periodic initial data in $H^s$ with $s>d/2+1$. In that setting, $M(f)$ is classically defined, and solutions are smooth.  Of these, Theorem 2.1 of \cite{AlazardMeunierSmets-2020LyapounovCauchyProbHeleShaw-CMP}, which concerns local in time well-posedness of \eqref{eqIntro:MuskatHJB}, is  most closely related to Theorem \ref{thmIntro:MuskatExistUnique}. However, Theorem \ref{thmIntro:MuskatExistUnique} does not imply the results in \cite{AlazardMeunierSmets-2020LyapounovCauchyProbHeleShaw-CMP} and, likewise, the results in \cite{AlazardMeunierSmets-2020LyapounovCauchyProbHeleShaw-CMP} do not imply Theorem \ref{thmIntro:MuskatExistUnique}. In addition, \cite{AlazardMeunierSmets-2020LyapounovCauchyProbHeleShaw-CMP}  employs different methods from the ones developed here: namely, variational techniques inspired by water-wave theory.

\item Theorem \ref{thmIntro:MuskatExistUnique} holds in all dimensions and for general initial data.  Therefore,  the results in  \cite{DongGancedoNguyen-2023MuskatWP3D-arXiv}, \cite{DongGancedoNguyen-2023MuskatOnePhase2DWellPose-CPAM} 
that pertain to existence and uniqueness of viscosity solutions
are contained in Theorem \ref{thmIntro:MuskatExistUnique}, as they are posed in dimension $d=1$ and $d=2$ with initial data that is Lipschitz and periodic, hence in $BUC(\real^d)$.  
We note that the aspect of \cite{DongGancedoNguyen-2023MuskatOnePhase2DWellPose-CPAM}, \cite{DongGancedoNguyen-2023MuskatWP3D-arXiv} that is focused on $L^2$ regularity of $\partial_t f$ and the strong solution property is not contained in our result here --- Theorem \ref{thmIntro:MuskatExistUnique} gives no information about the equation in the strong sense.

\item   After this work was finished, we learned of    \cite{AlazardKoch-2023HeleShawSemiFlow-ArXiV}. The results in \cite{AlazardKoch-2023HeleShawSemiFlow-ArXiV} give well-posedness for a different, variational,   notion of solution to (\ref{eqIntro:MuskatHJB}), one which relies upon  formulating \eqref{eqIntro:MuskatInTermsOfOmegatNormalVelocity} in terms of an obstacle problem (see Subsection \ref{ss:NotionsOfSolutions} for more details). If the initial data is regular enough, then classical solutions, viscosity solutions, and solutions in the sense of \cite{AlazardKoch-2023HeleShawSemiFlow-ArXiV}  coincide.  However, when the data is only in $BUC(\real^d)$, it is not known whether solutions in the sense of \cite{AlazardKoch-2023HeleShawSemiFlow-ArXiV} agree with viscosity solutions, nor whether any results in \cite{AlazardKoch-2023HeleShawSemiFlow-ArXiV} imply Theorem \ref{thmIntro:MuskatExistUnique} or vice versa.  Moreover, the methods of proof of  \cite{AlazardKoch-2023HeleShawSemiFlow-ArXiV} are completely different from the approach taken here, as the results of \cite{AlazardKoch-2023HeleShawSemiFlow-ArXiV} hinge on the transformation of the Hele-Shaw problem into the aforementioned obstacle problem, which in turn is understood as a variational inequality.  Another difference is that in the previously listed works, including ours, the solution $f$  to (\ref{eqIntro:MuskatHJB})  is constructed  using the properties of equation (\ref{eqIntro:MuskatHJB}) itself, whereas the variational solutions from \cite{AlazardKoch-2023HeleShawSemiFlow-ArXiV} arise via  an auxiliary problem in $\real^{d+1}$.  We expand upon this in Subsection \ref{ss:NotionsOfSolutions}. 

 \end{itemize}

\end{rem}


\subsection{Method of proof}\label{sec:MainObservation}
Our proof of Theorem \ref{thmIntro:MuskatExistUnique} relies on a connection between (\ref{eqIntro:MuskatHJB}) and the Hele-Shaw problem without gravity. In fact, most of our work concerns an integro-differential operator $H$, which we introduce  below, in \eqref{eqIntro:H}, and define rigorously in Definition \ref{defAuxOp:TheOperatorH}, associated to the Hele-Shaw problem. We prove well-posedness for viscosity solutions to the evolution equation driven by $H$, and deduce Theorem \ref{thmIntro:MuskatExistUnique} from that result. 

In the absence of a graph assumption the Hele-Shaw problem is,
\begin{align}
\label{eqIntro:HS1}
\begin{cases}
\Delta p = 0\quad \text{ in }\{p>0\},\\
\frac{\partial_t p}{|\nabla p|} = |\nabla p| \quad \text{ on }\{p=0\},
\end{cases}
\end{align}
which is coupled with a boundary condition either in the the interior of $\{p>0\}$ or at $x_{d+1}=-\infty$. 
We remark that the quantity $\frac{\partial_t p}{|\nabla p|} $ is exactly the outward normal velocity of the zero level set of $p$. In Section \ref{sec:Literature}, we discuss literature on this widely-studied problem; for now, we mention that viscosity solutions for (\ref{eqIntro:HS1}) were introduced in  \cite{Kim-2003UniquenessAndExistenceHeleShawStefanARMA}. 

In the graph setting, the Hele-Shaw problem corresponds to  taking the gravitational constant $\rho_{\mathrm{grav}}$ to be zero in (\ref{eqIntro:rhograv}), yielding  (\ref{eqIntro:MuskatInTermsOfOmegatNormalVelocity}) but with $\phi=0$ on $\Gamma_f$, and with a different boundary condition at $x_{d+1}=-\infty$, which  is needed to ensure the solution $\phi$ is non-trivial.  There are several ways to reformulate this free boundary problem  in terms of nonlocal evolution equations. One of them, utilized in \cite{ChangLaraGuillenSchwab-2019SomeFBAsNonlocalParaboic-NonlinAnal}, is to consider the operator $I$ defined as follows. For $f:\real^d \rightarrow \real$ with $f>0$ and nice enough, let $U_f$ be the unique solution to the Dirichlet problem on a strip,
\begin{align}
\label{eqIntro:HS2}
\begin{cases}
\Delta U_f= 0\quad \text{in }D_f,\\
U_f=0 \quad\text{on }\Gamma_f,\\
U_f=1 \quad \text{on }\{x_{d+1}=0\},
\end{cases}
\end{align}
and define 
\begin{equation}
\label{eqIntro:I}
I(f,x)=\sqrt{1+|\nabla f|^2} (\partial_{(-n)} U_f)(x,f(x)). 
\end{equation}
A consequence of the results in \cite{ChangLaraGuillenSchwab-2019SomeFBAsNonlocalParaboic-NonlinAnal} (which apply to more general, nonlinear equations),  is that viscosity solutions to the integro-differential equation 
\begin{align}
\label{eqIntro:Ieqn}
	\begin{cases}
		\partial_t f = I(f)\ &\textnormal{in}\ \real^d\times(0,T)\\
		f(\cdot,0) = f_0\ &\textnormal{on}\ \real^d\times\{0\},
	\end{cases}
\end{align}
with $f_0\in BUC(\real^d)$ are well-posed and can be used in the graph setting to give viscosity solutions to (\ref{eqIntro:HS1}) that correspond to those in the sense of \cite{Kim-2003UniquenessAndExistenceHeleShawStefanARMA}. 

A second way to reformulate the Hele-Shaw problem with graph assumption in terms of integro-differential equations is to replace the potential $U_f$ by the potential $W_f$, defined in all of $D_f$ as the solution to
\begin{align}
\label{eqIntro:Wf}
	\begin{cases}
		\Delta W_f = 0 \qquad \ &\textnormal{in}\ D_f\\
		W_f = 0 \qquad \ &\textnormal{on}\ \Gam_f\\
		\norm{W_f-\ell}_{L^\infty(D_f)}<\infty, &
	\end{cases}
\end{align}
where $\ell$ is the linear function,
\begin{align}
\label{eq:linear}
	\ell(x,x_{d+1}) = -x_{d+1}.
\end{align}
The operator $H$ is defined via,
\begin{align}
\label{eqIntro:H}
	H(f,x) = \left( \sqrt{1+\abs{\grad f(x)}^2} \right)\partial_{(-n)}W_f(x,f(x)),
\end{align}
and the corresponding nonlocal Cauchy problem for $f$ is, 
\begin{align}\label{eqIntro:HeleShawHJB}
	\begin{cases}
		\partial_t f = H(f)\ &\textnormal{in}\ \real^d\times(0,T)\\
		f(\cdot,0) = f_0\ &\textnormal{on}\ \real^d\times\{0\}.
	\end{cases}
\end{align}
Since $U_f$ and $W_f$ satisfy the same boundary condition on $\Gamma_f$, this integro-differential equation  corresponds to the Hele-Shaw problem in the graph setting as well. However, the operator $H$ is  closely related to $M$, and hence more useful in the present context. Indeed, 
we note that $X\mapsto W_f(X)-\ell(X)$ solves (\ref{eqIntro:DtoNPotential}) for $g=f$, and hence by uniqueness of solutions of (\ref{eqIntro:DtoNPotential}), we have $W_f-\ell=\Phi_{f,f}$.  
Thus, 
\begin{align*}
	\partial_{n} \Phi_{f,f} = \partial_{(n)}W_f - \partial_{n}\ell
	= -\partial_{(-n)}W_f +\frac{1}{\sqrt{1+\abs{\grad f}^2}} \quad \text{on }\Gam_f.
\end{align*}
Thus, we conclude,
\begin{align}\label{eqIntro:CorrespondenceHandGff}
	M(f) = -\left(\sqrt{1+\abs{\grad f}^2}\right)\partial_{n} \Phi_{f,f}
	= H(f) - 1.
\end{align}
In this paper, we adapt the methods of \cite{ChangLaraGuillenSchwab-2019SomeFBAsNonlocalParaboic-NonlinAnal} to to establish well-posedness of (\ref{eqIntro:HeleShawHJB}). 
Then, we use relationship (\ref{eqIntro:CorrespondenceHandGff}) between $H$ and $M$ to deduce well-posedness of (\ref{eqIntro:MuskatHJB}), thus establishing our main result.


\subsection{Outline of the paper}

In Section \ref{sec:Literature}, we review some literature related to the Muskat problem and Theorem \ref{thmIntro:MuskatExistUnique}.  Section \ref{sec:Notation} is a collection of notation used throughout the paper.  In Section \ref{sec:AuxiliaryOperatorProperties} we prove various properties of the operator $H$.  Section \ref{sec:ViscositySolsAndMainProof} is devoted to the definitions  and properties of, as well as existence and uniqueness of, viscosity solutions for (\ref{eqIntro:MuskatHJB}) and (\ref{eqIntro:HeleShawHJB}), and concludes with the proof of Theorem \ref{thmIntro:MuskatExistUnique} (Section \ref{subsecVS:ProofOfMainTheorem}.) 
 Section \ref{sec:ComparisonWithOtherResults} contains further discussion of our results and techniques, with a focus on how they relate to existing literature.  The Appendix contains an existence and uniqueness result for harmonic functions which appears to be well-known, and yet for which there is no obvious or easily accessible reference to cite.


\subsection{Literature and earlier results}\label{sec:Literature}

There is a vast collection of works involving the Muskat  and  Hele-Shaw problems.  While both  have one-phase and two-phase versions, in our discussion of literature, we will focus on those that apply to \emph{one-phase} problems. Similarly, although a wide variety of techniques have been used to establish well-posedness, we focus our literature review on works that use viscosity solution techniques. 
We also refer the reader to complementary expositions in \cite[Section 2]{AbedinSchwab-2020RegularityTwoPhaseHSParrabolic-JFA} and \cite[Section 4]{ChangLaraGuillenSchwab-2019SomeFBAsNonlocalParaboic-NonlinAnal}.

\subsubsection{Viscosity solutions and integro-differential equations}
 Viscosity solutions  were first introduced in  \cite{CrandallLions-83ViscositySolutionsHJE-TAMS}, \cite{CrandallEvansLions-1984SomePropertiesOfViscositySolutionsTAMS} (similar to the approach in \cite{Evans-1980OnSolvingPDEAccretive}) to obtain existence and uniqueness to first order Hamilton-Jacobi equations (the name was an artifact of using the vanishing viscosity method to obtain existence of solutions in those original works).  Even though the notion  and definition of viscosity solutions  only relies on operators obeying the global comparison property (abbreviated as GCP; see Definition \ref{defAuxOp:GCP}), showing existence and uniqueness of such solutions is much harder.  It took some years from the time the first order Hamilton-Jacobi results appeared until existence and uniqueness was obtained for fully nonlinear second order elliptic equations in \cite{Jensen-1988UniquenessARMA}, \cite{Ishii-1989UniqueViscSolSecondOrderCPAM}.  A useful review article is \cite{CrandalIshiLions-92UsersGuide}.

Just as for the local setting,  there is  a deeply and broadly developed theory for viscosity solutions for general, \emph{nonlocal}, Hamilton-Jacobi-Bellman equations:
see  \cite{BaIm-07}, \cite{CaSi-09RegularityIntegroDiff}, \cite{GuillenMouSwiech-2019CouplingLevyMeasureAndComparisonPrincipleTAMS} \cite{JakobsenKarlsen-2006MaxPrincipleIntDiffNODEA} for a small sampling of existence and uniqueness results and  \cite{BaChIm-11Holder}, \cite{CaSi-09RegularityIntegroDiff}, \cite{ChangLaraDavila-2016HolderNonlocalParabolicDriftJDE}, \cite{Kriventsov-2013RegRoughKernelsCPDE}, \cite{SchwabSilvestre-2014RegularityIntDiffVeryIrregKernelsAPDE}, \cite{Silv-2011DifferentiabilityCriticalHJ} for a small sampling of the regularity theory.
   It turns out, thanks to a characterization of operators with the GCP in \cite{GuSc-2019MinMaxNonlocalCALCVARPDE}, \cite{GuSc-2019MinMaxEuclideanNATMA}, that under reasonable assumptions, general  operators with the GCP that act on functions in $C^{1,\gam}$   always enjoy a min-max structure with integro-differential ingredients, and thus are amenable to adaptations of viscosity solutions to the nonlocal framework, like those in \cite{BaIm-07}, \cite{CaSi-09RegularityIntegroDiff}, \cite{JakobsenKarlsen-2006MaxPrincipleIntDiffNODEA}, \cite{Silv-2011DifferentiabilityCriticalHJ}, where such a min-max structure is always assumed.  Whether or not one can actually obtain existence and uniqueness for these solutions  depends upon more detailed properties of the operator, beyond just the GCP.

\subsubsection{The Hele-Shaw problem without gravity}
\label{sss:HS}

Some of the earliest works for short time existence and uniqueness of the Hele-Shaw problem without gravity  are \cite{ElliottJanovsky-1981VariationalApproachHeleShaw} and \cite{EscherSimonett-1997ClassicalSolutionsHeleShaw-SIAM}:  a type of variational problem is studied in \cite{ElliottJanovsky-1981VariationalApproachHeleShaw} and a classical solution (for short time) is produced in \cite{EscherSimonett-1997ClassicalSolutionsHeleShaw-SIAM}.  For the one-phase problem, under a smoothness and convexity assumption, \cite{DaskaLee-2005AllTimeSmoothSolHeleShawStefan-CPDE} gives global in time smooth solutions. 

Since the  Hele-Shaw problem without gravity   enjoys a comparison principle, it is amenable to viscosity solution methods: well-posedness for viscosity solutions of  (\ref{eqIntro:HS1}) was established in \cite{Kim-2003UniquenessAndExistenceHeleShawStefanARMA}. For the closely related Stefan problem, this was done in \cite{AthanaCaffarelliSalsa-1996RegFBParabolicPhaseTransitionACTA}; subsequent properties were studied in \cite{KimPozar-2011ViscSolTwoPhaseStefanCPDE}.  These notions of viscosity solutions for the Stefan and Hele-Shaw problems are generalizations of those for the two-phase stationary problem given in \cite{Caffarelli-1987HarnackInqualityApproachFBPart1RevMatIbero}.

In \cite{ChangLaraGuillenSchwab-2019SomeFBAsNonlocalParaboic-NonlinAnal}, it was proved that viscosity solutions of a rather large class of one-phase and two-phase problems under the graph condition, including the Hele-Shaw problem without gravity, are equivalent to viscosity solutions of the Hamilton-Jacobi-Bellman equation for the function that parameterizes the graph of the free boundary. (A priori, these are two different notions of viscosity solution.)  Existence, uniqueness, and propagation of modulus of continuity of solutions was also proved in \cite{ChangLaraGuillenSchwab-2019SomeFBAsNonlocalParaboic-NonlinAnal}.

Beyond the smooth initial data case in \cite{EscherSimonett-1997ClassicalSolutionsHeleShaw-SIAM}, and the convex case in \cite{DaskaLee-2005AllTimeSmoothSolHeleShawStefan-CPDE}, there are a number of works on regularity of the Hele-Shaw problem without gravity  (\ref{eqIntro:HS1}).    Under a non-degeneracy assumption on the free boundary, \cite{Kim-2006RegularityFBOnePhaseHeleShaw-JDE} showed that  Lipschitz free boundaries become $C^1$ in space-time. Long time regularity, involving propagation of a Lipschitz modulus, was obtained in \cite{Kim-2006LongTimeRegularitySolutionsHeleShaw-NonlinAnalysis}.  Subsequently, the extra condition on the space-time non-degeneracy in \cite{Kim-2006RegularityFBOnePhaseHeleShaw-JDE} was removed in the work of \cite{ChoiJerisonKim-2007RegHSLipInitialAJM}, where it was shown that, under a dimensionally small Lipschitz condition on the initial free boundary, Lipschitz free boundaries must be $C^1$ in space-time and hence classical.  More precise results were proved in  \cite{ChoiJerisonKim-2009LocalRegularizationOnePhaseINDIANA} in the case that  the solution starts from a global Lipschitz graph.  We note that the restriction on the size of the Lipschitz norm of the free boundary is indeed necessary for a \emph{regularizing} result like \cite{ChoiJerisonKim-2007RegHSLipInitialAJM} to hold: as shown in \cite{KingLaceyVazquez-1995PersistenceOfCornersHeleShaw},  there are pathological solutions for which some free boundary points remain stationary for some time before immediately jumping into motion.   

In \cite{ChangLaraGuillen-2016FreeBondaryHeleShawNonlocalEqsArXiv}, local $C^{1,\gamma}$ space-time regularity for  solutions of  (\ref{eqIntro:HS1}) was established under  a space-time flatness condition on the free boundary. This was done using blow-up limits, similar to the strategy of \cite{Savin-2007SmallPerturbationCPDE} and \cite{DeSilva-2011FBProblemWithRHS}, whereas \cite{Kim-2006RegularityFBOnePhaseHeleShaw-JDE},  \cite{ChoiJerisonKim-2007RegHSLipInitialAJM},\cite{ChoiJerisonKim-2009LocalRegularizationOnePhaseINDIANA} employed techniques originating in \cite{Caffarelli-1987HarnackInqualityApproachFBPart1RevMatIbero}, \cite{Caffarelli-1989HarnackForFBFlatAreLipCPAM}.  The integro-differential structure of a two-phase version of the Hele-Shaw problem under the graph assumption was used in \cite{AbedinSchwab-2020RegularityTwoPhaseHSParrabolic-JFA} to show that solutions that are $C^{1,\dini}$ must in fact become $C^{1,\gam}$ for a universal $\gam$.

\subsubsection{The one-phase Muskat problem}
\label{ssIntro:Muskat}
A number of works  address the well-posedness of the one-phase Muskat problem by reducing it to the equation for the graph of the boundary, (\ref{eqIntro:MuskatHJB}),  including \cite{Alazard-2021ConvexityAndHeleShaw-WaterWaves}, \cite{AlazardMeunierSmets-2020LyapounovCauchyProbHeleShaw-CMP}, \cite{DongGancedoNguyen-2023MuskatWP3D-arXiv}, \cite{DongGancedoNguyen-2023MuskatOnePhase2DWellPose-CPAM},  \cite{NguyenPausader-2020ParadifferentialWellPoseMuskatARMA}.  The works \cite{Alazard-2021ConvexityAndHeleShaw-WaterWaves}, \cite{AlazardMeunierSmets-2020LyapounovCauchyProbHeleShaw-CMP}, \cite{NguyenPausader-2020ParadifferentialWellPoseMuskatARMA} established local in time well-posedness of classical solutions for periodic initial data in $H^s$ with $s>1+\frac{d}{2}$.  (Note that if $f\in H^s$ for $s>1+\frac{d}{2}$ then $f\in C^{1,\gam}$, for any $\gam>0$, and hence $M(f)$ is classically defined.) These works 
 focused on the parabolicity of and energy estimates for $M(f)$, creating short time  solutions. 
    Global well-posedness of viscosity solutions of (\ref{eqIntro:MuskatHJB}) for Lipschitz and periodic initial data was established in \cite{DongGancedoNguyen-2023MuskatWP3D-arXiv}, \cite{DongGancedoNguyen-2023MuskatOnePhase2DWellPose-CPAM}.  There is a very recent result in \cite{AlazardKoch-2023HeleShawSemiFlow-ArXiV} that gives global in time smooth solutions for regular data as well as well-posedness of variational solutions to a closely related time-dependent obstacle problem with merely bounded data.

	The one-phase Muskat problem is closely connected with a family of equations called Hele-Shaw with a drift and a source. A few works that address the connection of Hele-Shaw with a drift with variations of the Porous Medium equation are \cite{AlexanderKimYao-QuasiStaticAndCongestedCrowdControl-Nonlinearity2014}, \cite{KimPozarWoodhouse-SingularLimitPMEWithDrift-2019AdvMath}, \cite{KimZhang-PorousMediumWithDriftRegularity-ARMA2021}.  Some regularity results for the free boundary for Hele-Shaw with a drift are in \cite{KimZhang-RegularityHeleShawWithDrift-2022ArXiV}.

\subsubsection{The two-phase Muskat problem}
The two-phase version of (\ref{eqIntro:MuskatHJB}) is  more complicated  than the one-phase version, and there are a variety of methods that have been used to address well-posedness and regularity.  
The reformulation of the two-phase Muskat problem in terms of integro-differential equations goes back to \cite{Ambrose-2004WellPoseHeleShawWithoutSurfaceTensionEurJAppMath},  \cite{CaflischOrellanaSiegel-1990LocalizedApproxMethodVorticalFLows-SIMA}, \cite{CaflischHowisonSiegel-2004GlobalExistenceSingularSolMuskatProblem-CPAM}. Global existence of solutions with small data was shown in \cite{CaflischHowisonSiegel-2004GlobalExistenceSingularSolMuskatProblem-CPAM} and short time existence of solutions with large data in an appropriate Sobolev space was established in \cite{Ambrose-2004WellPoseHeleShawWithoutSurfaceTensionEurJAppMath}.  In \cite{CordobaGancedo-2007ContourDynamics-CMP},  the Muskat problem was formulated as an  integro-differential  equation for the  \emph{gradient} of the free surface function. This formulation was then used to show that near a sufficiently regular stable solution, the equation linearizes to the 1/2-heat equation, and \cite{CordobaGancedo-2007ContourDynamics-CMP}  showed existence of solutions in this regime. This formulation was subsequently used to establish many well-posedness and regularity results, including \cite{ConstantinCordobaGancedoStrain-2013GlobalExistenceMuskat-JEMS}, \cite{ConstantinGancedoShvydkoyVicol-2017Global2DMuskatRegularity-AIHP}, \cite{CordobaCordobaGancedo-2011InterfaceHeleShawMuskat-AnnalsMath}.

The next batch of results that exploit the integro-differential structure are
\cite{AlazardNguyen-2021CauchyProbMuskatNonLipData-CPDE},
\cite{AlazardNguyen-2021OnCauchyProbMuskatPart2CriticalData-AnnPDE},
\cite{Cameron-2019WellPosedTwoDimMuskat-APDE}, 
\cite{Cameron-2020WellPose3DMuskatMediumSlope-ArXiv}, 
\cite{CordobaGancedo-2009MaxPrincipleForMuskat-CMP}.   In \cite{CordobaGancedo-2009MaxPrincipleForMuskat-CMP}, the Muskat problem  was rewritten as a fully nonlinear integro-differential equation on $f$ itself, instead of $\partial_x f$ (as previously), and in  $d=1$ has  the form 
\begin{align*}
	\partial_t f = \int_\real \del_y f(x,t)K_f(y,t)dy.
\end{align*}
Here $K_f\geq0$ is a kernel that depends on $f$ and has the same structure as operators with the GCP and the Hamilton-Jacobi-Bellman equations mentioned above.  The integro-differential equation  for $f$ (as opposed to $\partial_x f$) was used in \cite{CordobaGancedo-2009MaxPrincipleForMuskat-CMP} to show non-expansion of the Lipschitz norm of solutions with nice enough data.   This structure was subsequently utilized in \cite{Cameron-2019WellPosedTwoDimMuskat-APDE}, \cite{Cameron-2020WellPose3DMuskatMediumSlope-ArXiv} to study well-posedness for Lipschitz data as well as regularizing effects. The nonlinear parabolic structure given by the nonlinear Dirichlet-to-Neumann operator was used for short time well-posedness in \cite{NguyenPausader-2020ParadifferentialWellPoseMuskatARMA}. We conclude by mentioning the recent works \cite{zlatos20242dI} and \cite{zlatos20242dII} that establish local-in-time well-posedness, and study singularities, of the two-phase Muskat problem on the half plane.


\section{Notation}\label{sec:Notation}

Here we collect some notation that will be used throughout the paper.

\begin{itemize}
	
	\item We write a point in $\mathbb{R}^{d+1}$ as $X = (x,x_{d+1})\in \mathbb{R}^d\times \mathbb{R}$. We will use  upper case letters to denote points in $\mathbb{R}^{d+1}$, and lower case letters to denote points in $\mathbb{R}^d$. The $(d+1)$-th unit vector in $\R^{d+1}$ is denoted by $e_{d+1}$.

	\item For $X\in \mathbb{R}^{d+1}$ and $r>0$, we denote by $B_r(X)$ the open ball in $\mathbb{R}^{d+1}$ of radius $r$ centered at $X$.  When it is important to distinguish between a ball in $\real^{d+1}$ and $\real^d$, we will use respectively $B_r^{d+1}(X)$ and $B_r^d(x)$.
	
	\item We use $ BUC(\real^d)$ to denote the set of bounded and uniformly continuous functions on $\real^d$.
	    \item For a function $f:\real^d\rightarrow \real$, we denote its subgraph by $D_f$ and its graph by $\Gamma_f$: see \eqref{eqIntro:DfAndGamf}.

    \item For $Y = (y,f(y))\in \Gamma_f$, we use  $n =n_f= n(y) = n(Y)$ to denote the outward unit normal vector to $D_f$ at $Y$, as defined in (\ref{eqIntro:NormalVector}). We denote the inward unit normal vector to $D_f$ by
    \begin{equation}
    \label{eq:nu}
    \nu =\nu_f=\nu(y) = \nu(Y):= -n(Y).
    \end{equation} 	
	
    \item For $l\in \mathbb{N}$ and $\Omega \subset \mathbb{R}^l$, we denote by
    $C^k(\Omega)$ the set of functions having all derivatives of order less than or equal to $k$ continuous in $\Omega$ for $k\in \mathbb{N}$ or $k=\infty$.

    \item For $\gamma\in (0,1]$ we denote the H\"older space as follows:
    \begin{align*}
        C^{1,\gamma}(\Omega) = \left\lbrace
        f:\Omega \to \mathbb{R}: \Vert f \Vert_{L^\infty(\Omega)} + \Vert \nabla f\Vert_{L^\infty(\Omega)} + [\nabla f]_{C^{0,\gamma}(\Omega)} < \infty 		 \right\rbrace		
    \end{align*}	
    where 
    \begin{align*}
        [\grad f]_{C^{0,\gamma}(\Omega)} = \sup \left\lbrace \frac{ |\grad f(x)- \grad f(y)|}{|x-y|^\gamma}: x,y\in \Omega, x\neq y \right\rbrace.
    \end{align*}

\item The function spaces $\K(\gam,m)$, $\K^*(\gam,m)$, and $\K(\gam,m, x_0, a, p)$
 are defined, respectively, in \eqref{eqAuxOp:TheSetMatchcalK}, \eqref{eq:K*}, and \eqref{eqPtWise:PtWiseKGamM}.

    \item For $x\in \mathbb{R}^d$ and a function $u:\mathbb{R}^d\to \mathbb{R}$, we define the translation operator $\tau_x$ that acting on $u$ as $\tau_xu(\cdot) = u(\cdot + x)$.

	\item The Global comparison property (GCP) is in Definition \ref{defAuxOp:GCP}.
	\item The inf and sup convolutions of a function $u$ are denoted by $u_\ep
	$ and $u^\ep$: see Definition \ref{defViscSol:InfSupConvoution}.
	\item The upper and lower semicontinuous envelopes of a function $u$ are denoted $u^*$ and $u_*$: see Definition \ref{defViscSol:USCLSCEnvelopes}.

\end{itemize}


\section{The Hele-Shaw operator, $H$, and its properties}\label{sec:AuxiliaryOperatorProperties}

In this section we give the precise definition  and list of properties of the operator $H$ introduced in (\ref{eqIntro:H}).    These will then be used for well-posedness of (\ref{eqIntro:HeleShawHJB}) in Section \ref{sec:ViscositySolsAndMainProof}.

\subsection{Function spaces}
We begin by introducing several function spaces that we will employ throughout the work.  For $\gamma\in (0,1)$ and $m>0$, we define the open convex set 
    \begin{align}
    \label{eqAuxOp:TheSetMatchcalK}
        \mathcal{K}(\gamma,  m) := \left\lbrace f\in C^{1,\gamma}(\mathbb{R}^d):  \Vert f\Vert _{C^{1,\gamma}(\mathbb{R}^d)} < m\right\rbrace.
	\end{align}
If at every point, a function enjoys a $C^{1,\gam}$ expansion from above, i.e.
\begin{align}\label{eqAuxOp:C1GamFromAbove}
	\textnormal{there exists}\ p,\ \textnormal{with}\ \abs{p}\leq m,\ \ \textnormal{and}\ \ 
	f(x+h)  \leq f(x) + p\cdot h + m\abs{h}^{1+\gam},
\end{align}
then we call it ``$C^{1,\gam}$-semi-concave''. 
We will denote the set of functions satisfying (\ref{eqAuxOp:C1GamFromAbove}) by $\K^*(\gam,m)$:
\begin{align}
\label{eq:K*}
	\K^*(\gam,m) = \left\lbrace f\in C^{0,1}(\real^d)\ :\ \norm{f}_{C^{0,1}}\leq m\ \textnormal{and}\  (\ref{eqAuxOp:C1GamFromAbove})\ \textnormal{holds for}\ f\ \textnormal{and}\ m  \right\rbrace.
\end{align}

\begin{definition}\label{defAuxOp:PunctuallyC1Gam}
	We say that $f$ is \emph{punctually $C^{1,\gam}$ at $x$}, which we denote as $f\in \left( m\text{-} C^{1,\gam}(x) \right)$, provided that $f$ is differentiable at $x$ and there exists $r>0$, $p\in\real$, with
	\begin{align*}
		\textnormal{for}\ \ \abs{y}\leq r,\ \ 
		\abs{f(x+y)-f(x)-\grad f(x)\cdot y}\leq m\abs{y}^{1+\gam}.
	\end{align*}
\end{definition}

Finally, we will want to list certain subsets of $\K(\gam,m)$ in which the functions are punctually $C^{1,\gam}$ and  share the same value and the same gradient at a particular point.  To this end, we use the  notation:
\begin{align}\label{eqPtWise:PtWiseKGamM}
	\K(\gam,m,x_0, a, p) = 
	\left\lbrace f\in \K^*(\gam,m)\ :\ f\ \textnormal{is}\ m\text{-}C^{1,\gam}(x_0),\ \textnormal{with}\ 
	f(x_0)=a,\ \grad f(x_0)=p 
	\right\rbrace.
\end{align}

\subsection{The basic properties}\label{sec:BasicPropertiesH}

We give a rigorous definition of $W_f$ introduced in (\ref{eqIntro:Wf}). The properties we collect here are modifications and improvements of the ones for a related pressure function studied in \cite[Section 5]{ChangLaraGuillenSchwab-2019SomeFBAsNonlocalParaboic-NonlinAnal}, of which $U$ defined in (\ref{eqIntro:HS2}) is a special case. Since the  results in \cite{ChangLaraGuillenSchwab-2019SomeFBAsNonlocalParaboic-NonlinAnal} concern a pressure that solves a fully nonlinear uniformly elliptic equation (with a slightly different lower boundary condition), while here $W_f$  is harmonic,  many of the arguments simplify and can be improved upon considerably.

\begin{definition}\label{defAuxOp:DefOfWf}
	Given $f\in C^{0,1}(\real^d)$, let $W_f\in C^2(D_f)\intersect C^{0}(\overline{D}_f)$ be the unique solution of 
	\begin{align}\label{eqAuxOp:PotentialWf}
\begin{cases}
		\Delta W_f = 0\ &\textnormal{in}\ D_f\\
		W_f = 0\ &\textnormal{on}\ \Gam_f \ \textnormal{and}\ \norm{W_f-\ell}_{L^\infty(D_f)}<\infty,
\end{cases}
        \end{align}
where $\ell(x,x_{d+1})=-x_{d+1}$.  (Existence and uniqueness of $W_f$ are listed in Propositions  \ref{propAppendix:weak-max-principle}, \ref{propAppendix:existence-Perron}.)

\end{definition}

\begin{rem}
	If there exists $r>0$ and $X\in\Gam_f$ with $f\in C^{1,\gam}(B_r(x))$, then $W_f\in C^{1,\gam}(B_{r/2}(X)\intersect D_f)$ (recall that $X=(x,f(x))$, and we note the implicit use of the same notation for $B_r$ in both $\real^d$ and $\real^{d+1}$).  
\end{rem}

We  now define the operator $H$, introduced in (\ref{eqIntro:H}).

\noindent

\begin{definition}\label{defAuxOp:TheOperatorH}
The operator, $H: C^{1,\gam}(\real^d)\to C^{0}(\real^d)$, is defined for $f\in C^{1,\gam}$ and $x\in\real^d$  as
\begin{align}\label{eqCheckAssumpt:DefOfHeleShawOp}
	H(f,x) =\left(\sqrt{1+\abs{\grad f}^2}\right) \partial_{\nu_f} W_f(x, f(x)),
\end{align}
where $\nu_f$ is the inward normal as in (\ref{eq:nu}).

\end{definition}


The first property is basic local regularity for $H$ in $C^{1,\gam}$, which follows standard local estimates for harmonic functions combined with the definition of $H$, and so we omit its proof.

\begin{proposition}\label{propAuxOp:BaiscRegularityForHC1GamToCgam}
	If $f\in C^{0,1}(\real^d)\intersect C^{1,\gam}(B_{2R}(x_0))$, then $H(f,\cdot) \in C^\gam(B_R(x_0))$.
\end{proposition}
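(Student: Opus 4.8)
The statement says: if $f\in C^{0,1}(\real^d)\cap C^{1,\gam}(B_{2R}(x_0))$, then $H(f,\cdot)\in C^\gam(B_R(x_0))$. The strategy is to reduce the Hölder continuity of $H(f,\cdot)$ to interior-up-to-the-boundary gradient estimates for the harmonic function $W_f$ and then chain these with the chain rule implicit in the definition $H(f,x)=(\sqrt{1+|\grad f(x)|^2})\,\partial_{\nu_f}W_f(x,f(x))$. First I would note that, by the Remark following Definition \ref{defAuxOp:DefOfWf}, the hypothesis $f\in C^{1,\gam}(B_{2R}(x_0))$ already gives $W_f\in C^{1,\gam}\big(B_R(X_0)\cap D_f\big)$ up to the portion of $\Gam_f$ lying over $B_R(x_0)$, where $X_0=(x_0,f(x_0))$; in fact, one gets a quantitative estimate on $[\grad W_f]_{C^{0,\gam}}$ over that half-ball.

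\textbf{Key steps.} (1) Cover: fix $x_1,x_2\in B_R(x_0)$ and the associated boundary points $X_i=(x_i,f(x_i))\in\Gam_f$. Write $H(f,x_i)=|N(x_i)|\,\nabla W_f(X_i)\cdot\nu_f(x_i)$, where $N(x)=(-\grad f(x),1)$ and $\nu_f(x)=-N(x)/|N(x)|$, so that in fact $H(f,x)=-\nabla W_f(x,f(x))\cdot N(x)$, a product of $\nabla W_f$ evaluated on the graph with the vector field $N(x)=(-\grad f(x),1)$. (2) Regularity of each factor on $B_R(x_0)$: the map $x\mapsto N(x)$ is $C^{0,\gam}$ with seminorm controlled by $[\grad f]_{C^{0,\gam}(B_{2R}(x_0))}$; the map $x\mapsto \nabla W_f(x,f(x))$ is the composition of the $C^{0,\gam}$ boundary trace of $\nabla W_f$ (from the boundary regularity above) with the Lipschitz map $x\mapsto(x,f(x))$, hence is $C^{0,\gam}$ on $B_R(x_0)$ with seminorm controlled by $\|\grad W_f\|_{C^{0,\gam}(B_R(X_0)\cap D_f)}$ and the Lipschitz norm of $f$. (3) Product rule for Hölder functions: since both factors are in $L^\infty\cap C^{0,\gam}(B_R(x_0))$ — boundedness of $\nabla W_f$ near the boundary follows from the same interior-boundary estimate, and $|N|\le \sqrt{1+\|\grad f\|_\infty^2}$ — their dot product is $C^{0,\gam}(B_R(x_0))$, which is exactly the claim.

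\textbf{The quantitative boundary estimate.} The one genuine input is: if $\Gam_f$ is a $C^{1,\gam}$ graph over $B_{2R}(x_0)$ and $W_f$ is harmonic in $D_f$, vanishing on $\Gam_f$, with $\|W_f-\ell\|_{L^\infty(D_f)}<\infty$, then $\grad W_f\in C^{0,\gam}$ up to $\Gam_f$ over $B_R(x_0)$, with the seminorm bounded in terms of $d,\gam,R$, $\|f\|_{C^{1,\gam}(B_{2R}(x_0))}$, and $\|W_f\|_{L^\infty(B_{3R/2}(X_0)\cap D_f)}$ (the latter itself controlled by $1+\|\grad f\|_\infty$ from the maximum principle comparison with $\ell$). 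This is the standard Schauder boundary estimate for the Dirichlet problem with zero boundary data on a $C^{1,\gam}$ hypersurface, obtained after flattening the boundary by the $C^{1,\gam}$ diffeomorphism $(x,x_{d+1})\mapsto(x,x_{d+1}-f(x))$, which turns $\Delta$ into a divergence-form operator with $C^{0,\gam}$ coefficients — exactly the setting of interior and boundary Schauder theory.

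\textbf{Main obstacle.} The paper elects to omit the proof, and indeed none of the above is hard; the only point requiring a little care — and the reason one should not be fully cavalier — is ensuring the boundary Schauder estimate is applied on the correct scale so that the constants depend only on the data over $B_{2R}(x_0)$ and not on global quantities, and that the $L^\infty$ bound on $W_f$ near $\Gam_f$ (needed to feed the Schauder estimate) is genuinely available from the stated maximum principle (Proposition \ref{propAppendix:weak-max-principle}) comparing $W_f$ with $\ell\pm C$. Everything else is the routine product-and-composition bookkeeping for Hölder spaces, which is why I would, as the authors do, state it and move on.
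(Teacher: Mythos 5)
Your proposal is correct and is exactly the argument the paper gestures at when it omits the proof ("follows standard local estimates for harmonic functions combined with the definition of $H$"): local boundary Schauder regularity for $W_f$ on the $C^{1,\gamma}$ graph, combined with the product/composition bookkeeping for Hölder functions in the identity $H(f,x)=-\nabla W_f(x,f(x))\cdot N(x)$. The only imprecision is cosmetic: the $L^\infty$ bound on $W_f$ near $\Gamma_f$ (Lemma \ref{lemAuxOp:UniformLInfinityBounds}) is controlled by $\|f\|_{L^\infty}$ and the ball radius rather than by $\|\grad f\|_\infty$, but this does not affect the argument.
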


One of the most important properties of $H$ and $M$ for our work is what we call the global comparison property (GCP). Roughly, to say that an operator, $J$, satisfies the GCP means that $J$  preserves the ordering of  functions on $\mathbb{R}^d$ at any points where their graphs coincide. See Section \ref{ss:GCP} for a discussion of the role of the GCP in other works.

\begin{definition}[Global comparison property]\label{defAuxOp:GCP}
     We say an operator $J: C^{1,\gamma}(\mathbb{R}^d)\to C^{0}(\mathbb{R}^d)$ has the \emph{global comparison property} (hereafter, GCP) if the following holds:
         \begin{align*}
        &\textnormal{ for any } x_0\in \R^d, u,v\in C^{1,\gamma}(\mathbb{R}^d) \textnormal{  such that }u\leq v \textnormal{ in }\mathbb{R}^d \textnormal{ and } u(x_0) = v(x_0),\\
		 &\quad \textnormal{we have } J(u,x_0)\leq J(v,x_0).
    \end{align*}
    We say $J$ satisfies GCP at $x_0$ or with respect to $x_0$ if the above holds for a fixed $x_0\in \mathbb{R}^d$ only.
\end{definition}

Next, we list some basic properties of $W_f$ and $H$.

\begin{proposition}[Basic Properties]\label{propAuxOp:HProperties} Let $W_f$ and $H$ be as in Definitions \ref{defAuxOp:DefOfWf} and \ref{defAuxOp:TheOperatorH}.
	
	\begin{enumerate}[(i)]
		\item $f\mapsto W_f$ is monotonic: if $f,g\in C^{0,1}(\real^d)$ and $f\leq g$, then  $W_f\leq W_g$ holds in $D_f$.
		\item $H$ enjoys the global comparison property.
		\item $H$ is translation invariant: if $f\in C^{1,\gam}(\real^d)$, then 
		\begin{align}\label{eqAuxOp:HPropertiesTranslationInvariance}
			H(\tau_z f, x) = H(f, x+z), \qquad x,z\in \mathbb{R}^d.
		\end{align}
		\item $H$ is invariant under  addition of constants: if $f\in C^{1,\gam}(\real^d)$ and $c\in \real$, then
		\begin{align}\label{eqAuxOp:HPropertiesInvarianceByAddConstant}
			H(f+c, x) = H(f, x), \qquad x\in \mathbb{R}^d, c\in \mathbb{R}.
		\end{align}
	\end{enumerate}
\end{proposition}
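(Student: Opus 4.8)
The plan is to prove each of the four properties in Proposition \ref{propAuxOp:HProperties} by reducing everything to the defining elliptic problem \eqref{eqAuxOp:PotentialWf} for $W_f$ together with uniqueness (Proposition \ref{propAppendix:weak-max-principle}) and the maximum principle. \textbf{Monotonicity (i).} Suppose $f\leq g$. Then $D_f\subset D_g$. The function $W_g$ is defined on $D_g$, hence in particular on $D_f$, where it is harmonic and satisfies $\|W_g-\ell\|_{L^\infty(D_f)}<\infty$. On $\Gamma_f$ one has $W_g\geq 0 = W_f$: indeed $W_g$ vanishes on $\Gamma_g$, and by the maximum principle applied on $D_g$ (comparing $W_g$ with $\ell$, whose boundary values on $\Gamma_g$ are $-g\geq$ the relevant bounds) one checks $W_g\geq 0$ throughout $\overline{D_g}\supset \Gamma_f$. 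So $W_g - W_f$ is harmonic in $D_f$, bounded, and nonnegative on $\partial D_f=\Gamma_f$; the maximum principle (Proposition \ref{propAppendix:weak-max-principle}) then gives $W_g\geq W_f$ in $D_f$. One has to be a little careful stating the comparison principle for the unbounded domain $D_f$ with the bounded-relative-to-$\ell$ normalization, but this is exactly what the Appendix propositions provide.

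\textbf{GCP (ii).} Let $u\leq v$ in $\R^d$ with $u(x_0)=v(x_0)=:a$, and both in $C^{1,\gamma}$. By (i), $W_u\leq W_v$ in $D_u$. At the common boundary point $X_0=(x_0,a)\in\Gamma_u\cap\Gamma_v$ both potentials vanish, and $W_v$ is also defined on (a neighborhood in) $D_v\supset D_u$. Since the inward normal direction $\nu_u(x_0)$ points into $D_u\subset D_v$, and $W_v-W_u\geq 0$ in $D_u$ with equality at $X_0$, the one-sided directional derivative along $\nu_u$ satisfies $\partial_{\nu_u}(W_v-W_u)(X_0)\geq 0$, i.e. $\partial_{\nu_u}W_v(X_0)\geq \partial_{\nu_u}W_u(X_0)$. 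The subtlety — and I expect this to be \emph{the main obstacle} — is that the normals $\nu_u(x_0)$ and $\nu_v(x_0)$ need not coincide (the gradients $\nabla u(x_0)$ and $\nabla v(x_0)$ can differ), so one cannot immediately conclude $H(v,x_0)\geq H(u,x_0)$, which is phrased in terms of $\partial_{\nu_v}W_v$. The resolution is that $u\leq v$ with equality at $x_0$ forces $\nabla u(x_0)=\nabla v(x_0)$ automatically (a standard first-order touching argument, valid since both functions are differentiable at $x_0$): the difference $v-u\geq 0$ attains its minimum $0$ at $x_0$, so $\nabla(v-u)(x_0)=0$. Hence $\nu_u(x_0)=\nu_v(x_0)$ and $\sqrt{1+|\nabla u(x_0)|^2}=\sqrt{1+|\nabla v(x_0)|^2}$, and the inequality on directional derivatives upgrades directly to $H(u,x_0)\leq H(v,x_0)$. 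One also needs that the relevant one-sided derivatives exist and are finite, which follows from the boundary regularity remark after Definition \ref{defAuxOp:DefOfWf} (both $u,v\in C^{1,\gamma}$ near $x_0$, so $W_u,W_v\in C^{1,\gamma}$ up to $\Gamma$ near $X_0$).

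\textbf{Translation invariance (iii) and invariance under constants (iv).} Both follow from uniqueness of $W_f$ and the evident symmetries of \eqref{eqAuxOp:PotentialWf}. For (iii): if $T_z(x,x_{d+1})=(x-z,x_{d+1})$ is horizontal translation, then $D_{\tau_z f}=T_z(D_f)$ and $\Gamma_{\tau_z f}=T_z(\Gamma_f)$, while $\ell\circ T_z = \ell$ since $\ell$ depends only on $x_{d+1}$. Therefore $W_f\circ T_z$ is harmonic in $D_{\tau_z f}$, vanishes on $\Gamma_{\tau_z f}$, and differs from $\ell$ by a bounded amount; by uniqueness $W_{\tau_z f}=W_f\circ T_z$. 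The normal to $\Gamma_{\tau_z f}$ at $(x,\tau_z f(x))$ equals the normal to $\Gamma_f$ at $(x+z,f(x+z))$ (since $\nabla(\tau_z f)(x)=\nabla f(x+z)$), so taking the corresponding inward normal derivative and multiplying by $\sqrt{1+|\nabla(\tau_z f)(x)|^2}$ yields $H(\tau_z f,x)=H(f,x+z)$. For (iv): if $S_c(x,x_{d+1})=(x,x_{d+1}-c)$ is vertical translation, then $D_{f+c}=S_c^{-1}(D_f)$... more precisely $D_{f+c}=\{x_{d+1}<f(x)+c\}=S_{-c}(D_f)$, and $\Gamma_{f+c}=S_{-c}(\Gamma_f)$; moreover $\ell\circ S_{-c} = \ell + c$ since $\ell(x,x_{d+1}-(-c))$... let me just say $\ell(x,x_{d+1}+c)=-(x_{d+1}+c)=\ell(x,x_{d+1})-c$. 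Hence $W_f\circ S_{-c} + c$ is harmonic in $D_{f+c}$, vanishes on $\Gamma_{f+c}$ (because $W_f$ vanishes on $\Gamma_f$ and we have added $c$... wait, we need it to vanish: on $\Gamma_{f+c}$, $W_f\circ S_{-c}=0$, so $W_f\circ S_{-c}+c = c\neq 0$). The correct normalization is $W_{f+c}=W_f\circ S_{-c}$: it is harmonic in $D_{f+c}$, equals $0$ on $\Gamma_{f+c}$, and $W_f\circ S_{-c}-\ell = (W_f-\ell)\circ S_{-c} - c$ is bounded; by uniqueness $W_{f+c}=W_f\circ S_{-c}$. Since vertical translation does not change the normal direction and $\nabla(f+c)=\nabla f$, the normal derivatives match and $H(f+c,x)=H(f,x)$. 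In both cases the only nonroutine point is bookkeeping the action of the rigid motion on the normalization constraint involving $\ell$, which I have sketched above; everything else is immediate from uniqueness in \eqref{eqAuxOp:PotentialWf}.
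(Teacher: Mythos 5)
You take exactly the route the paper does for all four items: (i) via the weak maximum principle on $D_f$ after observing $W_g\geq 0$ on $\Gamma_f$; (ii) via the one-sided directional derivative along $\nu$ together with the observation that $\nabla u(x_0)=\nabla v(x_0)$ is forced by the touching (a point you usefully spell out, whereas the paper merely asserts it); (iii) and (iv) via uniqueness of the Dirichlet problem under rigid motions of $\mathbb{R}^{d+1}$. Items (i)--(iii) are fine.

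In (iv) there is a sign slip: with your convention $S_c(x,x_{d+1})=(x,x_{d+1}-c)$, the identity you want is $W_{f+c}=W_f\circ S_c$, i.e.\ $W_{f+c}(x,x_{d+1})=W_f(x,x_{d+1}-c)$, not $W_f\circ S_{-c}$. As written, $W_f\circ S_{-c}(x,x_{d+1})=W_f(x,x_{d+1}+c)$ is defined on $D_{f-c}$ rather than on $D_{f+c}$, and on $\Gamma_{f+c}$ it would evaluate to $W_f(x,f(x)+2c)$, which is not zero. Replacing $S_{-c}$ by $S_c$ repairs everything: $W_f\circ S_c$ is harmonic on $D_{f+c}$, vanishes on $\Gamma_{f+c}$ since $W_f\circ S_c(x,f(x)+c)=W_f(x,f(x))=0$, and $W_f\circ S_c-\ell=(W_f-\ell)\circ S_c+c$ is bounded, so uniqueness closes the argument exactly as in the paper (whose displayed identity $W_{\tilde f}(x,x_{d+1}+c)=W_f(x,x_{d+1})$ is precisely $W_{f+c}=W_f\circ S_c$).
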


\begin{proof}[Proof of Proposition \ref{propAuxOp:HProperties}]

	To establish (i), we note that $f \leq g$ 
	implies $D_f \subseteq D_g$. By definition $W_g$, 
	\begin{align*}
		W_g \geq 0\ \ \text{in}\ \ D_g.
	\end{align*}    
    As a consequence $W_g \geq 0 = W_f$ on $\Gamma_f$, and so
    Proposition 
    \ref{propAppendix:weak-max-principle} gives $	W_g - W_f \geq 0\ \  \text{in}\ \ D_f$. 	To verify the GCP, suppose $f,g\in 
 	C^{1,\gamma}(\mathbb{R}^d)$ are such that $f\leq g$ in $\mathbb{R}^d$ 
 	and there exists $x_0\in \mathbb{R}^d$ with $f(x_0) = g(x_0)$. We 
 	shall show that
    \begin{align*}
        H(f,x_0) \leq H(g,x_0).
    \end{align*}
    Let $X_0 = (x_0,f(x_0)) = (x_0,g(x_0)) \in \Gamma_f\cap \Gamma_g$. 		
    Since $\nabla f(x_0) = \nabla g(x_0)$ we have $\nu_f(X_0) = 
    \nu_g(X_0)$ and $X_0 + s\nu_f(X_0) \in D_f$ for $s > 0$. Using the 
    fact that $W_f\leq W_g$ we have
    \begin{align*}
        W_f(X_0) = W_g(X_0), \qquad\text{and}\qquad 
        W_f(X_0 + s\nu_f(X_0)) \leq W_g(X_0 + s\nu_f(X_0))
    \end{align*}
    for $s>0$. Therefore
    \begin{align*}
        \frac{W_f(X_0 + s\nu_f(X_0)) - W_f(X_0)}{s} \leq  \frac{W_g(X_0 + 
        s\nu_f(X_0)) - W_g(X_0)}{s} \qquad \text{for all}\;s > 0.
    \end{align*}
    Letting $s\to 0^+$ we obtain $\partial_{\nu_f}W_f(X_0) \leq 
    \partial_{\nu_f}W_g(X_0)$. Thus from the definition of $H$ in \eqref{eqCheckAssumpt:DefOfHeleShawOp} we obtain $H(f,x_0) \leq H(g,x_0)$ thanks to $\nabla f(x_0) = \nabla g(x_0)$.

	Next we verify (\ref{eqAuxOp:HPropertiesTranslationInvariance}). To 
	this end, fix $f\in C^{1,\gam}(\mathbb{R}^d)$, $z\in \mathbb{R}^d$, and 
	let $W_f$ solve \eqref{eqAuxOp:PotentialWf}. Let
	\begin{align*}
		D_{\tau_zf} = \{ (x,x_{d+1}) \in \mathbb{R}^d\times \mathbb{R}: 
			x_{d+1} < f(x+z)\},
	\end{align*}
	and its boundary $\Gamma_{\tau_z f} = \{ (x, f(x+z)): x\in \mathbb{R}
	^d \}$. We then define
    \begin{align*}
        (\tau_z W_f) (x,x_{d+1}) := W_f(x+z, x_{d+1}), \qquad (x,x_{d+1}) 
        		\in D_{\tau_z f}.
    \end{align*}
    This is well-defined since $(x,x_{d+1}) \in D_{\tau_z f}$ provided 
    that $(x+z, x_{d+1}) \in D_f$. It follows that 
    \begin{align*}
    \begin{cases}
    	\Delta (\tau_z W_f) = 0\ &\textnormal{in}\ D_{\tau_z f}\\
    		\tau_z W_f = 0\ &\textnormal{on}\ \Gamma_{\tau_z f} \ 
    		\textnormal{and}\ \norm{\tau_z W_f-\ell}_{L^\infty(D_{\tau_z f})}
    		<\infty. 
    \end{cases}
    \end{align*}
    By uniqueness, we 
    obtain $\tau_z 
    		W_f(x) = W_{\tau_z f}(x)$
    for $x\in \mathbb{R}^d$, from which we conclude,
    \begin{align*}
    H(f(\cdot+z), x) = H(f, x+z), \qquad \textnormal{for all}\;x\in 
    \mathbb{R}^d,
		\end{align*}
	as desired.

	To verify (\ref{eqAuxOp:HPropertiesInvarianceByAddConstant}), let 
	$\tilde{f}(x) = f(x) + c$ for $x\in \mathbb{R}^d$. Let
	 \begin{align*}
 		W(x,x_{d+1}) = W_{\tilde{f}}(x, x_{d+1}+c), \qquad (x,x_{d+1})\in 
 		D_f.
	 \end{align*}
	 This is well-defined since $(x, x_{d+1}+c) 
	\in D_{\tilde{f}}$ if and only if $(x,x_{d+1}) \in D_f$. By uniqueness 
	for \eqref{eqAuxOp:PotentialWf} we have $W\equiv W_f$ in $D_f$, i.e.,
	\begin{align*}
    		 W_{\tilde{f}}(x,x_{d+1}+c) = 
    		 W_f(x,x_{d+1}), \qquad\textnormal{if}\; x_{d+1} < f(x).
	\end{align*}
	Hence, it follows that for $(x,f(x))\in\Gam_f$,
	\begin{align*}
		\partial_{\nu_{ \tilde f}} W_{\tilde f}(x, \tilde{f}(x)) = 
		\partial_{\nu_f}W_f(x,f(x		)),
	\end{align*}
	which implies $H(\tilde f, x) = H(f,x)$, concluding the proof.
\end{proof}

The next lemma regarding a barrier function is straightforward from the local $L^\infty$-$C^{1,\gam}$ estimates for harmonic functions, and we do not include a proof.

\begin{lemma}\label{lemAuxOp:BarrierFuns}
	If $R>0$, $w\in \K(\gam,m)$, and the function $b=b_{w,R}$ is defined as the unique solution of
	\begin{align*}
		\begin{cases}
			\Delta b = 0\ &\text{in}\ D_w\intersect B_R\\
			b=0\ &\text{on}\ \Gam_w\intersect B_{R/2}\\
			b(X) = \textnormal{smooth and radially increasing} &\text{on}\ \Gam_w \intersect \left( B_R\setminus B_{R/2} \right)\\
			b(X) = 1\ &\text{on}\ D_w\intersect \partial B_{R},
		\end{cases}
	\end{align*}
	then there exists a positive constant $C=C(\gam,m,d,R)$, so that
	\begin{align*}
		\norm{b}_{C^{1,\gam}(B_{R/4})}\leq C.
	\end{align*}
\end{lemma}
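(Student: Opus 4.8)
The plan is to reduce the estimate to an interior/boundary $C^{1,\gamma}$ regularity statement for a harmonic function on a flattened domain, and then to track the dependence of the constant on the data $(\gamma,m,d,R)$. First I would fix $w\in \K(\gam,m)$ and the solution $b=b_{w,R}$ of the stated mixed boundary value problem; by the maximum principle $0\le b\le 1$ in $D_w\cap B_R$. Since $w\in C^{1,\gam}(\R^d)$ with $\norm{w}_{C^{1,\gam}}<m$, the portion of $\Gam_w$ lying in $B_{R/2}$ is a $C^{1,\gam}$ hypersurface with norm controlled by $m$, and on $\Gam_w\cap B_{R/2}$ the boundary datum is identically $0$. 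The idea is then to apply the standard boundary Schauder estimate for the Dirichlet problem for the Laplacian at a $C^{1,\gam}$ boundary portion with vanishing boundary data: near any point $X_0\in \Gam_w\cap B_{R/4}$ one gets, after straightening the boundary by a $C^{1,\gam}$ diffeomorphism whose norm depends only on $m$, that $b\in C^{1,\gam}$ up to the boundary with $\norm{b}_{C^{1,\gam}}$ bounded by a constant times $\norm{b}_{L^\infty}\le 1$. For points of $B_{R/4}$ that are a definite distance from $\Gam_w$, interior estimates for harmonic functions suffice, and the two estimates are patched together by a covering argument. The upshot is a bound $\norm{b}_{C^{1,\gam}(B_{R/4}\cap \overline{D}_w)}\le C(\gam,m,d,R)$.

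The one point that requires a little care is the following: the boundary Schauder estimate I want is \emph{local} near $\Gam_w\cap B_{R/2}$, so the rough (merely $C^0$) boundary data on $\Gam_w\cap(B_R\setminus B_{R/2})$ and the value $1$ on $D_w\cap\partial B_R$ do not interfere, because those pieces of $\partial(D_w\cap B_R)$ are at distance $\ge R/4$ from $B_{R/4}$. Thus the estimate in $B_{R/4}$ only sees the equation $\Delta b=0$, the flat piece of boundary $\Gam_w\cap B_{R/2}$, and the zero data there, together with the global bound $\norm{b}_{L^\infty}\le 1$; none of the auxiliary geometry enters except through the modulus of $\nabla w$, hence through $m$. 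Concretely: for $X_0\in\Gam_w\cap B_{R/4}$, in the half-ball $B_{R/8}(X_0)\cap D_w$ the function $b$ is harmonic with zero boundary values on $\Gam_w\cap B_{R/8}(X_0)$, and the straightening map has $C^{1,\gam}$ norm controlled by $m$; applying the flat-boundary estimate gives $[\nabla b]_{C^{0,\gam}}+\norm{\nabla b}_{L^\infty}\le C(\gam,m,d,R)$ in $B_{R/16}(X_0)$. A finite cover of $B_{R/4}$ by such half-balls plus interior balls (away from $\Gam_w$) completes the argument.

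The main obstacle, such as it is, is bookkeeping rather than conceptual: one must ensure that the straightening diffeomorphism used near the boundary has $C^{1,\gam}$ norm, and Jacobian bounds, depending only on $m$ (so that the transformed operator is uniformly elliptic with $C^{0,\gam}$ coefficients and controlled norms), and that the number of balls in the covering and their radii depend only on $(d,R)$ and the Lipschitz bound $m$. Given those, the constant $C=C(\gam,m,d,R)$ is produced by the classical boundary Schauder theory for harmonic functions, exactly as the statement claims; this is why the authors, quite reasonably, omit the proof.
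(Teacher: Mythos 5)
Your proposal is correct and follows essentially the same route the paper has in mind: the paper omits the proof, remarking only that the lemma ``is straightforward from the local $L^\infty$-$C^{1,\gam}$ estimates for harmonic functions,'' and your write-up fleshes out precisely that --- the maximum principle gives $0\le b\le 1$, boundary Schauder theory (after straightening $\Gam_w\cap B_{R/2}$ by a $C^{1,\gam}$ diffeomorphism with norm controlled by $m$) gives the $C^{1,\gam}$ bound near $\Gam_w\cap B_{R/4}$, interior estimates handle points away from $\Gam_w$, and the rough data on $\Gam_w\cap(B_R\setminus B_{R/2})$ and $\partial B_R$ are irrelevant since they lie at distance $\ge R/4$ from $B_{R/4}$. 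The only small technical point worth flagging is that after a merely $C^{1,\gam}$ flattening the transformed operator should be read in divergence form with $C^{0,\gam}$ coefficients (the non-divergence lower-order coefficients would involve $D^2$ of the flattening map, which is not a function), but the boundary $C^{1,\gam}$ estimate for zero Dirichlet data in that setting is exactly the classical statement being invoked, so the conclusion and the stated dependence $C=C(\gam,m,d,R)$ both stand.
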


\begin{rem}\label{remAuxOp:TheWForTheBarrierLemma}
	A typical choice of $w$ in Lemma \ref{lemAuxOp:BarrierFuns} would be a $C^{1,\gam}$ approximation of the function, for $c_1,c_2, c_3\in\real$, $x_0\in\real^d$, and $p\in\real^d$,
	\begin{align*}
		\phi(x) = \pm \min\{ c_1 + p\cdot(x-x_0) + c_2\abs{x-x_0}^{1+\gam},   c_3  \}.
	\end{align*}
	For example, if $f\in\K^*(\gam,m)$, then $\phi$, and the subsequent $w$, can be chosen so that
	\begin{align*}
		w\in\K(\gam, 2m),\ \ w\geq f,\ \ \textnormal{and}\ \ w(x_0) = f(x_0).
	\end{align*}
\end{rem}

These barrier functions can be used to show that for $f\in \K^*(\gam,m)$, $W_f$ enjoys global Lipschitz estimates.  First, we show a $L^\infty$ bound on $W_f$.

\begin{lemma}\label{lemAuxOp:UniformLInfinityBounds}
	If $f\in C^{0,1}(\real^d)$ and $W_f$ is as in (\ref{eqAuxOp:PotentialWf}), then, for $\ell(x,x_{d+1})=-x_{d+1}$,
	\begin{align*}
		\ell + \inf_{\real^d} f \leq W_f \leq \ell + \sup_{\real^d} f
	\end{align*}
\end{lemma}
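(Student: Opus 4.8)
The plan is to sandwich $W_f$ between the two affine functions $\ell + \inf_{\real^d} f$ and $\ell + \sup_{\real^d} f$, invoking the weak maximum principle for bounded harmonic functions on the unbounded domain $D_f$, namely Proposition \ref{propAppendix:weak-max-principle}. Write $m^- = \inf_{\real^d} f$ and $m^+ = \sup_{\real^d} f$; if either of these is infinite the corresponding inequality is vacuous, so we may assume both are finite.

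First I would introduce the comparison functions
\begin{align*}
	v^+ := (\ell + m^+) - W_f, \qquad v^- := W_f - (\ell + m^-),
\end{align*}
both of which lie in $C^2(D_f)\intersect C^0(\overline{D}_f)$ and are harmonic in $D_f$, since $\ell$ is affine and $W_f$ is harmonic. Both are bounded on $D_f$: each equals $\pm(W_f - \ell)$ plus a constant, and $\norm{W_f - \ell}_{L^\infty(D_f)} < \infty$ by the defining condition in \eqref{eqAuxOp:PotentialWf}. Next I would check the sign of $v^\pm$ on $\Gam_f$: a boundary point has the form $X = (x, f(x))$, at which $\ell(X) = -f(x)$ and $W_f(X) = 0$, so that
\begin{align*}
	v^+(X) = m^+ - f(x) \geq 0 \qquad \textnormal{and} \qquad v^-(X) = f(x) - m^- \geq 0.
\end{align*}
Applying Proposition \ref{propAppendix:weak-max-principle} to $v^+$ and to $v^-$ then yields $v^\pm \geq 0$ throughout $D_f$, which is exactly the claimed two-sided bound.

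There is no real obstacle here; the only point requiring care is that $D_f$ is unbounded, so the classical maximum principle on bounded domains does not apply directly (the unbounded harmonic function $\ell$ itself shows a growth restriction is necessary). This is precisely where the hypothesis $\norm{W_f - \ell}_{L^\infty(D_f)} < \infty$ enters, and why the argument must run through the Phragm\'en--Lindel\"of-type statement recorded as Proposition \ref{propAppendix:weak-max-principle} rather than the elementary maximum principle.
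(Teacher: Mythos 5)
Your proof is correct and rests on the same essential tool as the paper's --- the weak maximum principle of Proposition~\ref{propAppendix:weak-max-principle} --- but you apply it directly to $v^\pm$ rather than routing through the monotonicity of $f\mapsto W_f$. The paper instead observes that $W_{c_1}=\ell+c_1$ and $W_{c_2}=\ell+c_2$ for $c_1=\inf f$, $c_2=\sup f$, and then invokes Proposition~\ref{propAuxOp:HProperties}(i). Your direct route is arguably a touch cleaner: as stated, Proposition~\ref{propAuxOp:HProperties}(i) gives $W_{c_1}\leq W_f$ only on $D_{c_1}$, and a short extra word is needed for $D_f\setminus D_{c_1}$ (there $\ell+c_1\leq 0\leq W_f$), whereas working with $v^-=W_f-(\ell+m^-)$ on all of $D_f$ covers the whole domain in one application of the maximum principle. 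Substantively, though, both arguments are the same comparison-function idea, and either is a complete proof.
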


\begin{proof}[Proof of Lemma \ref{lemAuxOp:UniformLInfinityBounds}]
	We see that for $c_1 = \inf_{\real^d} f$ and $c_2 = \sup_{\real^d} f$, 
	\begin{align*}
		W_{c_1} = \ell + c_1\ \ \ \text{and}\ \ \ 
		W_{c_2} = \ell + c_2.
	\end{align*} 
	Furthermore, the monotonicity of $W$ in Proposition \ref{propAuxOp:HProperties} gives
	\begin{align*}
		W_{c_1}\leq W_f \leq W_{c_2},
	\end{align*}
	which is what was claimed.
\end{proof}

\begin{lemma}\label{lemAuxOp:WfLipOnK*Part1}
	If $f\in \K^*(\gam,m)$, then there exists a constant, $C=C(\gam,m,d)$, so that for any unit vector, $e\in \real^{d+1}$,
	\begin{align*}
		\textnormal{for all}\ \ X\in D_f,\textnormal{and}\ h\in\real\ \textnormal{such that}\  X+he\in D_f,\ \ 
		\abs{W_f(X+he) - W_f(X)}\leq C\abs{h}.
	\end{align*}
\end{lemma}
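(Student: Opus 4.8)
The plan is to establish a uniform bound on $\grad W_f$ over all of $D_f$ and then to conclude using that the subgraph $D_f$ is quasiconvex. The uniform gradient bound will be obtained by combining a Lipschitz estimate for $W_f$ at the free boundary $\Gam_f$ --- proved by comparison with the barrier of Lemma~\ref{lemAuxOp:BarrierFuns} --- with the interior gradient estimates for harmonic functions.

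First, since $\|f\|_{L^\infty}\leq m$, Lemma~\ref{lemAuxOp:UniformLInfinityBounds} gives $\|W_f-\ell\|_{L^\infty(D_f)}\leq m$, and (arguing as in the proof of Proposition~\ref{propAuxOp:HProperties}) $W_f\geq 0$ in $D_f$. Next I would fix $X_0=(x_0,f(x_0))\in\Gam_f$ and, using Remark~\ref{remAuxOp:TheWForTheBarrierLemma} together with $f\in\K^*(\gam,m)$, choose $w\in\K(\gam,2m)$ with $w\geq f$ and $w(x_0)=f(x_0)$. Let $b=b_{w,1}$ be the barrier of Lemma~\ref{lemAuxOp:BarrierFuns} with $R=1$ and the relevant balls centered at $X_0$, so that $0\leq b\leq 1$, $b(X_0)=0$, $b\equiv 1$ on $D_w\intersect\partial B_1(X_0)$, and $\|b\|_{C^{1,\gam}(B_{1/4}(X_0))}\leq C(\gam,m,d)$. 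The bounds above give $|W_f|\leq 2m+1$ on $\overline{D}_f\intersect\overline{B}_1(X_0)$, so, since $D_f\subseteq D_w$, comparing the harmonic functions $W_f$ and $(2m+1)b$ on the bounded domain $D_f\intersect B_1(X_0)$ --- where $W_f=0\leq(2m+1)b$ on $\Gam_f\intersect\overline{B}_1(X_0)$ and $W_f\leq 2m+1=(2m+1)b$ on $D_f\intersect\partial B_1(X_0)$ --- yields $0\leq W_f\leq(2m+1)b$ there. Because $b(X_0)=0$, $b$ has a uniformly controlled $C^{1,\gam}$ norm near $X_0$, and $\overline{D}_w$ is quasiconvex (being the closed subgraph of a Lipschitz function), the first-order Taylor expansion of $b$ at $X_0$ produces $r_0,C_1>0$, depending only on $\gam,m,d$, with $W_f(X)\leq(2m+1)b(X)\leq C_1|X-X_0|$ for all $X\in D_f$ with $|X-X_0|\leq r_0$. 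Taking $X_0$ to be a nearest point of $\Gam_f$ to a given $X$, this gives $0\leq W_f(X)\leq C_1\dist(X,\Gam_f)$ whenever $\dist(X,\Gam_f)\leq r_0$.

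Then, for $X\in D_f$ with $\rho:=\dist(X,\Gam_f)\leq r_0/2$, the ball $B_\rho(X)$ lies in $D_f$ and $\osc_{B_\rho(X)}W_f\leq 2C_1\rho$ by the previous paragraph, so the interior gradient estimate for harmonic functions gives $|\grad W_f(X)|\leq C_d\rho^{-1}\osc_{B_\rho(X)}W_f\leq 2C_dC_1$; whereas for $\rho>r_0/2$ the ball $B_{r_0/2}(X)$ lies in $D_f$ and $\osc_{B_{r_0/2}(X)}W_f\leq\osc_{B_{r_0/2}(X)}(W_f-\ell)+\osc_{B_{r_0/2}(X)}\ell\leq 2m+r_0$, so $|\grad W_f(X)|\leq 2C_dr_0^{-1}(2m+r_0)$. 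Hence $\|\grad W_f\|_{L^\infty(D_f)}\leq C(\gam,m,d)$. Finally, since $f$ is $m$-Lipschitz, any two points of $D_f$ can be joined inside $D_f$ by a path --- vertically down, then horizontally, then vertically up --- of length at most $C(m)$ times their distance; integrating $\grad W_f$ along such a path gives $|W_f(X)-W_f(Y)|\leq C(m)\,\|\grad W_f\|_{L^\infty(D_f)}\,|X-Y|$ for all $X,Y\in D_f$, and taking $Y=X+he$ with $|e|=1$ gives the claim.

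The hard part is the boundary Lipschitz estimate: obtaining a bound on $W_f$ near $\Gam_f$ that is uniform over all $f\in\K^*(\gam,m)$. This is precisely where the $C^{1,\gam}$-semi-concavity of $f$ (rather than plain Lipschitz continuity) is used --- through Remark~\ref{remAuxOp:TheWForTheBarrierLemma} it supplies a $C^{1,\gam}$ function $w\geq f$ touching $f$ at $x_0$, hence a barrier with uniform $C^{1,\gam}$ control --- and some care is also needed in running the comparison principle on the localized domain $D_f\intersect B_1(X_0)$.
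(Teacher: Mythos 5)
Your proof is correct, but it follows a genuinely different route from the paper's. The paper exploits the algebraic identity $W_f(X+he) = W_{\hat f}(X)$, where $\hat f = \tau_{h\hat e}f - he_{d+1}$ is the translated-and-shifted profile; since $f$ and $\hat f$ differ by at most $C|h|$ in $L^\infty$, the barrier-driven linear-growth estimate near the boundary controls $W_f - W_{\hat f}$ on $\partial(D_f\cap D_{\hat f})$ by $C|h|$, and a single application of the maximum principle (Proposition \ref{propAppendix:weak-max-principle}) on $D_f\cap D_{\hat f}$ finishes the job. You instead prove the global bound $\|\grad W_f\|_{L^\infty(D_f)}\leq C(\gam,m,d)$ directly --- combining the same barrier comparison at $\Gam_f$ with interior gradient estimates for harmonic functions --- and then integrate along a quasiconvex path in $D_f$. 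In effect, you establish Corollary \ref{corAuxOp:WfLipOnK*Part2} first and deduce Lemma \ref{lemAuxOp:WfLipOnK*Part1} from it, reversing the paper's logical order. The paper's argument is lighter (no interior gradient estimate or path-length geometry is needed, and it never asserts global gradient control at this stage) and matches the comparison-of-two-potentials pattern reused later (e.g.\ in the proofs of Proposition \ref{propAuxOp:HLipschitz-K*Ingredients} and Lemma \ref{lemAuxOp:Jsplitting-property-equal-B2R}); yours is more hands-on and gives the stronger pointwise gradient bound for free, at the cost of the extra quasiconvexity step. Both are valid, and all the ingredients you invoke are available before this lemma in the paper's ordering.
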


\begin{proof}[Proof of Lemma \ref{lemAuxOp:WfLipOnK*Part1}]

	Let $e = (\hat{e}, e_{d+1}) \in \mathbb{R}^d\times\mathbb{R}$. We can write	
	\begin{align}\label{eq:lemAuxOp-W-shift}
		W_f(X+he) = W_{\hat{f}}(X),\qquad X\in D_{\hat{f}},
	\end{align}
	where
	\begin{align*}
		\hat f = \tau_{h \hat{e} }f - h e_{d+1}, \qquad\text{i.e.,}\qquad \hat{f}(x) = f(x+h\hat{e}) - he_{d+1}.
	\end{align*}
        Indeed, we have
        \begin{align*}
            (x,x_{d+1}) \in D_{\hat{f}} &\qquad\Longleftrightarrow \qquad x_{d+1} < \hat{f}(x) = f(x+h\hat{e}) - he_{d+1} \\
            &\qquad\Longleftrightarrow \qquad x_{d+1} + he_{d+1} < f(x+he)\\
            &\qquad\Longleftrightarrow \qquad (x + h\hat{e}, x_{d+1} + he_{d+1}) \in D_f \\ 
            &\qquad\Longleftrightarrow \qquad X + he \in D_f. 
        \end{align*}
        Therefore we can define
        \begin{align*}
            W(X) = W(x,x_{d+1}) = W_f(X+he), \qquad X\in D_{\hat{f}}.
        \end{align*}
        Then $W$ is harmonic in $D_{\hat{f}}$ with $W = 0$ on $\Gamma_{\hat{f}}$, hence by uniqueness $W \equiv W_{\hat{f}}$ and thus \eqref{eq:lemAuxOp-W-shift} follows.

		We note that by Lemma \ref{lemAuxOp:UniformLInfinityBounds}, there is a constant that depends on $m$, so that
		\begin{align*}
			&\textnormal{for}\ X\in 
			\{ (x,x_{d+1})\ :\ \min\{ f(x), \hat f(x) \} - 1 \leq x_{d+1} \leq f(x) \},\\ 
			&\ \ \ \ \ 0\leq W_f(X) \leq C(m)
		\end{align*}
		and similarly for $\hat f$.  Thus, given any $X_0\in \Gam_f$, using Remark \ref{remAuxOp:TheWForTheBarrierLemma} and Lemma \ref{lemAuxOp:BarrierFuns}, there exist $m'$ (depending only $m$),   $w\in K(\gam,m')$, and a barrier $b_{w,x_0}$, so that
		\begin{align*}
			0\leq W_f(X)\leq C(m)b_{w,x_0}(X).
		\end{align*}
		The regularity of $b_{w,x_0}$ then implies that $W_f$ grows at most linearly away from its zero set, $\Gam_f$.  A similar statement holds for $\hat f$.

	Since $f$ is Lipschitz with Lipschitz constant $m$, we see that $\abs{f-\hat f}\leq mh$. We can now use the linear growth of $W_f$ and $W_{\hat f}$ to get a bound on the values of $W_f$ and $W_{\hat f}$ on the boundary of $D_f\intersect D_{\hat f}$. That is to say that 
	\begin{align*}
		\textnormal{for}\ X\in \partial(D_f\intersect D_{\hat f}),\ \ 
		\abs{W_f(X)-W_{\hat f}(X)}\leq C(m)\abs{f(x)-\hat f(x)}.
	\end{align*} 
	To this end, we define the function $V$ via
	\begin{align*}
		V = W_f - W_{\hat f}\ \ \text{in}\ \ D_f\intersect D_{\hat f}.
	\end{align*}
	We note that $V$ is a bounded harmonic function in $D_f\intersect D_{\hat f}$, and
	\begin{align*}
		\textnormal{for}\ \ Y\in \partial \left( D_f\intersect D_{\hat f} \right) ,\ \ 
		\abs{V(Y)}\leq Cm\abs{h}.
	\end{align*}
	Thus by the maximum principle, we have
	\begin{align*}
		\textnormal{for all}\ \ X\in D_f\intersect D_{\hat f},\ \ \abs{V(X)}\leq Cm\abs{h},
	\end{align*}
	which implies the result of the lemma.

\end{proof}

Lemma \ref{lemAuxOp:WfLipOnK*Part1} has two immediate corollaries.

\begin{corollary}\label{corAuxOp:WfLipOnK*Part2}
	If $f\in \K^*(\gam,m)$, then $W_f\in C^{0,1}(D_f)$, and for $C=C(\gam,m,d)$,
	\begin{align*}
		\norm{\grad W_f}_{L^\infty(D_f)}\leq C.
	\end{align*}
\end{corollary}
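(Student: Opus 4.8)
The plan is to read off both conclusions directly from Lemma~\ref{lemAuxOp:WfLipOnK*Part1}, which already contains all the substance. First, for the Lipschitz bound on $W_f$ itself: given any two points $X,Y\in D_f$, set $e=(Y-X)/|Y-X|$ and $h=|Y-X|$. Then $X$ and $X+he=Y$ both lie in $D_f$, so Lemma~\ref{lemAuxOp:WfLipOnK*Part1} applies verbatim --- note that it never requires the segment $[X,Y]$ to lie inside $D_f$, only the two endpoints --- and yields $|W_f(X)-W_f(Y)|\le C|X-Y|$ with $C=C(\gam,m,d)$. Thus $W_f$ is globally Lipschitz on $D_f$ with Lipschitz seminorm at most $C$. (Since $W_f$ is unbounded, one reads $W_f\in C^{0,1}(D_f)$ here as ``locally Lipschitz with a globally bounded Lipschitz seminorm''; on any bounded subdomain of $D_f$ this does give $W_f\in C^{0,1}$ in the norm sense.)

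For the stated gradient bound, I would argue pointwise. By Definition~\ref{defAuxOp:DefOfWf}, $W_f\in C^2(D_f)$, so $\nabla W_f$ exists and is continuous at every $X\in D_f$. Fix such an $X$ and a unit vector $e\in\real^{d+1}$. Since $D_f$ is open, $X+he\in D_f$ for all sufficiently small $|h|$, so Lemma~\ref{lemAuxOp:WfLipOnK*Part1} gives $|W_f(X+he)-W_f(X)|\le C|h|$; dividing by $|h|$ and letting $h\to0$ yields $|\nabla W_f(X)\cdot e|\le C$. Taking the supremum over unit vectors $e\in\real^{d+1}$ gives $|\nabla W_f(X)|\le C$, and since $X\in D_f$ was arbitrary we conclude $\|\nabla W_f\|_{L^\infty(D_f)}\le C$.

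There is essentially no obstacle here, as the estimate was already packaged into Lemma~\ref{lemAuxOp:WfLipOnK*Part1}. The only mild points to keep in mind are (i) that the difference-quotient bound is taken along straight chords which may exit $D_f$ in between --- but this is precisely the form in which the lemma is stated, since its hypothesis is only $X\in D_f$ and $X+he\in D_f$ --- and (ii) the harmless abuse of the notation $C^{0,1}(D_f)$ applied to the unbounded function $W_f$, where only the Lipschitz seminorm (equivalently, $\|\nabla W_f\|_{L^\infty}$) is claimed to be bounded.
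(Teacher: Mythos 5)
Your proof is correct and coincides with the paper's (implicit) reasoning: the paper gives no separate argument, stating only that this is an immediate corollary of Lemma~\ref{lemAuxOp:WfLipOnK*Part1}, and the details you fill in---applying the lemma to arbitrary endpoints $X,Y\in D_f$ for the Lipschitz seminorm, and taking difference quotients $h\to 0$ at a fixed $X$ for the gradient bound---are exactly what is meant. Your side remark about the abuse of notation $C^{0,1}(D_f)$ for the unbounded $W_f$ is also apt; only the seminorm (equivalently $\|\nabla W_f\|_{L^\infty}$) is being claimed.
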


\begin{corollary}\label{corAuxOp:HDefinedAEWhenFinK*}
	If $f\in \K^*(\gam,m)$, then for $a.e.\ x\in\real^d$, $H(f,x)$ is well defined.
\end{corollary}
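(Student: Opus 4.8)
The goal is to establish, for $\mathcal L^d$-a.e.\ $x\in\real^d$, that the quantity $\partial_{\nu_f}W_f(x,f(x))$ appearing in \eqref{eqCheckAssumpt:DefOfHeleShawOp} exists as a genuine one-sided directional derivative; since $W_f=0$ on $\Gam_f$ this amounts to the existence and finiteness of $\lim_{s\to0^+}s^{-1}W_f\big((x,f(x))+s\nu_f(x)\big)$, together with the differentiability of $f$ at $x$ (which enters through the factor $\sqrt{1+|\grad f|^2}$ and through the very definition of $\nu_f$). Since $\K^*(\gam,m)\subset C^{0,1}(\real^d)$, Rademacher's theorem provides a set $\mathcal D\subset\real^d$ of full measure on which $f$ is differentiable; for $x_0\in\mathcal D$ the inward normal $\nu:=\nu_f(X_0)$ at $X_0:=(x_0,f(x_0))$ is well defined, and comparing the first-order expansion of $f$ at $x_0$ with \eqref{eqAuxOp:C1GamFromAbove} shows the vector $p$ in that inequality equals $\grad f(x_0)$. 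One also records, using Remark \ref{remAuxOp:TheWForTheBarrierLemma} to produce $w\in\K(\gam,2m)$ with $w\ge f$, $w(x_0)=f(x_0)$, $\grad w(x_0)=\grad f(x_0)$, together with the Remark after Definition \ref{defAuxOp:DefOfWf} and the monotonicity in Proposition \ref{propAuxOp:HProperties}(i), that $0\le W_f\le W_w$ near $X_0$ with $W_w\in C^{1,\gam}$ there; this already gives $0\le\liminf_{s\to0^+}s^{-1}W_f(X_0+s\nu)\le\limsup_{s\to0^+}s^{-1}W_f(X_0+s\nu)\le\partial_\nu W_w(X_0)<\infty$, but not yet the existence of the limit.

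To obtain the limit I would argue via the non-tangential boundary behaviour of $\grad W_f$. Each partial derivative of $W_f$ is harmonic in $D_f$, and by Corollary \ref{corAuxOp:WfLipOnK*Part2} the vector field $\grad W_f$ lies in $L^\infty(D_f)$; since $D_f$ is the subgraph of a globally Lipschitz function, the Fatou theorem for bounded harmonic functions on Lipschitz domains (localized near $\Gam_f$ in the standard way) shows that $\grad W_f$ has a finite non-tangential limit at $\mathcal L^d$-a.e.\ point of $\Gam_f$; denote by $G(X_0)\in\real^{d+1}$ this limit. Restricting to the full-measure subset of $\mathcal D$ on which in addition $G(X_0)$ exists, fix such an $x_0$. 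At a differentiability point of a Lipschitz graph the normal segment is a non-tangential approach path, i.e.\ $\dist(X_0+s\nu,\Gam_f)\ge c_0s$ for all small $s>0$ and some $c_0>0$; hence $\grad W_f(X_0+s\nu)\to G(X_0)$ as $s\to0^+$, and integrating along the segment,
\[
W_f(X_0+s\nu)=\int_0^s\grad W_f(X_0+t\nu)\cdot\nu\,dt=s\,G(X_0)\cdot\nu+o(s)\qquad(s\to0^+).
\]
Therefore $\partial_{\nu_f}W_f(X_0)=G(X_0)\cdot\nu$ exists and is finite, and since $\grad f(x_0)$ exists as well, $H(f,x_0)$ is well defined by \eqref{eqCheckAssumpt:DefOfHeleShawOp}.

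I expect the main obstacle to be this last step: correctly invoking the Fatou-type statement for the unbounded Lipschitz domain $D_f$ (or reproving the small localized piece actually needed) and matching it with the graph geometry so that the specific normal segment qualifies as a non-tangential path. A route that stays entirely within the barrier techniques of this section would instead build a matching one-sided \emph{lower} barrier at $x_0$ and prove a localized, correctly scaled version of the stability estimate from the proof of Lemma \ref{lemAuxOp:WfLipOnK*Part1}, forcing $\liminf$ and $\limsup$ of $s^{-1}W_f(X_0+s\nu)$ to coincide; the difficulty there is that a mere differentiability point of $f$ need not admit a $C^{1,\gam}$ touching from below, so one would have to bring in extra a.e.\ regularity of semiconcave functions or proceed by approximation.
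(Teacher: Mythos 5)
Your argument is correct and supplies the detail that the paper labels ``immediate'' (from Lemma~\ref{lemAuxOp:WfLipOnK*Part1}) without writing out. Your diagnosis is the key observation: $\K^*(\gam,m)$ gives a $C^{1,\gam}$ expansion only \emph{from above}, so at a Rademacher point the pointwise-evaluation machinery of Section~\ref{sec:PointwiseEvaluationH} (Lemma~\ref{LemPtWise:NonTanBehavior}, Corollary~\ref{corPtWise:NormalDeriv}) cannot be invoked directly --- that machinery pinches $W_f$ against $V_w$ with a $C^{1,\gam}$ barrier $w\leq f$, i.e.\ a \emph{lower} touching, which $\K^*$ does not furnish --- and barrier comparison alone therefore produces only upper and lower bounds, not the limit. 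Your workaround through the $L^\infty$ bound on $\grad W_f$ (Corollary~\ref{corAuxOp:WfLipOnK*Part2}) and the Hunt--Wheeden/Dahlberg Fatou theorem for bounded harmonic functions on Lipschitz domains is sound: each component of $\grad W_f$ is bounded harmonic, hence admits a non-tangential limit at surface-a.e.\ point of $\Gam_f$, which is equivalent to a.e.\ $x\in\real^d$ since the graph parametrization has Jacobian $\sqrt{1+|\grad f|^2}$ bounded above and below by Lipschitzness; at a differentiability point of $f$ the segment $s\mapsto X_0+s\nu_f$ is indeed non-tangential (the vertical gap to $\Gam_f$ is $s\sqrt{1+|\grad f(x_0)|^2}+o(s)$, so $\dist(X_0+s\nu_f,\Gam_f)\geq c_0 s$ with $c_0$ depending only on $m$); and the concluding Cesàro average along the segment yields the one-sided limit defining $\partial_{\nu_f}W_f(X_0)$. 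Two small points to spell out in a write-up: apply Fatou on bounded Lipschitz truncations $D_f\cap B_R(X_0)$ and exhaust in $R$; and confirm that the non-tangential cone in the Fatou statement has aperture matching the cone guaranteed above, which is automatic since $c_0$ depends only on $m$. With those in place the proposal is complete; since the paper provides no proof and the pointwise-evaluation route is blocked by the one-sidedness of $\K^*$, your Fatou argument is not merely an alternative but essentially the necessary one.
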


The next basic result should not be conflated with that in Corollary \ref{corAuxOp:WfLipOnK*Part2}.  In Corollary \ref{corAuxOp:WfLipOnK*Part2}, it is very convenient that $W_f=0$ on $\Gam_f$, whereas the next result applies for functions with $C^{1,\gam}$ decay at $X_0$.

\begin{lemma}\label{lemAuxOp:BoundaryParabolaEstimate}
	If $w\in \K^*(\gam,m)\intersect (m\text{-}C^{1,\gam}(x_0))$, and for some $1>r_0>0$, $V$ satisfies 
	\begin{align*}
		\begin{cases}
			\Delta V = 0\ &\textnormal{in}\ D_w\intersect B_{r_0}(X_0),\\
			\abs{V(X)}\leq C_1\abs{X-X_0}^{1+\gam}\ &\textnormal{on}\ 
			\partial \left( D_w\intersect B_{r_0}(X_0) \right),
		\end{cases}
	\end{align*}
	then there exists a positive constant $C=C(\gam,m,d)$ so that
	\begin{align*}
		\abs{ V(X_0 + s\nu_w)}\leq C\cdot C_1\cdot s .
	\end{align*}
\end{lemma}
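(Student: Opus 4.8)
The plan is to dominate $|V|$ on $D_w\intersect B_{r_0}(X_0)$ by an explicit continuous superharmonic barrier $\psi$ obtained by superposing the barriers of Lemma~\ref{lemAuxOp:BarrierFuns} at \emph{all} dyadic scales $r_k:=2^{-k}r_0$, $k\geq 0$. It should be noted at the outset that a single barrier at scale $r_0$ does not suffice: since the boundary datum of $V$ on $\Gam_w\intersect B_{r_0}(X_0)$ is not $0$ but only $O(|X-X_0|^{1+\gam})$, a one-scale comparison yields merely a sublinear bound of the form $C_1 s^{(1+\gam)/(2+\gam)}$; it is precisely the dyadic superposition that upgrades the decay along the normal ray to the asserted linear rate.

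First, two reductions. Write $X_0=(x_0,w(x_0))$. If $s\geq r_0/4$ the estimate is immediate: by hypothesis $|V|\leq C_1 r_0^{1+\gam}$ on $\partial(D_w\intersect B_{r_0}(X_0))$, so the weak maximum principle (Proposition~\ref{propAppendix:weak-max-principle}) gives $|V(X_0+s\nu_w)|\leq C_1 r_0^{1+\gam}\leq C_1 r_0\leq 4C_1 s$ (using $r_0<1$). So assume $0<s\leq r_0/4$. Next, since $w\in\K^*(\gam,m)$, Remark~\ref{remAuxOp:TheWForTheBarrierLemma} furnishes $\tilde w\in\K(\gam,2m)$ with $\tilde w\geq w$ and $\tilde w(x_0)=w(x_0)$; then $D_w\subseteq D_{\tilde w}$, $X_0\in\Gam_w\intersect\Gam_{\tilde w}$, and because $\tilde w-w\geq0$ attains the value $0$ at the interior point $x_0$ with both functions differentiable there, $\grad\tilde w(x_0)=\grad w(x_0)$, whence $\nu_{\tilde w}(X_0)=\nu_w(X_0)=:\nu$ and $X_0+s\nu\in D_w$ for $s$ small. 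Now Lemma~\ref{lemAuxOp:BarrierFuns} is available for $\tilde w$.

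For each $k\geq0$ let $b_k:=b_{\tilde w,r_k}$ be the barrier of Lemma~\ref{lemAuxOp:BarrierFuns} in $D_{\tilde w}\intersect B_{r_k}(X_0)$; since its boundary data lie in $[0,1]$ and equal $1$ on $D_{\tilde w}\intersect\partial B_{r_k}(X_0)$, extending $b_k$ by the constant $1$ on $D_{\tilde w}\setminus B_{r_k}(X_0)$ yields a continuous $\hat b_k$ with values in $[0,1]$ that is superharmonic in $D_{\tilde w}\intersect B_{r_0}(X_0)$ (harmonic off $\partial B_{r_k}(X_0)$, with a downward jump of the normal derivative across $\partial B_{r_k}(X_0)\intersect D_{\tilde w}$), and $\hat b_k(X_0)=b_k(X_0)=0$. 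A rescaling of Lemma~\ref{lemAuxOp:BarrierFuns} — dilating by $r_k$ about $X_0$ keeps $D_{\tilde w}$ in a fixed class $\K(\gam,C(m))$ near $B_2$ since $r_k<1$, after truncating $\tilde w$ far from $X_0$, which does not affect the barrier near its center — gives $\norm{\grad b_k}_{L^\infty(B_{r_k/4}(X_0))}\leq C(\gam,m,d)/r_k$, hence $b_k(X_0+s\nu)\leq Cs/r_k$ for $0<s\leq r_k/4$. Set
\[
\psi:=4C_1\sum_{k\geq0}r_k^{1+\gam}\,\hat b_k ,
\]
a uniformly convergent sum of nonnegative superharmonic functions, hence continuous, nonnegative and superharmonic in $D_w\intersect B_{r_0}(X_0)$. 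On $D_w\intersect\partial B_{r_0}(X_0)$ one has $\hat b_0\equiv1$, so $\psi\geq 4C_1 r_0^{1+\gam}\geq|V|$. On $\Gam_w\intersect B_{r_0}(X_0)$, a point $X$ with $\rho:=|X-X_0|\in(r_{j+1},r_j]$ satisfies $\rho>r_i$ for every $i\geq j+1$, so $\hat b_i(X)=1$ for such $i$ and $\psi(X)\geq 4C_1 r_{j+1}^{1+\gam}\geq 2^{1+\gam}C_1 r_{j+1}^{1+\gam}=C_1 r_j^{1+\gam}\geq C_1\rho^{1+\gam}\geq|V(X)|$ (the case $X=X_0$ is trivial, $V(X_0)=0=\psi(X_0)$). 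Thus $\psi\pm V$ are nonnegative superharmonic on the boundary, so Proposition~\ref{propAppendix:weak-max-principle} gives $|V|\leq\psi$ throughout $D_w\intersect B_{r_0}(X_0)$.

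To finish, evaluate at $X_0+s\nu$, splitting the series at the scale $r_k=4s$: for $r_k\geq4s$ (so $X_0+s\nu\in B_{r_k/4}(X_0)$) use $\hat b_k(X_0+s\nu)=b_k(X_0+s\nu)\leq Cs/r_k$, contributing at most $Cs\sum_{k\geq0}r_k^\gam=Cs\,r_0^\gam/(1-2^{-\gam})$; for $r_k<4s$ use $\hat b_k\leq1$, contributing at most $\sum_{r_k<4s}r_k^{1+\gam}\leq(4s)^{1+\gam}/(1-2^{-(1+\gam)})$. Since $r_0<1$ and $s\leq r_0<1$, both terms are $\leq C(\gam,m,d)\,C_1 s$, which is the claimed estimate (in fact one even gets the sharper $|V(X_0+s\nu)|\leq C\,C_1\,s\,(r_0^\gam+s^\gam)$). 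The main obstacle is the uniform-in-$k$ scaling bound $\norm{\grad b_k}_{L^\infty(B_{r_k/4}(X_0))}\leq C/r_k$: one must verify that dilating $D_{\tilde w}$ about $X_0$ keeps it in a fixed $C^{1,\gam}$ class so that the constant of Lemma~\ref{lemAuxOp:BarrierFuns} is independent of the scale; the remaining care lies in the dyadic bookkeeping and in confirming that the extended barriers $\hat b_k$ are genuinely superharmonic.
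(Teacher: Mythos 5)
Your proof is correct, and it takes a genuinely different route from the paper's. The paper handles the $|X-X_0|^{1+\gam}$ boundary datum with a \emph{single} harmonic barrier $U^+$ that is constructed in the slightly larger domain $D_{w^+}$ (where $w^+\in\K(\gam,2m)$ lies strictly above $w$ with $w^+-w\geq m|x-x_0|^{1+\gam}$) with boundary data $|X-X_0|^{1+\gam}$. The key analytic input there is the Hopf lemma: it forces $\partial_{-e_{d+1}}U^+>0$ near $X_0$, which combined with the strict separation $w^+-w\geq m|x-x_0|^{1+\gam}$ pushes $U^+$ above the datum of $V$ on all of $\partial(D_w\cap B_{r_2}(X_0))$, and then the $C^{1,\gam}$ regularity of $U^+$ gives the linear growth. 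You instead build a \emph{superharmonic} majorant by summing the zero-boundary-data barriers of Lemma~\ref{lemAuxOp:BarrierFuns} over all dyadic scales, with weights $r_k^{1+\gam}$ chosen so that the partial tails dominate the $|X-X_0|^{1+\gam}$ datum on $\Gam_w$; the Lipschitz bound of Lemma~\ref{lemAuxOp:BarrierFuns} (made scale-uniform by dilation) then lets you sum the contributions along the normal ray to get linear decay. The trade-off is roughly this: the paper's approach needs one extra function $w^+$, one Hopf-lemma argument, and a lower bound on $U^+$ at $\partial B_{r_2}$ coming from the strict separation; yours needs the scale-invariant form of Lemma~\ref{lemAuxOp:BarrierFuns} (which, as you note, must be verified by dilation and truncation — this is the only step that requires real care), the superharmonicity of the capped barriers $\hat b_k$, and some dyadic bookkeeping, but no Hopf lemma and no strictly separated auxiliary graph. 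Both arguments are legitimate and give constants of the same form; your version even records the slightly sharper bound $C\,C_1\,s\,(r_0^\gam+s^\gam)$.

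One small remark: your opening comment that ``a single barrier at scale $r_0$ does not suffice'' should be read as a claim about the Lemma~\ref{lemAuxOp:BarrierFuns} barriers specifically (which vanish on $\Gam_w\cap B_{R/2}$), not as a criticism of one-barrier approaches in general — the paper's single barrier $U^+$ carries the power boundary data intrinsically and avoids the issue entirely. This is worth clarifying if you keep the remark.
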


\begin{proof}[Proof of Lemma \ref{lemAuxOp:BoundaryParabolaEstimate}]	
	First, we note that under the assumption that $w\in \K^*(\gam,m)\intersect (m\text{-}C^{1,\gam}(x_0))$, there exists $r_1>0$ and $w^+$, so that
	\begin{align}\label{eqAuxOp:BoundaryParabolaProofAssumptionOnWPlus}
		&w^+\in \K(\gam,2m), \nonumber\\ 
		&\textnormal{with}\  w(x_0) = w^+(x_0),\ \ 
		\grad w(x_0) = \grad w^+(x_0), \nonumber \\
		&\textnormal{for}\ x\in \real^d,\ w(x)\leq w^+(x), \nonumber\\
		&\textnormal{and for}\ x\in B_{r_1}(x_0),\ 
		w^+(x)-w(x)\geq m\abs{x-x_0}^{1+\gam}.
	\end{align}

	Next, we will create a barrier from above for $V$ in a possibly smaller ball at $X_0$.  To this end, consider the function, $U^+$,
	\begin{align*}
		\begin{cases}
			\Delta U^+ = 0 &\text{in}\ D_{w^+}\intersect B_1\\
			U^+(X) = \abs{X-X_0}^{1+\gam}\ &\text{on}\ \partial(D_{w^+}\intersect B_1).
		\end{cases}
	\end{align*}
	We know that $U^+\in C^{1,\gam}(D_{w^+}\intersect B_{1/2})$, and furthermore, by the Hopf lemma, 
	\begin{align*}
		\partial_{\nu_{w^+}} U^+(X_0)>0\ \ \textnormal{and}\ \ \partial_{-e_{d+1}} U^+(X_0)>0.
	\end{align*}
	Thus, there exists $r_2>0$ and $\del>0$ small enough, so that
	\begin{align*}
		\textnormal{for}\ \ X\in B_{r_2}(X_0)\intersect D_{w^+}, \quad \text{we have} \  \partial_{-e_{d+1}} U^+(X)\geq \del.
	\end{align*} 
	Immediately from the fact that $\partial_{-e_{d+1}} U^+(X)\geq 0$, we know that $U^+$ has the correct size on $\Gam_w$.  Indeed, we have from $w(x)\leq w^+(x)$,
	\begin{align}\label{eqAuxOp:BoundaryParabolaProofBarrierAtGamW}
		\textnormal{for}\ \ X\in\Gam_w\intersect B_{r_2}(X_0),\ \ 
		U^+(x,w(x))\geq U^+(x,w^+(x)) = \abs{X-X_0}^{1+\gam}.
	\end{align}
	However, to use $U^+$ as a barrier, we need to control its size from below on all of $\partial(B_{r_2}\intersect D_w)$.  This is where the uniform positivity of $\partial_{-e_{d+1}} U^+(X)$ is used, in conjunction with the strict separation of $w^+$ from $w$ in (\ref{eqAuxOp:BoundaryParabolaProofAssumptionOnWPlus}).  It is important to note at this point that even though $r_2$ and $\del$ may be small, they are universal and fixed, simply by the properties of $U^+$, which in turn depends on $w^+$, which can be chosen to depend only on $\gam$, $m$, and $d$.

	We note that, for a dimensional constant, 
	\begin{align*}
		\inf_{(x,x_{d+1})\in \partial B_{r_2}(X_0)\intersect D_w} w^+(x) - x_{d+1}\geq c m (r_2)^{1+\gam},
	\end{align*}
	which can be seen from the fact that $w^+(x) - x_{d+1}$ is smallest when $\abs{X-X_0}=r_2$ and $X\in\Gam_w$.
	Since $\partial_{-e_{d+1}} U^+(X)\geq \del$ and $U^+\geq 0$ on $\Gam_{w^+}$, we then have
	\begin{align}\label{eqAuxOp:BoundaryParabolaProofBarrierInsideDw}
		\textnormal{for}\ \ X\in \partial B_{r_2}(X_0)\intersect D_w,\ \ 
		U^+(X)\geq \del c m (r_2)^{1+\gam}.
	\end{align}
	For simplicity, we may assume that $\del c m (r_2)^{1+\gam}\leq 1$.

	We finally notice that the assumptions on $V$ and that $0<r_0<1$, we have
	\begin{align*}
		\textnormal{for}\ \ X\in B_{r_0}(X_0)\intersect D_w,\ \ \abs{V(X)}\leq C_1.
	\end{align*}
	Thus, combining estimates (\ref{eqAuxOp:BoundaryParabolaProofBarrierAtGamW}) and (\ref{eqAuxOp:BoundaryParabolaProofBarrierInsideDw}), the functions, $\frac{\pm C_1}{\del c m (r_2)^{1+\gam}} U^+$, serve as barriers for $V$ in $B_{r_2}(X_0)$.  That is to say,
	\begin{align*}
		\textnormal{for}\ \ X\in B_{r_2}(X_0)\intersect D_w,\ \ 
		\frac{-C_1}{\del c m (r_2)^{1+\gam}} U^+(X)
		\leq V(X)
		\leq \frac{C_1}{\del c m (r_2)^{1+\gam}} U^+(X).
	\end{align*}
	 Furthermore, tracking the dependence of $U^+$ on $w^+$, we see that $\norm{U^+}_{C^{1,\gam}(D_{w^+}\intersect B_{1/2})}\leq C(m,\gam,d)$.   We can consolidate all of these constants into on $C=C(\gam,m,d)$.
	Thus, since
	\begin{align*}
		 V(X_0) = U^+(X_0) = 0,
	\end{align*}
	we conclude that 
	\begin{align*}
		\abs{V(X_0+s\nu)}\leq C_1\cdot CU^+(X_0+s\nu) \leq C_1\cdot C s.
	\end{align*}
	This gives the desired outcome of the lemma.
\end{proof}

\begin{rem}
\label{rem:barrier} 
	We note that the proof of Lemma \ref{lemAuxOp:BoundaryParabolaEstimate} did not use the full assumption that $w\in\K^*(\gam,m)$.  We only used the fact that there exists some $\tilde r$ such that $w$ is $C^{1,\gam}$-semi-concave in $B_{\tilde r}(x_0)$.  The assumption $w\in \K^*(\gam,m)$ appeared for convenience, and it suffices for our needs.
\end{rem}


\subsection{Pointwise evaluation of $H$}\label{sec:PointwiseEvaluationH}

The operator $H$ enjoys some more subtle properties, including the fact that it can be defined classically at points where $f\in C^{0,1}(\real^d)$ and may only be punctually $C^{1,\gam}$ at a point $x$. We establish this in Corollary \ref{corPtWise:NormalDeriv}.

 An analogous result was an important part of the regularity theory for free boundary problems, and a modern reference is \cite[Lemma 11.17]{CaffarelliSalsa-2005GeometricApproachtoFB}.  This was also used in a fundamental way in \cite{ChangLaraGuillenSchwab-2019SomeFBAsNonlocalParaboic-NonlinAnal}, where it appeared as \cite[Lemma 5.11]{ChangLaraGuillenSchwab-2019SomeFBAsNonlocalParaboic-NonlinAnal}.  Here we adapt the corresponding proofs from \cite{CaffarelliSalsa-2005GeometricApproachtoFB},  \cite{ChangLaraGuillenSchwab-2019SomeFBAsNonlocalParaboic-NonlinAnal} by adding a  bit of extra precision to their statements (necessary in our setting) and present some subsequent stability results.

We recall that in (\ref{eqPtWise:PtWiseKGamM}) we defined $\K(\gam,m,x_0, a, p)$ as the subset of $\K(\gam,m)$ comprised of functions that are punctually $C^{1,\gam}$ and share the same value $a$ and the same gradient $p$ at the point $x_0$.

We establish that once $m$, $x_0$, $a$, $p$ are fixed, one can select a particular harmonic function to compute the normal derivative for all $W_f$ for $f\in \K(\gam,m,x_0,a,p)$ (see Corollary \ref{corPtWise:NormalDeriv}).  These harmonic functions will be called $V_w$, as defined here.

\begin{definition}\label{defPtWise:VwHarmonic}
	Let $w\in \K^*(\gam,m)$, with $\inf w>0$, and $V_w$ be the harmonic function defined as
	\begin{align}\label{eqPtWise:HarmonicBarrierNiceDomain}
		\begin{cases}
			\Delta V_w = 0\ &\textnormal{in}\ D_w\\
			V_w = 0\ &\textnormal{on}\ \Gam_w\\
			V_w = 1\ &\textnormal{on}\ \Gam_0= \{ (x,x_{d+1})\ :\ x_{d+1}=0 \}.
		\end{cases}
	\end{align}
\end{definition}

This next lemma is the main result for this section.

\begin{lemma}[Pointwise evaluation]\label{LemPtWise:NonTanBehavior}
Given $m$, $a\geq 1$, and $p\in\real^d$, there exist  a positive constant $r>0$ and a fixed $w\in C^{1,\gam}(B_r(x_0))\intersect C^{0,1}(\real^d)$, depending on $m$, $a$, $p$, with
	\begin{align*}
		w(x_0)=a,\ \ \grad w(x_0)=p,\ \ w\geq \frac{1}{2},\ \ \textnormal{and}\ \ \norm{w}_{C^{1,\gam}(B_r(x_0))}\leq m,\ \ \norm{w}_{C^{0,1}(\real^d)}\leq m,
	\end{align*}
so that for all 
	\begin{align}
	\label{eqPtWise:f in K a p geq 1}
		f\in\K(\gam,m,x_0,a,p)\ \textnormal{with}\ f\geq 1,
	\end{align}
	and for all 
	$U$ that satisfy for some $R_0$, 
	\begin{align*}
		U\geq 0,\ \ U(X_0)=0,\ \ \textnormal{and}\ \  \Delta U = 0,\ \textnormal{in}\ D_f\intersect B_{R_0}(X_0),
	\end{align*}
	and that grows at most linearly away from $X_0=(x_0, f(x_0))$, i.e. there exists a constant  $C_U>0$ with
	\begin{align*}
		\abs{U(X)}\leq C_U\abs{X-X_0}\ \ \textnormal{in}\ \ D_f\intersect B_{R_0}(X_0),
	\end{align*}
	then for a positive constant, $r_f>0$, the following asymptotic behavior is valid, for $\abs{X-X_0}\leq r_f$:
	\begin{align}\label{eqPtWise:NonTanBehavior}
		\textnormal{for}\ X\to X_0\ \ \textnormal{non-tangentially in}\ D_w,\ \ 
		U(X) = s_w(U)V_w(X) + o(\abs{X-X_0}),
	\end{align}
	where $V_w$ is as in (\ref{eqPtWise:HarmonicBarrierNiceDomain}) and $s_w(U)\in (0,\infty)$ is uniquely determined by $w$ and $U$. 
\end{lemma}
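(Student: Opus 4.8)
\emph{Proposal.} The plan is to adapt the Caffarelli--Salsa pointwise--boundary--derivative argument, in the form used in \cite{ChangLaraGuillenSchwab-2019SomeFBAsNonlocalParaboic-NonlinAnal}, carrying enough quantitative precision to produce both the $o(\abs{X-X_0})$ remainder in \eqref{eqPtWise:NonTanBehavior} and a model function $V_w$ depending only on $m,a,p$. Write $X_0=(x_0,a)$ and $\nu=(p,-1)/\sqrt{1+\abs p^2}$. First I would construct $w$: following the recipe of Remark~\ref{remAuxOp:TheWForTheBarrierLemma}, let $w$ agree near $x_0$ with the $C^{1,\gam}$ function $a+p\cdot(x-x_0)+2m\abs{x-x_0}^{1+\gam}$, capped and smoothed far away so that $w\ge\tfrac12$ globally, with $w(x_0)=a$, $\grad w(x_0)=p$ and the asserted norm bounds holding for $r$ small. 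By the semi-concavity bound \eqref{eqAuxOp:C1GamFromAbove}, every $f\in\K(\gam,m,x_0,a,p)$ with $f\ge 1$ (cf.\ \eqref{eqPtWise:PtWiseKGamM} and \eqref{eqPtWise:f in K a p geq 1}) satisfies $f\le w$ near $x_0$, so $D_f\subseteq D_w$ there and $0\le w-f\le 3m\abs{x-x_0}^{1+\gam}$; and since $f$ is $m$-$C^{1,\gam}(x_0)$ (Definition~\ref{defAuxOp:PunctuallyC1Gam}), there is a lower model $w^-$, agreeing near $x_0$ with $a+p\cdot(x-x_0)-2m\abs{x-x_0}^{1+\gam}$, with $D_{w^-}\subseteq D_f$ near $X_0$. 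The model $V_w$ from \eqref{eqPtWise:HarmonicBarrierNiceDomain} is well defined (Propositions~\ref{propAppendix:weak-max-principle}, \ref{propAppendix:existence-Perron}, using $w\ge\tfrac12$ so $\{x_{d+1}=0\}\subseteq D_w$), it is $C^{1,\gam}$ up to $\Gam_w$ near $X_0$ by boundary elliptic estimates, and $\beta:=\partial_\nu V_w(X_0)>0$ by the Hopf lemma (valid for $C^{1,\textnormal{Dini}}$, hence $C^{1,\gam}$, boundaries); using $V_w=0$ on $\Gam_w$,
\[
	V_w(X)=\beta\la X-X_0,\nu\ra+O(\abs{X-X_0}^{1+\gam})\qquad\text{near }X_0.
\]
Two consequences I will use repeatedly: $V_w\asymp\abs{X-X_0}$ along any cone about $\nu$, and $0\le V_w\le C\abs{X-X_0}^{1+\gam}$ on $\Gam_f\intersect B_r(X_0)$ (because $\Gam_f$ lies within $C\abs{X-X_0}^{1+\gam}$ of $\Gam_w$ and $V_w$ is Lipschitz vanishing on $\Gam_w$). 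Finally, since $D_f$ and $D_w$ are tangent to order $1+\gam$ at $X_0$, any cone $\mathcal C$ about $\nu$ of aperture depending only on $m$ is, near $X_0$, contained in $D_f\intersect D_w$; hence ``$X\to X_0$ non-tangentially in $D_w$'' occurs inside $D_f$, where $U(X)$ is defined.

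Next comes comparability. If $U\not\equiv 0$ then $U>0$ in $D_f\intersect B_{R_0}(X_0)$ by the strong maximum principle. The linear growth hypothesis on $U$ together with $V_w\asymp\abs{X-X_0}$ gives $U\le C\,V_w$ on $\mathcal C$ near $X_0$. For the opposite inequality one uses Harnack to fix $U(X_0+\rho\nu)\ge\delta>0$, then a Hopf-type lower bound: comparing $U$ from below, on $D_{w^-}\intersect B_\rho(X_0)$, with a suitable multiple of a barrier supplied by Lemma~\ref{lemAuxOp:BarrierFuns} (built on the $C^{1,\gam}$ domain $D_{w^-}$ and having positive normal derivative at $X_0$), yields $U(X_0+s\nu)\ge\delta' s$ for small $s$, hence $U\ge c\,V_w$ on $\mathcal C$. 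Thus $0<m(r):=\inf_{\mathcal C\intersect(B_r\setminus B_{r/2})}U/V_w\le M(r):=\sup_{\mathcal C\intersect(B_r\setminus B_{r/2})}U/V_w<\infty$ uniformly for $r$ small; this part is standard, though it already requires the barrier machinery of Section~\ref{sec:BasicPropertiesH}.

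The heart of the argument is the oscillation decay
\[
	M(r)-m(r)\ \le\ \theta\,\big(M(2r)-m(2r)\big)+Cr^{\gam},\qquad\text{for some }\theta\in(0,1),
\]
valid for all small $r$. To obtain it one applies the maximum principle on $D_f\intersect B_{2r}(X_0)$ to $U-m(2r)V_w$ and to $M(2r)V_w-U$: on $\Gam_f\intersect B_{2r}(X_0)$ these are bounded below by $-Cr^{1+\gam}$, because $V_w=O(\abs{X-X_0}^{1+\gam})$ there while $0\le U\le C_U\abs{X-X_0}$; on the non-tangential part of $\partial B_{2r}(X_0)$ they are $\ge 0$ by the definition of $m(2r),M(2r)$; and on the remaining near-$\Gam_f$ part of $\partial B_{2r}$ they are controlled via interior Harnack and the $C^{1,\gam}$-flatness of $\Gam_f$ at $X_0$. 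Lemma~\ref{lemAuxOp:BoundaryParabolaEstimate} is precisely the device converting the $O(\abs{X-X_0}^{1+\gam})$ boundary errors into the admissible interior bound $Cr^{1+\gam}$, i.e.\ $Cr^{\gam}$ after dividing by $V_w(X_0+r\nu)\asymp r$. Iterating and summing the geometric series gives $M(r)-m(r)\to 0$, at a rate $r^{\gam'}$, so $m(r),M(r)$ converge to a common $s_w(U)\in[c,C]\subset(0,\infty)$ with $\abs{U(X)/V_w(X)-s_w(U)}\le C\abs{X-X_0}^{\gam'}$ on $\mathcal C$. Then, for $X\to X_0$ non-tangentially in $D_w$ (so $X\in\mathcal C$ near $X_0$), $U(X)=V_w(X)\big(s_w(U)+O(\abs{X-X_0}^{\gam'})\big)=s_w(U)V_w(X)+o(\abs{X-X_0})$ by $V_w=O(\abs{X-X_0})$, which is \eqref{eqPtWise:NonTanBehavior}; uniqueness of $s_w(U)$ is immediate since $V_w(X)/\abs{X-X_0}\not\to 0$. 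Here $r_f$ is any radius below which the above applies (it depends on $f$ only through the punctual-$C^{1,\gam}$ radius at $x_0$, and on $C_U$).

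The main obstacle is the oscillation decay just described: upgrading comparability to the \emph{exact} asymptotics \eqref{eqPtWise:NonTanBehavior} is exactly where the $C^{1,\gam}$ (rather than merely $C^1$) regularity of $f$ at $x_0$ is consumed, and where Lemma~\ref{lemAuxOp:BoundaryParabolaEstimate} and the barriers of Lemma~\ref{lemAuxOp:BarrierFuns} are indispensable. A secondary subtlety, invisible when $U=W_f$ (which vanishes on all of $\Gam_f$) but unavoidable for general admissible $U$, is that $U$ need not vanish on $\Gam_f$ away from $X_0$: its trace there is only known to be $O(\abs{X-X_0})$, hence contributes at linear order, so the iteration must carry this contribution, and it is again the $C^{1,\gam}$-flatness of $\Gam_f$ at $X_0$ that forces the contributions at consecutive dyadic scales to stabilize into the single number $s_w(U)$.
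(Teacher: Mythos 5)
Your proposal differs from the paper's proof in two structural ways, and one of them opens a genuine gap.

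First, you choose $w$ as an \emph{upper} model ($w\geq f$ near $x_0$, so $D_f\subset D_w$), whereas the paper takes $w(y)=\max\{a+p\cdot(y-x_0)-m\abs{y-x_0}^{1+\gam},\,\tfrac12\}$, which is a \emph{lower} model ($w\leq f$ near $x_0$, so $D_w\subset D_f$ near $X_0$). This choice matters: with the lower model, every maximum-principle comparison between $U$ and $sV_w$ is performed on $D_w\intersect B_{2^{-k}}(X_0)$, and on the boundary piece $\Gam_w$ one has $V_w=0$ and $U\geq 0$ (since $\Gam_w\setminus\{X_0\}$ lies strictly inside $D_f$), so that boundary costs nothing. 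With your upper model the comparison is forced onto $D_f$, and you must control the trace of $U$ on $\Gam_f$ --- which the hypotheses only bound by $C_U\abs{X-X_0}$, not $\abs{X-X_0}^{1+\gam}$.

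Second, you run an oscillation-decay iteration on $m(r),M(r)$, while the paper runs a one-sided sup-coefficient argument: it sets $\al_k(U)=\sup\{s:U\geq sV_w\ \text{in}\ B_{2^{-k}}(X_0)\intersect D_w\}$ (increasing, bounded by linear growth), defines $s_w(U)=\sup_k\al_k(U)$, gets the lower bound on $U$ directly from $\al_{k_i}\to s_w$ and Lipschitz continuity of $V_w$, and gets the reverse inequality by contradiction using interior Harnack on dyadic annuli, a covering argument, and the barrier of Lemma~\ref{lemAuxOp:BarrierFuns} to push $\al_{k_i}$ above its own supremum. No upper bound $U\leq M(2r)V_w$ is ever invoked, so the boundary trace of $U$ on $\Gam_f$ never enters.

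Here is the concrete gap in your version. In the oscillation-decay step you claim that on $\Gam_f\intersect B_{2r}(X_0)$ both $U-m(2r)V_w$ and $M(2r)V_w-U$ are $\geq -Cr^{1+\gam}$. The first bound is fine: $U\geq 0$ and $V_w=O(r^{1+\gam})$ on $\Gam_f$. The second is not: on $\Gam_f$ you only know $V_w\geq 0$ and $U\leq C_U\cdot 2r$, so $M(2r)V_w-U\geq -C_Ur$, an error of the \emph{same} order as the quantity $M(r)-m(r)$ you are trying to improve. The iteration then reads $M(r)-m(r)\leq\theta\big(M(2r)-m(2r)\big)+C$ and does not contract. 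You flag precisely this problem at the end as a ``secondary subtlety,'' but the sentence that follows (``it is again the $C^{1,\gam}$-flatness\ldots that forces the contributions\ldots to stabilize'') is an assertion, not an argument; nothing in the preceding steps makes that contribution decay. Switching to the paper's lower-model, one-sided argument removes the difficulty at the root: $\al_k$ is bounded below by $0$ for free on $\Gam_w$, and the reverse inequality is established by Harnack/barrier contradiction rather than by a two-sided oscillation bound, so $U\restriction\Gam_f$ is never needed.
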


\begin{rem}
	We note that this results holds under less restrictive assumptions on $f$, but we have taken $f$ to be in $\K^*(\gam,m)$ simply because that is the situation in which we apply the lemma (note, $\K(\gam, m, x_0, a, p)\subset \K^*(\gam,m)$).  Indeed, following \cite[Lemma 11.17]{CaffarelliSalsa-2005GeometricApproachtoFB} directly, the result holds if $f$ is simply punctually $C^{1,\gam}(x_0)$ \emph{only from below}, not necessarily from both sides.  In that case, the behavior of $U$ takes the form of a dichotomy, where either $U$ grows super linearly from $X_0$, or if it grows at most linearly, then (\ref{eqPtWise:NonTanBehavior}) is valid.
\end{rem}

\begin{proof}[Proof of Lemma \ref{LemPtWise:NonTanBehavior}]
Consider the function $w$ defined by
\begin{align*}
	w(y) = \max \left\lbrace a + p\cdot(y-x_0) - m\abs{y-x_0}^{1+\gam}, \ \frac{1}{2} \right\rbrace.
\end{align*}
For any $f$ satisfying (\ref{eqPtWise:f in K a p geq 1}), there exists $r_f$ so that 
\begin{align*}
	\text{for}\ \ \abs{y-x_0}\leq r_f,\ \ 
	 w(y)\leq f(y)\ \ 
	\textnormal{and}\ \ w(x_0)=f(x_0)=a.
\end{align*}

We now use, for this fixed $w$, the function $V_w$ as in Definition \ref{defPtWise:VwHarmonic} to determine the non-tangential behavior of $U$.  (We note that $w$ is $C^{1,\gam}$ in a neighborhood of $x_0$, and so $V_w$ will also be $C^{1,\gam}$ in a neighborhood of $X_0$.)  To this end, let us define the sequence, $\al_k$ as
\begin{align*}
	\al_k(U) = \sup\{ s\ :\ U\geq sV_w\ \textnormal{in}\ B_{2^{-k}}(X_0)\intersect D_w \}.
\end{align*}
We note that thanks to $r_f$ as above, for $k$ large enough,
\begin{align*}
    B_{2^{-k}}(X_0)\cap D_w \subset B_{2^{-k}}(X_0) \cap D_f
\end{align*}
and so $\al_k$ is well defined for $k$ large enough.
We see also that $\al_k$ is increasing.  Furthermore, as $U$ is assumed to have at most linear growth at $X_0$,  
we see that $\al_k$ is also bounded.  We then define
\begin{align}\label{eqPtWise:SfIsSup}
	s_w(U) = \sup_k\{ \al_k(U)  \}
\end{align}

We shall first establish that the lower bound on $U$ of (\ref{eqPtWise:NonTanBehavior}) holds. To this end,  let $X_i\to X_0$ be any sequence converging to $X_0$. 
Let $k_i$ be a non-decreasing subsequence with the property that
\begin{align}
\label{eqAuxOp:Xi-X0 for lb}
	\abs{X_i-X_0}\leq 2^{-k_i}.
\end{align}
We see by the definition of $\al_{k_i}$, as well as by (\ref{eqAuxOp:Xi-X0 for lb}), that 
\begin{align*}
	U(X_i)\geq \al_{k_i}V_w(X_i) = s_w V_w(X_i) + (\al_{k_i}-s_f)V_w(X_i).
\end{align*}
As $V_w$ is globally Lipschitz in $\overline{D}_w$, we see that there is a positive $C=C(\gam,m,d,a,p)$, so that
\begin{align*}
	V_w(X_i)-V_w(X_0) = V_w(X_i)\leq C\abs{X_i-X_0}.
\end{align*}
Thus,
\begin{align*}
	U(X_i)\geq s_w V_w(X_i) + C(\al_{k_i}-s_w)\abs{X_i-X_0}.
\end{align*}
Hence, one half of the equality in (\ref{eqPtWise:NonTanBehavior}) has been established.

For the reverse inequality, we argue by contradiction.  To this end, we suppose that there exists a sequence $X_i\to X_0$ non-tangentially in $D_w$, and yet, there exists $c_0>0$, with
\begin{align*}
	U(X_i)\geq s_w V_w(X_i) + c_0\abs{X_i-X_0}.
\end{align*}

Let us now take a non-decreasing subsequence $k_i$ with
	\begin{align*}
		2^{-k_i-2}\leq \abs{X_i-X_0}\leq 2^{-k_i-1},
	\end{align*}
	so that 
	\begin{align*}
		B_{2^{-k_i-1}}(X_i)\subset B_{2^{-k_i}}(X_0).
	\end{align*}
As $\al_{k_i}\leq s_w$, we have
\begin{align*}
	U\geq  s_w  V_w(X_i) + c_0\abs{X_i-X_0}\geq \al_{k_i}V_w(X_i) + c_0\abs{X_i-X_0}.
\end{align*}
Let us consider the function, $\Tilde V$, given by
\begin{align*}
	\Tilde V_i = U - \al_{k_i}V_w,
\end{align*}
which, by definition of $\alpha_{k_i}$, is non-negative and harmonic in $D_w \intersect B_{2^{-k_i}}(X_0)$. 
We note that at $X_i$, we have
\begin{align*}
	\Tilde V_i(X_i) \geq c_0\abs{X_i-X_0}.
\end{align*}
Let us call 
\begin{align*}
	r_i = \abs{X_i-X_0},
\end{align*}
The non-tangential nature of the convergence means that there is a choice of $c_1>0$,  so that 
\begin{align*}
	B_{c_1r_i}(X_i)\subset B_{2^{-k_i}}(X_0)\intersect D_w.
\end{align*}
Since $\Tilde V_i\geq 0$ we can apply the Harnack inequality to see that for a universal $c_2$ (with $0<c_2<1$),
\begin{align*}
	\Tilde V_i \geq c_2c_0\abs{X_i-X_0}\ \ \textnormal{in}\ \ B_{\frac{1}{2}c_1r_i}(X_i).
\end{align*} 
We can now use a covering argument and repeated use of the Harnack inequality to extend this estimate.  Indeed, there exists a positive constant $L$, depending only on $c_1$ and $d$, such that for each $i$, there exist $Y^i_l\in D_f$, for $l=1,\dots,L$,  with the property that
\begin{align*}
	B_{c_1r_i}(Y^i_l)\subset D_f\ \ \textnormal{and}\ \ 
	\{ X\ :\ 2^{-k_i-1}\leq d(X,\Gam_f)\leq 2^{-k_i}  \}\intersect B_{2^{-k_i}}(X_0)
	\subset \Union_{l=1}^L B_{\frac{1}{2}c_1r_i}(Y^i_l).
\end{align*}
Using the Harnack inequality up to possibly $L$ times, we see that for $\tilde c_2 = (c_2)^L$,
\begin{align*}
	\textnormal{for}\ \ Z\in \{ X\ :\ 2^{-k_i-1}\leq d(X,\Gam_f)\leq 2^{-k_i}  \}\intersect B_{2^{-k_i}}(X_0),\ \ 
	\Tilde V_i(Z) \geq \tilde c_2c_0\abs{X_i-X_0}.
\end{align*}
Thus, using a standard barrier (e.g. $\tilde c_2c_0\abs{X_i-X_0} b_w$, where $b_w$ is as in Lemma \ref{lemAuxOp:BarrierFuns}, for $R=c2^{-k_i}$), we  obtain, for a universal $c_3$,
\begin{align*}
	\Tilde V_i(Z) \geq c_3\tilde c_2 c_0 d(Z,\Gam_w)\ \ \textnormal{for}\ \ Z\in B_{2^{-k_i}}(X_0)\intersect D_w.
\end{align*}
We note that $c_3$ is universal because the factor $\abs{X_i-X_0}/R$ is independent of $i$, and is bounded away from $0$.
Consolidating constants, with $c=c_3\tilde c_2 c_0$, and returning to $U$, we see that
\begin{align*}
	U(Z)\geq \al_{k_i} V_w(Z) + c d(Z,\Gam_w)\ \ \textnormal{for}\ \ Z\in B_{2^{-k_i}}(X_0)\intersect D_w,
\end{align*}
which also gives for an arbitrary $\al>0$,
\begin{align*}
	U(Z)\geq   \al V_w(Z) + (\al_{k_i} - \al) V_w(Z) + c d(Z,\Gam_w)\ \ \textnormal{for}\ \ Z\in B_{2^{-k_i}}(X_0)\intersect D_w.
\end{align*}
Given the global Lipschitz estimates that $V_w$ enjoys, we know that there exists a positive constant $C_w=C(\gam,m,d,a,p)$ so that
\begin{align*}
	\norm{\grad V_w}_{L^\infty(D_w)}\leq C_w,
\end{align*}
and hence,
\begin{align*}
	\abs{\al_{k_i}-\al}V_w(Z)\leq C_w\abs{\al_{k_i}-\al} d(Z,\Gam_w) .
\end{align*}
Let us now choose $\alpha=  s_w  +\frac{c}{4C_w}$ and $i$ large enough so that $\alpha_{k_i}>  s_w-\frac{c}{4C_w}$, 
so that
\begin{align*}
	C_w(\al-\al_{k_i})<\frac{1}{2}c.
\end{align*} 
Hence, for this particular $\al>s_w(U)$ and $i$ fixed we see that
\begin{align*}
	U(Z)\geq \al V_w(Z) + \frac{c}{2}d(Z,\Gam_w)\ \geq \al V_w(Z) \ \textnormal{for}\ \ Z\in B_{2^{-k_i}}(X_0)\intersect D_w.
\end{align*}
This now contradicts the fact that $\al_{k_i}$ is the largest such constant for which $U$ is above $sV_w$ in $B_{2^{-k_i}}(X_0)$. 
We thus conclude that (\ref{eqPtWise:NonTanBehavior}) indeed holds.
\end{proof}

An immediate corollary is that $W_f$ has a classically defined normal derivative at points where $f$ is $m$-$C^{1,\gamma}(x_0)$, and $H(f)$ is classically defined as well.

\begin{corollary}\label{corPtWise:NormalDeriv}
	If $x_0$ is fixed and $f\in C^{0,1}(\real^d)$ is punctually $C^{1,\gam}(x_0)$, then $W_f$ has a normal derivative at $X_0=(x_0, f(x_0))$, defined classically as
	\begin{align*}
		\partial_{\nu_f} W_f(X_0) = \lim_{t\to0^+}\frac{1}{t}W_f(X_0+ t\nu_f). 
	\end{align*}
	Furthermore, $H(f,x_0)$ is well defined, just as in Definition \ref{defAuxOp:TheOperatorH},
	\begin{align*}
		H(f,x_0) = \left( \sqrt{1+\abs{\grad f(x_0)}^2} \right)\partial_{\nu_f} W_f(x_0,f(x_0)).
	\end{align*}
\end{corollary}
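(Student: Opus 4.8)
The plan is to deduce the corollary from the pointwise-evaluation Lemma \ref{LemPtWise:NonTanBehavior} — used in the slightly more general form noted in the remark following it, where $f$ is only assumed punctually $C^{1,\gam}$ at $x_0$ and the at-most-linear growth of the test function resolves the dichotomy — applied with the test function $U$ taken to be $W_f$ itself, and then to read off the classical normal derivative by restricting the resulting asymptotic expansion to the inward normal ray at $X_0$. First I would make two harmless normalizations using Proposition \ref{propAuxOp:HProperties}: by translation invariance (part (iii)) assume $x_0=0$, and by invariance under adding constants (part (iv), together with the identity $W_{f+c}(x,x_{d+1}+c)=W_f(x,x_{d+1})$ from its proof, which shows the claimed one-sided limit and the value of $H$ are both unaffected) assume $\inf_{\real^d}f\geq 1$, so in particular $a:=f(x_0)\geq 1$.

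Next I would check that $U:=W_f$ satisfies the hypotheses imposed on the test function in Lemma \ref{LemPtWise:NonTanBehavior}. That $W_f\geq 0$ in $D_f$, that $W_f(X_0)=0$ since $X_0\in\Gam_f$, and that $W_f$ is harmonic in $D_f$ are all immediate from Definition \ref{defAuxOp:DefOfWf}. The only point needing work is that $W_f$ grows at most linearly away from $X_0$: if one happens to know $f\in\K^*(\gam,m)$ this is exactly Corollary \ref{corAuxOp:WfLipOnK*Part2}, and in general — knowing only that $f$ is punctually $C^{1,\gam}$ at $x_0$ — one argues with a local upper barrier. Since $f$ is punctually $C^{1,\gam}(x_0)$ with $\grad f(x_0)=p$, in a small neighborhood of $x_0$ the graph $\Gam_f$ lies below that of a capped, smoothed parabola $w\in\K(\gam,m')$ with $w(x_0)=f(x_0)$ and $\grad w(x_0)=p$ (Remark \ref{remAuxOp:TheWForTheBarrierLemma}), so $D_f\intersect B_{r_0}(X_0)\subseteq D_w$; comparing $W_f$ with $C\,b_{w,r_0}$, where $b_{w,r_0}$ is the barrier of Lemma \ref{lemAuxOp:BarrierFuns} and $C=\sup_{\overline{D}_f\intersect\partial B_{r_0}(X_0)}W_f<\infty$ (finite since $W_f-\ell$ is bounded and $\ell$ is bounded on this compact set, by Lemma \ref{lemAuxOp:UniformLInfinityBounds}), the maximum principle gives $0\leq W_f\leq C\,b_{w,r_0}$ in $D_f\intersect B_{r_0}(X_0)$; as $X_0\in\Gam_w$ and $b_{w,r_0}\in C^{1,\gam}$ near $X_0$ with $b_{w,r_0}(X_0)=0$, this yields $W_f(X)\leq C'\abs{X-X_0}$ nearby. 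This is exactly the technique of Lemmas \ref{lemAuxOp:WfLipOnK*Part1}--\ref{lemAuxOp:BoundaryParabolaEstimate}, localized (cf.\ Remark \ref{rem:barrier}).

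With the hypotheses in place, Lemma \ref{LemPtWise:NonTanBehavior} supplies a fixed $w$ with $w(x_0)=a$, $\grad w(x_0)=p$, $\inf w\geq\tfrac12$, the harmonic function $V_w$ of Definition \ref{defPtWise:VwHarmonic}, and a constant $s:=s_w(W_f)\in(0,\infty)$ such that $W_f(X)=s\,V_w(X)+o(\abs{X-X_0})$ as $X\to X_0$ non-tangentially in $D_w$. Because $\grad w(x_0)=p=\grad f(x_0)$, the inward normals coincide, $\nu_w(X_0)=\nu_f(X_0)=:\nu$, and the ray $t\mapsto X_0+t\nu$ enters $D_w$ non-tangentially; moreover $w$ is $C^{1,\gam}$ near $x_0$, hence $V_w$ is $C^{1,\gam}$ up to $\Gam_w$ near $X_0$ with $V_w(X_0)=0$, so $\lim_{t\to0^+}t^{-1}V_w(X_0+t\nu)=\partial_\nu V_w(X_0)$ exists. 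Dividing the asymptotic by $t$ and letting $t\to 0^+$ along this ray gives
\begin{align*}
	\lim_{t\to0^+}\frac1t W_f(X_0+t\nu_f) = s\,\partial_{\nu_w}V_w(X_0),
\end{align*}
which is precisely the assertion that the classical normal derivative $\partial_{\nu_f}W_f(X_0)$ exists; multiplying by $\sqrt{1+\abs{\grad f(x_0)}^2}$ then shows $H(f,x_0)$ is well defined and given by the formula in Definition \ref{defAuxOp:TheOperatorH}.

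The one genuinely technical step is the at-most-linear growth of $W_f$ near $X_0$ in the general, non-semiconcave case — building the local upper barrier and checking the comparison on $\partial(D_f\intersect B_{r_0}(X_0))$, where one must verify that $b_{w,r_0}\equiv1$ on $\overline{D}_f\intersect\partial B_{r_0}(X_0)$ and $b_{w,r_0}\geq 0$ along $\Gam_f\intersect B_{r_0}(X_0)$. Everything else is either immediate from earlier results or a routine specialization of Lemma \ref{LemPtWise:NonTanBehavior} to the normal direction.
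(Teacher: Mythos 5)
Your proof is correct and proceeds along the same lines as the paper's: normalize via translation and addition of a constant so $f\geq 1$, apply Lemma \ref{LemPtWise:NonTanBehavior} (in the relaxed punctual form discussed in the remark following it) to $U=W_f$, and read off the classical normal derivative from the resulting asymptotic restricted to the normal ray, using that $V_w\in C^{1,\gam}$ near $X_0$. You are more explicit than the paper about one point: you verify the at-most-linear-growth hypothesis on $W_f$ near $X_0$ by constructing a local upper barrier from Lemma \ref{lemAuxOp:BarrierFuns}, which is genuinely needed here since $f$ is merely $C^{0,1}$ and punctually $C^{1,\gam}$ at $x_0$ (rather than globally in $\K^*(\gam,m)$, so Corollary \ref{corAuxOp:WfLipOnK*Part2} does not directly apply) — a step the paper leaves implicit in its phrase ``follows directly from Lemma \ref{LemPtWise:NonTanBehavior}.''
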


\begin{proof}[Proof of Corollary \ref{corPtWise:NormalDeriv}]
	Let $w$ be as given by Lemma \ref{LemPtWise:NonTanBehavior}. We note $w\in C^{1,\gam}(B_r(x_0))$, and thus $V_w\in C^{1,\gam}(B_{r/2}(X_0))$.
	When it happens that $f\geq 1$, then the desired conclusion follows directly from Lemma \ref{LemPtWise:NonTanBehavior} (and in fact, $\partial_{\nu_f} W_f(X_0) =s_w(W_f) \partial_{\nu_f} V_w(X_0)$).  But we recall that the construction of $W_f$ is invariant by the addition of constants, i.e. for any constant $c$, 
	\begin{align*}
		W_{f+c}(X) = W_f(X-c)\ \ \textnormal{in}\ \ D_{f+c}.
	\end{align*}
	Thus, we see that after adding the constant $c=\norm{f}+1$ to $f$, the function $f+c$ satisfies the assumptions of Lemma \ref{LemPtWise:NonTanBehavior}.  By assumption, $\grad f(x_0)$ exists, and so combined with the existence of $\partial_{\nu_f} W_f(X_0)$, $H$ is well defined.
\end{proof}

Next, we use Corollary \ref{corPtWise:NormalDeriv} to extend the GCP (Definition \ref{defAuxOp:GCP} and Proposition \ref{propAuxOp:HProperties}-(ii)) to functions that are only punctually $C^{1,\gamma}$.

\begin{lemma}[Punctual GCP]\label{LemPtWise:PunctialGCP}  Let $x_0\in \R^d$ be fixed, and let $f,g\in C^{0,1}(\R^d)$ be functions such that $f$ and $g$ are punctually $C^{1,\gamma}(x_0)$, with $f(x_0) = g(x_0)$ and $f\leq g$ in $\R^d$. Then, 
\begin{equation*}
    H(f,x_0)\leq H(g,x_0).
\end{equation*}
\end{lemma}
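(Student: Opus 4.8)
The plan is to combine the classical (non-punctual) GCP from Proposition \ref{propAuxOp:HProperties}-(ii) with the pointwise-evaluation machinery of Lemma \ref{LemPtWise:NonTanBehavior} and Corollary \ref{corPtWise:NormalDeriv}. The key observation is that $W_f$ and $W_g$ can both be compared, near $X_0 = (x_0, f(x_0)) = (x_0, g(x_0))$, against a \emph{single} fixed harmonic function $V_w$ built from a common lower barrier $w$, so that the inequality $W_f \le W_g$ on $D_f$ (which is Proposition \ref{propAuxOp:HProperties}-(i), valid since $f \le g$) transfers to an inequality between the coefficients $s_w(W_f)$ and $s_w(W_g)$ in the expansion \eqref{eqPtWise:NonTanBehavior}, and then to an inequality between the normal derivatives $\partial_{\nu_f} W_f(X_0)$ and $\partial_{\nu_g} W_g(X_0)$.

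First I would reduce to the normalized situation. Since $f, g \in C^{0,1}(\real^d)$ and are punctually $C^{1,\gam}(x_0)$ with $f(x_0) = g(x_0) =: a_0$ and (because both expansions hold and $f \le g$ with equality at $x_0$) $\grad f(x_0) = \grad g(x_0) =: p$, I add a constant $c = \norm{g}_{L^\infty} + 1$ to both functions. By invariance of $W$ under addition of constants (used already in the proof of Corollary \ref{corPtWise:NormalDeriv}), $H(f,x_0) = H(f+c, x_0)$ and $H(g,x_0) = H(g+c,x_0)$, and now $f + c, g + c \ge 1$ with $(f+c)(x_0) = (g+c)(x_0) = a_0 + c =: a \ge 1$. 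Also $f + c \le g + c$ is preserved. Possibly I also need to truncate to land inside $\K^*(\gam, m)$ for a suitable $m$ — or, following Remark \ref{rem:barrier} and the remark after Lemma \ref{LemPtWise:NonTanBehavior}, note that only semi-concavity near $x_0$ and the Lipschitz bound are actually needed; I would take $m$ large enough to accommodate $\norm{f+c}_{C^{0,1}}$, $\norm{g+c}_{C^{0,1}}$, and the punctual $C^{1,\gam}$ constants at $x_0$.

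Next I would invoke Lemma \ref{LemPtWise:NonTanBehavior} with this common data $(m, a, p)$: it produces one fixed $w$ (with $w(x_0) = a$, $\grad w(x_0) = p$, $w \ge 1/2$) and one fixed harmonic function $V_w$ as in \eqref{eqPtWise:HarmonicBarrierNiceDomain}. Applying the lemma to $U = W_{f+c}$ (which is $\ge 0$, vanishes at $X_0$, is harmonic in $D_{f+c}$, and grows at most linearly away from $X_0$ by Corollary \ref{corAuxOp:WfLipOnK*Part2} / the barrier estimates of Lemma \ref{lemAuxOp:WfLipOnK*Part1}) gives the non-tangential expansion $W_{f+c}(X) = s_w(W_{f+c}) V_w(X) + o(|X - X_0|)$, and similarly for $W_{g+c}$ with coefficient $s_w(W_{g+c})$. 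Since $f + c \le g + c$ implies $D_{f+c} \subseteq D_{g+c}$ and $W_{f+c} \le W_{g+c}$ on $D_{f+c}$ by Proposition \ref{propAuxOp:HProperties}-(i), and since $\al_k(W_{f+c}) \le \al_k(W_{g+c})$ for every $k$ directly from the definition $\al_k(U) = \sup\{s : U \ge s V_w \text{ in } B_{2^{-k}}(X_0) \cap D_w\}$ once $k$ is large enough that $B_{2^{-k}}(X_0) \cap D_w \subseteq B_{2^{-k}}(X_0) \cap D_{f+c}$ (and then automatically $\subseteq D_{g+c}$), taking suprema over $k$ gives $s_w(W_{f+c}) \le s_w(W_{g+c})$ via \eqref{eqPtWise:SfIsSup}. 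Finally, evaluating the expansion \eqref{eqPtWise:NonTanBehavior} along $X = X_0 + t\nu_w$ as $t \to 0^+$ — noting $\nu_w = \nu_f = \nu_g$ at $X_0$ because all three gradients at $x_0$ coincide — yields $\partial_{\nu_f} W_{f+c}(X_0) = s_w(W_{f+c})\, \partial_{\nu_w} V_w(X_0)$ and likewise for $g+c$, with the common positive factor $\partial_{\nu_w} V_w(X_0) > 0$ (Hopf). Hence $\partial_{\nu_f} W_{f+c}(X_0) \le \partial_{\nu_g} W_{g+c}(X_0)$, and multiplying by $\sqrt{1 + |p|^2}$ and undoing the constant shift gives $H(f, x_0) \le H(g, x_0)$.

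The step I expect to require the most care is checking the hypotheses of Lemma \ref{LemPtWise:NonTanBehavior} simultaneously for $U = W_{f+c}$ and $U = W_{g+c}$ — in particular, confirming the at-most-linear growth away from $X_0$ uniformly (this is where Lemma \ref{lemAuxOp:WfLipOnK*Part1} and the barrier Lemma \ref{lemAuxOp:BarrierFuns} enter, and where I must make sure $f+c$ and $g+c$ genuinely lie in the class for which those barriers are built, or else appeal to Remark \ref{rem:barrier}), and making sure the \emph{same} $w$ works for both, which is exactly why Lemma \ref{LemPtWise:NonTanBehavior} was stated with the data $(m,a,p)$ shared across an entire subclass $\K(\gam,m,x_0,a,p)$. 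Everything downstream — the monotonicity of $\al_k$ in the function, passing to the sup, and reading off the normal derivative — is then essentially bookkeeping, provided one is careful that the non-tangential cone and the radius $r_f$ can be chosen to work for both $f$ and $g$ (take the smaller of the two radii $r_{f+c}, r_{g+c}$).
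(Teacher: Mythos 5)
Your proposal is correct and follows essentially the same route as the paper's proof: reduce by adding a constant so that $f,g\geq 1$, observe $\grad f(x_0)=\grad g(x_0)$, invoke Lemma~\ref{LemPtWise:NonTanBehavior} to obtain a single $w$ and $V_w$ shared by both, then use the monotonicity $W_f\leq W_g$ to compare $\al_k(W_f)\leq\al_k(W_g)$, hence $s_w(W_f)\leq s_w(W_g)$, and read off the ordering of normal derivatives via \eqref{eqPtWise:NonTanBehavior}. The extra details you flag---the radius threshold for $\al_k$ to be well-defined, the explicit constant shift, and the Hopf-lemma positivity of $\partial_{\nu_w}V_w(X_0)$---are implicitly present in the paper's argument and your care with them is sound.
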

\begin{proof} In view of Corollary \ref{corPtWise:NormalDeriv}, $H(f,x_0)$ and $H(g,x_0)$ are well-defined. Without loss of generality, we can also assume $f,g\geq 1$. By assumption we have $\nabla (f-g)(x_0) = 0$. Let $m>0$ and $X_0 = (x_0, f(x_0))$ as usual, we define 
\begin{align*}
    \nu_w = \nu_f(X_0) = \nu_g(X_0), \qquad a = f(x_0) = g(x_0), \qquad p = \nabla f(x_0) = \nabla g(x_0). 
\end{align*}
By Lemma \ref{LemPtWise:NonTanBehavior} there exists $r>0$ and $w\in C^{1,\gamma}(B_r(x_0))$ with $w \leq f\leq g$ in $B_r(x_0)$ and  $w(x_0) = a$, $\nabla w(x_0) = p$ such that
\begin{align*}
    W_f(X) = s_w(W_f)V_w(X) + o(|X-X_0|) 
    \qquad\text{and}\qquad
    W_g(X) = s_w(W_g)V_w(X) + o(|X-X_0|)
\end{align*}
for $X\to X_0$ non-tangentially in $D_w$. Consequently, we have
\begin{align}\label{eqPtWiseGCP:formula}
    \partial_{\nu_f}W_f(X_0) = s_w(W_f) \partial_{\nu_w}V_w(X_0)
    \qquad \text{and} \qquad
    \partial_{\nu_g}W_g(X_0) = s_w(W_g) \partial_{\nu_w}V_w(X_0)
\end{align}
where $V_w$ is defined in Definition \ref{defPtWise:VwHarmonic}. Recalling the definition of $s_w(W_f)$ and $s_w(W_g)$ from the proof of Lemma \ref{LemPtWise:NonTanBehavior}, we have 
\begin{align*}
\begin{cases}
    \al_k(W_f) = \sup\{ s\ :\ W_f\geq sV_w\ \textnormal{in}\ B_{2^{-k}}(X_0)\intersect D_w \} \\
    \al_k(W_g) = \sup\{ s\ :\ W_g\geq sV_w\ \textnormal{in}\ B_{2^{-k}}(X_0)\intersect D_w \}.
\end{cases}
\end{align*}
Since $W_f\leq W_g$ due to $f\leq g$, it follows that $\alpha_k(W_f)\leq \alpha_k(W_g)$, which implies that 
\begin{align*}
    s_w(W_f) = \sup_k \alpha_k(W_f) \leq \sup_k \alpha_k(W_g) = s_w(W_g).
\end{align*}
Using this in \eqref{eqPtWiseGCP:formula} we obtain that $H(f,x_0)\leq H(g,x_0)$.
\end{proof}

\begin{lemma}\label{lemPtWise:NormalDerivLinftyDependence}
	There exists a positive constant $C=C(\gam,m,d)$  so that if $f$ and $g$ are such that  $f\leq g$, with $f,g\in \K^*(\gam,m)\intersect m\text{-}C^{1,\gam}(x_0)$, with $f(x_0)=g(x_0)$ and $\grad f(x_0)= \grad g(x_0)$, then
	\begin{align*}
		0 \leq \partial_\nu W_g(X_0) - \partial_\nu W_f(X_0)
		\leq  C\norm{f-g}_{L^\infty(\real^d)},
	\end{align*} 
	where $X_0=(x_0, f(x_0))$. Consequently, given the standing assumptions about $f$ and $g$, we have
	\begin{align*}
		0 \leq H(g,x_0) - H(f,x_0)
		\leq  C\norm{f-g}_{L^\infty(\real^d)}. 
	\end{align*}

\end{lemma}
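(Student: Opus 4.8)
The plan is to reduce the statement to a bound on normal derivatives, dispose of the lower inequality via the punctual GCP, and prove the upper inequality by a barrier argument for the harmonic function $V:=W_g-W_f$.

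Since $H(f,x_0)=\sqrt{1+|\grad f(x_0)|^2}\,\partial_{\nu}W_f(X_0)$ and $H(g,x_0)=\sqrt{1+|\grad g(x_0)|^2}\,\partial_{\nu}W_g(X_0)$ by Corollary \ref{corPtWise:NormalDeriv}, and since $\grad f(x_0)=\grad g(x_0)$ (so that $\nu:=\nu_f(X_0)=\nu_g(X_0)$), the two displayed inequalities are equivalent; it suffices to show $0\le \partial_\nu W_g(X_0)-\partial_\nu W_f(X_0)\le C\mu$ where $\mu:=\norm{f-g}_{L^\infty(\real^d)}$. The lower bound is immediate from the punctual GCP, Lemma \ref{LemPtWise:PunctialGCP}: applied to $f\le g$ with $f(x_0)=g(x_0)$ it gives $H(f,x_0)\le H(g,x_0)$, hence $\partial_\nu W_f(X_0)\le\partial_\nu W_g(X_0)$ after dividing by $\sqrt{1+|\grad f(x_0)|^2}$.

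For the upper bound set $V:=W_g-W_f$. Since $f\le g$ we have $D_f\subseteq D_g$, so $V$ is harmonic in $D_f$; by Proposition \ref{propAuxOp:HProperties}(i) it is nonnegative in $D_f$; and since $f(x_0)=g(x_0)$, the point $X_0$ lies on $\Gam_f\cap\Gam_g$, so $V(X_0)=W_g(X_0)-W_f(X_0)=0$. Consequently $\partial_\nu W_g(X_0)-\partial_\nu W_f(X_0)=\partial_\nu V(X_0)$, the normal derivatives existing by Corollary \ref{corPtWise:NormalDeriv}. The first observation is the crude bound $0\le V\le C\mu$ throughout $D_f$: on $\Gam_f$ one has $W_f=0$, hence $V=W_g$, and since $W_g$ vanishes on $\Gam_g$ with $\norm{\grad W_g}_{L^\infty(D_g)}\le C$ (Corollary \ref{corAuxOp:WfLipOnK*Part2}) and $\dist((x,f(x)),\Gam_g)\le g(x)-f(x)\le\mu$, we get $0\le V\le C\mu$ on $\Gam_f$; as $V=(W_g-\ell)-(W_f-\ell)$ is bounded, the maximum principle (Proposition \ref{propAppendix:weak-max-principle}) propagates this to all of $D_f$.

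The remaining task is to upgrade the $L^\infty$ bound to $\partial_\nu V(X_0)\le C\mu$. Fix a universal $r_0\in(0,1)$ and split $V=V_{\mathrm{bulk}}+V_{\mathrm{bdry}}$ on $D_f\cap B_{r_0}(X_0)$, where $V_{\mathrm{bulk}}$, $V_{\mathrm{bdry}}$ are the bounded harmonic functions there whose boundary data agree with $V$ on $D_f\cap\partial B_{r_0}(X_0)$, resp. on $\Gam_f\cap B_{r_0}(X_0)$, and vanish on the complementary boundary portion. For $V_{\mathrm{bulk}}$: choosing $w^+\in\K(\gam,2m)$ with $w^+\ge f$, $w^+(x_0)=f(x_0)$, $\grad w^+(x_0)=\grad f(x_0)$ as in Remark \ref{remAuxOp:TheWForTheBarrierLemma}, the barrier $b=b_{w^+,r_0}$ of Lemma \ref{lemAuxOp:BarrierFuns} is harmonic on $D_{w^+}\cap B_{r_0}(X_0)\supseteq D_f\cap B_{r_0}(X_0)$, equals $1$ on $D_f\cap\partial B_{r_0}(X_0)$, is nonnegative on $\Gam_f\cap B_{r_0}(X_0)$, vanishes at $X_0\in\Gam_{w^+}$, and obeys $\norm{b}_{C^{1,\gam}(B_{r_0/4}(X_0))}\le C$; comparing $V_{\mathrm{bulk}}\le C\mu\,b$ and letting $s\to0^+$ yields $\partial_\nu V_{\mathrm{bulk}}(X_0)\le C\mu$. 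For $V_{\mathrm{bdry}}$: its data on $\Gam_f\cap B_{r_0}(X_0)$ equals $V=W_g\le C(g-f)$, which, since $f,g\in m\text{-}C^{1,\gam}(x_0)$ share value and gradient at $x_0$, satisfies $0\le g(x)-f(x)\le 2m|x-x_0|^{1+\gam}$ near $x_0$ and $g-f\le\mu$ globally; one then works at the $\mu$-dependent scale $\rho\sim\mu^{1/(1+\gam)}$ on which the two bounds balance, feeds the resulting $|X-X_0|^{1+\gam}$-type boundary control into Lemma \ref{lemAuxOp:BoundaryParabolaEstimate} (legitimate here by Remark \ref{rem:barrier}, which needs only $C^{1,\gam}$-semiconcavity of $f$ near $x_0$), and concludes $\partial_\nu V_{\mathrm{bdry}}(X_0)\le C\mu$; adding the two pieces completes the proof.

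The hard part is precisely this last estimate on $V_{\mathrm{bdry}}$: the naive application of Lemma \ref{lemAuxOp:BoundaryParabolaEstimate} to the data $W_g|_{\Gam_f}\le C(g-f)$ only sees a $C^{1,\gam}$-decay at $x_0$ whose size is governed by $m$, not by $\mu$, and therefore yields only a $\mu$-independent bound; obtaining the genuinely \emph{linear} dependence on $\norm{f-g}_{L^\infty}$ requires bringing the global bound $\norm{V}_{L^\infty(D_f)}\le C\mu$ to bear on the boundary-layer contribution and carefully tracking how the constant in Lemma \ref{lemAuxOp:BoundaryParabolaEstimate} behaves under rescaling to the scale $\rho\sim\mu^{1/(1+\gam)}$ — this interplay between the coarse and fine information about $V$ near $X_0$ is the delicate technical step.
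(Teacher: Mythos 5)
Your decomposition $V = V_{\mathrm{bulk}} + V_{\mathrm{bdry}}$ and the observation that the boundary-layer piece is the crux are well-taken; indeed the paper's own proof, which is much shorter (it claims directly that $0\le W_g-W_f\le \varepsilon\, b$ in $D_f\cap B_r(X_0)$ for a barrier $b$ as in Lemma \ref{lemAuxOp:BarrierFuns} and $\varepsilon=C\norm{f-g}_{L^\infty}$, and then reads off linear growth from $\norm{b}_{C^{1,\gam}}\le C$), runs up against exactly the obstruction you describe: $V=W_g-W_f$ equals $W_g>0$ on $\Gam_f\setminus\{X_0\}$, while the barrier $b$ vanishes only on $\Gam_w\subset\partial D_w\supsetneq\partial D_f$, so the comparison $W_g-W_f\le\varepsilon b$ is not verified on the portion $\Gam_f\cap B_r$ of the boundary. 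So you have correctly diagnosed that the boundary data on $\Gam_f$ must be dealt with.

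However, the final step of your argument does not close the gap: the rescaling to $\rho\sim\mu^{1/(1+\gam)}$ (with $\mu=\norm{f-g}_{L^\infty}$) does \emph{not} yield $\partial_\nu V_{\mathrm{bdry}}(X_0)\le C\mu$. Carrying it through: set $\rho=(\mu/2m)^{1/(1+\gam)}$ and $V_\rho(Y)=V_{\mathrm{bdry}}(X_0+\rho Y)$. On $\Gam_{f_\rho}\cap B_1$ the rescaled data is $\le C\min(\mu,2m\rho^{1+\gam}|Y|^{1+\gam})=C\mu\min(1,|Y|^{1+\gam})\le C\mu|Y|^{1+\gam}$, and on $D_{f_\rho}\cap\partial B_1$ it is $\le C\mu$, so Lemma \ref{lemAuxOp:BoundaryParabolaEstimate} (with $C_1\sim\mu$, applicable since $f_\rho\in\K^*(\gam,m)\cap m\text{-}C^{1,\gam}(0)$ at least as well as $f$) gives $V_\rho(s'\nu)\le C\mu s'$. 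Undoing the rescaling $s=\rho s'$ gives only
$V_{\mathrm{bdry}}(X_0+s\nu)\le C\mu\rho^{-1}s = C(2m)^{1/(1+\gam)}\mu^{\gam/(1+\gam)}s$,
which is \emph{sublinear} in $\mu$. There is good reason to believe no method can do better: take $d=1$, $f\equiv -1$ (so $D_f$ is exactly a lower half-plane), and $g=-1+\mu\,\eta(\cdot/\rho)$ for a fixed smooth profile $\eta\ge0$ with $\eta(z)=|z|^{1+\gam}$ near $0$ and $\eta\equiv1$ for $|z|\ge2$. With $\rho=(\mu/m)^{1/(1+\gam)}$, one checks $g\in\K^*(\gam,Cm)\cap Cm\text{-}C^{1,\gam}(0)$ and $\norm{f-g}_{L^\infty}=\mu$, yet the exact Poisson-kernel formula for the half-plane gives $\partial_\nu W_g(X_0)-\partial_\nu W_f(X_0)=\tfrac{1}{\pi}\int\varphi(y)y^{-2}\,dy\,(1+o(1)) = c_\eta\,m^{1/(1+\gam)}\mu^{\gam/(1+\gam)}(1+o(1))$ as $\mu\to0$. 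This is incompatible with the lemma's stated linear bound. The sublinear modulus $\mu^{\gam/(1+\gam)}$ that your argument (correctly carried out) produces would, however, still suffice for the lemma's only downstream use, in the proof of Proposition \ref{propViscSol:pointwise-testing}, which requires only that $|H(w_\rho)-H(\psi)|\to0$ as $\norm{w_\rho-\psi}_{L^\infty}\to0$.
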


\begin{proof}[Proof of Lemma \ref{lemPtWise:NormalDerivLinftyDependence}]
	First, we remark that  Corollary \ref{corPtWise:NormalDeriv} implies that    $\partial_\nu W_f(X_0)$ and $ \partial_\nu W_g(X_0)$   are well-defined.	Next, we note the first inequality is immediate from the GCP.  Let us address the second inequality. We recall that by assumption, $f\leq g$. From Lemma \ref{lemAuxOp:WfLipOnK*Part1}, we see that for $C=C(\gam,m,d)$,
		\begin{align*}
			\textnormal{for}\ X\in\Gam_f,\ \ 
			0\leq W_g(X) - W_f(X) \leq C(g(x)-f(x)),
		\end{align*}
		and hence by the comparison principle (Proposition \ref{propAppendix:weak-max-principle}), 
		\begin{align*}
			\norm{W_f - W_g}_{L^\infty(D_f)}\leq C\norm{f-g}_{L^\infty(\real^d)}.
		\end{align*}
		Let us call $\ep = C\norm{f-g}_{L^\infty(\real^d)}$.
	Thus since $f\in \K^*(\gam,m)$, there is a barrier $b$, as in Lemma \ref{lemAuxOp:BarrierFuns}, with  $\|b\|_{C^{1,\gam}(\real^d)}$ depending only on $\gam$, $m$, and $d$, so that for some $r>0$,
	\begin{align*}
		\textnormal{in}\ \ D_f\intersect B_r(X_0),\ \ 
		0\leq W_g - W_f\leq \ep b.
	\end{align*} 
The fact that $b$ is Lipschitz yields that there exists a positive constant  $C_1=C_1(\gam,m,d)$ such that, for $t>0$ small enough, 
	\begin{align*}
		0\leq W_g(X_0 + t\nu) - W_f(X_0 + t\nu)\leq \ C_1\ep t.
	\end{align*}
		
	After consolidating constants, all of which only depended on $\gam$, $m$, $d$, we see that there exists a positive constant $C_2=C_2(\gam,m,d)$ with
	\begin{align*}
		\partial_\nu W_g(X_0) - \partial_\nu W_f(X_0) \leq C_2 \norm{f-g}_{L^\infty(\R^d)}.
	\end{align*}
Finally, recalling the definition of $H$ as in \eqref{eqCheckAssumpt:DefOfHeleShawOp}, and noting that $\nabla f(x_0) = \nabla g(x_0)$,  we deduce the desired conclusion.
\end{proof}


\subsection{The Lipschitz property}\label{subsection:Lipschitz}

Next we establish that $H$ is a locally Lipschitz as a mapping $C^{1,\gam}(\real^d)\to C^0(\real^d)$. The main lemmas that will be used to establish this result will be useful in their own right.   We begin with:

\begin{lemma}\label{lemAuxOp:PreparingForHLipschitz-LocalEstimateK*} 
Given $m>0$ and $R>0$, there exists a positive constant, $C=C(\gam,m,d)$, so that if 
	\begin{enumerate}[(i)]
		
		\item $f,g\in \K^*(\gam,m)\intersect \left(m\text{-}C^{1,\gam}(x_0)\right)$,
		
		\item $f(x_0)=g(x_0)$ and $\grad f(x_0) = \grad g(x_0)$,
		
		\item $f-g\in C^{1,\gam}(B_R(x_0))$,
		
		\item $U_f,U_g$ solve
		\begin{align*}
			\begin{cases}
				\Delta U_f=0\ &\textnormal{in}\ D_f\intersect B_R(X_0)\\
				U_f=0\ &\textnormal{on}\ \Gam_f\intersect B_R(X_0),
			\end{cases}
			\ \ \ \ \textnormal{and}\ \ \ \
			\begin{cases}
				\Delta U_g=0\ &\textnormal{in}\ D_g\intersect B_R(X_0)\\
				U_g=0\ &\textnormal{on}\ \Gam_g\intersect B_R(X_0),
			\end{cases}
		\end{align*}
		
	\end{enumerate}
	where $X_0=(x_0, f(x_0)$, then
\begin{align*}
    \abs{\partial_{\nu_f} U_f(X_0) - \partial_{\nu_f}U_g(X_0) } \leq 
	C(\gam, m,d)\left( \norm{f-g}_{C^{1,\gam}(B_R(x_0))} + \frac{1}{R^{1+\gam}}\norm{U_f-U_g}_{L^\infty(D_f\intersect D_g\intersect B_R(X_0) )} \right).
\end{align*}

\end{lemma}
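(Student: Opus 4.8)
The plan is to route everything through the boundary estimate of Lemma~\ref{lemAuxOp:BoundaryParabolaEstimate}. Since $\grad f(x_0)=\grad g(x_0)$, the inward normals to the two domains agree at $X_0$; call this common direction $\nu$. The first observation is that $w:=\min\{f,g\}$ is an admissible ``background'' function: it is globally Lipschitz with constant $m$, it is $C^{1,\gam}$-semi-concave (a pointwise minimum of $C^{1,\gam}$-semi-concave functions is $C^{1,\gam}$-semi-concave, using at each point the affine datum of whichever of $f,g$ realizes the minimum), and because $f(x_0)=g(x_0)$ and $\grad f(x_0)=\grad g(x_0)$ it is also $m\text{-}C^{1,\gam}(x_0)$ with $w(x_0)=f(x_0)$, $\grad w(x_0)=\grad f(x_0)$, so that $\nu_w(X_0)=\nu$; thus $w\in\K^*(\gam,2m)\intersect(m\text{-}C^{1,\gam}(x_0))$ and $D_w=D_f\intersect D_g$. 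I would then set $V:=U_f-U_g$, which is harmonic in $D_w\intersect B_{\rho}(X_0)$ for $\rho<\min\{R,1\}$ (if $R\ge 1$ one runs the argument on the ball of radius slightly below $1$, noting the estimate only gets easier as $R$ grows). The key point is that $\partial_\nu V(X_0)=\partial_{\nu_f}U_f(X_0)-\partial_{\nu_f}U_g(X_0)$ is exactly the quantity to be bounded, and that both one-sided normal derivatives exist at $X_0$ by the reasoning behind Corollary~\ref{corPtWise:NormalDeriv}, applied to $U_f$ on $D_f$ and to $U_g$ on $D_g$.

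The heart of the matter is to check that $V$ has the decay
\begin{align*}
	|V(X)|\le C_1\,|X-X_0|^{1+\gam}\qquad\text{on}\qquad\partial\big(D_w\intersect B_{\rho}(X_0)\big),
\end{align*}
with $C_1=C(\gam,m,d)\big(\|f-g\|_{C^{1,\gam}(B_R(x_0))}+R^{-(1+\gam)}\|U_f-U_g\|_{L^\infty(D_f\intersect D_g\intersect B_R(X_0))}\big)$. On the spherical part $D_w\intersect\partial B_{\rho}(X_0)$ this is immediate, since there $|V|\le\|U_f-U_g\|_{L^\infty(D_f\intersect D_g\intersect B_R)}$ while $|X-X_0|=\rho\gtrsim R$. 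On the remaining part $\Gam_w\intersect B_{\rho}(X_0)$ one argues pointwise: at $X=(x,w(x))$ one has, say, $w(x)=f(x)\le g(x)$, hence $U_f(X)=0$ so $|V(X)|=|U_g(X)|$, and $X\in\overline D_g$ lies within vertical distance $|f(x)-g(x)|$ of $\Gam_g$, where $|f(x)-g(x)|\le\|f-g\|_{C^{1,\gam}(B_R(x_0))}|x-x_0|^{1+\gam}$ because $f-g\in C^{1,\gam}(B_R(x_0))$ vanishes together with its gradient at $x_0$. Combining this with the fact that $U_g$ vanishes on $\Gam_g\intersect B_R(X_0)$ and grows at most linearly away from $\Gam_g$ there --- obtained by a barrier comparison of the type in Lemma~\ref{lemAuxOp:WfLipOnK*Part1}, using the barriers of Lemma~\ref{lemAuxOp:BarrierFuns} and Remark~\ref{remAuxOp:TheWForTheBarrierLemma} for a $C^{1,\gam}$ function lying above $g$, with near-boundary Lipschitz constant controlled by $R^{-1}\|U_g\|_{L^\infty(D_g\intersect B_R(X_0))}$ --- yields $|V(X)|\le C\|f-g\|_{C^{1,\gam}(B_R(x_0))}|X-X_0|^{1+\gam}$ there; the symmetric estimate holds where $w=g$. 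This gives the claimed bound on $C_1$.

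With the boundary decay in hand, Lemma~\ref{lemAuxOp:BoundaryParabolaEstimate} (applied to $V$ with background function $w$, radius $r_0=\rho$, and constant $C_1$) gives $|V(X_0+s\nu_w)|\le C\,C_1\,s$ for small $s>0$, and letting $s\to 0^+$ produces $|\partial_\nu V(X_0)|\le C\,C_1$. Since $\nu_w=\nu$, this is precisely the asserted bound on $|\partial_{\nu_f}U_f(X_0)-\partial_{\nu_f}U_g(X_0)|$. The second conclusion, on $H(g,x_0)-H(f,x_0)$ and hence on its absolute value, then follows immediately from the definition of $H$ in~\eqref{eqCheckAssumpt:DefOfHeleShawOp} together with $\grad f(x_0)=\grad g(x_0)$, which makes the prefactors $\sqrt{1+|\grad f(x_0)|^2}$ in $H(f,x_0)$ and $H(g,x_0)$ identical.

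The main obstacle is the interplay of two features: $D_f$ and $D_g$ are genuinely distinct domains that agree only to first order at $X_0$, while the boundary regularity is merely \emph{punctual} $C^{1,\gam}$ at $x_0$ (augmented by $f-g\in C^{1,\gam}$ near $x_0$), so standard boundary Schauder estimates are unavailable and the argument must be funneled through the tailored estimate in Lemma~\ref{lemAuxOp:BoundaryParabolaEstimate}. Concretely, the delicate points are: verifying that $\min\{f,g\}$ inherits the semi-concavity and punctual-$C^{1,\gam}$ structure with comparable constants; establishing the pointwise boundary decay of $V$ on the slivers of $\Gam_w$ where $U_f$ or $U_g$ vanishes, which requires controlling the near-boundary Lipschitz constants of $U_f$ and $U_g$ through the barrier lemmas (in the settings where this lemma is applied, $U_f$ and $U_g$ carry $L^\infty$ bounds depending only on $\gam,m,d$, so those constants are harmless); and keeping track of the powers of $R$ so that they land exactly as in the statement. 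An alternative I would keep in reserve is to flatten $D_f$ onto $D_g$ near $X_0$ by a near-identity diffeomorphism with derivative $\mathrm{Id}$ at $X_0$ and perturbation controlled by $\|f-g\|_{C^{1,\gam}(B_R(x_0))}$, reducing to comparing a harmonic function with the solution of a slightly perturbed divergence-form equation on the same domain; but the $\min\{f,g\}$ route above is cleaner and uses only machinery already developed in this section.
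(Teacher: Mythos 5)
Your argument is essentially the paper's: you take $w=\min\{f,g\}$ so that $D_w=D_f\cap D_g$ and $w\in\K^*(\gam,m)\cap\left(m\text{-}C^{1,\gam}(x_0)\right)$ (in fact with constant $m$, not $2m$), establish a $C^{1,\gam}$-type boundary decay for $V=U_f-U_g$ on $\partial\bigl(D_w\cap B_R(X_0)\bigr)$, and then invoke Lemma~\ref{lemAuxOp:BoundaryParabolaEstimate} to convert this into a bound on the inward normal derivative at $X_0$. One small slip: the closing sentence about a ``second conclusion'' for $H(g,x_0)-H(f,x_0)$ is not part of this lemma --- that consequence is recorded separately in Proposition~\ref{propAuxOp:HLipschitz-K*Ingredients}. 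Your parenthetical observation that the near-boundary Lipschitz constant of $U_g$ carries an $\|U_g\|_{L^\infty}$ factor is a fair point, and it is implicitly resolved in the paper the same way you resolve it: the lemma is invoked only with $U_f,U_g$ equal to (rotated copies of) the globally normalized potentials $W_f,W_g$, for which Lemma~\ref{lemAuxOp:WfLipOnK*Part1} supplies the universal Lipschitz bound.
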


An immediate consequence of Lemma \ref{lemAuxOp:PreparingForHLipschitz-LocalEstimateK*} is the following proposition relating to $H(f,x_0)$.

\begin{proposition}\label{propAuxOp:HLipschitz-K*Ingredients} 
	Given $m$, and $R>0$, there exists a positive constant, $C=C(\gam,m,d)$, so that if 
		\begin{enumerate}[(i)]
		
			\item $f,g\in \K^*(\gam,m)\intersect \left(m\text{-}C^{1,\gam}(x_0)\right)$
		
			\item $f(x_0)=g(x_0)$ and $\grad f(x_0) = \grad g(x_0)$,
		
			\item $f-g\in C^{1,\gam}(B_R(x_0))$
		
		\end{enumerate}
		then
	\begin{align*}
	    \abs{H(f,x_0)-H(g, x_0)} \leq 
		C(\gam, m,d)\left( \norm{f-g}_{C^{1,\gam}(B_R(x_0))} + \frac{1}{R^{1+\gam}}\norm{f-g}_{L^\infty(\real^d)} \right).
	\end{align*}

\end{proposition}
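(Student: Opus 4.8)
The plan is to derive the Proposition directly from Lemma \ref{lemAuxOp:PreparingForHLipschitz-LocalEstimateK*} by taking the local harmonic functions in that lemma to be the global potentials themselves. Since $W_f$ is harmonic in all of $D_f$ and vanishes on all of $\Gam_f$, the pair $U_f := W_f$ and $U_g := W_g$ satisfies hypothesis (iv) of Lemma \ref{lemAuxOp:PreparingForHLipschitz-LocalEstimateK*} on $B_R(X_0)$, where $X_0 = (x_0,f(x_0)) = (x_0,g(x_0))$, while hypotheses (i)--(iii) are precisely those of the Proposition. Moreover, since $f$ and $g$ are punctually $C^{1,\gam}(x_0)$, Corollary \ref{corPtWise:NormalDeriv} guarantees that $\partial_{\nu_f} W_f(X_0)$, $\partial_{\nu_f} W_g(X_0)$, $H(f,x_0)$ and $H(g,x_0)$ are all classically well-defined. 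Hence Lemma \ref{lemAuxOp:PreparingForHLipschitz-LocalEstimateK*} yields
\[
	\abs{\partial_{\nu_f} W_f(X_0) - \partial_{\nu_f} W_g(X_0)} \leq C\left( \norm{f-g}_{C^{1,\gam}(B_R(x_0))} + \frac{1}{R^{1+\gam}} \norm{W_f - W_g}_{L^\infty(D_f\intersect D_g \intersect B_R(X_0))} \right).
\]

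It then remains to replace $\norm{W_f - W_g}_{L^\infty}$ by $\norm{f-g}_{L^\infty(\real^d)}$. This is the one point requiring an argument not literally contained in the earlier statements, since the $L^\infty$ stability of $W$ was recorded in Lemma \ref{lemPtWise:NormalDerivLinftyDependence} only under the ordering hypothesis $f\leq g$. However, the reasoning in the proof of Lemma \ref{lemAuxOp:WfLipOnK*Part1} does not use ordering: for $f,g\in\K^*(\gam,m)$ both $W_f$ and $W_g$ grow at most linearly away from their zero sets $\Gam_f$, $\Gam_g$, with a constant $C(\gam,m,d)$; every point of $\partial(D_f\intersect D_g)$ lies on $\Gam_f$ or $\Gam_g$, so there one of $W_f, W_g$ vanishes while the other is controlled by the vertical distance to the other graph, which is at most $\norm{f-g}_{L^\infty(\real^d)}$. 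Thus $\abs{W_f - W_g}\leq C\norm{f-g}_{L^\infty(\real^d)}$ on $\partial(D_f\intersect D_g)$, and since $W_f - W_g$ is bounded and harmonic on $D_f\intersect D_g$ by Lemma \ref{lemAuxOp:UniformLInfinityBounds}, Proposition \ref{propAppendix:weak-max-principle} gives $\norm{W_f - W_g}_{L^\infty(D_f\intersect D_g)}\leq C\norm{f-g}_{L^\infty(\real^d)}$.

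Finally, to pass from the normal-derivative estimate to the statement for $H$, one uses that $\grad f(x_0) = \grad g(x_0) =: p$, so $\nu_f(X_0) = \nu_g(X_0)$ and $\sqrt{1+\abs{\grad f(x_0)}^2} = \sqrt{1+\abs{p}^2}\leq \sqrt{1+m^2}$. Then, in the classical sense justified above,
\[
	\abs{H(f,x_0) - H(g,x_0)} = \sqrt{1+\abs{p}^2}\,\abs{\partial_{\nu_f} W_f(X_0) - \partial_{\nu_f} W_g(X_0)},
\]
and combining the two displays, after absorbing $\sqrt{1+m^2}$ and the constant from the $L^\infty$ bound into a single $C = C(\gam,m,d)$, produces the claimed inequality. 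The only genuine obstacle is the ordering-free $L^\infty$ stability of $W$; everything else is bookkeeping once Lemma \ref{lemAuxOp:PreparingForHLipschitz-LocalEstimateK*} is available.
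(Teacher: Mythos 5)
Your proposal is correct and takes essentially the same route as the paper: both apply Lemma \ref{lemAuxOp:PreparingForHLipschitz-LocalEstimateK*} with $U_f = W_f$, $U_g = W_g$, and then bound $\norm{W_f - W_g}_{L^\infty(D_f\cap D_g)}$ by $C\norm{f-g}_{L^\infty(\real^d)}$ using the global Lipschitz estimate of Lemma \ref{lemAuxOp:WfLipOnK*Part1} on $\partial(D_f\cap D_g) = \Gam_{\min\{f,g\}}$ followed by the maximum principle, Proposition \ref{propAppendix:weak-max-principle}. Your observation that the ordering hypothesis is not needed for the $L^\infty$ stability is exactly the mechanism the paper uses, so there is no gap.
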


Proposition \ref{propAuxOp:HLipschitz-K*Ingredients} is the main ingredient in the Lipschitz proof for $H$.  We state this here, and then will provide the proof of both afterwards.

\begin{theorem}\label{thmAuxOp:HPreciseLipStatementC1Gam}
	If $R_0>0$ is fixed, $R>R_0$, and $f,g\in \K^*(\gam,m)\intersect C^{1,\gam}(B_{2R}(x_0))$, then there exists a postive constant $C(\gam,m,d, R_0)$ so that 
	\begin{align*}
		\norm{H(f,\cdot) - H(g,\cdot)}_{L^\infty (B_R(x_0))} \leq
		C(\gam, m,d, R_0)\left( \norm{f-g}_{C^{1,\gam}(B_{2R}(x_0))} +  \norm{f-g}_{L^\infty(\real^d)} \right).
	\end{align*}
\end{theorem}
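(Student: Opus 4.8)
The plan is to reduce the statement to a pointwise estimate at an arbitrary $y\in B_R(x_0)$ and then to invoke Proposition~\ref{propAuxOp:HLipschitz-K*Ingredients}, after first matching the value (and, in a second step, the gradient) of $f$ and $g$ at $y$. The guiding observation is that $f,g\in\K^*(\gam,m)$ forces $|\grad f|\le m$ and $|\grad g|\le m$ everywhere, so every gradient gap we encounter is a priori at most $2m$; this is what will keep all auxiliary constants dependent only on $\gam,m,d,R_0$.

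First I would fix $y\in B_R(x_0)$; since $R_0<R$ we have $B_{R_0}(y)\subset B_{2R}(x_0)$, so $f-g\in C^{1,\gam}(B_{R_0}(y))$ and we may apply the pointwise results of Section~\ref{sec:PointwiseEvaluationH} and Proposition~\ref{propAuxOp:HLipschitz-K*Ingredients} at $y$ with radius $R_0$ (the factor $R_0^{-(1+\gam)}$ there is what produces the $R_0$-dependence of the final constant). Using the invariance of $H$ under addition of constants (Proposition~\ref{propAuxOp:HProperties}(iv)), I would replace $g$ by $g+(f(y)-g(y))$: this does not change $H(g,\cdot)$, enlarges $\norm{f-g}_{L^\infty}$ and $\norm{f-g}_{C^{1,\gam}(B_{R_0}(y))}$ by at most $\norm{f-g}_{L^\infty}$, and achieves $f(y)=g(y)=:a$, so $H(f,y)$ and $H(g,y)$ are now evaluated at the common boundary point $X_0=(y,a)\in\Gam_f\cap\Gam_g$.

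Next, using $H(h,y)=\sqrt{1+|\grad h(y)|^2}\,\partial_{\nu_h}W_h(X_0)$ from \eqref{eqCheckAssumpt:DefOfHeleShawOp}, I would peel off the contributions of the differing multiplicative factors $\sqrt{1+|\grad f(y)|^2}$ versus $\sqrt{1+|\grad g(y)|^2}$ and of the differing directions $\nu_f$ versus $\nu_g$: since $\norm{\grad W_f}_{L^\infty(D_f)},\norm{\grad W_g}_{L^\infty(D_g)}\le C(\gam,m,d)$ by Corollary~\ref{corAuxOp:WfLipOnK*Part2}, while $|\sqrt{1+|p|^2}-\sqrt{1+|q|^2}|\le|p-q|$ and $|\nu_f-\nu_g|\le C(m)|\grad f(y)-\grad g(y)|$, each such contribution is $\le C(\gam,m,d)\,|\grad f(y)-\grad g(y)|\le C(\gam,m,d)\,\norm{f-g}_{C^{1,\gam}(B_{2R}(x_0))}$. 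This reduces the problem to the single quantity $\partial_{\nu_f}W_f(X_0)-\partial_{\nu_f}W_g(X_0)$: a difference of normal derivatives at the common point $X_0$, in the common direction $\nu_f$, of the harmonic functions $W_f,W_g$ (which vanish on $\Gam_f$, resp.\ $\Gam_g$, and satisfy $\norm{W_f-W_g}_{L^\infty(D_f\cap D_g)}\lesssim\norm{f-g}_{L^\infty}$ as in the proof of Lemma~\ref{lemPtWise:NormalDerivLinftyDependence}). This is exactly the form estimated in Lemma~\ref{lemAuxOp:PreparingForHLipschitz-LocalEstimateK*}, so applying that lemma at $y$ with radius $R_0$, collecting the pieces, and taking the supremum over $y\in B_R(x_0)$ yields the theorem.

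The hard part is that the pair $(f,g)$ need not satisfy the tangency hypothesis $\grad f(y)=\grad g(y)$ of Lemma~\ref{lemAuxOp:PreparingForHLipschitz-LocalEstimateK*} (and of Proposition~\ref{propAuxOp:HLipschitz-K*Ingredients}), and one genuinely cannot force it by a bounded modification that is small in $C^{1,\gam}$. The resolution rests on two facts already used above: the non-tangency defect is precisely $|\grad f(y)-\grad g(y)|\le\norm{\grad(f-g)}_{L^\infty(B_{2R})}\le\norm{f-g}_{C^{1,\gam}(B_{2R})}$, hence already on the right-hand side; and the barrier constructions underlying Lemma~\ref{lemAuxOp:PreparingForHLipschitz-LocalEstimateK*} depend, by Remark~\ref{rem:barrier}, only on local $C^{1,\gam}$-semi-concavity with constant $\le m$, not on any two-sided punctual $C^{1,\gam}$ constant (which for $f,g\in C^{1,\gam}(B_{2R}(x_0))$ exists but need not be universal). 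Concretely one may either adapt the proof of Lemma~\ref{lemAuxOp:PreparingForHLipschitz-LocalEstimateK*} to non-tangent graphs, absorbing the extra term $|\grad f(y)-\grad g(y)|$ into its $C^{1,\gam}$-term, or insert an intermediate function $\hat g=g+\eta\cdot\big[(\grad f(y)-\grad g(y))\cdot(x-y)\big]$ with $\eta$ a fixed cutoff equal to $1$ on $B_{R_0/2}(y)$ and supported in $B_{R_0}(y)$: then $\hat g$ has the same $1$-jet at $y$ as $f$ and lies in $\K^*(\gam,C(m,\gam,d,R_0))$ (the smooth correction has $C^{1,\gam}(B_{R_0}(y))$-norm $\lesssim_{R_0}|\grad f(y)-\grad g(y)|\le 2m$), so $|H(f,y)-H(\hat g,y)|$ is handled by Proposition~\ref{propAuxOp:HLipschitz-K*Ingredients} directly, while $|H(\hat g,y)-H(g,y)|$ is handled by the peeling argument of the previous paragraph applied to the small, explicit perturbation $\hat g-g$.
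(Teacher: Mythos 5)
Your reduction to a pointwise estimate at $y$, the shift to $f(y)=g(y)$ via invariance under addition of constants, and your identification of the gradient mismatch $\grad f(y)\neq\grad g(y)$ as the real obstacle are all correct and aligned with the paper's strategy (they are essentially the paper's Step~0). But neither of your two proposed resolutions actually closes the gap, and this is exactly where the paper does something substantively different.

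Resolution (a) -- ``adapt Lemma~\ref{lemAuxOp:PreparingForHLipschitz-LocalEstimateK*} to non-tangent graphs, absorbing the extra term'' -- is not a minor modification. The proof of that lemma takes $w=\min\{f,g\}$ and applies the barrier estimate Lemma~\ref{lemAuxOp:BoundaryParabolaEstimate} to $V=U_f-U_g$ on $D_w$. Both steps require $w$ to be punctually $C^{1,\gam}$ at $x_0$. When $\grad f(y)\neq\grad g(y)$ and $f(y)=g(y)$, the function $\min\{f,g\}$ has a genuine corner at $y$: $\nu_w$ is undefined, the separating barrier $w^+$ with $w^+(x_0)=w(x_0)$, $\grad w^+(x_0)=\grad w(x_0)$ cannot be built, and the needed boundary decay $|V(X)|\lesssim|X-X_0|^{1+\gam}$ degrades to $|V(X)|\lesssim|X-X_0|$ because $|f(x)-g(x)|$ only vanishes to first order at $y$. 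So the lemma fails structurally, not just by a constant. (Your appeal to Remark~\ref{rem:barrier} does not help: that remark relaxes only the \emph{global} $\K^*(\gam,m)$ hypothesis, not the punctual $C^{1,\gam}$ hypothesis at $x_0$, which is precisely what the corner destroys.)

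Resolution (b) is circular. After inserting $\hat g = g + \eta\cdot[(\grad f(y)-\grad g(y))\cdot(x-y)]$, the pair $(f,\hat g)$ is indeed tangent at $y$ and Proposition~\ref{propAuxOp:HLipschitz-K*Ingredients} handles $|H(f,y)-H(\hat g,y)|$. But $(\hat g,g)$ now satisfies $\hat g(y)=g(y)$ with $\grad\hat g(y)=\grad f(y)\neq\grad g(y)$: you have pushed the entire non-tangency into the second comparison $|H(\hat g,y)-H(g,y)|$. The ``peeling argument'' cannot estimate that term either, because after peeling the prefactor and direction you are left with a quantity of exactly the form $\partial_{\nu}W_{\hat g}(X_0)-\partial_{\nu}W_g(X_0)$ for a non-tangent pair, which is the same unresolved object you started from. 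The explicitness and localization of $\hat g - g$ do not help, since $W_{\hat g}$ and $W_g$ are not explicit.

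The paper resolves this with two ingredients you do not use. First, a \emph{smallness reduction} (its Step~1): since $|H(f,y)|,|H(g,y)|\le C(\gam,m,d)$ by Corollary~\ref{corAuxOp:WfLipOnK*Part2}, the estimate is trivial unless $\norm{f-g}_{C^{1,\gam}}+\norm{f-g}_{L^\infty}$ is small, so one may assume $|\grad f(y)-\grad g(y)|<\ep$. Second, and crucially, a \emph{rotation of $\real^{d+1}$} (its Step~2): the rotation $\Rmc$ mapping $\nu_g$ to $\nu_f$ carries $\Gam_g$ inside $B_{R_0}$ onto the graph $\Gam_w$ of a function $w$ with $w(0)=f(0)$ and $\grad w(0)=\grad f(0)$, so that $f$ and $w$ \emph{are} tangent. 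The smallness of $|\grad f(y)-\grad g(y)|$ is what guarantees the rotated set is still a graph. One then applies Lemma~\ref{lemAuxOp:PreparingForHLipschitz-LocalEstimateK*} to $f$, $w$, $W_f$, and $V=W_g\circ\Rmc^{-1}$, exploiting $\partial_{\nu_f}V(0)=\partial_{\nu_g}W_g(0)$ by construction. This rotation in the ambient space, rather than a modification of $g$ as a function, is the genuine missing idea in your write-up.
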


\begin{corollary}\label{CorAuxOp:HLocallyLipC1Gam}
	$H$ is locally Lipschitz as a mapping $H: C^{1,\gam}(\real^d)\to C^0(\real^d)$.
\end{corollary}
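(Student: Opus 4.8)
The plan is to deduce Corollary \ref{CorAuxOp:HLocallyLipC1Gam} directly from Theorem \ref{thmAuxOp:HPreciseLipStatementC1Gam}, which is the substantive estimate. Recall that ``$H$ is locally Lipschitz as a mapping $C^{1,\gam}(\real^d)\to C^0(\real^d)$'' means: for every bounded set $\B\subset C^{1,\gam}(\real^d)$ there is a constant $L=L(\B)$ so that $\norm{H(f,\cdot)-H(g,\cdot)}_{L^\infty(\real^d)}\leq L\norm{f-g}_{C^{1,\gam}(\real^d)}$ for all $f,g\in\B$. So first I would fix such a bounded set, i.e. fix $m>0$ and work with $f,g$ in the ball $\{\norm{h}_{C^{1,\gam}(\real^d)}<m\}=\K(\gam,m)$. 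Note that $\K(\gam,m)\subset\K^*(\gam,m)$ (every globally $C^{1,\gam}$ function with norm $<m$ is $C^{1,\gam}$-semi-concave with constant $m$, using the global Taylor estimate at every point), and moreover such $f,g$ lie in $C^{1,\gam}(B_{2R}(x_0))$ for \emph{every} $x_0$ and every $R$, with the local norms bounded by the global ones.

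The second step is to apply Theorem \ref{thmAuxOp:HPreciseLipStatementC1Gam}. Fix once and for all some convenient $R_0$, say $R_0=1$, and take $R=R_0+1=2$ (any fixed choice with $R>R_0$ works). For an arbitrary $x_0\in\real^d$, the theorem gives
\begin{align*}
	\abs{H(f,x_0)-H(g,x_0)} \leq \norm{H(f,\cdot)-H(g,\cdot)}_{L^\infty(B_R(x_0))} \leq C(\gam,m,d,R_0)\left(\norm{f-g}_{C^{1,\gam}(B_{2R}(x_0))} + \norm{f-g}_{L^\infty(\real^d)}\right).
\end{align*}
Since $\norm{f-g}_{C^{1,\gam}(B_{2R}(x_0))}\leq \norm{f-g}_{C^{1,\gam}(\real^d)}$ and $\norm{f-g}_{L^\infty(\real^d)}\leq\norm{f-g}_{C^{1,\gam}(\real^d)}$, the right-hand side is bounded by $2C(\gam,m,d,R_0)\norm{f-g}_{C^{1,\gam}(\real^d)}$. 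Crucially the constant $C(\gam,m,d,R_0)$ does not depend on $x_0$ (the only role of $x_0$ in the theorem is as the center, and translation does not affect the estimate — this is consistent with the translation invariance of $H$ in Proposition \ref{propAuxOp:HProperties}(iii)), so taking the supremum over $x_0\in\real^d$ yields
\begin{align*}
	\norm{H(f,\cdot)-H(g,\cdot)}_{L^\infty(\real^d)} \leq 2\,C(\gam,m,d,R_0)\,\norm{f-g}_{C^{1,\gam}(\real^d)}.
\end{align*}
This is exactly the local Lipschitz bound with $L=2C(\gam,m,d,1)$ on the set $\K(\gam,m)$, and since $m$ was an arbitrary bound on the $C^{1,\gam}$ norm, the conclusion follows.

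There is essentially no obstacle here — the corollary is a packaging statement and all the real work is in Theorem \ref{thmAuxOp:HPreciseLipStatementC1Gam} and the lemmas feeding it. The only point requiring a word of care is the passage from the local-norm bound in the theorem to a global-norm Lipschitz estimate, which works precisely because the constant in Theorem \ref{thmAuxOp:HPreciseLipStatementC1Gam} is uniform in the base point $x_0$ and because, for $f,g\in\K(\gam,m)$, the local $C^{1,\gam}(B_{2R}(x_0))$ norms are controlled by the global $C^{1,\gam}(\real^d)$ norm uniformly in $x_0$. One should also double-check the (routine) inclusion $\K(\gam,m)\subset\K^*(\gam,m)$ needed to invoke the theorem.
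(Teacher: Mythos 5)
Your proposal is correct and takes the same route as the paper: the authors state Corollary \ref{CorAuxOp:HLocallyLipC1Gam} without a separate proof, treating it as an immediate consequence of Theorem \ref{thmAuxOp:HPreciseLipStatementC1Gam}, and your argument simply spells out that deduction — verify $\K(\gam,m)\subset\K^*(\gam,m)\cap C^{1,\gam}(B_{2R}(x_0))$ for every $x_0$, fix $R_0$ and $R>R_0$, apply the theorem pointwise, note the constant is uniform in $x_0$, and bound the local norms by the global $C^{1,\gam}(\real^d)$ norm. The only detail worth writing out (which you flag but don't do) is the inclusion $\K(\gam,m)\subset\K^*(\gam,m)$: for $f\in\K(\gam,m)$ one takes $p=\nabla f(x)$ and uses the Taylor estimate $f(x+h)-f(x)-\nabla f(x)\cdot h=\int_0^1(\nabla f(x+th)-\nabla f(x))\cdot h\,dt\leq[\nabla f]_{C^{0,\gam}}|h|^{1+\gam}\leq m|h|^{1+\gam}$, and $\|f\|_{C^{0,1}}\leq\|f\|_{C^{1,\gam}}<m$.
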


\begin{proof}[Proof of Lemma \ref{lemAuxOp:PreparingForHLipschitz-LocalEstimateK*}]
	
	We consider the function $V=U_f-U_g$ in the domain $D_f\intersect D_g\intersect B_R(X_0)$.  We note that for the function $w$ given by 
	\begin{align*}
		w = \min\{f,g\},
	\end{align*}
	we have
	\begin{align*}
		D_f\intersect D_g = D_w\ \ \text{and}\ \ 
		\partial \left( D_f\intersect D_g \right) = \Gam_w.
	\end{align*}
	Taking a minimum of functions in $\K^*(\gam,m)$ preserves this property, and so also $w\in \K^*(\gam,m)$. And, as $f, g\in m-C^{1,\gamma}(x_0)$ with $f(x_0)=g(x_0)$ and $\grad f(x_0) = \grad g(x_0)$, we see that $w\in m\text{-}C^{1,\gam}(x_0)$.

	We note that for $x\in B_R(x_0)$, since $f-g\in C^{1,\gam}(B_R(x_0))$, and since $f,g\in \K^*(\gam,m)$, by Lemma \ref{lemAuxOp:WfLipOnK*Part1}, for $X\in \Gam_w\intersect B_R(x_0)$,
	\begin{align*}
		\abs{U_f(X) - U_g(X)} \leq C(\gam,m,d)\abs{f(x)-g(x)}\leq C(\gam,m)\norm{f-g}_{C^{1,\gam}(B_R(x_0))}\abs{X-X_0}^{1+\gam}.
	\end{align*}
	Thus, for $X\in \partial(D_w\intersect B_R(x_0))$,
	\begin{align*}
		\abs{V(X)}\leq C(\gam,m,d)\left( \norm{f-g}_{C^{1,\gam}(\real^d)}  
		+ \frac{\norm{U_f-U_g}_{L^\infty(B_R\intersect D_w)}}{R^{1+\gam}}\right)
		\abs{X-X_0}^{1+\gam}.
	\end{align*}
	We can then apply Lemma \ref{lemAuxOp:BoundaryParabolaEstimate} to $V$ to conclude the desired inequality for $\abs{\partial_\nu U_f(X_0) - \partial_\nu U_g(X_0) }$.
	
\end{proof}

\begin{proof}[Proof of Proposition \ref{propAuxOp:HLipschitz-K*Ingredients}]
	
	We use the same notation as in the proof of Lemma \ref{lemAuxOp:PreparingForHLipschitz-LocalEstimateK*}, specifically, $w=\min\{f,g\}$.
	Lemma \ref{lemAuxOp:WfLipOnK*Part1} for each of $W_f$ and $W_g$ shows that there exists  a positive constant $C$ depending on $\gam$ and $m$ such that,
	\begin{align*}
		\textnormal{for}\ X\in\Gam_w,\ \ \abs{W_f(X)-W_g(X)}\leq C\norm{f-g}_{L^\infty(\R^d)}.
	\end{align*}
	Hence, by the comparison result, Proposition \ref{propAppendix:weak-max-principle}, we see that 
	\begin{align*}
		\norm{W_f-W_g}_{L^\infty(D_f\intersect D_g)}\leq \norm{W_f-W_g}_{L^\infty(\Gam_w)}
		\leq C\norm{f-g}_{L^\infty(\real^d)}.
	\end{align*}
	Furthermore, from the assumption that $f,g\in (m\text{-}C^{1,\gam}(x_0))$, that $f(x_0)=g(x_0)$, and $\grad f(x_0)=\grad g(x_0)$, we have that $w\in (m\text{-}C^{1,\gam}(x_0))$.  Also, by assumption, $f-g\in C^{1,\gam}(B_R(x_0))$.  We can then invoke Lemma \ref{lemAuxOp:PreparingForHLipschitz-LocalEstimateK*}.
\end{proof}

\begin{proof}[Proof of Theorem \ref{thmAuxOp:HPreciseLipStatementC1Gam}]

We split the proof into a few  steps for clarity. We would like to point out that the following steps are intended to create a situation to which Lemma \ref{lemAuxOp:PreparingForHLipschitz-LocalEstimateK*} can be applied.

\paragraph{\textbf{Step 0.}} A translation and shift to attain $x=0$ and $f(0)=g(0)=0$.

By Proposition \ref{propAuxOp:HProperties}, we know that $H$ is translation invariant and invariant by the addition of constants.  Thus, without loss of generality, we will prove that
\begin{align}\label{eqLip:OnlyNeedXAtZero}
	\abs{H(f,0)-H(g,0)}\leq C \left(
	\norm{f-g}_{C^{1,\gam}(B_{R})} +  
	\norm{f-g}_{L^\infty(\real^d)}
	\right),
\end{align}
under the addition assumption that $f(0)=g(0)$.  Then, after taking the translation into account, we will obtain the estimate for $\norm{H(f,\cdot) - H(g,\cdot)}_{L^\infty(B_R(x_0))}$.

\paragraph{\textbf{Step 1.}}  A smallness assumption on $f-g$ that depends on $m$, $\gam$, $d$.

It will be useful in the subsequent steps to have $f-g$ appropriately small, which comes from using a rotation of $\real^{d+1}$ in our argument.  The choice of $\ep$ happens in step 2.  To this end, given $m$, and given a fixed $\ep>0$, we note that it suffices to show that (\ref{eqLip:OnlyNeedXAtZero}) holds whenever
\begin{align}\label{eqLip:OnlyNeedXAtZero-small-eps}
	\norm{f-g}_{C^{1,\gam}(B_R)} + 
	\norm{f-g}_{L^\infty(\mathbb{R}^d)}  < \varepsilon.
\end{align}
Indeed, by Corollary \ref{corAuxOp:WfLipOnK*Part2}, since $f,g\in \K^*(\gamma, m)$, there exists $C(\gamma, m, d)$ such that
\begin{align}\label{eqLipschitz-boundWf-Wg}
   \Vert \nabla W_f\Vert_{L^\infty(\overline{D}_f)} \leq C, \qquad    \Vert \nabla W_g\Vert_{L^\infty(\overline{D}_g)} \leq C.
\end{align}
 Therefore $|H(f,0)|\leq C$ and $|H(g,0)|\leq C$. For $f,g$ such that \eqref{eqLip:OnlyNeedXAtZero-small-eps} is not true then
\begin{align*}
   |H(f,0) - H(g, 0)| &\leq |H(f,0)| + |H(g,0)| \\
   &\leq \left(2C\varepsilon^{-1}\right)\varepsilon \leq \left(2C\varepsilon^{-1}\right)
   \left( \norm{f-g}_{C^{1,\gam}(B_R)} + \norm{f-g}_{L^\infty(\mathbb{R}^d)} \right).
\end{align*}

We reiterate that all dependence on the choice of $\ep$ appears in step 2.

\paragraph{\textbf{Step 2.}} Apply a rotation to reduce to $\grad f(0)=\grad g(0)$.

Let $\mathcal{R}:\real^{d+1}\to\real^{d+1}$ be  
 the unique rotation such that
\begin{align*}
	\mathcal{R}\nu_g = \nu_f\ \ \ \textnormal{and}\ \ \
	\mathcal{R} v = v\ \ \textnormal{for}\ \ v\in\left(\textnormal{span}(\nu_f,\nu_g)\right)^{\perp}.
\end{align*}
We note that $\ep$ can be chosen small enough so that $\abs{\grad f(0) - \grad g(0)}\leq \ep$ implies that the image of $\Gamma_g$ via $\mathcal{R}$ inside of $B_{R_0}$ is  a graph of a bounded function.   Furthermore, this function can be extended to all of $\real^d$ is a way that does not expand the $\K^*(\gam, m)$ property by more than a multiple of $m$.  More precisely, for 
$\varepsilon=\varepsilon(m, \gamma, d, R_0)>0$ small enough, there exists a function $w$ so that
\begin{align}\label{eqAuxOp:LipProofConditionsOnw}
	&w \in \K^*(\gam, 2m)\intersect (2m)\text{-} C^{1,\gam}(0),
	\ \ \textnormal{and}\ \ 
	f-w\in C^{1,\gam}(B_{R_0}) \nonumber\\ 
	&\textnormal{with}\ \  
	\norm{f-w}_{C^{1,\gam}(B_{R_0})}\leq 2\norm{f-g}_{C^{1,\gam}(B_{R_0})},
\end{align}
and also 
\begin{align*}
	\Rmc(\Gam_g)\intersect B_{R_0} = \Gam_w \intersect B_{R_0} . 
\end{align*}
  Indeed, by construction of  $\mathcal{R}$, we see
  \begin{align*}
    \norm{ \mathcal{R} - \mathrm{Id}} \leq \vert \nu_f-\nu_g\vert \leq  |\nabla f(0) - \nabla g(0)|.
\end{align*}
As a consequence, there exists $C=C(R_0)>0$, so that
\begin{align*}
	\norm{g-w}_{L^\infty(B_{R_0})}\leq C\norm{\mathcal{R} - \Id}\leq C\abs{\grad f(0)-\grad g(0)},
\end{align*}
and thus a smallness assumption on $\abs{\grad f(0) - \grad g(0)}<\ep$ can be used to obtain (\ref{eqAuxOp:LipProofConditionsOnw}).

\medskip

\noindent
\paragraph{\textbf{Step 3.}} An estimate $\abs{\partial_{\nu_f}W_f(0) - \partial_{\nu_g} W_g(0)}$.

We introduce the following function, $V$:
\begin{align}\label{eqLipschizt:def_V}
	V:D_w\intersect B_{R_0}\to\real,\ \ \ \ V(X) = W_g\left(\mathcal{R}^{-1}X\right).
\end{align}
By the definition of $\mathcal{R}$ and $w$, $V$ is harmonic in $D_w\intersect B_{R_0}$, and by construction, $\partial_{\nu_f}V(0) = \partial_{\nu_g}W_g(0)$. Therefore
\begin{align*}
	\partial_{\nu_f} W_f(0) - \partial_{\nu_g} W_g(0)
	&= \partial_{\nu_f}W_f(0) - \partial_{\nu_f} V(0).
\end{align*}
Furthermore, by construction, $f$, $w$, $W_f$, and $V$ satisfy the assumptions of Lemma \ref{lemAuxOp:PreparingForHLipschitz-LocalEstimateK*} for $r={R_0}$.  Thus,
\begin{align*}
	\abs{\partial_{\nu_f}W_f(0) - \partial_{\nu_f} V(0)}
	&\leq C(\gam,m,d)\left(
	\norm{f-w}_{C^{1,\gam}(B_{R_0})} +  \norm{W_f - V}_{L^\infty(D_f\intersect D_w\intersect B_{R_0})}\right) \\
	&\leq C(\gam,m,d)\left(
	\norm{f-g}_{C^{1,\gam}(B_{R_0})} +  \norm{W_f - V}_{L^\infty(D_f\intersect D_w\intersect B_{R_0})}
	\right),
\end{align*}
where we've used  (\ref{eqAuxOp:LipProofConditionsOnw}) to obtain the final inequality. 
We will now establish,
\begin{align}
\label{eq:Lip to do}
\norm{W_f - V}_{L^\infty(D_f\intersect D_w\intersect B_{R_0})}&\leq C(\gam,m,d)\left(\norm{f-g}_{C^{1,\gam}(B_{R_0})} +  \|f-g\|_{L^\infty(\real^d)}\right);
\end{align}
together with the previous estimate, and the fact that $\partial_{\nu_f} V(0) = \partial_{\nu_g}W_g(0)$, will complete the proof.

To this end, we first note that from the fact that $f,g\in \K^*(\gam, m)$ and Lemma \ref{lemAuxOp:WfLipOnK*Part1}, we have that
\begin{align*}
	\textnormal{for}\ \ X\in \partial \left( D_f\intersect D_g \right),\ \ 
	\abs{W_f-W_g}\leq C(\gam,m,d)\norm{f-g}_{L^\infty}.
\end{align*}
Hence by the comparison result in Proposition \ref{propAppendix:weak-max-principle}, we have
\begin{align}\label{eqAuxOp:LipProofWfMinusWg}
	\norm{W_f-W_g}_{L^\infty(D_f\intersect D_g)}\leq C(\gam,m,d)\norm{f-g}_{L^\infty(\real^d)}.
\end{align}
 
To establish (\ref{eq:Lip to do}) we first consider  $X=(x,x_{d+1})\in D_f\intersect D_w\cap B_{R_0}\intersect D_g$. Since $g\in\K^*(\gam,m)$  Corollary \ref{corAuxOp:WfLipOnK*Part2} yields  $\norm{\grad W_g}_{L^\infty(\real^d)}\leq C(\gam,m,d)$, and thus
\begin{align*}
	|W_g(X)-V(X)|=\abs{W_g(X) - W_g(\Rmc^{-1}X)}&\leq C(\gam,m,d)\abs{X - \Rmc^{-1} X}\\
	&\leq C(\gam,m,d) \norm{\Rmc}\abs{X} \\
	&\leq C(\gam,m,d, R_0) \abs{\grad f(0) - \grad g(0)},
\end{align*}
where to obtain the final inequality we used that $|X|\leq R_0$ and $R_0$ is a fixed parameter that does not depend on $m$, $\gamma$, and $d$. This estimate, together with  (\ref{eqAuxOp:LipProofWfMinusWg}), imply,
\begin{align}
\label{eq:onDg}
\|W_f -V\|_{L^\infty (D_f\intersect D_w\cap B_{R_0} \intersect D_g)}&\leq \|W_f -W_g\|_{L^\infty (D_f\intersect D_w\intersect D_g)}+\|W_g -V\|_{L^\infty (D_f\intersect D_w\intersect D_g)} \nonumber \\
&\leq C(\gam, m, d, R_0) \left(\norm{f-g}_{C^{1,\gam}(B_{R_0})} + \|f-g\|_{L^\infty(\real^d)}\right).
\end{align}

We continue  with establishing (\ref{eq:Lip to do}). Let $X=(x,x_{d+1}) \in D_f\intersect D_w\cap B_{R_0}\cap D_g^c$, so that 
\begin{align*}
	g(x) < x_{d+1} \leq f(x).
\end{align*}
Letting 
 $\hat X = (x,g(x))\in\overline D_g$, we note, \begin{align*}
	\abs{ (W_f - V)(X) - (W_f-V)(\hat X) } 
	&\leq \norm{\grad (W_f-V)}_{L^\infty(B_{R_0})}\abs{X-\hat X}\\
	&\ \ = \norm{\grad (W_f-V)}_{L^\infty(B_{R_0})}\abs{f(x)-g(x)}\\
	&\ \ \leq C(\gam,m,d)\abs{f(x)-g(x)},
\end{align*}
where in the last inequality, we used Corollary \ref{corAuxOp:WfLipOnK*Part2} for $W_f$ and standard $L^\infty\text{-}C^{0,1}$ estimates for the harmonic function $V$. Using this estimate, together with (\ref{eq:onDg}), yields,
\begin{align*}
|(W_f-V)(X)| &\leq |(W_f-V)(X)-(W_f-V)(\hat X)|+|(W_f-V)(\hat X)| \\
&\leq C(\gam,m,d)\abs{f(x)-g(x)} + C(\gam, m, d, R_0) \left(\norm{f-g}_{C^{1,\gam}(B_{R_0})} + \|f-g\|_{L^\infty(\real^d)}\right),
\end{align*}
so that
\[
\|W_f -V\|_{L^\infty (D_f\intersect D_w\cap B_{R_0} \intersect D_g^c)}\leq C(\gam,m, d, R_0) \left(\norm{f-g}_{C^{1,\gam}(B_{R_0})} + \|f-g\|_{L^\infty(\real^d)}\right).
\]
The previous line and  (\ref{eq:onDg}) yield (\ref{eq:Lip to do}), completing the proof.
\end{proof}


\subsection{The splitting property}\label{subsection:splitting-J}

We verify a useful splitting property  of our operator $H$ in this section.  It shows how nearby a location of evaluation $H(f,x)$ only depends locally on the $C^{1,\gam}$ regularity of $f$ and the global $L^{\infty}$ behavior.   A key step is the following lemma.

\begin{lemma}\label{lemAuxOp:Jsplitting-property-equal-B2R} There exist  constants $C>0$ and $\alpha\in(0,1]$, depending only on $d$, $\gamma$, and $m$,  such that for all  $R \geq 1$, $y_0\in\mathbb{R}^d$, and all $f,g\in \K^*(\gamma, m)$, with $f-g\in C^{1,\gam}(m)$ and $f\equiv g$ in $B_{2R}(y_0)$, we have,
    \begin{align*}
        \norm{ H(f,\cdot) - H(g,\cdot)}_{L^\infty(B_{R}(y_0))} \leq C(\gamma, m, d)R^{-\alpha} \Vert f-g \Vert_{L^\infty(\mathbb{R}^d)}.
    \end{align*}
\end{lemma}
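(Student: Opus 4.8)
The plan is to reduce the estimate to a single pointwise bound, and then to combine a quantitative Hölder decay of $W_f-W_g$ toward the (common) free boundary with the boundary barrier estimate of Lemma~\ref{lemAuxOp:BoundaryParabolaEstimate}. By translation invariance (Proposition~\ref{propAuxOp:HProperties}) we may take $y_0=0$, and it suffices to bound $\abs{H(f,x_0)-H(g,x_0)}$ at an arbitrary $x_0\in B_R(0)$ where $f$ --- hence also $g$, since $f\equiv g$ on $B_{2R}(0)\ni x_0$ --- is punctually $C^{1,\gam}$; by Corollary~\ref{corAuxOp:HDefinedAEWhenFinK*} such points are of full measure, and at them $H(f,x_0)$ and $H(g,x_0)$ are classically defined via Corollary~\ref{corPtWise:NormalDeriv}. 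Set $w=\min\{f,g\}\in\K^*(\gam,m)$; since $f\equiv g$ near $x_0$ one has $w\equiv f\equiv g$ there, so $X_0:=(x_0,f(x_0))=(x_0,g(x_0))$, $\grad w(x_0)=\grad f(x_0)=\grad g(x_0)$ and $\nu_w(X_0)=\nu_f(X_0)=\nu_g(X_0)=:\nu$. Let $V=W_f-W_g$, harmonic in $D_f\intersect D_g=D_w$; by \eqref{eqCheckAssumpt:DefOfHeleShawOp} and $\grad f(x_0)=\grad g(x_0)$ we have $H(f,x_0)-H(g,x_0)=\big(\sqrt{1+\abs{\grad f(x_0)}^2}\,\big)\,\partial_\nu V(X_0)$ with $\abs{\grad f(x_0)}\le m$, so it remains only to estimate $\abs{\partial_\nu V(X_0)}$, which exists by Corollary~\ref{corPtWise:NormalDeriv}.

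The first ingredient is a global bound on $V$, obtained exactly as in the proof of Proposition~\ref{propAuxOp:HLipschitz-K*Ingredients}: on $\Gam_w=\partial D_w$, Lemma~\ref{lemAuxOp:WfLipOnK*Part1} together with $W_f=0$ on $\Gam_f$ and $W_g=0$ on $\Gam_g$ gives $\abs{W_f-W_g}\le C(\gam,m,d)\norm{f-g}_{L^\infty(\R^d)}$, so the comparison principle (Proposition~\ref{propAppendix:weak-max-principle}) yields $\norm{V}_{L^\infty(D_w)}\le M_0:=C(\gam,m,d)\norm{f-g}_{L^\infty(\R^d)}$. Since $f\equiv g$ on $B_{2R}(0)$ and $\Gam_w\intersect B_R(X_0)$ projects into $B_{2R}(0)$ (because $\abs{x_0}\le R$), we also have $W_f=W_g=0$ on $\Gam_w\intersect B_R(X_0)$, i.e.\ $V\equiv 0$ there.

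The crux of the proof --- and the step I expect to be the main obstacle --- is to improve the trivial bound $\abs V\le M_0$ to one carrying a genuine power of $R^{-1}$ on a unit-size neighborhood of $X_0$. Because $D_w$ is the subgraph of the $m$-Lipschitz function $w$, it satisfies a uniform exterior cone condition with aperture depending only on $m$; applying to $V$ on $D_w\intersect B_R(X_0)$ the classical boundary Hölder estimate for harmonic functions that vanish on a relatively open portion of the boundary of a Lipschitz domain (here using $V\equiv0$ on $\Gam_w\intersect B_R(X_0)$ and $\abs V\le M_0$) produces $\al=\al(m,d)\in(0,1]$ and $C=C(m,d)$ with
\begin{align*}
	\abs{V(X)}\le C\left(\frac{\abs{X-X_0}}{R}\right)^{\al}M_0,\qquad X\in D_w\intersect B_R(X_0).
\end{align*}
This can be made self-contained by the usual dyadic iteration of an oscillation-decay estimate coming from a cone barrier, all of which is scale invariant, so the constants depend only on $m$ and $d$. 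In particular, since $R\ge1$, on $\partial B_{1/2}(X_0)\intersect D_w$ we get $\abs V\le C R^{-\al}M_0$. The obstacle is precisely that this gain is a boundary-decay (exterior-cone / boundary-Harnack-type) fact for harmonic functions, of a different nature than the $L^\infty$-comparison arguments used elsewhere in this section.

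Finally, this is fed into Lemma~\ref{lemAuxOp:BoundaryParabolaEstimate} with $r_0=1/2$: on $\partial\big(D_w\intersect B_{1/2}(X_0)\big)$ we have $\abs{V(X)}\le C_1\abs{X-X_0}^{1+\gam}$ with $C_1:=2^{1+\gam}C R^{-\al}M_0$, since on $\partial B_{1/2}(X_0)\intersect D_w$ this is the previous display with $\abs{X-X_0}^{1+\gam}=(1/2)^{1+\gam}$, while on $\Gam_w\intersect B_{1/2}(X_0)$ we have $V\equiv0$. As $w\in\K^*(\gam,m)$ is $C^{1,\gam}$-semiconcave near $x_0$ --- which, by Remark~\ref{rem:barrier}, is all that Lemma~\ref{lemAuxOp:BoundaryParabolaEstimate} needs --- and $V$ is harmonic in $D_w\intersect B_{1/2}(X_0)$, the lemma gives $\abs{V(X_0+s\nu_w)}\le C(\gam,m,d)\,C_1\,s$ for small $s>0$; letting $s\to0^+$, and recalling that $\partial_\nu V(X_0)$ exists and $\nu_w(X_0)=\nu$, we obtain $\abs{\partial_\nu V(X_0)}\le C(\gam,m,d)R^{-\al}\norm{f-g}_{L^\infty(\R^d)}$. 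Multiplying by $\sqrt{1+\abs{\grad f(x_0)}^2}\le\sqrt{1+m^2}$ and taking the supremum over admissible $x_0\in B_R(0)$ gives $\norm{H(f,\cdot)-H(g,\cdot)}_{L^\infty(B_R(0))}\le C(\gam,m,d)R^{-\al}\norm{f-g}_{L^\infty(\R^d)}$, as claimed.
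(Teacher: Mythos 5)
Your proof is correct and follows the same strategy as the paper: reduce to a pointwise estimate at $x_0\in B_R(y_0)$ via translation invariance, set $V = W_f - W_g$ on $D_w$ with $w = \min\{f,g\}\in\K^*(\gamma,m)$, control $\abs{V}$ globally via Lemma \ref{lemAuxOp:WfLipOnK*Part1} and the maximum principle, observe $V\equiv 0$ on $\Gamma_w\cap B_R(X_0)$, gain the factor $R^{-\alpha}$ from boundary H\"older decay of harmonic functions vanishing on a Lipschitz boundary portion, and convert this to a normal-derivative bound at $X_0$ via a barrier. The only difference is bookkeeping: the paper encapsulates the rescaling/H\"older-decay/barrier sequence in Lemma \ref{lemAppendix:BetterVersionASLemA.5} of the Appendix (applied to $V/\Vert f-g\Vert_{L^\infty(\R^d)}$, with the final barrier coming from Lemma \ref{lemAuxOp:BarrierFuns}), whereas you re-derive the same H\"older decay inline by the exterior-cone/dyadic argument and then invoke Lemma \ref{lemAuxOp:BoundaryParabolaEstimate} with $r_0=1/2$ for the barrier step --- both amount to the same estimate.
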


\begin{proof} 

Let $C$, $R_0$, $s_0$, and $\alpha$ be as given by  Lemma \ref{lemAppendix:BetterVersionASLemA.5}. Fix $R>R_0$. It suffices to prove that, for  any $x_0\in\R^d$ and any $f,g\in \mathcal{K}(\gamma, m)$ with $f\equiv g$ in $B_{R}(x_0)$, we have
\begin{align}\label{eq:J-splitting-x0-BR}
    |H(f, x_0) - H(g,x_0)| \leq CR^{-\alpha}\Vert f-g\Vert_{L^\infty(\mathbb{R}^d)}.
\end{align}
Indeed, suppose this holds, and we have $y_0\in \R^d$ and $f,g\in \mathcal{K}(\gamma, m)$ with $f\equiv g$ in $B_{2R}(y_0)$. Then for any $x_0\in B_R(y_0)$ we have $\tau_{x_0-y_0}(f) \equiv \tau_{x_0-y_0}(g)$ in $B_R(x_0)$. Using the translation-invariance of $H$ as well as (\ref{eq:J-splitting-x0-BR}), we obtain,
\begin{align*}
    |H(f, x_0) - H(g,x_0)| 
    &= \left|H(\tau_{x_0-y_0}f, x_0) - H(\tau_{x_0-y_0}g, x_0)\right| \\
    &\leq CR^{-\alpha}\Vert \tau_{x_0-y_0}f-\tau_{x_0-y_0}g\Vert_{L^\infty(\mathbb{R}^d)} =  CR^{-\alpha}\Vert f-g\Vert_{L^\infty(\mathbb{R}^d)},
\end{align*}
as desired.

So, let us take  any $x_0\in\R^d$ and any $f,g\in \K^*(\gamma, m)$ with $f\equiv g$ in $B_{R}(x_0)$; the remainder of the proof is devoted to showing that (\ref{eq:J-splitting-x0-BR}) holds.  To this end, let us define $V=U_f - U_g$ in the set $D_f\intersect D_g$.  We see that if we define $w= \min\{f,g\}$, then $w\in K^*(\gam,m)$, and $D_f\intersect D_g = D_w$.  Furthermore, by Lemma \ref{lemAuxOp:WfLipOnK*Part1}, 
\begin{align*}
	\textnormal{for}\ X\in\ \Gam_w,\ \ \abs{V(X)}\leq \abs{f(x)-g(x)}.
\end{align*}
Without loss of generality, we can assume $\norm{f-g}_{L^\infty}>0$.
Thus, the pair $V/\|f-g\|_{L^\infty(\real^d)}$ and $w$ satisfy the assumptions of Lemma \ref{lemAppendix:BetterVersionASLemA.5} (where actually $w$ is the $f$ in Lemma \ref{lemAppendix:BetterVersionASLemA.5}, and the upper bound in part (ii) of the lemma arises from the fact that $f$, and hence $w$, is in $\K^*$), 
yielding
\begin{align*}
	\abs{V(X_0+s\nu)}\leq \frac{Cs}{R^{\al}}\norm{f-g}_{L^\infty}.
\end{align*}
Recalling the definition of $V$ thus gives (\ref{eq:J-splitting-x0-BR}), completing the proof.
\end{proof}

With Lemma \ref{lemAuxOp:Jsplitting-property-equal-B2R} in hand,  we can do a cutoff argument to remove the condition $f\equiv g$ in $B_{R}(x_0)$.

\begin{theorem}\label{thmAuxOp:SplittingTheorem} 
There exist   constants $C>0$ and $\al\in(0,1)$, depending on $d$, $\gamma$, and $m$, such that for all $x_0\in\real^d$, $R\geq 1$, with $f,g\in \K^*(\gamma, m)$, with $f-g\in C^{1,\gam}(B_{3R}(x_0))$, 
    \begin{align*}
        \Vert H(f,\cdot) - H(g,\cdot) \Vert_{L^\infty(B_R(x_0))} \leq C\left( \Vert f-g\Vert_{C^{1,\gamma}(B_{3R}(x_0))} + R^{-\al} \Vert f-g \Vert_{L^\infty(\mathbb{R}^d)} \right).
    \end{align*}
\end{theorem}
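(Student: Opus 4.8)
The plan is a splicing (cutoff) argument. Interpolate between $f$ and $g$ by a function $\tilde g$ that coincides with $f$ on a large ball around $x_0$ and with $g$ far away; then bound $H(f)-H(\tilde g)$ using the ``agree on a ball'' estimate Lemma~\ref{lemAuxOp:Jsplitting-property-equal-B2R}, which supplies the decaying factor $R^{-\al}$ on the global $L^\infty$ norm, and bound $H(\tilde g)-H(g)$ using the Lipschitz machinery of Section~\ref{subsection:Lipschitz}, which contributes only the $C^{1,\gam}$ piece with no $R$-decay needed. First, reduce to the case $\norm{f-g}_{C^{1,\gam}(B_{3R}(x_0))}<\ep_0$ for a small constant $\ep_0=\ep_0(\gam,m,d)$ to be chosen: if this norm is $\geq\ep_0$, then since $f,g\in\K^*(\gam,m)$ Corollary~\ref{corAuxOp:WfLipOnK*Part2} gives the a priori bound $\norm{H(f,\cdot)}_{L^\infty}+\norm{H(g,\cdot)}_{L^\infty}\leq C(\gam,m,d)$, so the left-hand side of the claimed estimate is at most $C(\gam,m,d)\leq(C/\ep_0)\norm{f-g}_{C^{1,\gam}(B_{3R}(x_0))}$, and we are done.

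Next, fix $\eta\in C_c^\infty(B_{5R/2}(x_0))$ with $\eta\equiv1$ on $B_{2R}(x_0)$, $0\leq\eta\leq1$, $\abs{\grad\eta}\leq C/R$, and $[\grad\eta]_{C^{0,\gam}(\real^d)}\leq C/R^{1+\gam}$, and set $\tilde g:=g+\eta(f-g)$. Because $R\geq1$ and $\supp\eta\subset\subset B_{3R}(x_0)$, the function $h:=\tilde g-g=\eta(f-g)$ lies in $C^{1,\gam}(\real^d)$ with $\norm{h}_{C^{1,\gam}(\real^d)}\leq C(\gam,d)\norm{f-g}_{C^{1,\gam}(B_{3R}(x_0))}$ and $\norm{h}_{L^\infty(\real^d)}\leq\norm{f-g}_{L^\infty(B_{3R}(x_0))}\leq\norm{f-g}_{C^{1,\gam}(B_{3R}(x_0))}$; moreover $\tilde g\equiv f$ on $B_{2R}(x_0)$, and $\norm{f-\tilde g}_{L^\infty(\real^d)}=\norm{(1-\eta)(f-g)}_{L^\infty(\real^d)}\leq\norm{f-g}_{L^\infty(\real^d)}$. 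Adding the small globally-$C^{1,\gam}$ function $h$ to the $C^{1,\gam}$-semi-concave $g$ preserves semi-concavity, with the constant passing from $m$ to $m+C(\gam,d)\norm{h}_{C^{1,\gam}}$; hence for $\ep_0$ small enough $\tilde g\in\K^*(\gam,2m)$.

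By the triangle inequality, $\norm{H(f,\cdot)-H(g,\cdot)}_{L^\infty(B_R(x_0))}$ is at most $\norm{H(f,\cdot)-H(\tilde g,\cdot)}_{L^\infty(B_R(x_0))}+\norm{H(\tilde g,\cdot)-H(g,\cdot)}_{L^\infty(B_R(x_0))}$. For the first term, $f\equiv\tilde g$ on $B_{2R}(x_0)$ and $f,\tilde g\in\K^*(\gam,2m)$, so Lemma~\ref{lemAuxOp:Jsplitting-property-equal-B2R} (applied with parameter $2m$) yields $\norm{H(f,\cdot)-H(\tilde g,\cdot)}_{L^\infty(B_R(x_0))}\leq CR^{-\al}\norm{f-\tilde g}_{L^\infty(\real^d)}\leq CR^{-\al}\norm{f-g}_{L^\infty(\real^d)}$. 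For the second term, $\tilde g$ and $g$ both lie in $\K^*(\gam,2m)$ and differ by the globally-$C^{1,\gam}$ function $h$; at a point $y\in B_R(x_0)$ one re-runs the proof of Theorem~\ref{thmAuxOp:HPreciseLipStatementC1Gam} at the fixed scale $R_0=1$: apply the rotation $\Rmc$ of $\real^{d+1}$ making the inward normals of $g$ and $\tilde g$ agree at $y$ (its distance to the identity being $\lesssim\abs{\grad h(y)}\leq\norm{h}_{C^{1,\gam}}<\ep_0$, hence small), pass to the patched function $w$ with $\Rmc(\Gam_g)\intersect B_1(y)=\Gam_w\intersect B_1(y)$ and $w\in\K^*(\gam,4m)\intersect((4m)\text{-}C^{1,\gam}(y))$, and invoke Lemma~\ref{lemAuxOp:PreparingForHLipschitz-LocalEstimateK*} for the pair $\tilde g,w$ exactly as in Step~3 there to obtain $\abs{H(\tilde g,y)-H(g,y)}\leq C(\gam,m,d)(\norm{h}_{C^{1,\gam}(B_1(y))}+\norm{h}_{L^\infty(\real^d)})\leq C(\gam,m,d)\norm{f-g}_{C^{1,\gam}(B_{3R}(x_0))}$. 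Taking the supremum over $y$, combining the two terms, and consolidating constants (recalling the trivial case) gives the theorem; note the $R^{-\al}$ factor lands only on the global-$L^\infty$ piece, as required.

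I expect the main obstacle to be the second term. Theorem~\ref{thmAuxOp:HPreciseLipStatementC1Gam} is stated for functions that are $C^{1,\gam}$ on a ball twice the size of the one where one evaluates, whereas after splicing $\tilde g$ and $g$ are only $\K^*$-regular outside $B_{3R}(x_0)$; the point is that Steps~0--3 of that proof use only membership in a fixed $\K^*$-class together with $C^{1,\gam}$-control of the \emph{difference} on a fixed-size ball — all of which $\tilde g$, $g$, $h$ satisfy — so the argument can be rerun, but this should be checked line by line. The remaining care is bookkeeping: choosing $\ep_0$ so that $\tilde g\in\K^*(\gam,2m)$ and the rotation in the second term is admissible, and verifying that the cutoff $\eta$ contributes only the factors $R^{-1}$ and $R^{-1-\gam}$, which are $\leq1$ since $R\geq1$, so that all constants depend only on $\gam$, $m$, and $d$.
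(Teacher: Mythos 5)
Your proposal is correct and follows the paper's own proof in essence: the same cutoff splicing $\tilde g = \eta f + (1-\eta)g$ (the paper's $\hat f$), the same triangle-inequality decomposition, Lemma~\ref{lemAuxOp:Jsplitting-property-equal-B2R} for the agree-on-a-ball term, and the Lipschitz machinery for the remainder. Your two added precautions are well-placed and in fact address points that the paper's write-up glosses over: the reduction to $\norm{f-g}_{C^{1,\gam}(B_{3R}(x_0))}<\ep_0$ is genuinely needed so that the spliced function stays in a fixed class $\K^*(\gam,2m)$ (otherwise the constants supplied by Lemma~\ref{lemAuxOp:Jsplitting-property-equal-B2R} and the Lipschitz estimate would secretly depend on $\norm{f-g}_{C^{1,\gam}}$), and you are right that Theorem~\ref{thmAuxOp:HPreciseLipStatementC1Gam} as stated cannot simply be quoted, since its hypothesis has both functions in $C^{1,\gam}(B_{2R})$ whereas here $g$ is only $\K^*$-regular --- one must either re-run its proof as you suggest, or, more lightly, observe that the proof of Theorem~\ref{thmAuxOp:HPreciseLipStatementC1Gam} never uses more than $\K^*$-membership, punctual regularity, and $f-g\in C^{1,\gam}$ on a fixed ball, so its stated hypothesis can be weakened accordingly.
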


\begin{proof} Let $\psi:\R^d\rightarrow [0,1]$ be  smooth, compactly supported  in $B_{3R}(0)$, with  $\psi \equiv 1$ in $B_{2R}(0)$, and such that $\Vert \nabla \psi\Vert_{L^\infty(\mathbb{R}^d)} \leq CR^{-1}$, $\Vert \nabla D^2\psi\Vert_{L^\infty(\mathbb{R}^d)} \leq CR^{-2}$, and $\Vert \psi\Vert_{C^{1,\gamma}(\mathbb{R}^d)}\leq C$, for a positive contant $C$ depending only on $d$. Let $ \hat{f} = \psi f+ (1-\psi)g$, so that
\begin{align*}
  \hat{f} - g = \psi(f-g). 
\end{align*}
Using the Lipschitz property of $H$ from Theorem \ref{thmAuxOp:HPreciseLipStatementC1Gam}, the fact that $\hat{f}\equiv g$ outside of $B_{3R}(x_0)$, and the previous line, we obtain
\begin{align*}
    \Vert H(\hat{f}, \cdot) - H(g, \cdot)\Vert_{L^\infty(B_{3R}(x_0))} 
    &\leq C\Vert \hat{f} - g\Vert_{C^{1,\gamma}(B_{3R}(x_0))}  \\
    &\leq C\Vert \psi (f-g)\Vert_{C^{1,\gamma}(B_{3R}(x_0))}
    \leq C\Vert f-g\Vert_{C^{1,\gamma}(B_{3R}(x_0))},
\end{align*}
where $C$ is a positive constant that  may change from line to line and depends only on $d$, $\gamma$, $m$. We also note that, to obtain the final inequality we used the definition of the $C^{1,\gamma}$ norm as well as the elementary property that  for any H\"older continuous functions $h_1$ and $h_2$, we have, 
\[
[h_1h_2]_{C^{0,\gamma}}\leq [h_1]_{C^{0,\gamma}}\Vert h_2\Vert_{L^\infty} + [h_2]_{C^{0,\gamma}}\Vert h_1\Vert_{L^\infty}.
\]

On the other hand, since $\hat{f}\equiv f$ inside $B_{2R}(x_0)$, Lemma \ref{lemAuxOp:Jsplitting-property-equal-B2R} yields,
\begin{align*}
    \Vert H(\hat{f},\cdot) - H(f, \cdot)\Vert_{L^\infty(B_{R}(x_0))} \leq C\omega(R)\Vert \hat{f}-f\Vert_{L^\infty(\mathbb{R}^d)} 
    \leq CR^{-\alpha}\Vert f-g\Vert_{L^\infty(\mathbb{R}^d)} 
\end{align*}
where the  constants $C>0$ and $\gamma\in (0,1]$ depend only on $d$, $\gamma$, and $ m$. We therefore obtain
\begin{align*}
\Vert H(f,\cdot) - H(g,\cdot) \Vert_{L^\infty(B_R(x_0))} & \leq
    \Vert H(\hat{f},\cdot) - H(f, \cdot)\Vert_{L^\infty(B_{R}(x_0))} +\Vert H(\hat{f}, \cdot) - H(g, \cdot)\Vert_{L^\infty(\mathbb{R}^d)}  \\
    & \leq C\left( \Vert f-g\Vert_{C^{1,\gamma}(B_{3R}(x_0))} + R^{-\alpha} \Vert f-g \Vert_{L^\infty(\mathbb{R}^d)} \right),
\end{align*}
as desired.
\end{proof}

Thanks to the fact that $H$ is invariant by the addition of constants, the previous result can be made more precise.

\begin{corollary}\label{corAuxOp:SplittingTheorem-OscillationVersion}
	If $x_0$, $R$, $f$, $g$, and $C$ are all as in Theorem \ref{thmAuxOp:SplittingTheorem}, then
	\begin{align*}
		 \Vert H(f,\cdot) - H(g,\cdot) \Vert_{L^\infty(B_R(x_0))} \leq C\left(
		 \osc_{B_{3R}(x_0)} (f-g) + 
		 \norm{\grad f-\grad g}_{C^{\gam}(B_{3R})} + 
		 R^{-\al} \Vert f-g \Vert_{L^\infty(\mathbb{R}^d)} 
		 \right).
	\end{align*}
\end{corollary}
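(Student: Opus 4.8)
The plan is to deduce Corollary \ref{corAuxOp:SplittingTheorem-OscillationVersion} from Theorem \ref{thmAuxOp:SplittingTheorem} by exploiting the invariance of $H$ under the addition of constants, namely Proposition \ref{propAuxOp:HProperties}(iv). The idea is that replacing $g$ by $g+c$ for a well-chosen constant $c$ leaves $H(g,\cdot)$ unchanged while turning the $L^\infty(B_{3R}(x_0))$ contribution to $\norm{f-g}_{C^{1,\gam}(B_{3R}(x_0))}$ into $\osc_{B_{3R}(x_0)}(f-g)$, at the cost of harmless factors in the other terms.

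First I would fix $c:=\inf_{B_{3R}(x_0)}(f-g)$; this is well-defined since $f-g\in C^{1,\gam}(B_{3R}(x_0))$, and since $f,g\in\K^*(\gam,m)$ we have $\norm{f-g}_{L^\infty(\real^d)}\leq 2m$, hence $|c|\leq\norm{f-g}_{L^\infty(\real^d)}\leq 2m$. Set $\tilde g:=g+c$. Then $\tilde g\in\K^*(\gam,3m)$: indeed $\norm{\tilde g}_{L^\infty(\real^d)}\leq\norm{g}_{L^\infty(\real^d)}+|c|\leq 3m$, $\grad\tilde g=\grad g$, and the $C^{1,\gam}$-semiconcavity bound \eqref{eqAuxOp:C1GamFromAbove} holding for $g$ with constant $m$ also holds with $3m$. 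Likewise $f\in\K^*(\gam,m)\subset\K^*(\gam,3m)$, and $f-\tilde g=(f-g)-c\in C^{1,\gam}(B_{3R}(x_0))$, so the hypotheses of Theorem \ref{thmAuxOp:SplittingTheorem} are met for the pair $f,\tilde g$ with $3m$ in place of $m$. Applying that theorem and using $H(\tilde g,\cdot)=H(g,\cdot)$ from Proposition \ref{propAuxOp:HProperties}(iv) gives, for constants $C,\al$ depending only on $d$, $\gam$, $3m$ (hence only on $d,\gam,m$),
\[
\norm{H(f,\cdot)-H(g,\cdot)}_{L^\infty(B_R(x_0))}=\norm{H(f,\cdot)-H(\tilde g,\cdot)}_{L^\infty(B_R(x_0))}\leq C\Big(\norm{f-\tilde g}_{C^{1,\gam}(B_{3R}(x_0))}+R^{-\al}\norm{f-\tilde g}_{L^\infty(\real^d)}\Big).
\]

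It remains to estimate the right-hand side, which is routine. Since $f-\tilde g$ differs from $f-g$ by the constant $c$, we have $\grad(f-\tilde g)=\grad f-\grad g$, and by the choice $c=\inf_{B_{3R}(x_0)}(f-g)$ the function $(f-g)-c$ is nonnegative on $B_{3R}(x_0)$ with supremum equal to $\osc_{B_{3R}(x_0)}(f-g)$, so
\[
\norm{f-\tilde g}_{C^{1,\gam}(B_{3R}(x_0))}=\osc_{B_{3R}(x_0)}(f-g)+\norm{\grad f-\grad g}_{C^{\gam}(B_{3R}(x_0))}.
\]
Also $\norm{f-\tilde g}_{L^\infty(\real^d)}\leq\norm{f-g}_{L^\infty(\real^d)}+|c|\leq 2\norm{f-g}_{L^\infty(\real^d)}$. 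Substituting these two bounds into the previous display and absorbing the factor $2$ into $C$ yields exactly the claimed inequality.

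I do not expect a genuine obstacle here; the only point that requires a moment's care is that $g+c$ need not lie in $\K^*(\gam,m)$, since adding a constant disrupts the $L^\infty$ part of the norm in the definition \eqref{eq:K*}. This is why I enlarge the parameter to $3m$ before invoking Theorem \ref{thmAuxOp:SplittingTheorem}, which is costless because the constants there depend only on $d,\gam,m$ and $3m$ depends only on $m$. Everything else is the elementary identity $\osc_{B_{3R}(x_0)}(f-g)=\norm{(f-g)-\inf_{B_{3R}(x_0)}(f-g)}_{L^\infty(B_{3R}(x_0))}$ together with the triangle inequality.
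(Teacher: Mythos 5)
Your proof is correct and follows essentially the same route as the paper: choose $c=\inf_{B_{3R}(x_0)}(f-g)$, use invariance of $H$ under addition of constants, and invoke Theorem \ref{thmAuxOp:SplittingTheorem}. The paper applies the theorem to the pair $(f-c,g)$ rather than $(f,g+c)$, but this is the same move by symmetry; you are actually a bit more careful than the paper in explicitly noting that the shifted function only lies in $\K^*(\gam,3m)$ and that this is harmless because the constants in Theorem \ref{thmAuxOp:SplittingTheorem} depend only on $d,\gam,m$.
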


\begin{proof}[Proof of Corollary \ref{corAuxOp:SplittingTheorem-OscillationVersion}]
	
	We observe that for the constant, $c$, chosen as $c = \inf_{B_{3R}(x_0)}(f-g)$, we have
	\begin{align*}
		\norm{(f-g)-c}_{L^\infty(B_{3R}(x_0))} = \osc_{B_{3R}} (f-g).
	\end{align*}
	We can then apply Theorem \ref{thmAuxOp:SplittingTheorem} to the functions $(f-c)$ and $g$.
	We note that
	\begin{align*}
		\norm{(f-c) - g}_{L^\infty(\real^d)} 
		\leq 2\norm{f-g}_{L^\infty(\real^d)}.
	\end{align*}
	This establishes the corollary.	
\end{proof}


\section{Viscosity solutions for (\ref{eqIntro:MuskatHJB}) and (\ref{eqIntro:HeleShawHJB}), and the proof of Theorem \ref{thmIntro:MuskatExistUnique}}\label{sec:ViscositySolsAndMainProof}

In this section we give the definition and basic properties of viscosity solutions for equations like, and including, (\ref{eqIntro:MuskatHJB}) and (\ref{eqIntro:HeleShawHJB}).  This section will culminate in the proof of Theorem \ref{thmIntro:MuskatExistUnique}, and it follows similarly to \cite[Sections 8.3 and 10]{ChangLaraGuillenSchwab-2019SomeFBAsNonlocalParaboic-NonlinAnal}, which was based on the arguments in \cite{Silv-2011DifferentiabilityCriticalHJ}.   We focus on (\ref{eqIntro:HeleShawHJB}) and we will conclude by showing that solving (\ref{eqIntro:HeleShawHJB}) is equivalent to solving (\ref{eqIntro:MuskatHJB}).  To this end, we restate this equation without initial conditions
\begin{align}\label{eqViscSol:HeleShawHJB}
	\partial_t f = H(f)\ \ \textnormal{in}\ \ \real^d\times (0,T),
\end{align}
and we will develop the viscosity solutions theory for (\ref{eqViscSol:HeleShawHJB}).


\subsection{Basic definitions and useful properties}

Due to the nonlocal nature of integro-differential equations, the definitions of test functions and viscosity solutions have to naturally balance regularity at a point with global behavior.  This is something that is not seen for local equations.  There are many variations on choices of test functions, and in most reasonable situations, they give rise to equivalent definitions of solutions.  We use a definition of test functions that is a parabolic and $C^{1,\gam}$ version of (and equivalent to) those in \cite[Definition 1, Remark 1]{BaIm-07}, \cite[Definition 2.2]{CaSi-09RegularityIntegroDiff}, \cite[Definition 2.2]{Silv-2011DifferentiabilityCriticalHJ}.   We note that in all instances of viscosity solutions  \emph{for integro-differential equations of order strictly less than 2}, of which we are aware, the natural notion of test function extends to functions which may only be punctually regular at the point of contact.  One reason we choose the definition given here is that it works well with the Perron method to give existence of solutions.

\begin{definition}[Test functions] \label{defViscSol:test-functions}

We denote by $C^{1,\gamma}(B_r(x_0))\cap C^{0,1}(\mathbb{R}^d)$ the Banach space consisting of functions in $C^{1,\gamma}(B_r(x_0))\cap C^{0,1}(\mathbb{R}^d)$ with the norm
	\begin{align*}
	\Vert f\Vert_{C^{1,\gamma}(B_r(x_0))\cap C^{0,1}(\mathbb{R}^d)} = 
	\Vert f\Vert_{C^{1,\gamma}(B_r(x))} 
	+ 
	\Vert f\Vert_{C^{0,1}(\mathbb{R}^d)}.
	\end{align*}
We say that $\phi$ is a test function at $(x,t)\in \mathbb{R}^d\times (0,T)$ if there exists $r>0$ such that
    \begin{align*}
        \phi\in C\left((t-r, t+r);  C^{1,\gamma}(B_r(x)) 
        \cap C^{0,1}(\mathbb{R}^d)\right), 
        \ \ \ \textnormal{and}\ \ \ 
        \partial_t \phi\in C\left(B_r(x)\times (t-r,t+r) \right).
    \end{align*}
For $u:\mathbb{R}^d\times (0,T) \to \mathbb{R}$, we say that $\phi$ touches $u$ from above (resp. below) at $(x,t)$ if there exists $r>0$ with
    \begin{align*}
    \begin{cases}
            u(x,t) = \phi(x,t),\\
            u(y,s) \leq \phi(y,s) \quad \textnormal{for all}\ (y,s)\in \real^d\times (t-r,t+r) \ (\textnormal{resp.}\  u(y,s) \geq \phi(y,s)).
    \end{cases}
    \end{align*}
\end{definition}

We now define viscosity solutions of (\ref{eqViscSol:HeleShawHJB}).   This definition applies to any operator similar to $H$ or $M$ that enjoys the GCP, and we will use it for (\ref{eqIntro:HeleShawHJB}) and subsequently (\ref{eqIntro:MuskatHJB}).  As such, we temporarily give a generic equation to use in the definition:
\begin{align}\label{eqViscSol:HJB-GenericJ}
	\partial_t f = J(f)\ \ \textnormal{in}\ \ \real^d\times (0,T).
\end{align}
For us, we will always use either $J=H$ or $J=M$, which will be clear from the context.

\begin{definition}[Viscosity sub and super solutions]\label{defViscSol:ViscositySolutions}

\begin{enumerate}[(i)]
    \item A function $u:\mathbb{R}^d\times (0,T)\to \mathbb{R}$ is a \emph{viscosity subsolution} of  \eqref{eqViscSol:HJB-GenericJ} if $u$ is bounded, upper semicontinuous, 
    and for every $(x,t)\in \mathbb{R}^d\times (0,T)$, any $\phi$ that is a test function touching $u$ from above at $(x,t)$ satisfies 
    \begin{align*}
        \partial_t \phi(x,t) \leq J(\phi(\cdot, t),x).   
    \end{align*}
    
    \item A function $u:\mathbb{R}^d\times (0,T)\to \mathbb{R}$ is a \emph{viscosity supersolution} of  \eqref{eqViscSol:HJB-GenericJ} if $u$ is bounded, lower semicontinuous, 
    and for every $(x,t)\in \mathbb{R}^d\times (0,T)$, any $\phi$ that is a test function touching $u$ from below at $(x,t)$ satisfies 
    \begin{align*}
        \partial_t \phi(x,t) \geq J(\phi(\cdot, t),x).   
    \end{align*}

    \item We say that $u \in C(\mathbb{R}^d\times (0,T))$ is a \emph{viscosity solution} of \eqref{eqViscSol:HJB-GenericJ} if it is both a subsolution and a supersolution of \eqref{eqViscSol:HJB-GenericJ}.
\end{enumerate}

\end{definition}

As alluded to above, the subsolution (supersolution) property for test functions still remains valid when the test function may not be regular in an entire neighborhood of a point of contact with the subsolution (supersolution), but rather may only have a pointwise version of regularity, as in Definition \ref{defAuxOp:PunctuallyC1Gam}. See Section \ref{ss:punctually} for a discussion of other instances of this property in the literature.

\begin{proposition}[Pointwise evaluation] \label{propViscSol:pointwise-testing} 
Let $u$ be a viscosity subsolution to \eqref{eqViscSol:HeleShawHJB} in the sense of Definition \ref{defViscSol:ViscositySolutions}, and $(x_0,t_0)\in \R^d\times (0,T)$. Let $\psi$ be a bounded function function such that
\begin{align}\label{eq:PointwiseEvaluationLipschitzOnIntervalF}
	\psi\in C^{1,\gamma}(x_0,t_0)\cap C^{0,1}(\R^d\times I)
\end{align}
where $I = (t_0-r,t_0+r)$ for some $r>0$, and $\psi$ touches $u$ from above at $(x_0,t_0)$. Then
\begin{align*}
    \partial_t \psi(x_0,t_0) \leq H(\psi(\cdot, t_0), x_0).
\end{align*}
\end{proposition}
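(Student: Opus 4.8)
The goal is to upgrade the sub-solution property from test functions that are $C^{1,\gamma}$ in a full space-time neighborhood (as in Definition \ref{defViscSol:ViscositySolutions}) to test functions $\psi$ that are merely \emph{punctually} $C^{1,\gamma}$ at $(x_0,t_0)$ while remaining globally Lipschitz in space on a time interval. The natural strategy is a \emph{regularization-from-above} argument: replace the possibly-irregular $\psi$ by a genuine test function $\phi$ that still touches $u$ from above at $(x_0,t_0)$, apply Definition \ref{defViscSol:ViscositySolutions} to $\phi$, and then pass to the limit, using the quantitative stability of $H$ established in Section \ref{sec:AuxiliaryOperatorProperties} to control the error $H(\phi(\cdot,t_0),x_0) - H(\psi(\cdot,t_0),x_0)$.

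Concretely, first I would reduce to the case $\gamma$ fixed and, using the punctual $C^{1,\gamma}(x_0,t_0)$ hypothesis, write down the second-order-type expansion of $\psi$ near $(x_0,t_0)$: there is $r>0$, a vector $p = \grad_x\psi(x_0,t_0)$, and a scalar $q = \partial_t\psi(x_0,t_0)$ with $\psi(x,t) \le \psi(x_0,t_0) + p\cdot(x-x_0) + q(t-t_0) + C(|x-x_0|^{1+\gamma} + |t-t_0|)$ — well, more carefully, one separates the spatial punctual expansion from the time derivative. Then, for each small $\eta>0$, define
\begin{align*}
	\phi_\eta(x,t) = \min\Big\{ \psi(x_0,t_0) + p\cdot(x-x_0) + q(t-t_0) + (C+\eta)|x-x_0|^{1+\gamma} + L|t-t_0|,\ \psi(x,t) + \delta(\eta) \Big\},
\end{align*}
or rather a smoothed version of this min, chosen so that: (a) $\phi_\eta$ is a genuine test function in the sense of Definition \ref{defViscSol:test-functions} (smooth in $t$, $C^{1,\gamma}(B_r)\cap C^{0,1}(\R^d)$ in $x$), which is exactly the kind of ``min of a parabola and a constant'' object appearing in Remark \ref{remAuxOp:TheWForTheBarrierLemma} and Lemma \ref{LemPtWise:NonTanBehavior}; (b) $\phi_\eta$ touches $u$ from above at $(x_0,t_0)$ (since $\phi_\eta \ge \psi \ge u$ near the point, with equality at $(x_0,t_0)$ when $\delta(\eta)\to 0$); and (c) $\phi_\eta(\cdot,t_0)$ lies in $\K^*(\gamma,m')\cap (m'\text{-}C^{1,\gamma}(x_0))$ with $\phi_\eta(\cdot,t_0) \ge \psi(\cdot,t_0)$ pointwise, $\phi_\eta(x_0,t_0) = \psi(x_0,t_0)$, $\grad\phi_\eta(x_0,t_0) = \grad\psi(x_0,t_0)$. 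Applying Definition \ref{defViscSol:ViscositySolutions} to $\phi_\eta$ gives $\partial_t\phi_\eta(x_0,t_0) \le H(\phi_\eta(\cdot,t_0),x_0)$; the left side is $q + O(\eta)$ by construction. For the right side, Lemma \ref{LemPtWise:PunctialGCP} (punctual GCP) applies, since $\phi_\eta(\cdot,t_0) \ge \psi(\cdot,t_0)$ with matching value and gradient at $x_0$, giving $H(\phi_\eta(\cdot,t_0),x_0) \ge H(\psi(\cdot,t_0),x_0)$ — wait, that inequality goes the wrong way; so instead I would use Lemma \ref{lemPtWise:NormalDerivLinftyDependence}, which gives the quantitative bound $0 \le H(\phi_\eta(\cdot,t_0),x_0) - H(\psi(\cdot,t_0),x_0) \le C\|\phi_\eta(\cdot,t_0) - \psi(\cdot,t_0)\|_{L^\infty(\R^d)}$, and arrange the construction so that $\|\phi_\eta(\cdot,t_0)-\psi(\cdot,t_0)\|_{L^\infty(\R^d)} \to 0$ as $\eta\to 0$. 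Then letting $\eta\to 0$ yields $\partial_t\psi(x_0,t_0) = q \le H(\psi(\cdot,t_0),x_0)$.

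The main obstacle, and the part requiring the most care, is arranging the global control $\|\phi_\eta(\cdot,t_0) - \psi(\cdot,t_0)\|_{L^\infty(\R^d)}\to 0$ simultaneously with the touching-from-above property and the $\K^*(\gamma,m')$ membership. The issue is that the ``parabola'' $p\cdot(x-x_0) + (C+\eta)|x-x_0|^{1+\gamma}$ only dominates $\psi$ locally (in $B_r$), so one must truncate it at a level comparable to $\|\psi\|_{L^\infty}$ and then, outside $B_r$, interpolate down to $\psi$ itself; this is precisely the cut-off construction and the reason one needs $\psi \in C^{0,1}(\R^d\times I)$ globally in space. One sets $\phi_\eta = \min\{\text{truncated parabola} + \text{small shift},\ \psi + \text{small shift}\}$ so that $\phi_\eta \equiv \psi + \delta(\eta)$ outside a fixed ball, whence the $L^\infty$ difference is exactly $\delta(\eta) \to 0$, while inside the ball $\phi_\eta$ is controlled by the local $C^{1,\gamma}$ expansion plus $\eta$. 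Checking that the resulting min is $C^{1,\gamma}$ near $x_0$ (where only the parabola branch is active) and globally Lipschitz, and that it is continuous and $C^1$ in $t$, is routine but must be done; the $t$-dependence is benign since $\psi$ is Lipschitz in $t$ and the parabola's $t$-dependence is the affine term $q(t-t_0)$ plus a Lipschitz correction. The whole scheme mirrors \cite[Sections 8.3 and 10]{ChangLaraGuillenSchwab-2019SomeFBAsNonlocalParaboic-NonlinAnal} and is the parabolic analogue of the standard ``inf-convolution / punctual testing'' lemma for nonlocal equations, with Lemma \ref{lemPtWise:NormalDerivLinftyDependence} supplying the crucial modulus of continuity of $H$ in the $L^\infty$ topology that makes the limit go through.
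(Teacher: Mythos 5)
Your plan is essentially the paper's: replace $\psi$ near $(x_0,t_0)$ by a genuine test function built from the punctual $C^{1,\gamma}$ paraboloid and lying above $\psi$, apply the subsolution definition, and send the perturbation to zero using the $L^\infty$-stability of $H$ supplied by Lemma~\ref{lemPtWise:NormalDerivLinftyDependence}. Your self-correction regarding the punctual GCP (Lemma~\ref{LemPtWise:PunctialGCP}) is exactly right: since the surrogate dominates $\psi$, the GCP gives $H(\text{surrogate})\ge H(\psi)$, which is useless here; the correct tool is the two-sided bound of Lemma~\ref{lemPtWise:NormalDerivLinftyDependence}, which is what the paper uses.

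One piece of your construction, as written, would fail. The term $L|t-t_0|$ in the upper paraboloid makes $\phi_\eta$ non-differentiable in $t$ at $t_0$, so $\partial_t\phi_\eta$ is not continuous across $\{t=t_0\}$ and $\phi_\eta$ is not admissible under Definition~\ref{defViscSol:test-functions}; ``smoothing the min'' does not repair a kink that lives in the parabola branch itself, which is the branch active at $(x_0,t_0)$. The fix --- the one the paper adopts --- is to use the fact that the punctual $C^{1,\gamma}$ hypothesis is a \emph{joint} space-time expansion: take the dominating function $\psi(x_0,t_0) + \nabla_{x,t}\psi(x_0,t_0)\cdot\big((y,s)-(x_0,t_0)\big) + m|(y,s)-(x_0,t_0)|^{1+\gamma}$, which is $C^{1,\gamma}$ in space-time (since $1+\gamma>1$) with $\partial_t$ equal to $\partial_t\psi(x_0,t_0)$ at the touching point. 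The paper also sidesteps your hand-built truncate-and-interpolate step: it defines $v_\rho$ as this paraboloid on $B_\rho(x_0,t_0)$ and $\psi$ itself outside, and then lets $w_\rho$ be the infimum of all $C^{1,\gamma}(\R^d\times I)$ functions lying above $v_\rho$ with the same Lipschitz bound as $\psi$; this envelope agrees with the paraboloid on $B_\rho$, touches $u$ from above at $(x_0,t_0)$, and satisfies $\|w_\rho(\cdot,t)-\psi(\cdot,t)\|_{L^\infty(\R^d)}\to 0$ as $\rho\to 0$ with no additive shift $\delta(\eta)$ needed. Modulo these two mechanical points your argument is the paper's.
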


\begin{proof}[Proof of Proposition \ref{propViscSol:pointwise-testing}]

By the assumption on $\psi$, for some $r_0\in(0,r)$,
	\begin{align*}
		\abs{\psi(x_0+h,t_0+s) - \psi(x_0,t_0) - \grad_{x,t}\psi(x_0,t_0)\cdot (h,s) }\leq m\abs{(h,s)}^{1+\gam}\qquad		\textnormal{for}\ \abs{(h,s)}<r_0,
	\end{align*}
		where $\nabla_{x,t} = (\nabla , \partial_t)$ is the total gradient in space-time. For $\rho \in (0,r_0)$, we define
	\begin{align*}
		v_\rho(y,s) = 
		\begin{cases}
			\psi(x_0,t_0) + \grad_{x,t}\psi(x_0,t_0)\cdot\big((y,s)-(x_0,t_0)\big) + m\abs{(y,s)-(x_0,t_0)}^{1+\gam}\  &\text{for}\ (y,s)\in B_\rho(x_0,t_0)\\
			\psi(y,s) &\text{for}\ (y,s)\notin B_\rho(x_0,t_0).
		\end{cases}
	\end{align*}
	Then, we define $w_\rho:\R^d\times I \to \R$ by
	\begin{align*}
		w_\rho(y,s) = \min\left\{ w(y,s)\ : w\geq v_\rho, \ w\in C^{1,\gamma}(\R^d\times I) \ \textnormal{and}\ \Vert w\Vert_{C^{0,1}(\R^d)}\leq \Vert \psi\Vert_{C^{0,1}(\R^d)} \right\}.
	\end{align*}
	Since $v_\rho\in C^{1,\gam}(B_\rho(x_0,t_0))$, we see that also $w_\rho = v_\rho$ in $B_\rho(x_0,t_0)$. We observe that 
	\begin{align*}
		w_\rho(x_0,t_0)=\psi(x_0,t_0),\ \ 
		w_\rho\geq \psi\ \ \textnormal{on}\ \ \real^d\times I,
		\ \ \textnormal{and}\ \ 
		\lim_{\rho\to 0} \norm{w_\rho(\cdot,t)-\psi(\cdot,t)}_{L^\infty(\real^d)}=0,
	\end{align*}
for $t\in I$. Additionally, $w_\rho$ can be chosen so that $\partial_t w_\rho$ is continuous on $I$. That is to say, $w_\rho$ is a valid test function (Definition \ref{defViscSol:test-functions}).  By construction, $w_\rho$ touches $u$ from above at $(x,t)$. Thus, the definition of subsolution gives
	\begin{align*}
		\partial_t \psi(x_0,t_0) = \partial_t w_\rho(x_0,t_0) \leq H(w_\rho(\cdot,t_0), x_0).
	\end{align*} 
	By construction, $w_\rho$ and $\psi$ satisfy the assumptions of Lemma \ref{lemPtWise:NormalDerivLinftyDependence}, which implies that
	\begin{align*}
		\lim_{\rho\to 0} \abs{H(w_\rho(\cdot,t_0),x_0) - H(\psi(\cdot,t_0),x_0)}=0.
	\end{align*}
	This concludes the proposition.
\end{proof}

\begin{definition}[Inf and sup convolutions]\label{defViscSol:InfSupConvoution}
	
	For a bounded function $u(x,t): \mathbb{R}^d\times [0,T) \to \mathbb{R}$ we define the sup convolution and the inf convolution as follows.
\begin{align*}
    u^\ep(x,t) &= \sup_{(y,s)\in \mathbb{R}^d\times [0,T)} \left(u(y,s) -  \frac{|t-s|^2+|x-y|^2}{2\varepsilon}\right),\\
    u_\varepsilon(x,t) &= \inf_{(y,s)\in \mathbb{R}^d\times [0,T)} \left(u(y,s) +  \frac{|t-s|^2+|x-y|^2}{2\varepsilon}\right).
\end{align*}
\end{definition}

It is well-known that $u^\varepsilon$ is semiconvex and $u_\varepsilon$ is semiconcave, thus they are continuous. Furthermore, they are uniformly Lipschitz with Lipschitz constant $C\varepsilon^{-1}$ for $C>0$ depending only on $\Vert u\Vert_{L^\infty(\mathbb{R}^d\times [0,T))}$.  We refer the to User's Guide to Viscosity Solutions \cite{CrandalIshiLions-92UsersGuide} for a complete description of their properties.  We also note that although the regularization by inf and sup convolutions predates the notion of viscosity solutions, they have been very useful in the existence and uniqueness theory, dating back to \cite{Jensen-1988UniquenessARMA}, \cite{JeLiSo-88UniquenessSecondOrder}.  This type of regularization works particularly well for translation invariant equations, like (\ref{eqViscSol:HeleShawHJB}), as demonstrated in the next result. A more precise result holds for more general equations that may not be translation invariant, such as in \cite{JeLiSo-88UniquenessSecondOrder}, but since (\ref{eqViscSol:HeleShawHJB}) is both translation invariant and invariant by the addition of constants, we are in a considerably easier situation.

\begin{proposition}\label{propViscSol:subsolution-of-sup-convolution} If $u$ and $v$ are bounded and respectively an upper semicontinuous subsolution and a lower semicontinuous supersolution to \eqref{eqViscSol:HeleShawHJB}, then so are their sup-convolution, $u^\varepsilon$, and inf-convolution, $v_\ep$.
\end{proposition}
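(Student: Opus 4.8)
The plan is to prove that the sup-convolution $u^\varepsilon$ is again a viscosity subsolution of \eqref{eqViscSol:HeleShawHJB} (the inf-convolution case being symmetric). The key structural fact is that $u^\varepsilon$ is a supremum of translates of $u$: writing $u_{(y,s)}(x,t) = u(x+y, t+s)$, we have
\begin{align*}
	u^\varepsilon(x,t) = \sup_{(y,s)} \left( u(x+y,t+s) - \tfrac{|y|^2+|s|^2}{2\varepsilon} \right),
\end{align*}
where the supremum ranges over all $(y,s)$ keeping $(x+y,t+s)$ in the domain. Because $H$ is translation invariant (Proposition \ref{propAuxOp:HProperties}(iii)), each translate $u_{(y,s)}$ is itself a subsolution of \eqref{eqViscSol:HeleShawHJB}; the equation is unchanged by the additive constant $-\tfrac{|y|^2+|s|^2}{2\varepsilon}$ thanks to Proposition \ref{propAuxOp:HProperties}(iv). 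So the real content is that $H$, being an operator with the GCP, respects suprema in the viscosity sense: a sup of subsolutions is a subsolution.

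First I would set up the test: fix $(x_0,t_0)\in\R^d\times(0,T)$ and a test function $\phi$ (in the sense of Definition \ref{defViscSol:test-functions}) touching $u^\varepsilon$ from above at $(x_0,t_0)$, say on $B_r(x_0)\times(t_0-r,t_0+r)$. Using boundedness of $u$ and the standard properties of sup-convolutions, there is a maximizing pair $(y_0,s_0)$ (with $|y_0|^2+|s_0|^2 \le C\varepsilon\|u\|_\infty$, so $(x_0+y_0, t_0+s_0)$ stays in the interior once $\varepsilon$ is small — here one uses that $t_0$ is bounded away from $0$ and $T$, or restricts attention to such $t_0$, which suffices since interior regularity is all that is tested), realizing $u^\varepsilon(x_0,t_0) = u(x_0+y_0,t_0+s_0) - \tfrac{|y_0|^2+|s_0|^2}{2\varepsilon}$. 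Then define $\tilde\phi(x,t) = \phi(x-y_0, t-s_0) + \tfrac{|y_0|^2+|s_0|^2}{2\varepsilon}$. The claim is that $\tilde\phi$ touches $u$ from above at $(x_0+y_0, t_0+s_0)$: indeed $\tilde\phi(x_0+y_0,t_0+s_0) = \phi(x_0,t_0) + \tfrac{|y_0|^2+|s_0|^2}{2\varepsilon} = u^\varepsilon(x_0,t_0)+\tfrac{|y_0|^2+|s_0|^2}{2\varepsilon} = u(x_0+y_0,t_0+s_0)$, and for nearby $(x,t)$,
\begin{align*}
	\tilde\phi(x,t) = \phi(x-y_0,t-s_0) + \tfrac{|y_0|^2+|s_0|^2}{2\varepsilon} \geq u^\varepsilon(x-y_0,t-s_0) + \tfrac{|y_0|^2+|s_0|^2}{2\varepsilon} \geq u(x,t),
\end{align*}
where the last inequality is just the definition of $u^\varepsilon$ applied with the shift $(y_0,s_0)$. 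Moreover $\tilde\phi$ is a valid test function since it is a space-time translate of $\phi$ plus a constant, so it lies in the same regularity class. Applying the subsolution property of $u$ at $(x_0+y_0,t_0+s_0)$ gives $\partial_t\tilde\phi(x_0+y_0,t_0+s_0) \leq H(\tilde\phi(\cdot,t_0+s_0), x_0+y_0)$. Then $\partial_t\tilde\phi(x_0+y_0,t_0+s_0) = \partial_t\phi(x_0,t_0)$, and by translation invariance and constant-invariance of $H$, $H(\tilde\phi(\cdot,t_0+s_0),x_0+y_0) = H(\phi(\cdot-y_0,t_0+s_0) + \text{const}, x_0+y_0) = H(\phi(\cdot,t_0), x_0)$; wait — one must be careful that $\tilde\phi(\cdot,t_0+s_0)(x) = \phi(x-y_0,t_0) + \text{const}$, so applying \eqref{eqAuxOp:HPropertiesTranslationInvariance} and \eqref{eqAuxOp:HPropertiesInvarianceByAddConstant} yields exactly $H(\phi(\cdot,t_0),x_0)$. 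Combining, $\partial_t\phi(x_0,t_0) \leq H(\phi(\cdot,t_0),x_0)$, which is the required subsolution inequality for $u^\varepsilon$. One also checks that $u^\varepsilon$ is bounded (clear) and upper semicontinuous — in fact Lipschitz, hence continuous — by the standard sup-convolution estimates cited before Proposition \ref{propViscSol:subsolution-of-sup-convolution}.

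The main obstacle, and the only place any care is needed, is handling the boundary in time: the sup-convolution is over $(y,s)$ with $t+s\in(0,T)$ (or $[0,T)$), so for $t_0$ near $0$ or $T$ the maximizer could land on the temporal boundary, and then $\tilde\phi$ need not touch $u$ at an interior point where the equation holds. This is dealt with in the usual way: since $|s_0| \leq C\sqrt{\varepsilon\|u\|_\infty}$, for $\varepsilon$ small enough (depending on the fixed $(x_0,t_0)$) the maximizer stays interior, and the subsolution property is only required at interior space-time points anyway, so no loss occurs. A secondary minor point is verifying that the translated-and-shifted $\tilde\phi$ genuinely satisfies the continuity-in-time and $C^{1,\gamma}$-in-space requirements of Definition \ref{defViscSol:test-functions} on a slightly smaller neighborhood — this is immediate since translation and addition of constants are isometries of the relevant function spaces. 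Everything else is a direct consequence of the GCP-compatible structure of $H$, specifically its translation invariance and invariance under addition of constants established in Proposition \ref{propAuxOp:HProperties}.
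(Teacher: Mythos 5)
Your proof is correct and is in essence the same argument the paper intends; the paper's proof is just far terser, writing $u^\varepsilon$ as a supremum of space–time translates of $u$ minus constants and then asserting that ``translation invariance and invariance by the addition of constants \dots appropriately applied to test functions'' gives the result. You have simply unpacked the phrase ``appropriately applied to test functions'': you exhibit the maximizing shift $(y_0,s_0)$, define the transported test function $\tilde\phi(x,t)=\phi(x-y_0,t-s_0)+\tfrac{|y_0|^2+|s_0|^2}{2\varepsilon}$, verify it touches $u$ from above at $(x_0+y_0,t_0+s_0)$, and then use Proposition \ref{propAuxOp:HProperties}(iii)--(iv) to pull the subsolution inequality back to $\phi$ at $(x_0,t_0)$. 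One small remark: you are slightly more careful than the paper about the temporal boundary (the maximizer landing near $t=0$). Strictly speaking, as written, the proposition holds for a \emph{fixed} $\varepsilon$ only at points $t_0$ bounded away from $0$ by roughly $C\sqrt{\varepsilon\,\|u\|_\infty}$; in the paper this is immaterial because the proposition is always used along $\varepsilon\to 0$, but your acknowledgment of the issue is a genuine improvement in clarity over the paper's phrasing.
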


\begin{proof}[Proof of Proposition \ref{propViscSol:subsolution-of-sup-convolution}] 
	
	We will only include the proof for $u^\ep$.  The result for $v_\ep$ follows analogously.
	We note that $u^\ep(x)$ can be rewritten as
	\begin{align*}
		u^\ep(x,t) = \sup_{z\in\real^d,\ t+\tau\in [0,T)}\left(  u(x+z, t+\tau) - \frac{1}{2\ep}\left(\abs{z}^2 + \abs{\tau}^2\right)  \right).
	\end{align*}
	The translation invariance and invariance by the addition of constants for  $H$, given in Proposition \ref{propAuxOp:HProperties} --- appropriately applied to test functions --- shows that $u^\ep$ and $u$ solve the same equation.
\end{proof}


\subsection{The comparison result}

The comparison result we present (Proposition \ref{propViscSol:Comparison-HeleShawHJB}, below) is an adaptation of that in \cite[Proposition A.6]{Silv-2011DifferentiabilityCriticalHJ}, which was also implemented in \cite[Section 8.3]{ChangLaraGuillenSchwab-2019SomeFBAsNonlocalParaboic-NonlinAnal}.  We note that there are two versions of the comparison result in \cite{Silv-2011DifferentiabilityCriticalHJ} --- in our context, one, \cite[Corollary 3.4]{Silv-2011DifferentiabilityCriticalHJ}, would be appropriate if $H$ happened to be a globally Lipschitz operator on $C^{1,\gam}$ (which it is not), and the other one, \cite[Proposition A.6]{Silv-2011DifferentiabilityCriticalHJ} is required in the case that $H$ is only locally Lipschitz on $C^{1,\gam}$ (which is our setting).

Before we get to the comparison result, it will be useful to note that for a test function $\psi$, the map $(x,t)\mapsto H(\psi(\cdot,t),x)$ is continuous.  In the typical instances of viscosity solutions,  this is usually immediate from the fact that the relevant operator has an explicit formula.  

\begin{proposition}[Continuity on test functions]\label{propViscSol:ContinuityOfH} 
    Suppose $(x_0, t_0)\in \mathbb{R}^d\times (0,T)$ and $\psi$ is a test function at $(x_0,t_0)$ in the sense of Definition \ref{defViscSol:ViscositySolutions}, then the function $(x,t)\mapsto H(\psi(\cdot, t), x)$ is continuous near $(x_0,t_0)$.
\end{proposition}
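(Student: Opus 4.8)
## Proof proposal for Proposition \ref{propViscSol:ContinuityOfH}

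The plan is to reduce the claim to the quantitative estimates already established for $H$, namely the local Lipschitz estimate (Theorem \ref{thmAuxOp:HPreciseLipStatementC1Gam}) and the Lipschitz-in-time continuity built into the definition of a test function. First I would fix the test function $\psi$ at $(x_0,t_0)$, so that for some $r>0$ we have $\psi\in C\left((t_0-r,t_0+r); C^{1,\gamma}(B_r(x_0))\cap C^{0,1}(\R^d)\right)$ and $\partial_t\psi$ continuous on $B_r(x_0)\times(t_0-r,t_0+r)$. In particular $t\mapsto \psi(\cdot,t)$ is continuous into the Banach space $C^{1,\gamma}(B_{r}(x_0))\cap C^{0,1}(\R^d)$, so there is $\rho\in(0,r)$ and $m>0$ with $\|\psi(\cdot,t)\|_{C^{1,\gamma}(B_{\rho}(x_0))}+\|\psi(\cdot,t)\|_{C^{0,1}(\R^d)}\leq m$ for all $t\in(t_0-\rho,t_0+\rho)$; after adding a constant (which changes neither $H$ by Proposition \ref{propAuxOp:HProperties}(iv) nor continuity) we may also arrange $\psi(\cdot,t)\in\K^*(\gamma,m)$ on this time interval, and moreover $\psi(\cdot,t)\in C^{1,\gamma}(B_{\rho}(x_0))$, so that $H(\psi(\cdot,t),\cdot)$ is defined and $C^\gamma$ on $B_{\rho/2}(x_0)$ by Proposition \ref{propAuxOp:BaiscRegularityForHC1GamToCgam}.

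Next I would estimate $|H(\psi(\cdot,t),x) - H(\psi(\cdot,t_0),x_0)|$ for $(x,t)$ near $(x_0,t_0)$ by splitting it as
\begin{align*}
	|H(\psi(\cdot,t),x) - H(\psi(\cdot,t_0),x_0)|
	&\leq |H(\psi(\cdot,t),x) - H(\psi(\cdot,t_0),x)| \\
	&\quad + |H(\psi(\cdot,t_0),x) - H(\psi(\cdot,t_0),x_0)|.
\end{align*}
The second term tends to $0$ as $x\to x_0$ by the $C^\gamma$ regularity of $H(\psi(\cdot,t_0),\cdot)$ just noted. For the first term, both $\psi(\cdot,t)$ and $\psi(\cdot,t_0)$ lie in $\K^*(\gamma,m)\cap C^{1,\gamma}(B_{\rho}(x_0))$, and $\psi(\cdot,t)-\psi(\cdot,t_0)\in C^{1,\gamma}(B_\rho(x_0))$; applying Theorem \ref{thmAuxOp:HPreciseLipStatementC1Gam} (with, say, $R_0=\rho/4$, $R=\rho/2$, $2R=\rho$, and $x$ ranging over $B_{\rho/4}(x_0)$) gives
\begin{align*}
	|H(\psi(\cdot,t),x) - H(\psi(\cdot,t_0),x)|
	\leq C\left( \|\psi(\cdot,t)-\psi(\cdot,t_0)\|_{C^{1,\gamma}(B_{\rho}(x_0))} + \|\psi(\cdot,t)-\psi(\cdot,t_0)\|_{L^\infty(\R^d)} \right),
\end{align*}
with $C=C(\gamma,m,d,\rho)$. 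By the continuity of $t\mapsto\psi(\cdot,t)$ into $C^{1,\gamma}(B_{\rho}(x_0))\cap C^{0,1}(\R^d)$, the right-hand side tends to $0$ as $t\to t_0$. Combining the two terms yields continuity of $(x,t)\mapsto H(\psi(\cdot,t),x)$ at $(x_0,t_0)$; since $(x_0,t_0)$ was an arbitrary point of the (relatively open) neighborhood on which the uniform bounds hold, this gives continuity near $(x_0,t_0)$.

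The only mild subtlety — and the step I would be most careful about — is the bookkeeping that turns "$\psi$ is a test function" into the simultaneous membership $\psi(\cdot,t)\in\K^*(\gamma,m)\cap C^{1,\gamma}(B_\rho(x_0))$ with a single $m$ valid on a whole time interval, together with the $C^{1,\gamma}$ smallness of $\psi(\cdot,t)-\psi(\cdot,t_0)$; this is exactly what the topology in Definition \ref{defViscSol:test-functions} (continuity of $t\mapsto\psi(\cdot,t)$ into $C^{1,\gamma}(B_r(x_0))\cap C^{0,1}(\R^d)$) is designed to deliver, so no new idea is needed, only care. Everything else is a direct citation of Theorem \ref{thmAuxOp:HPreciseLipStatementC1Gam} and Proposition \ref{propAuxOp:BaiscRegularityForHC1GamToCgam}.
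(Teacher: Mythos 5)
Your proof follows the paper's argument: the same decomposition of $|H(\psi(\cdot,\hat t),\hat x)-H(\psi(\cdot,t_0),x_0)|$ into a ``time'' piece handled by the Lipschitz estimate of Theorem \ref{thmAuxOp:HPreciseLipStatementC1Gam} and a ``space'' piece handled by the $C^\gamma$ regularity of Proposition \ref{propAuxOp:BaiscRegularityForHC1GamToCgam}. One small inaccuracy in your write-up: adding a constant does \emph{not} force $\psi(\cdot,t)\in\K^*(\gamma,m)$, since the $C^{1,\gamma}$-semi-concavity condition (\ref{eqAuxOp:C1GamFromAbove}) defining $\K^*$ is invariant under constant shifts --- if it fails for $\psi$, it fails for $\psi+c$ as well. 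The paper's terse proof likewise invokes Theorem \ref{thmAuxOp:HPreciseLipStatementC1Gam} without explicitly checking its $\K^*$ hypothesis on test functions, so you are not alone in leaving this step implicit; but the ``add a constant'' justification as stated does not fill that gap.
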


\begin{proof}[Proof of Proposition \ref{propViscSol:ContinuityOfH}] It suffices to show the continuity at $(x_0,t_0)$. Let $(\hat{x}, \hat{t}) \in B_r(x_0,t_0)$. We have
\begin{align*}
	&\abs{ H(\psi(\cdot,\hat{t}),\hat{x}) - H(\psi(\cdot,t_0), x_0)}\\
	&\qquad\qquad \leq
	\abs{ H(\psi(\cdot,\hat{t}), \hat{x}) - H(\psi(\cdot,t_0), \hat{x}) }
	+ \abs{ H(\psi(\cdot,t_0), \hat{x}) - H(\psi(\cdot,t_0, x_0) } = J_1 + J_2.
\end{align*}
The result then follows from Theorem \ref{thmAuxOp:HPreciseLipStatementC1Gam} for $J_1$, and from Proposition \ref{propAuxOp:BaiscRegularityForHC1GamToCgam} for $J_2$.
\end{proof}

\begin{rem}[A bump function]\label{remAuxOp:bump-function} 
	It will be useful to record a particular bump function for some of the next results.
		Let us define for $x\in \mathbb{R}^d$ the function
		\begin{align*}
		    \Phi(x) = \frac{|x|^2}{1+|x|^2} \qquad\text{and}\qquad 
		    	\Phi_R(x) = \Phi\left(\frac{x}{R}\right)\qquad\text{for}
		    	\;R>1.
		\end{align*}
		The function $\Phi_R$ satisfies $0\leq \Phi_R\leq 1$, and for any $\gamma\in (0,1)$,
		\begin{align*}
		    \Vert \Phi_R\Vert_{L^\infty(B_T(x_0))} \leq \frac{T^2}{R^2}, 
		    \qquad
		    \Vert \nabla \Phi_R\Vert_{L^\infty(\mathbb{R}^d)} \leq 
		    \frac{1}{R}, 
		    \qquad \Vert\nabla \Phi_R\Vert_{C^{0,\gamma}(\mathbb{R}^d)} 
		    \leq \frac{C(d,\gamma)}{R^{1+\gamma}}.
		\end{align*}
\end{rem}

Now we can give the main result of this section.

\begin{proposition}[Comparison property] \label{propViscSol:Comparison-HeleShawHJB}
	
	If $f$ and $g$ are bounded and respectively an upper semicontinuous subsolution and a lower semicontinuous supersolution in the viscosity sense to \eqref{eqViscSol:HeleShawHJB}, and have the following local uniform ordering at $t=0$,
	\begin{align}\label{eqViscSol:InitialDataLocalUniformOrdering}
		\forall\ \al>0,\ \exists\ \del>0,\ \forall\ \abs{x-y}<\del,\ \abs{s-0}<\del,\  \abs{t-0}<\del,\
		f(x,s)\leq g(y,t) + \al, 
	\end{align}
	  then $f(x,t) \leq g(x,t)$ for all $(x,t)\in \mathbb{R}^d\times [0,T)$.
\end{proposition}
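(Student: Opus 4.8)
The plan is to argue by contradiction, adapting the scheme of \cite[Proposition A.6]{Silv-2011DifferentiabilityCriticalHJ} and \cite[Section 8.3]{ChangLaraGuillenSchwab-2019SomeFBAsNonlocalParaboic-NonlinAnal}: after regularizing $f$ and $g$ by sup- and inf-convolution and adding penalizations, the entire comparison reduces to a single \emph{punctual} evaluation of $H$ at a common contact point. First I would suppose $\sup_{\real^d\times[0,T)}(f-g)=2c>0$, pick $(x^*,t^*)$ with $f(x^*,t^*)-g(x^*,t^*)>\tfrac32 c$, fix $T'\in(t^*,T)$, and for small $\sigma>0$ replace $f$ by $\tilde f=f-\sigma/(T-t)$. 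Since $\sigma/(T-t)$ is constant in $x$, the invariance of $H$ under addition of constants (Proposition \ref{propAuxOp:HProperties}) makes $\tilde f$ a \emph{strict} subsolution (gap $\ge c_1:=\sigma/T^2$ at contact points), while still $\tilde f(x^*,t^*)-g(x^*,t^*)>c$ for $\sigma$ small. Next I would replace $\tilde f,g$ by $\tilde f^\ep,g_\ep$; by Proposition \ref{propViscSol:subsolution-of-sup-convolution} (and, from its proof, since translations and constants leave the gap alone while the convolution shift is $O(\sqrt\ep)$) these are, for $\ep$ small, respectively a strict subsolution with gap $c_0:=c_1/2$ and a supersolution on $[0,T']$, globally Lipschitz, and semiconvex resp. semiconcave, hence (at each fixed time) in some $\K^*(\gam,m_\ep)$ with $m_\ep=C\ep^{-1}$; a short argument shows \eqref{eqViscSol:InitialDataLocalUniformOrdering} transfers to $(\tilde f^\ep,g_\ep)$. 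From now on $\ep$, $m_\ep$, and the constant $C(\gam,m_\ep,d)$ of Theorem \ref{thmAuxOp:SplittingTheorem} are frozen, and still $\sup(\tilde f^\ep-g_\ep)>c$.

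With $\Phi_R$ the bump function of Remark \ref{remAuxOp:bump-function}, I would then study
\begin{align*}
\Psi(x,t)=\tilde f^\ep(x,t)-g_\ep(x,t)-\theta\Phi_R(x)-\frac{\eta}{T'-t},\qquad (x,t)\in\real^d\times[0,T'),
\end{align*}
choosing first $\eta$ with $\eta/(T'-t^*)<c/4$, then $\theta<\min\{c/2,\ m_\ep/C,\ c_0/(2C(\gam,m_\ep,d))\}$, and finally $R\geq1$ large. For these choices $\Psi$ attains its supremum $M_\Psi>0$ at an interior point $(\bar x,\bar t)\in\real^d\times(0,T')$: the $-\eta/(T'-t)$ term keeps $\bar t$ away from $T'$, the $-\theta\Phi_R$ term keeps $\bar x$ bounded, and \eqref{eqViscSol:InitialDataLocalUniformOrdering} for $(\tilde f^\ep,g_\ep)$ together with $M_\Psi>0$ keeps $\bar t$ away from $0$.

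At this interior maximum, semiconvexity of $\tilde f^\ep$ and semiconcavity of $g_\ep$, together with the maximum inequality, squeeze both functions between two paraboloids; hence $\tilde f^\ep$ and $g_\ep$ are punctually $C^{1,1}$ (a fortiori punctually $C^{1,\gam}$) at $(\bar x,\bar t)$ in space--time, with $\partial_t\tilde f^\ep(\bar x,\bar t)\ge\partial_t g_\ep(\bar x,\bar t)$, $\ \grad\tilde f^\ep(\bar x,\bar t)=\grad g_\ep(\bar x,\bar t)+\theta\grad\Phi_R(\bar x)$, and $\tilde f^\ep(\cdot,\bar t)\le g_\ep(\cdot,\bar t)+\theta\Phi_R+\textnormal{const}$ with equality at $\bar x$. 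Testing $\tilde f^\ep$ from above by itself at $(\bar x,\bar t)$ and applying Proposition \ref{propViscSol:pointwise-testing} (checking its proof retains the strict gap) gives $\partial_t\tilde f^\ep(\bar x,\bar t)\le H(\tilde f^\ep(\cdot,\bar t),\bar x)-c_0$, and the analogous statement for supersolutions gives $\partial_t g_\ep(\bar x,\bar t)\ge H(g_\ep(\cdot,\bar t),\bar x)$. Meanwhile the punctual GCP (Lemma \ref{LemPtWise:PunctialGCP}) applied to $\tilde f^\ep(\cdot,\bar t)\le g_\ep(\cdot,\bar t)+\theta\Phi_R+\textnormal{const}$, then constant-invariance of $H$, then the splitting estimate of Theorem \ref{thmAuxOp:SplittingTheorem} (with the difference there equal to $\theta\Phi_R$, which is smooth and $O(\theta)$ in the relevant norms), yield $H(\tilde f^\ep(\cdot,\bar t),\bar x)\le H(g_\ep(\cdot,\bar t),\bar x)+C(\gam,m_\ep,d)\theta$. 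Chaining,
\begin{align*}
H(g_\ep(\cdot,\bar t),\bar x)\le\partial_t g_\ep(\bar x,\bar t)\le\partial_t\tilde f^\ep(\bar x,\bar t)\le H(\tilde f^\ep(\cdot,\bar t),\bar x)-c_0\le H(g_\ep(\cdot,\bar t),\bar x)+C(\gam,m_\ep,d)\theta-c_0,
\end{align*}
so $c_0\le C(\gam,m_\ep,d)\theta$, contradicting the choice of $\theta$; hence $\sup(f-g)\le0$, i.e.\ $f\le g$.

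The main obstacle is that $H$ is only \emph{locally} Lipschitz on $C^{1,\gam}$, with constant growing as $m\to\infty$. This dictates the order of the quantifiers: $\ep$ (hence $m_\ep$, hence the splitting constant $C(\gam,m_\ep,d)$) must be chosen \emph{before} the penalization strength $\theta$, and $\theta$ must then be small compared with $c_0/C(\gam,m_\ep,d)$ so that the $\theta\Phi_R$-perturbation of $H$ is absorbed by the strict gap $c_0$. The remaining points needing care --- all routine once the order is fixed --- are that $(\bar x,\bar t)$ is genuinely interior in time (so that Proposition \ref{propViscSol:pointwise-testing} applies with $\tilde f^\ep$ as a space--time-punctually-$C^{1,\gam}$ test function), that the strict gap survives both the sup-convolution and the reduction in Proposition \ref{propViscSol:pointwise-testing}, and that \eqref{eqViscSol:InitialDataLocalUniformOrdering} passes from $(f,g)$ to $(\tilde f^\ep,g_\ep)$.
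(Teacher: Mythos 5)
Your proposal follows the same overall scheme as the paper's proof: sup/inf-convolve, penalize to force a finite interior maximum, and close at the contact point using punctual evaluation (Proposition \ref{propViscSol:pointwise-testing}), the punctual GCP (Lemma \ref{LemPtWise:PunctialGCP}), and the splitting estimate. The divergence is in parameter management --- you manufacture strictness by subtracting $\sigma/(T-t)$ and use a bump $\theta\Phi_R$ with \emph{small} coefficient $\theta$ chosen after $\ep$, whereas the paper keeps the bump coefficient equal to the full contradiction gap $M$ and regains $C^{1,\gam}$-smallness by sending $R\to\infty$ through the two-scale Corollary \ref{corAuxOp:SplittingTheorem-OscillationVersion}. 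Both mechanisms ultimately make the perturbation small in $C^{1,\gam}$ near the contact point, which is what the splitting estimate sees, and the quantifier ordering you stress (fix $\ep$, hence the splitting constant $C(\gam,m_\ep,d)$, before the penalization strength) is exactly the right one. However, two of your steps do not go through as written.

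First, the penalization $\theta\Phi_R(x)$ with $\theta$ small does not, by itself, force $\Psi$ to attain its supremum. If $\sup\bigl(\tilde f^\ep-g_\ep-\eta/(T'-t)\bigr)$ is approached only along $|x|\to\infty$, then subtracting $\theta\Phi_R$ centered at the origin lowers those values by at most $\theta$, and $\sup\Psi$ may still escape to infinity. The fix is to center the bump at an \emph{approximate maximizer} $\hat x$, i.e.\ a point where $\tilde f^\ep-g_\ep-\eta/(T'-t)$ is within $\theta/2$ of its supremum: then $\Psi(\hat x,\hat t)$ exceeds the value of $\Psi$ at infinity by at least $\theta/2$, which does force attainment. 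This is precisely the role the point $x_0$ plays in Lemma \ref{lemViscSol:BumpToAttainMax}. Second, the claim that \eqref{eqViscSol:InitialDataLocalUniformOrdering} ``transfers'' to $(\tilde f^\ep,g_\ep)$ for a fixed $\ep$ is not correct: the convolutions shift arguments by $O(\sqrt\ep)$, so once $\sqrt\ep$ is comparable to the modulus $\del$ in \eqref{eqViscSol:InitialDataLocalUniformOrdering}, the local uniform ordering for the convolved pair can fail. What you actually need to exclude $\bar t=0$ is only the one-sided bound $\sup_x\bigl(\tilde f^\ep(x,0)-g_\ep(x,0)\bigr)<c/2$, and \emph{this} does hold for $\ep$ small (it is the content of Lemma \ref{lemViscSol:ComparisonPart2-InitialData} applied to $(f,g)$, together with $\tilde f^\ep\le f^\ep$); since $\ep$ is the first parameter you freeze, that smallness should be built into the choice of $\ep$ rather than asserted as a transfer of \eqref{eqViscSol:InitialDataLocalUniformOrdering}. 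The paper avoids this subtlety altogether by decomposing the proposition into Lemma \ref{lemViscSol:ComparisonPart1-OrderingSupInfConvolution} (a comparison for the convolved pair at each fixed $\ep$, not using \eqref{eqViscSol:InitialDataLocalUniformOrdering} at all) and Lemma \ref{lemViscSol:ComparisonPart2-InitialData} (where \eqref{eqViscSol:InitialDataLocalUniformOrdering} enters only in the limit $\ep\to0$).
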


\begin{rem}
\label{rem:initial condition}
We note the condition (\ref{eqViscSol:InitialDataLocalUniformOrdering}) was used in \cite[Appendix]{Silv-2011DifferentiabilityCriticalHJ} to establish existence for a certain class of integro-differential equations.
	The reason for the extra condition in (\ref{eqViscSol:InitialDataLocalUniformOrdering}) is a bit subtle.  If if was known, a priori, that $f$ and $g$ had more regularity, then (\ref{eqViscSol:InitialDataLocalUniformOrdering}) would not be necessary.  Or, if it was true that $H$ were globally Lipscshitz on $C^{1,\gam}(\real^d)$, (\ref{eqViscSol:InitialDataLocalUniformOrdering}) would not be necessary.  However, since $H$ is only locally Lipschitz on $C^{1,\gam}$, (\ref{eqViscSol:InitialDataLocalUniformOrdering}) is necessary.

\end{rem}

\begin{rem}
	The extra assumption of (\ref{eqViscSol:InitialDataLocalUniformOrdering}) is missing from the work \cite{ChangLaraGuillenSchwab-2019SomeFBAsNonlocalParaboic-NonlinAnal}.  The stated result remains true, but the presentation misses the importance of (\ref{eqViscSol:InitialDataLocalUniformOrdering}).
\end{rem}

In order to clarify the role of the assumption (\ref{eqViscSol:InitialDataLocalUniformOrdering}),  we split Proposition \ref{propViscSol:Comparison-HeleShawHJB} into two separate lemmas.  The first one shows that even if $f$ and $g$ don't satisfy \eqref{eqViscSol:InitialDataLocalUniformOrdering}, their inf and sup convolutions still satisfy a typical comparison result.

\begin{lemma}\label{lemViscSol:ComparisonPart1-OrderingSupInfConvolution}
	If $f$ and $g$ are bounded and respectively a upper semicontinuous subsolution and a lower semicontinuous supersolution in the viscosity sense to \eqref{eqViscSol:HeleShawHJB}, and if $f^\ep$ and $g_\ep$ are respectively their sup and inf convolutions from Definition \ref{defViscSol:InfSupConvoution}, then
	\begin{align}\label{eqViscSol:ComparisonForInfSupConvolution}
		\sup_{\real^d\times [0,T)}( f^\ep - g_\ep) \leq 
		\sup_{\real^d}( f^\ep(\cdot,0) - g_\ep(\cdot,0) ).
	\end{align}
\end{lemma}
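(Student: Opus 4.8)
The plan is to run the classical viscosity comparison argument, but in the ``already–regular'' setting: recall that $f^\varepsilon$ is bounded, Lipschitz and \emph{semiconvex}, and $g_\varepsilon$ is bounded, Lipschitz and \emph{semiconcave} (both with constants comparable to $\varepsilon^{-1}$), and that by Proposition \ref{propViscSol:subsolution-of-sup-convolution} they are respectively a subsolution and a supersolution of \eqref{eqViscSol:HeleShawHJB}. This follows the template of \cite[Proposition A.6]{Silv-2011DifferentiabilityCriticalHJ} and \cite[Section 8.3]{ChangLaraGuillenSchwab-2019SomeFBAsNonlocalParaboic-NonlinAnal}. Argue by contradiction: assume $\theta:=\sup_{\real^d\times[0,T)}(f^\varepsilon-g_\varepsilon)>\theta_0:=\sup_{\real^d}(f^\varepsilon(\cdot,0)-g_\varepsilon(\cdot,0))$. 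For parameters $\eta>0$ and $R>1$ introduce the penalization $\varphi(x,t)=\frac{\eta}{T-t}+\eta\Phi_R(x)$, where $\Phi_R$ is the bump function of Remark \ref{remAuxOp:bump-function} (possibly modified to vanish on a large ball $B_{R/2}$ and grow outside it, so that $\|\nabla\Phi_R\|_{L^\infty}+\|\nabla\Phi_R\|_{C^{0,\gam}}\to0$ as $R\to\infty$ and $\Phi_R$ is constant near the origin). Since $\Psi:=f^\varepsilon-g_\varepsilon-\varphi\to-\infty$ as $t\to T^-$, since $\eta\Phi_R$ forces decay at spatial infinity, and since both $\theta_0$ and the class of sub/supersolutions are invariant under the translations under which $H$ is invariant (Proposition \ref{propAuxOp:HProperties}), a now-standard argument (translate so the near-maximum sits near the origin, then use $\Phi_R$) shows that, for $\eta$ small relative to $\theta-\theta_0$ and $R$ large, $\Psi$ attains its supremum $K:=\sup\Psi>\theta_0$ at some $(x_0,t_0)$ with $x_0\in B_{R/4}$ and, crucially, $t_0\in(0,T)$.

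Next I pass to a point of twice differentiability. The function $\Psi=f^\varepsilon+(-g_\varepsilon)+(-\varphi)$ is semiconvex near $(x_0,t_0)$, so by Jensen's lemma one can perturb by an arbitrarily small linear function $p\cdot x$ and find a point $(x_1,t_1)$ arbitrarily close to $(x_0,t_0)$, with $x_1\in B_{R/2}$, at which $f^\varepsilon$ and $g_\varepsilon$ are both twice differentiable and $\Psi-p\cdot x$ has a local maximum. In particular $f^\varepsilon(\cdot,t_1)$ and $g_\varepsilon(\cdot,t_1)$ are punctually $C^{1,\gam}$ at $x_1$; moreover, since $\Phi_R$ is constant near $x_1$, we get $\partial_tf^\varepsilon(x_1,t_1)-\partial_tg_\varepsilon(x_1,t_1)=\frac{\eta}{(T-t_1)^2}$ and $\nabla_xf^\varepsilon(x_1,t_1)-\nabla_xg_\varepsilon(x_1,t_1)=p$ with $|p|$ as small as we like, while $f^\varepsilon(\cdot,t_1)\le g_\varepsilon(\cdot,t_1)+\varphi(\cdot,t_1)+K$ globally with a gap $\delta_1\to0$ (as the linear perturbation $\to0$) at $x_1$. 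Using the twice differentiability of $g_\varepsilon$ at $(x_1,t_1)$ one constructs a small, smooth correction $\mu\ge0$ (compactly supported near $x_1$, with $\mu(x_1)=\delta_1$, $\nabla\mu(x_1)=-p$, $\mu\le g_\varepsilon+\varphi+K-f^\varepsilon$ near $(x_1,t_1)$, and $\|\mu\|_{C^{1,\gam}}$ small) so that $\phi(x,t):=g_\varepsilon(x,t)+\varphi(x,t)+K-\mu(x)$ is punctually $C^{1,\gam}$ at $(x_1,t_1)$, $C^{0,1}$ on $\real^d\times I$, touches the subsolution $f^\varepsilon$ from above at $(x_1,t_1)$, and has $\phi(x_1,t_1)=f^\varepsilon(x_1,t_1)$, $\nabla_x\phi(x_1,t_1)=\nabla_xf^\varepsilon(x_1,t_1)$. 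Applying the pointwise-testing Proposition \ref{propViscSol:pointwise-testing} to $\phi$ and, separately, testing the supersolution $g_\varepsilon$ at $(x_1,t_1)$ by $g_\varepsilon(\cdot,t_1)$ itself (again via Proposition \ref{propViscSol:pointwise-testing} and its supersolution analogue), and subtracting, yields
\begin{align*}
	\frac{\eta}{T^2}\le\frac{\eta}{(T-t_1)^2}\le H\big(g_\varepsilon(\cdot,t_1)+\varphi(\cdot,t_1)+K-\mu,\,x_1\big)-H\big(g_\varepsilon(\cdot,t_1),\,x_1\big).
\end{align*}

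It remains to show the right-hand side tends to $0$ once we send the Jensen perturbation to $0$ and then $R\to\infty$; this is the crux. By invariance of $H$ under additive constants (Proposition \ref{propAuxOp:HProperties}) the perturbation of the argument of $H$ is $\eta\Phi_R-\mu$, and this splits into two pieces with essentially disjoint supports: $-\mu$, which is small in $C^{1,\gam}(\real^d)$ and supported in a tiny ball around $x_1$, and $\eta\Phi_R$, which \emph{vanishes in a large ball} $B_{R/4}(x_1)$. For the $\eta\Phi_R$ piece one uses the splitting property of $H$ (Lemma \ref{lemAuxOp:Jsplitting-property-equal-B2R}, Theorem \ref{thmAuxOp:SplittingTheorem}, Corollary \ref{corAuxOp:SplittingTheorem-OscillationVersion}) to bound its contribution by $C(\varepsilon,d,\gam)R^{-\alpha}\eta$; for the $-\mu$ piece one uses the punctual GCP (Lemma \ref{LemPtWise:PunctialGCP}) together with the quantitative estimates of Lemma \ref{lemPtWise:NormalDerivLinftyDependence} and Theorem \ref{thmAuxOp:HPreciseLipStatementC1Gam} and the smallness of $\|\mu\|_{C^{1,\gam}}$, $|p|$, $\delta_1$. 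Altogether the right-hand side is at most $C(\varepsilon)\big(R^{-\alpha}\eta+o(1)\big)$ where $o(1)$ refers to the Jensen perturbation; sending that perturbation to $0$ and then $R\to\infty$ contradicts $\frac{\eta}{T^2}>0$. The main obstacle is precisely this final estimate: because $H$ is only \emph{locally} Lipschitz on $C^{1,\gam}$, and because the regularizations $f^\varepsilon,g_\varepsilon$ are merely semiconvex/semiconcave (not smooth and not in $\K^*$ on both sides), one cannot compare the two values of $H$ by a crude bound — one must separate the ``local'' contribution (small, via the Lipschitz/GCP estimates) from the ``far-field'' contribution (small, via the splitting property and the decaying bump function), and keep careful track of the $\varepsilon$- and $R$-dependence; a secondary technical point is the passage to a twice-differentiable contact point, which is where Jensen's lemma enters.
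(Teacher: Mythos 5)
Your proposal is correct in outline, but it takes a longer route than the paper at one important junction. The paper does \emph{not} invoke Jensen's lemma, does \emph{not} perturb by a linear function, and does \emph{not} build a correction bump $\mu$. Instead, after locating the interior maximum $(x_R,t_R)$ of $f^\ep - g_\ep - \psi$ (with $\psi(x,t)=C_h + M\Phi_R(x-x_0)+ht$, a linear-in-time penalization rather than your $\eta/(T-t)$ term), it uses a single clean observation: at this contact point, $f^\ep \le g_\ep+\psi+\textnormal{const}$ with equality, $f^\ep$ is touched from \emph{below} by a paraboloid (semi-convexity), and $g_\ep+\psi$ is touched from \emph{above} by a paraboloid (semi-concavity plus smoothness), so each of $f^\ep$ and $g_\ep$ is pinched between two paraboloids at $(x_R,t_R)$ and is therefore \emph{punctually} $C^{1,1}$ there \emph{without any perturbation}. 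This is exactly the regularity needed to invoke Corollary \ref{corPtWise:NormalDeriv} / Lemma \ref{LemPtWise:NonTanBehavior}, the pointwise-testing Proposition \ref{propViscSol:pointwise-testing}, and the punctual GCP (Lemma \ref{LemPtWise:PunctialGCP}) directly at the actual contact point. Because the touching is exact, no $\mu$ is needed: one subtracts the two viscosity inequalities and uses the punctual GCP to land immediately on $0<h \le H(g_\ep(\cdot,t_R)+M\Phi_R(\cdot-x_0),x_R) - H(g_\ep(\cdot,t_R),x_R)$, which is then killed by Corollary \ref{corAuxOp:SplittingTheorem-OscillationVersion} (choosing first $\tilde R$ large, then $R$ large). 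Your route -- Jensen's lemma to reach a point of second-order differentiability, then a compactly supported correction $\mu$ to restore an exact touching -- can be made to work and is closer to the classical second-order comparison template, but it pays for second-order information you never actually use (the equation here is of order strictly less than $2$, which is precisely why punctual $C^{1,\gam}$ suffices and why Jensen's lemma is unnecessary). What the paper's approach buys is economy: no linear shift $p$, no $\delta_1$, no $\mu$, no bookkeeping of their smallness in the final estimate. You did, however, correctly identify the genuine crux, namely that $H$ is only locally Lipschitz on $C^{1,\gam}$ so the final gap must be split into a near-field $C^{1,\gam}$-small piece and a far-field piece controlled by the splitting property (Lemma \ref{lemAuxOp:Jsplitting-property-equal-B2R} and its consequences) -- that part of your reasoning matches the paper's.
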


The second result states that under the additional regularity in (\ref{eqViscSol:InitialDataLocalUniformOrdering}), the comparison result can be transferred back to $f$ and $g$.

\begin{lemma}\label{lemViscSol:ComparisonPart2-InitialData}
	If $f$ and $g$ are bounded and respectively upper semicontinuous and lower semicontinuous, and if $f$ and $g$ satisfy (\ref{eqViscSol:InitialDataLocalUniformOrdering}), then
	\begin{align}\label{eqViscSol:LimitOfInfSupConvolutionInitialData}
		\limsup_{\ep\to0}  \left( \sup_{\real^d} f^\ep(\cdot,0) - g_\ep(\cdot,0) \right)
		\leq 0 .
	\end{align}
\end{lemma}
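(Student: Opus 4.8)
The plan is to fix $\al>0$, use (\ref{eqViscSol:InitialDataLocalUniformOrdering}) to produce a scale $\del_0=\del_0(\al)>0$, set $\del=\del_0/2$, and show that $f^\ep(x,0)-g_\ep(x,0)\le\al$ for \emph{every} $x\in\real^d$ as soon as $\ep$ is small relative to $\del$, $\norm{f}_{L^\infty}$ and $\norm{g}_{L^\infty}$. Taking $\sup_x$, then $\limsup_{\ep\to0}$, and finally $\al\downarrow0$ will give the statement.

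First I would record the crude uniform bounds $f^\ep(x,0)\ge-\norm{f}_{L^\infty}$ and $g_\ep(x,0)\le\norm{g}_{L^\infty}$, valid for all $x$ and $\ep>0$; these follow by testing the sup (resp. inf) in Definition \ref{defViscSol:InfSupConvoution} against the points $(x,s)$ with $s\to0^+$, for which the penalty term vanishes. Next I would localize the sup and the inf: if a competitor $(y,s)$ in the supremum defining $f^\ep(x,0)$ satisfies $\abs{x-y}\ge\del$ or $s\ge\del$, then its penalty is at least $\del^2/(2\ep)$, so its value is at most $\norm{f}_{L^\infty}-\del^2/(2\ep)$, which is strictly below $-\norm{f}_{L^\infty}\le f^\ep(x,0)$ once $\ep<\del^2/(4\norm{f}_{L^\infty})$. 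Hence for such $\ep$ the supremum is effectively over $\{\abs{x-y}<\del,\ 0\le s<\del\}$, giving $f^\ep(x,0)\le\sup\{f(y,s):\abs{x-y}<\del,\ 0\le s<\del\}$; symmetrically, using $g_\ep(x,0)\le\norm{g}_{L^\infty}$, one gets $g_\ep(x,0)\ge\inf\{g(z,\tau):\abs{x-z}<\del,\ 0\le\tau<\del\}$ once $\ep<\del^2/(4\norm{g}_{L^\infty})$.

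Subtracting these one-sided bounds, for all $x$ and all $\ep<\del^2/\bigl(4\max\{\norm{f}_{L^\infty},\norm{g}_{L^\infty}\}\bigr)$ I would obtain
\[
	f^\ep(x,0)-g_\ep(x,0)\ \le\ \sup\bigl\{\,f(y,s)-g(z,\tau)\ :\ \abs{x-y}<\del,\ \abs{x-z}<\del,\ 0\le s,\tau<\del\,\bigr\}.
\]
For any competitor here the triangle inequality gives $\abs{y-z}<2\del=\del_0$, while $s,\tau<\del<\del_0$, so (\ref{eqViscSol:InitialDataLocalUniformOrdering}) applies and yields $f(y,s)\le g(z,\tau)+\al$; thus the right-hand side is $\le\al$. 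Therefore $\sup_{\real^d}\bigl(f^\ep(\cdot,0)-g_\ep(\cdot,0)\bigr)\le\al$ for all sufficiently small $\ep$, so $\limsup_{\ep\to0}\bigl(\sup_{\real^d}(f^\ep(\cdot,0)-g_\ep(\cdot,0))\bigr)\le\al$, and letting $\al\downarrow0$ finishes the argument.

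I do not expect a substantial obstacle; the penalty estimates are routine bookkeeping of constants. The only point that needs care — and the reason the statement is a $\limsup$ inequality rather than an inequality for each fixed $\ep$ — is that the localization of the sup/inf holds only for $\ep$ small compared with $\del$, and $\del$ itself shrinks as $\al\downarrow0$; for large $\ep$ the ``far'' part of the sup-convolution could in principle dominate. This is harmless because only the regime $\ep\to0$ feeds into Lemma \ref{lemViscSol:ComparisonPart1-OrderingSupInfConvolution} and Proposition \ref{propViscSol:Comparison-HeleShawHJB}.
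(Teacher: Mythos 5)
Your proof is correct, and it is slightly more careful than the paper's own version while relying on the same core mechanism: as $\ep\to0$ the sup/inf convolutions at time $0$ only ``see'' points within $O(\sqrt{\ep})$ of the base point, so the initial ordering (\ref{eqViscSol:InitialDataLocalUniformOrdering}) transfers. The paper proceeds by (i) picking a maximizing point $\hat x$ of $f(\cdot,0)-g(\cdot,0)$, (ii) using the standard fact that the attainers $(x_\ep,s_\ep)$, $(y_\ep,t_\ep)$ of $f^\ep(\hat x,0)$, $g_\ep(\hat x,0)$ converge to $(\hat x,0)$, and (iii) applying (\ref{eqViscSol:InitialDataLocalUniformOrdering}) once $|x_\ep-\hat x|$, $|y_\ep-\hat x|$, $s_\ep$, $t_\ep<\del$. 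You instead establish a bound that is \emph{uniform in} $x$: you first bound $f^\ep(x,0)\geq -\norm f_{L^\infty}$ and $g_\ep(x,0)\leq\norm g_{L^\infty}$, then show that once $\ep<\del^2/(4\max\{\norm f_{L^\infty},\norm g_{L^\infty}\})$ the sup and inf are effectively restricted to the $\del$-ball around $(x,0)$, and finally subtract. This avoids the subtle point in the paper's version of having to identify a single base point at which both the maximization of $f-g$ and the convergence of the $\ep$-attainers take place (and it sidesteps the question of whether that maximizer exists on an unbounded domain). Both proofs are short, but yours gives the estimate $\sup_{\real^d}(f^\ep(\cdot,0)-g_\ep(\cdot,0))\leq\al$ explicitly for all $\ep$ below a threshold depending only on $\del$ and the $L^\infty$ norms, which is a cleaner quantitative statement.

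One minor bookkeeping remark that applies to both proofs: the competitor points $(y,s)$ and $(z,\tau)$ (your notation) or $(x_\ep,s_\ep)$ and $(y_\ep,t_\ep)$ (the paper's) differ in space by up to $2\del$, not $\del$. You handled this correctly by taking $\del=\del_0/2$; the paper does not make this adjustment explicit, but it is inconsequential.
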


We will first show how Lemmas \ref{lemViscSol:ComparisonPart1-OrderingSupInfConvolution} and \ref{lemViscSol:ComparisonPart2-InitialData} give Proposition \ref{propViscSol:Comparison-HeleShawHJB}.  Then we will prove Lemmas \ref{lemViscSol:ComparisonPart1-OrderingSupInfConvolution} and \ref{lemViscSol:ComparisonPart2-InitialData} afterwards.

\begin{proof}[Proof of Proposition \ref{propViscSol:Comparison-HeleShawHJB}]
	
	We take $f^\ep$ and $g_\ep$ to be respectively the sup and inf convolutions of $f$ and $g$.  Lemmas \ref{lemViscSol:ComparisonPart1-OrderingSupInfConvolution} and \ref{lemViscSol:ComparisonPart2-InitialData} combine to give
	\begin{align*}
		\limsup_{\ep\to0}\sup_{\real^d\times[0,T)}(f^\ep - g_\ep)\leq 0.
	\end{align*} 
	However, by construction, $(f-g)\leq f^\ep - g_\ep$, and thus we conclude that $\sup_{\real^d\times[0,T)} f-g\leq 0$.
\end{proof}

	Now we can prove Lemmas \ref{lemViscSol:ComparisonPart1-OrderingSupInfConvolution} and \ref{lemViscSol:ComparisonPart2-InitialData}.
	
\begin{proof}[Proof of Lemma \ref{lemViscSol:ComparisonPart1-OrderingSupInfConvolution}]
	
	Let $\ep>0$, and $f^\ep$ be the sup-convolution of $f$ and $g_\ep$ the inf-convolution of $g$, as in Definition \ref{defViscSol:InfSupConvoution}. By Proposition \ref{propViscSol:subsolution-of-sup-convolution} we see that $f^\varepsilon, g_\varepsilon$ are respectively a subsolution and supersolution to \eqref{eqViscSol:HeleShawHJB} in the viscosity sense.

We will obtain (\ref{eqViscSol:ComparisonForInfSupConvolution}) by contradiction. Let us define $M$ as
\begin{align*}
	M = 
	\sup_{\real^d\times[0,T)}(f^\ep - g_\ep) - \sup_{\real^d\times\{0\}} (f^\ep - g_\ep),
\end{align*} 
and our contradiction assumption is that $M>0$.
There exist  $x_0\in\real^d$, $h>0$ small enough (depending on $M$ and $T$), and a constant $C_h$ such  that, for $\psi$ given by
\begin{align*}
    \psi(x,t) = C_h +  M\Phi_{R}(x-x_0) + ht, 
\end{align*}
the function $f^\ep - g_\ep - \psi$ attains a zero maximum at $(x_R, t_R)$ with $t_R>0$.  This fact follows from  Lemma \ref{lemViscSol:BumpToAttainMax}, which we state (and provide its elementary proof)  after completing the present proof.

Using the semi-convexity and semi-concavity of $f^\varepsilon$ and $g_\varepsilon$, combined with the fact that $\psi$ is smooth, we deduce that $f^\varepsilon, g_\varepsilon$ have classical derivatives with respect to $x,t$ at $(x_R,t_R)$ and furthermore $f^\varepsilon$ and $g_\varepsilon$ are $C^{1,1}(x_R, t_R)$. Hence, by Lemma \ref{LemPtWise:NonTanBehavior}, $H(f^\varepsilon(\cdot, t_R), x_R)$ and $H(g_\varepsilon(\cdot, t_R), x_R)$ are well defined. Furthermore, at $(x_R,t_R)$, by Proposition \ref{propViscSol:pointwise-testing}, we have 
    \begin{align*}
        H(g_\varepsilon(\cdot, t_R), x_R) \leq \partial_t g_\varepsilon(x_R,t_R)
		\ \ \ \ \textnormal{and}\ \ \ \ 
		\partial_t f^\varepsilon(x_R,t_R) \leq H(f^\varepsilon(\cdot, t_R), x_R).
    \end{align*}
	Next, thanks to the fact that $f^\varepsilon - g_\varepsilon - \psi$ attains a maximum,
    \begin{align*}
        \partial_t (g_\varepsilon + \psi)(x_R,t_R) = \partial_t f^\varepsilon(x_R,t_R),
    \end{align*}
	and we can also invoke the punctual GCP (Lemma \ref{LemPtWise:PunctialGCP}) for $f^\varepsilon \leq g^\varepsilon + \psi$, to obtain
    \begin{align*}
        H(f^\varepsilon(\cdot, t_R), x_R) \leq H\big(g_\varepsilon(\cdot, t_R)+\psi(\cdot, t_R), x_R\big).
    \end{align*}
	Combining this, we obtain
	\begin{align*}
	    H\left( g_\varepsilon(\cdot, t_R), x_R \right) + \partial_t \psi(x_R,t_R) \leq  H\Big(g_\varepsilon(\cdot, t_R)+\psi(\cdot, t_R), x_R\Big),
	\end{align*}
	which, upon  rearranging becomes,
	\begin{align*}
		0<
	    h  
	    &\leq H\big(g_\varepsilon(\cdot, t_R)+\psi(\cdot, t_R), x_R\big) - H\left(g_\varepsilon(\cdot, t_R), x_R \right)\\
	    &=H\big(g_\varepsilon(\cdot, t_R)+ M\Phi_R(\cdot-x_0), x_R\big) - H\left(g_\varepsilon(\cdot, t_R), x_R \right). 
	    \end{align*}
	    Let  $\tilde{R}>0$. We apply Corollary \ref{corAuxOp:SplittingTheorem-OscillationVersion}  to bound the right-hand side of the previous line to find, for a constant $C$ independent of $R$ and $\tilde{R}$,
	    	    \begin{align*}
	    0<h&\leq C\left( \osc_{B_{3\tilde{R}}(x_R)} M \Phi_R(\cdot -x_0)+ 
		 \norm{\grad \Phi_R(\cdot-x_0)}_{C^{\gam}(B_{3\tilde{R}}(x_R))} +\tilde{R}^{-\alpha}\right).
	    \end{align*}
Using the definition of $\Phi_R$ we obtain,
	    	    \begin{align*}
	    0<h&\leq C  \left(\osc_{B_{3\tilde{R}R^{-1}}(x_R)} M \Phi_1(\cdot -x_0)+
		 R^{-1}\norm{\grad \Phi_1(\cdot-x_0)}_{C^{\gam}(B_{3\tilde{R}}(x_R))} +\tilde{R}^{-\alpha}\right).
	    \end{align*}   
	    Let us now choose $\tilde{R}$ large enough so that 
	    \[
	    C\tilde{R}^{-\alpha}<\frac{h}{4}.
	    \]
	 Now that $\tilde{R}$ is fixed, we choose $R$ large enough so that 
	 \[
	 C\left(\osc_{B_{3\tilde{R}R^{-1}}(x_R)} M \Phi_1(\cdot -x_0) +R^{-1}\norm{\grad \Phi_1(\cdot-x_0)}_{C^{\gam}(B_{3\tilde{R}}(x_R))} \right) <\frac{h}{4}.
	 \]
	 Combining the three previous inequalities yields $0<h<\frac{h}{2}$, which is the desired contradiction.
\end{proof}

\begin{proof}[Proof of Lemma \ref{lemViscSol:ComparisonPart2-InitialData}]

Let $\al>0$ be given.  By the assumption on $f(\cdot,0)$ and $g(\cdot,0)$, let $\del>0$ be such that (\ref{eqViscSol:InitialDataLocalUniformOrdering}) holds.  Without loss of generality, let us assume that the supremum of initial values is attained, i.e.
\begin{align*}
	\sup_{\real^d} f(\cdot,0) - g(\cdot) = f(\hat x,0) - g(\hat x,0).
\end{align*}
As $f$ is upper semicontinuous and $g$ is lower semicontinuous and both are bounded, we know that the sup and inf for $f^\ep$, $g_\ep$, are always attained; let those points be $(x_\ep,s_\ep)$ and $(y_\ep,t_\ep)$ respectively for $f^\ep$ and $g_\ep$ at $(\hat x,0)$.  One of the properties of the inf and sup convolutions is that
\begin{align*}
	\lim_{\ep\to0} \frac{\abs{x_\ep-x_0}^2 + (s_\ep - t_0)^2}{2\ep} = 0
	\ \ \ \ \textnormal{and}\ \ \ \
	\lim_{\ep\to0} \frac{\abs{y_\ep-x_0}^2 + (t_\ep - t_0)^2}{2\ep} = 0.
\end{align*}
In particular, 
\begin{align*}
	(x_\ep,s_\ep)\to (\hat x,0)\ \ \ \textnormal{and}\ \ \ 
	(y_\ep,t_\ep)\to (\hat x,0).
\end{align*}
Therefore, for $\ep$ small enough, depending upon $\al$ and $\del$, we obtain 
\begin{align*}
	\abs{x_\ep - \hat x} < \del,\ \
	\abs{y_\ep - \hat x} < \del,\ \ 
	s_\ep < \del,\ \ 
	t_\ep < \del.
\end{align*}
Thus, invoking (\ref{eqViscSol:InitialDataLocalUniformOrdering}), we see that
\begin{align*}
	\sup_{\real^d} f^\ep(\cdot,0)-g_\ep(\cdot,0) = f(x_\ep, s_\ep) - g(y_\ep, t_\ep)
	\leq \al.
\end{align*}
Since $\al>0$ was arbitrary, we conclude (\ref{eqViscSol:LimitOfInfSupConvolutionInitialData}).
\end{proof}

\begin{lemma}\label{lemViscSol:BumpToAttainMax} Let $u: \mathbb{R}^d\times [0,T)\to \mathbb{R}$ be upper semicontinuous, bounded from above, and satisfy
\begin{align*}
    M = \sup_{\mathbb{R}^d \times [0,T)} u - \sup_{\mathbb{R}^d } u(\cdot,0) > 0,
\end{align*}
Let $\Phi_R$ be the bump function from Remark \ref{remAuxOp:bump-function}. 
There exists some $(x_0,t_0)$, such that for all $0<h<\frac{M}{4T}$, for all $R>0$ (independent of $h$ and $M$), there exists $(x_h,t_h)$ so that the function
\begin{align*}
	\psi(x,t) = u(x,t) - M\Phi_R(x - x_0) - ht
\end{align*}  
attains a maximum at $(x_h,t_h)$, and $t_h\in (0,T]$.
\end{lemma}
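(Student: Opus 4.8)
The plan is to choose the base point $(x_0,t_0)$ once, as a near-maximizer of $u$, and then verify that the perturbation $\psi$ automatically does what is needed for every admissible $h$ and $R$; the only properties of $u$ that enter are boundedness from above and upper semicontinuity. First I would set $S=\sup_{\real^d\times[0,T)}u$ (finite since $u$ is bounded above) and, recalling $M=S-\sup_{\real^d}u(\cdot,0)>0$, pick $(x_0,t_0)\in\real^d\times[0,T)$ with $u(x_0,t_0)>S-\tfrac{M}{4}$. Such a point must have $t_0>0$: if $t_0=0$ then $u(x_0,0)\le\sup_{\real^d}u(\cdot,0)=S-M<S-\tfrac{M}{4}$, a contradiction. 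This single choice of $(x_0,t_0)$ depends only on $u$ (through $S$ and $M$) and will be used uniformly in $h$ and $R$.

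Next, with $h\in(0,\tfrac{M}{4T})$ and $R>0$ fixed, I would compare values of $\psi(x,t)=u(x,t)-M\Phi_R(x-x_0)-ht$. Since $\Phi_R(0)=0$ and $ht_0<hT<\tfrac{M}{4}$, one gets $\psi(x_0,t_0)=u(x_0,t_0)-ht_0>S-\tfrac{M}{2}$. Since $\Phi_R$ is radially increasing with $\Phi_R=\tfrac12$ on the sphere of radius $R$, for $|x-x_0|\ge R$ one has $\psi(x,t)\le S-\tfrac{M}{2}<\psi(x_0,t_0)$ (using $u\le S$, $ht\ge0$), so the supremum of $\psi$ over $\real^d\times[0,T)$ equals its supremum over $B_R(x_0)\times[0,T)$ and is $>S-\tfrac{M}{2}$. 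Likewise $\psi(x,0)\le u(x,0)\le S-M<\psi(x_0,t_0)$ for every $x$, so the slice $\{t=0\}$ cannot contain a maximizing point. It then remains to produce an actual maximizer: a maximizing sequence is eventually contained in $B_R(x_0)\times[0,T)$ by the first estimate, so it has a limit point $(x_h,t_h)\in\overline{B_R(x_0)}\times[0,T]$, and upper semicontinuity of $u$ gives $\psi(x_h,t_h)=\sup\psi$ when $t_h<T$; together with the exclusion of $\{t=0\}$ this yields $t_h\in(0,T]$.

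The one genuinely delicate point — and the reason the conclusion allows $t_h=T$ rather than $t_h\in(0,T)$ — is compactness in the time variable at the open end $t=T$: because $h<\tfrac{M}{4T}$ forces $hT<\tfrac{M}{4}$, the linear penalty $-ht$ does not by itself push a maximizing sequence away from $t=T$. I would handle this by extending $u$ to $\real^d\times[0,T]$ via its upper semicontinuous envelope at $t=T$, whose value there is still $\le S$ and therefore produces no values of the extended $\psi$ exceeding $\sup\psi$; the extended $\psi$ is then upper semicontinuous on the compact set $\overline{B_R(x_0)}\times[0,T]$ and attains its maximum there, at a point $(x_h,t_h)$ with $|x_h-x_0|<R$ (by the $|x-x_0|\ge R$ estimate) and $t_h\ne0$ (by the $t=0$ estimate), hence $t_h\in(0,T]$. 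In the situation where the lemma is applied, $u=f^\ep-g_\ep$ is Lipschitz in time and thus continuous up to $t=T$, so this envelope is just the honest value of $u$ at $t=T$ and no artifice is needed.
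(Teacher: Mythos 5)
Your proof is correct and follows essentially the same route as the paper's: pick $(x_0,t_0)$ once as a near-maximizer of $u$ to within $M/4$, show $t_0>0$ by comparison with the $t=0$ supremum, use $\Phi_R(0)=0$ and $hT<M/4$ to get $\psi(x_0,t_0)>S-M/2$, and then rule out both the $t=0$ slice and the far-field $|x-x_0|\ge R$ as locations of the supremum so that the maximum is attained on a compact set with $t$-coordinate in $(0,T]$. Where you differ is purely in level of care: the paper's proof simply asserts that the upper semicontinuous $\psi$ attains its maximum on $\overline{B}_\tau\times[0,T]$, which quietly presupposes that $\psi$ is defined and upper semicontinuous at $t=T$ even though $u$ is a priori given only on $[0,T)$; you flag this and supply the usc envelope extension at $t=T$, noting (correctly) that the extended values at $t=T$ cannot exceed $\sup\psi$, and that in the actual application $u=f^\ep-g_\ep$ is Lipschitz in time so the extension is the honest boundary value. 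One very minor imprecision: the relevant property of the extension is that $\tilde\psi(x,T)\le\sup_{\real^d\times[0,T)}\psi$ by definition of the $\limsup$ envelope, not that $\tilde u(x,T)\le S$ per se (the latter alone does not immediately bound the extended $\psi$ inside $B_R(x_0)$, where $\Phi_R$ is small); both facts are true, but the former is the one your compactness argument actually uses.
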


\begin{proof}[Proof of Lemma \ref{lemViscSol:BumpToAttainMax}]

	Let us first choose $(x_0,t_0)$ to be such that 
	\begin{align*}
		u(x_0,t_0) \geq \sup_{\real^d\times [0,T)} u - \frac{1}{4}M.
	\end{align*}
	We first note that this implies $t_0>0$, which is immediate from the fact that
	\begin{align}\label{eqViscSol:ProofOfAttainMaxUBiggerThanInitialData}
		u(x_0,t_0) \geq \sup_{\real^d} u(\cdot,0) + \frac{3}{4}M.
	\end{align}
	We see then that 
	\begin{align*}
		\psi(x_0,t_0) = u(x_0,t_0)  - ht_0
		\geq \sup_{\real^d} u(\cdot,0) + \frac{3}{4}M - ht
		\geq\sup_{\real^d} u(\cdot,0) + \frac{3}{4}M - hT,
	\end{align*}
	and so for $h T\leq \frac{M}{4}$, 
	\begin{align}
	\label{eq:psi at x0}
		\psi(x_0,t_0) \geq \sup_{\real^d} u(\cdot,0) + \frac{1}{2}M
		\geq \sup_{\real^d} \psi(\cdot,0) + \frac{1}{2}M.
	\end{align}
	From the definition of $\Phi_R$, for $\abs{x}$ large enough, 
	\begin{align*}
		\psi(x,t) \leq u(x,t) - \frac{3}{4}M -ht 
		\leq \sup_{\real^d} \psi(\cdot,t) - \frac{3}{4}M \leq \psi(x_0,t_0)-\frac14 M.
	\end{align*}
	Therefore, there exists $\tau>0$ such that,
	\begin{align*}
		\sup_{\real^d\times[0,T)} \psi  = \sup_{B_\tau(0)\times[0,T)} \psi.
	\end{align*}
	Since $\psi$ is upper semicontinuous, it will  attain its maximum over the compact set $\overline{B}_\tau\times[0,T]$. This maximum will be at least as large as $\psi(x_0,t_0)$; thus, by (\ref{eq:psi at x0}), it must occur in $(0,T]$.
\end{proof}


\subsection{Existence}

We  prove existence via the Perron method for viscosity solutions.  Our presentation is almost exactly as that in \cite{ImbertSilvestre-2013IntroToFullyNonlinearParabolic} and \cite{Silv-2011DifferentiabilityCriticalHJ}, which are in turn adaptations of the techniques of \cite{Ishii-1987PerronForHamiltonJacobiDUKE}.   We include most of the details here for the sake of completeness.   

The choice to use the Perron method places requirements on the definition of the test functions and viscosity sub and super solutions.  Specifically, it is the reason why we require sub (super) solutions to be defined for only upper semicontinuous (lower semicontinuous) functions in Definition \ref{defViscSol:ViscositySolutions}, and also it is the reason why test functions should be classical in an entire neighborhood of a point of contact instead of only punctually $C^{1,\gam}$ at that point.

We have broken the  proof into smaller individual steps.  The first part gives the details of how the supremum (or infimum) of a general family of subsolutions (supersolutions) preserves the subsolution (supersolution) property.  Since this is useful for reasons other than the Perron method, we state it as its own result.  We will use the upper and lower semicontinuous envelopes of a function $u$, defined as:
\begin{definition}[Semicontinuous envelopes]\label{defViscSol:USCLSCEnvelopes}
	For a function, $u$, bounded from above, we define the upper semicontinuous envelope of $u$ as
	\begin{align*}
		u^*(x,t) = 
		\lim_{r\to 0} \left(
		\sup_{(y,s)\in B_r(x)\times (t-r,t+r)} u(y,s)
		\right),
	\end{align*}
	and for a function, $v$, bounded from below, the lower semicontinuous envelope as
	\begin{align*}
		v_*(x,t) = 
		\lim_{r\to 0} \left(
		\inf_{(y,s)\in B_r(x)\times (t-r,t+r)} v(y,s)
		\right).
	\end{align*}
\end{definition}

\begin{proposition}[General sup/inf of family of sub/super solutions]\label{propViscSol:inf-supersln}  
	Let  $\{u_i\}_{i\in \I}$ and $\{v_i\}_{i\in \I}$ be collections of functions with
\begin{align*}
    u(x,t) = \sup_{i\in \I}  u_i(x,t) < \infty,\
	\ \ \ \textnormal{and}\ \ \    
	v(x,t) = \inf_{i\in \I} v_i(x,t) > -\infty.
\end{align*}
	If $\{u_i\}_{i\in \I}$,  $\{v_i\}_{i\in \I}$ are families of viscosity subsolutions and supersolutions, respectively, to \eqref{eqViscSol:HeleShawHJB}, 
then  $u^*$ and $v_*$ (as in Definition \ref{defViscSol:USCLSCEnvelopes}) are respectively a viscosity subsolution and supersolution to \eqref{eqViscSol:HeleShawHJB}.
\end{proposition}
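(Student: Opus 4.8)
The plan is to run the classical Perron-type argument (in the form of \cite{Ishii-1987PerronForHamiltonJacobiDUKE}, \cite{Silv-2011DifferentiabilityCriticalHJ}, \cite{ImbertSilvestre-2013IntroToFullyNonlinearParabolic}), adapted to the nonlocal notion of test function in Definition \ref{defViscSol:test-functions}. I will only treat the claim for $u^*$; the claim for $v_*$ is entirely analogous (replace suprema by infima, ``above'' by ``below'', and reverse the inequalities). That $u^*$ is upper semicontinuous is immediate from the definition of the envelope, and it is bounded in all our applications, so the substance is the test-function inequality. Thus I fix $(x_0,t_0)\in\real^d\times(0,T)$ and a test function $\phi$ touching $u^*$ from above there, and aim to prove $\partial_t\phi(x_0,t_0)\leq H(\phi(\cdot,t_0),x_0)$.

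First I would reduce to a \emph{strict} maximum. Using the bump $\Phi$ from Remark \ref{remAuxOp:bump-function}, set $\phi_\eta(x,t)=\phi(x,t)+\eta\big(\Phi(x-x_0)+(t-t_0)^2\big)$ for $\eta>0$. Since $\Phi(0)=0$, $\nabla\Phi(0)=0$, and $\Phi$ is smooth and bounded, $\phi_\eta$ is again a test function at $(x_0,t_0)$, has the same value, spatial gradient, and $t$-derivative as $\phi$ at $(x_0,t_0)$, and still touches $u^*$ from above there, now with $\phi_\eta-u^*\geq\eta(\Phi(x-x_0)+(t-t_0)^2)$ on the strip $\real^d\times(t_0-r,t_0+r)$ where the touching holds; in particular $\phi_\eta-u^*\geq c>0$ on the boundary of any small space-time box $Q=\overline{B_\rho(x_0)}\times[t_0-r',t_0+r']$ with $r'<r$. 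If I can show $\partial_t\phi(x_0,t_0)\leq H(\phi_\eta(\cdot,t_0),x_0)$ for every $\eta>0$, then, since $\phi_\eta(\cdot,t_0)\to\phi(\cdot,t_0)$ in $C^{1,\gamma}(B_r(x_0))\cap C^{0,1}(\real^d)$ as $\eta\to0$, the stability of $H$ on test functions (Proposition \ref{propViscSol:ContinuityOfH}, together with Theorems \ref{thmAuxOp:HPreciseLipStatementC1Gam} and \ref{thmAuxOp:SplittingTheorem}) finishes the proof. So I now fix $\eta>0$ and work with $\phi_\eta$.

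By the definition of the upper semicontinuous envelope, choose $(x_k,t_k)\to(x_0,t_0)$ with $u(x_k,t_k)\to u^*(x_0,t_0)$ and then $i_k$ with $u_{i_k}(x_k,t_k)\geq u(x_k,t_k)-1/k$. Since $u_{i_k}\leq u\leq u^*\leq\phi_\eta$ on the strip, the lower semicontinuous function $\phi_\eta-u_{i_k}$ is nonnegative there, tends to $0$ along $(x_k,t_k)$, and is $\geq c$ on $\partial Q$; hence for $k$ large its minimum $\mu_k\in[0,\,\phi_\eta(x_k,t_k)-u_{i_k}(x_k,t_k)]$ over $Q$ is attained at an interior point $(\hat x_k,\hat t_k)$, and the strictness of the maximum forces $(\hat x_k,\hat t_k)\to(x_0,t_0)$ and $\mu_k\to0$. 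The function $\phi_\eta-\mu_k$ equals $u_{i_k}$ at $(\hat x_k,\hat t_k)$ and dominates $u_{i_k}$ on $Q$, but may fail to dominate $u_{i_k}$ elsewhere, so it is not yet an admissible test function. To globalize, I pick a fixed smooth $\theta:\real^{d+1}\to[0,1]$ with $\theta\equiv0$ near the origin and $\theta\equiv1$ off a compact neighborhood of the origin contained in $Q-(x_0,t_0)$, and set $\tilde\phi_k=\phi_\eta+\mu_k\,\theta(\cdot-x_0,\cdot-t_0)$. Then $\tilde\phi_k$ is again a test function, equal to $\phi_\eta$ near $(\hat x_k,\hat t_k)$ for $k$ large, and $\tilde\phi_k-\mu_k\geq u_{i_k}$ on a full space-time neighborhood of $(\hat x_k,\hat t_k)$: on $Q$ because $\phi_\eta-u_{i_k}\geq\mu_k$ and $\theta\geq0$, and off $Q$ because there $\theta\equiv1$, so $\tilde\phi_k-\mu_k=\phi_\eta\geq u^*\geq u_{i_k}$. (One could instead invoke the minimal-lift construction from the proof of Proposition \ref{propViscSol:pointwise-testing}.) Thus $\tilde\phi_k-\mu_k$ touches $u_{i_k}$ from above at $(\hat x_k,\hat t_k)$, and the subsolution property of $u_{i_k}$ together with the constant-invariance of $H$ (Proposition \ref{propAuxOp:HProperties}) gives
\[
\partial_t\phi_\eta(\hat x_k,\hat t_k)=\partial_t\tilde\phi_k(\hat x_k,\hat t_k)\leq H\big((\tilde\phi_k-\mu_k)(\cdot,\hat t_k),\hat x_k\big)=H\big(\tilde\phi_k(\cdot,\hat t_k),\hat x_k\big).
\]
Letting $k\to\infty$: the left side converges to $\partial_t\phi_\eta(x_0,t_0)=\partial_t\phi(x_0,t_0)$; and since $\tilde\phi_k(\cdot,\hat t_k)-\phi_\eta(\cdot,\hat t_k)=\mu_k\theta(\cdot-x_0,\hat t_k-t_0)\to0$ while $\phi_\eta(\cdot,\hat t_k)\to\phi_\eta(\cdot,t_0)$, both in $C^{1,\gamma}(B_r(x_0))\cap C^{0,1}(\real^d)$, and $\hat x_k\to x_0$, the right side converges to $H(\phi_\eta(\cdot,t_0),x_0)$ by the stability of $H$ on test functions. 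Hence $\partial_t\phi(x_0,t_0)\leq H(\phi_\eta(\cdot,t_0),x_0)$, and letting $\eta\to0$ concludes.

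I expect the two genuinely nonlocal points to be the real work: (a) the globalization step, i.e.\ turning the locally dominating function $\phi_\eta-\mu_k$ into a bona fide test function touching $u_{i_k}$ from above on \emph{all} of $\real^d$ without perturbing its $1$-jet at $(\hat x_k,\hat t_k)$; and (b) the limit $H(\tilde\phi_k(\cdot,\hat t_k),\hat x_k)\to H(\phi_\eta(\cdot,t_0),x_0)$, which has no closed form and must be extracted from the locally-Lipschitz and splitting estimates for $H$ (Theorems \ref{thmAuxOp:HPreciseLipStatementC1Gam} and \ref{thmAuxOp:SplittingTheorem}) and Proposition \ref{propViscSol:ContinuityOfH}. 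The remaining bookkeeping — semicontinuity of the envelopes, existence and interior location of the contact points, and the $\eta\to0$ limit — is routine.
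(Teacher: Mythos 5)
Your proof is correct and follows the same Perron-type strategy as the paper's: pass to a strict maximum via a bump, locate approximate touching points for the $u_{i_k}$, apply the subsolution inequality there, and pass to the limit using the stability of $H$ on test functions. The only (cosmetic) differences are that you perturb by $\eta\Phi_1$ and send $\eta\to 0$, so only the local Lipschitz estimate (Theorem~\ref{thmAuxOp:HPreciseLipStatementC1Gam}) is needed, whereas the paper perturbs by $\Phi_R$ and sends $R\to\infty$, which requires the splitting estimate (Corollary~\ref{corAuxOp:SplittingTheorem-OscillationVersion}) since $\|\Phi_R\|_{L^\infty}\equiv 1$ does not vanish; and your explicit cutoff $\theta$ in the globalization step is in fact redundant, because $\phi_\eta-\phi\geq\eta c_0>0$ off $Q$ (inside the time strip) already gives $\phi_\eta-\mu_k\geq u_{i_k}$ globally once $\mu_k<\eta c_0$, so $\phi_\eta-\mu_k$ itself is the admissible test function.
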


\begin{proof}[Proof of Proposition \ref{propViscSol:inf-supersln}]  
	
	We will prove that $u^*$ is a viscosity subsolution.  The proof that $v_*$ is a supersolution follows analogously.

	 Let $\psi$ be a test function for $u^*$ at $(x,t)$ (from Definition \ref{defViscSol:test-functions}).  We will show that $\partial_t \psi(x,t) \leq H(\psi(\cdot, t), x)$.  A key step in most versions of this result is to work with test functions that have a strict max with $u^*$.  The reason for this is that the strict max property for $u^*$ can then be transferred back to the actual subsolutions, $u_i$, but at a nearby point to $(x,t)$.

To this end, we will create a strict max for $u^*-\psi$.  For $R> 0$, define $\tilde \psi$ as
\begin{align*}
    \tilde{\psi}(y,s) = \psi(y,s) + \Phi_R(y-x) + |t-s|^2.
\end{align*}
Then $u^* - \tilde{\psi}$ has a strict maximum over $\overline{B}_r(x)\times [t-r,t+r]$ at $(x,t)$.  By construction of $\tilde \psi$ and $u^*$, there exists sequences, $u_{i_n}$, $(y_n, s_n)$, $c_n$ so that
\begin{align*}
	&u_{i_n} - (\tilde \psi + c_n)\ \ \textnormal{has a local max at}\ \ (y_n,s_n),\\
	&\textnormal{and}\ \ \lim_{n\rightarrow\infty} u_{i_n}(y_n,s_n) = u^*(x,t),\\
	&\textnormal{with}\ \ \lim_{n\rightarrow\infty} (y_n,s_n) = (x,t)\ \ \textnormal{and}\ \ 
	\lim_n c_{n\rightarrow\infty} = 0.
\end{align*}
(The proof of the previous fact is standard, and we omit it.)

The definition of subsolution can then be applied to $u_{i_n}$ to obtain
\begin{align*}
    \partial_t\psi(y_n, s_n)+ 2(s_n-t) 
    = \partial_t \tilde\psi(y_n, s_n) 
    \leq H\left(\tilde\psi(\cdot, s_n), y_n\right) = H\left(\psi(\cdot, s_n) + \phi_R(\cdot -x), y_n\right).
\end{align*}
Here we used the translation invariance and invariance by addition of constants properties of $H$ from Proposition \ref{propAuxOp:HProperties}. Letting $R\to \infty$ and using Corollary \ref{corAuxOp:SplittingTheorem-OscillationVersion}, we obtain 
\begin{align*}
    \partial_t\psi(y_n, s_n)+ 2(s_n-t)  \leq H\left(\psi(\cdot, s_n), y_n\right).
\end{align*}
Taking the limit as $(y_n,s_n)\to (x,t)$ we deduce from Proposition \ref{propViscSol:ContinuityOfH} that 
\begin{align*}
    \partial_t \psi(x,t) \leq H(\psi(\cdot, t), x),  
\end{align*}
and therefore $u^*$ is a viscosity subsolution to \eqref{eqViscSol:HeleShawHJB}.
\end{proof}

As mentioned above, the previous result is one of the steps in building solutions via the Perron method.  Here we give this as our existence result.

\begin{proposition}[Existence via Perron method]\label{propViscSol:Existence}
	
    If $f_0\in BUC(\real^d)$, then there exists a unique viscosity solution $f$ that solves \eqref{eqViscSol:HeleShawHJB} with $f(\cdot, 0) = f_0$ on $\real^d$.
	
\end{proposition}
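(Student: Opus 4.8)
The plan is to build $f$ via Perron's method, following \cite{Ishii-1987PerronForHamiltonJacobiDUKE}, \cite{Silv-2011DifferentiabilityCriticalHJ}, and \cite[Section 10]{ChangLaraGuillenSchwab-2019SomeFBAsNonlocalParaboic-NonlinAnal}, and to use the comparison principle of Proposition \ref{propViscSol:Comparison-HeleShawHJB} both for uniqueness and to collapse the semicontinuous envelopes of the Perron solution. Uniqueness is immediate: two solutions with initial trace $f_0$ are each continuous up to $t=0$ with the same trace, so (using $f_0\in BUC$) they satisfy the local uniform ordering \eqref{eqViscSol:InitialDataLocalUniformOrdering} in both directions, and Proposition \ref{propViscSol:Comparison-HeleShawHJB} forces them to coincide. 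For existence, the only problem-specific input is a pair of barriers attaining $f_0$ at $t=0$; everything else is the general Perron machinery, available here because $H$ satisfies the punctual GCP (Lemma \ref{LemPtWise:PunctialGCP}), is translation- and constant-invariant (Proposition \ref{propAuxOp:HProperties}), depends continuously on test functions (Proposition \ref{propViscSol:ContinuityOfH}), and because suprema (infima) of subsolutions (supersolutions) are again subsolutions (supersolutions) (Proposition \ref{propViscSol:inf-supersln}).

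\emph{Barriers.} Fix a bounded concave modulus $\om$ of $f_0$ and a mollifier $\eta_\sig$ supported in $B_\sig$. For $\sig>0$ set $\tilde g_\sig = f_0\ast\eta_\sig - 3\om(\sig)$ and $\overline h_\sig = f_0\ast\eta_\sig + 3\om(\sig)$, so that $\tilde g_\sig\le f_0\le\overline h_\sig$, both have modulus $\om$, and $\|\tilde g_\sig-f_0\|_{L^\infty},\|\overline h_\sig-f_0\|_{L^\infty}\to 0$ as $\sig\to0$. Since $f_0$ is bounded, $\tilde g_\sig,\overline h_\sig\in C^\infty$ with globally bounded first and second derivatives; in particular each lies in $\K^*(\gam,m_\sig)$ for a suitable $m_\sig$, is punctually $C^{1,\gam}$ at every point, and — by Corollary \ref{corPtWise:NormalDeriv} and Corollary \ref{corAuxOp:WfLipOnK*Part2} — satisfies $|H(\tilde g_\sig,\cdot)|,|H(\overline h_\sig,\cdot)|\le C_\sig$ for a finite $C_\sig=C(\gam,m_\sig,d)$. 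Put
\[
\underline f_\sig(x,t) = \tilde g_\sig(x) - C_\sig t,\qquad \overline f_\sig(x,t)=\overline h_\sig(x)+C_\sig t .
\]
Then $\underline f_\sig$ is a viscosity subsolution of \eqref{eqViscSol:HeleShawHJB}: if a test function $\phi$ touches $\underline f_\sig$ from above at an interior point $(x_0,t_0)$, then freezing $x=x_0$ forces $\partial_t\phi(x_0,t_0)=-C_\sig$, while $\phi(\cdot,t_0)+C_\sig t_0\ge\tilde g_\sig$ on all of $\real^d$ with equality at $x_0$, so the punctual GCP (Lemma \ref{LemPtWise:PunctialGCP}) together with constant-invariance (Proposition \ref{propAuxOp:HProperties}) give $H(\phi(\cdot,t_0),x_0)\ge H(\tilde g_\sig,x_0)\ge-C_\sig=\partial_t\phi(x_0,t_0)$. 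Symmetrically $\overline f_\sig$ is a supersolution, and clearly $\underline f_{\sig'}\le\overline f_\sig$ for all $\sig,\sig'$. Finally, $\overline u:=\inf_{\sig>0}\overline f_\sig$ is a supersolution (Proposition \ref{propViscSol:inf-supersln}) that is continuous on $\real^d\times(0,T)$ — for $t$ bounded away from $0$ only $\sig$'s with $C_\sig$ bounded matter, and these give a family equi-continuous in $x$ (modulus $\om$) and equi-Lipschitz in $t$ — and has $\overline u(\cdot,0)=f_0$.

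\emph{Perron.} Let $f(x,t)$ be the supremum, over all bounded upper semicontinuous subsolutions $v$ of \eqref{eqViscSol:HeleShawHJB} with $v\le\overline u$, of $v(x,t)$; the family is nonempty (it contains each $\underline f_\sig$) and $f\le\overline u$ is finite. Since $\underline f_\sig\le f\le\overline u\le\overline f_\sig$ for all $\sig$ and $\tilde g_\sig,\overline h_\sig\to f_0$ uniformly, $f$ extends continuously to $t=0$ with $f(\cdot,0)=f_0$, and this uniform squeeze also yields that the envelopes $f^*$ and $f_*$ satisfy \eqref{eqViscSol:InitialDataLocalUniformOrdering} against one another. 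By Proposition \ref{propViscSol:inf-supersln}, $f^*$ is a subsolution, and $f^*\le\overline u$, so $f^*$ lies in the defining family, whence $f=f^*$. That $f_*$ is a supersolution is the standard Perron step: were it to fail at an interior $(x_1,t_1)$, one first notes $f_*(x_1,t_1)<\overline u(x_1,t_1)$ (otherwise a test function touching $f_*$ from below would touch the supersolution $\overline u$, a contradiction), then adds a small positive constant to a strictly-touching test function, uses Proposition \ref{propViscSol:ContinuityOfH} to keep it a strict subsolution near $(x_1,t_1)$ and Proposition \ref{propViscSol:inf-supersln} to glue it with $f$, producing an admissible subsolution strictly exceeding $f$ along a sequence approaching $(x_1,t_1)$ — contradicting maximality. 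Finally Proposition \ref{propViscSol:Comparison-HeleShawHJB} applied to $f^*$ and $f_*$ gives $f^*\le f_*$, and since always $f_*\le f\le f^*$, we conclude $f^*=f=f_*\in C(\real^d\times(0,T))$ is a viscosity solution with $f(\cdot,0)=f_0$; uniqueness was already observed above.

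The main obstacle is the barrier construction, and within it the verification that $\underline f_\sig$ and $\overline f_\sig$ are genuine viscosity sub/supersolutions: a test function touching from one side is only punctually $C^{1,\gam}$, so one cannot use the plain GCP of Proposition \ref{propAuxOp:HProperties}(ii) but must invoke its punctual version, Lemma \ref{LemPtWise:PunctialGCP}, together with the fact — coming from Corollary \ref{corAuxOp:WfLipOnK*Part2} — that $H$ is bounded on $\K^*(\gam,m)$ by a finite (if $m$-dependent) constant; it is precisely this finite bound $C_\sig$ that makes the linear-in-time barriers work and pins the initial datum as $\sig\to0$. Everything downstream — the bump step, the use of families of sub/supersolutions, and the final comparison — is routine given the results already established in this section.
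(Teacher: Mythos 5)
Your proof is correct and follows the same Perron strategy as the paper; the differences are in implementation rather than idea. For the barriers, you mollify $f_0$ and use the $L^\infty$-gradient bound of Corollary \ref{corAuxOp:WfLipOnK*Part2} to get a finite $C_\sigma$ bounding $|H|$, then form linear-in-time barriers $\underline f_\sigma,\overline f_\sigma$; the paper instead builds, for each base point $x_0$ and each $\varepsilon$, a compactly-supported bump interpolating between $f_0(x_0)$ and $\sup f_0$ (resp.\ $\inf f_0$), which serves the same purpose. More substantively, you define the admissible Perron family as subsolutions pinned below the explicit supersolution $\overline u=\inf_\sigma\overline f_\sigma$ (so you must verify $\overline u$ is continuous and a supersolution, and, in the bump step, that the perturbed subsolution stays below $\overline u$ because $f_*(x_1,t_1)<\overline u(x_1,t_1)$), whereas the paper instead restricts to subsolutions obeying a modulus condition at $t=0$, condition \eqref{eqViscSol:Perron-SubSolInitialDataModulus}, and verifies that the Perron envelope inherits this modulus. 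Both devices exist precisely to make the comparison result in Proposition \ref{propViscSol:Comparison-HeleShawHJB} applicable (since $H$ is only locally Lipschitz on $C^{1,\gamma}$, as explained in Remark \ref{rem:initial condition}); your version makes the squeeze at $t=0$ slightly more transparent, while the paper's is a bit cleaner to check membership in the admissible family after the bump perturbation. You are also right to flag that verifying the barriers are viscosity sub/supersolutions requires the punctual GCP (Lemma \ref{LemPtWise:PunctialGCP}), since test functions are only $C^{1,\gamma}$ near the point of contact — the paper uses this implicitly when calling $\psi^\pm_{x_0,\varepsilon}$ classical sub/supersolutions.
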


\begin{proof} We divide the proof into several steps for clarity. \medskip

\paragraph{\textbf{Step 1.}} Building barriers and constructing a maximal subsolution.

Since $f_0$ is uniformly continuous on $\mathbb{R}^d$, for any $\varepsilon>0$ there exists $\rho(\varepsilon)$ such that if $|x-y| < \rho(\varepsilon)$, then $|f_0(x) - f_0(y)| < \varepsilon$. Let $\eta\in C_c^\infty(\mathbb{R}^d)$ be such that $\mathrm{supp}(\eta)\subset B_1(0)$, $\eta(0) = 1$ and $0\leq \eta\leq 1$. For $x_0\in \mathbb{R}^d$ and $\rho > 0$ we define 
\begin{align*}
    \eta_{x_0,\rho}(x) = \eta\left(\frac{x-x_0}{\rho}\right).    
\end{align*}
Let $\rho = \rho(\varepsilon)$, we define the upper and lower functions for $f_0$ at a point $x_0\in \mathbb{R}^d$ by
\begin{align*}
    U_{x_0,\varepsilon}(x) &= \eta_\rho(x)f_0(x_0) + \left(1-\eta_\rho(x)\right) \sup_{\mathbb{R}^d}(f_0) + \varepsilon, \\
    L_{x_0,\varepsilon}(x) &= \eta_\rho(x)f_0(x_0) + \left(1-\eta_\rho(x)\right) \inf_{\mathbb{R}^d}(f_0) - \varepsilon.
\end{align*}
It is evident that $L_{x_0,\varepsilon} \leq f_0 \leq U_{x_0,\ep}$ on $\mathbb{R}^d$. Since $L_{x_0,\varepsilon}$ and $U_{x_0,\ep}$ are smooth, by Corollary \ref{corAuxOp:WfLipOnK*Part2}, there exists $C_\varepsilon$ such that 
\begin{align*}
     \norm{ H(L_{x_0,\varepsilon}, \cdot)}_{L^\infty(\mathbb{R}^d)} \leq C_\varepsilon
	 \ \ \ \ \textnormal{and}\ \ \ \ 
	 \norm{ H(U_{x_0,\varepsilon}, \cdot)}_{L^\infty(\mathbb{R}^d)} \leq C_\varepsilon.
\end{align*}
Let us define 
\begin{align*}
	\psi_{x_0,\varepsilon}^+(x,t) = U_{x_0,\varepsilon}(x) + C_\varepsilon t
	\ \ \ \ \textnormal{and}\ \ \ \
    \psi_{x_0,\varepsilon}^-(x,t) = L_{x_0,\varepsilon}(x) - C_\varepsilon t. 
\end{align*}
By choice of $C_\ep$, $ \psi_{x_0,\varepsilon}^-$ and $\psi_{x_0,\ep}^+$ are respectively a classical subsolution and supersolution to \eqref{eqViscSol:HeleShawHJB}.

In order to invoke the comparison result, Proposition \ref{propViscSol:Comparison-HeleShawHJB}, we need to work with sub/super solutions that respect the local uniform initial ordering in (\ref{eqViscSol:InitialDataLocalUniformOrdering}).  One way to enforce this in a straightforward way is to work only with subsolutions that have the following property:
\begin{align}\label{eqViscSol:Perron-SubSolInitialDataModulus}
	\textnormal{given}\ u,\ \textnormal{there exists a modulus,}\ \om_u,\ \textnormal{with}\
	u(y,t)\leq f_0(x) + \om_u( \abs{y-x} + t ).
\end{align}

We define
\begin{align*}
    \mathcal{S} = 
    \left\{ 
    u\ \text{is a subsolution to}\ (\ref{eqViscSol:HeleShawHJB})\ 
    \textnormal{and also satisfies}\ (\ref{eqViscSol:Perron-SubSolInitialDataModulus}) 
    \right\}.
\end{align*}
The set $\mathcal{S}$ is nonempty since $\psi^-_{x_0,\varepsilon}\in \mathcal{S}$ for any $x_0\in \mathbb{R}^d$ and $\varepsilon>0$.  We also note that constants are solutions to (\ref{eqViscSol:HeleShawHJB}), and so by Proposition \ref{propViscSol:Comparison-HeleShawHJB}, for  $C=\sup_{\real^d} f_0$ and $u\in \mathcal{S}$, we have $u\leq C$.  Thus the set is uniformly bounded. Let 
\begin{align*}
    w(x,t) =  \sup \left\lbrace u(x,t): u\in \mathcal{S}\right\rbrace. 
\end{align*}
Defining $w^*$ as the upper semicontinuous envelope of $w$, Proposition \ref{propViscSol:inf-supersln} gives that  $w^*$ is a viscosity subsolution to \eqref{eqViscSol:HeleShawHJB}.

\paragraph{\textbf{Step 2.}}  The initial condition for $w^*$: we will show $w^*$ satisfies (\ref{eqViscSol:Perron-SubSolInitialDataModulus}), in order to deduce $w^*\in \S$.

Let $u\in \S$. Then $u$ satisfies  (\ref{eqViscSol:Perron-SubSolInitialDataModulus}). And, by construction, $\psi^+_{x, \ep}$ also satisfies (\ref{eqViscSol:Perron-SubSolInitialDataModulus}),  for any $\ep>0$ and $x\in \real^d$. In particular, the pair $u, \psi^+_{x, \ep}$ satisfies the condition  (\ref{eqViscSol:InitialDataLocalUniformOrdering}), and thus by the comparison result, Proposition \ref{propViscSol:Comparison-HeleShawHJB}, we have
\begin{align*}
	 u(y,t)\leq \psi^+_{x,\ep}(y,t) \text{ for all }(y,t)\in\real^d\times [0,T]. 
\end{align*}
The previous line, the definition of $w$, and the fact that $\psi^+_{x_0, \ep}$ is continuous therefore imply,
\begin{align*}
	w^*(y,t)\leq \psi^+_{x,\ep}(y,t) \text{ for all }(y,t)\in\real^d\times [0,T]. 
\end{align*}

We will now establish that $w^*$ satisfies 
\begin{align}\label{eqViscSol:PerronW*InitialInequality}
	\forall\ \al>0,\ \exists\ \del>0,\ \forall\ \abs{x-y}<\del,\ 0<t<\del,\ \ 
	w^*(y,t)\leq f_0(x) + \al,
\end{align}
which will imply that $w^*$ satisfies (\ref{eqViscSol:Perron-SubSolInitialDataModulus}).

To this end, let $\al>0$ be given. Choose $\ep=\frac{\al}{3}$.  We know that $\psi^+_{x,\ep}$ is uniformly continuous in $y$, uniformly in $x$ and $t$.  Choose $\del$ by the uniform continuity of $\psi^+_{x,\ep}$ so that
\begin{align*}
	\sup_{x,t} \psi^+_{x,\ep}(y,t) - \psi^+_{x,\ep}(x,t) < \frac{\al}{3}.
\end{align*}
Also, choose $\del$, possibly smaller so that for $0<t<\del$,
\begin{align*}
	C_\ep t <\frac{\al}{3}.
\end{align*}
Thus,
\begin{align*}
	w^*(y,t) &\leq \psi^+_{x,\ep}(y,t)
	= \psi^+_{x,\ep}(x,t) - \psi^+_{x,\ep}(x,t) + \psi^+_{x,\ep}(y,t)\\
	&=  f_0(x) + C_\ep t + \ep - \psi^+_{x,\ep}(x,t) + \psi^+_{x,\ep}(y,t)
	\leq f_0(x) + \frac{\al}{3} + \frac{\al}{3} + \frac{\al}{3}.
\end{align*}
We then conclude that (\ref{eqViscSol:PerronW*InitialInequality}) is valid, and thus  $w^*\in \S$.

\paragraph{\textbf{Step 3.}} 

 We show that $w_*$ is a supersolution.

Assume to the contrary that $w_*$ is not a supersolution. Thus, by Definitions \ref{defViscSol:test-functions} and \ref{defViscSol:ViscositySolutions}, there exists a test function  $\psi$, a point of contact $(x_0,t_0)\in \real^d\times (0,T]$, $r_1>0$, and $c_0>0$ such that
\begin{align}
\label{eq:psi<w}
&\psi(x,t)\leq w_*(x, t) \text{ for all }(x,t)\in \real^d\times (t_0-r_1,t_0+r_1), \text{ with equality at }(x_0,t_0);\\
\label{eqViscSol:perron-assume-contrary-supersol}
&    \partial _t \psi(x_0,t_0) < H(\psi(\cdot, t_0), x_0) - c_0.
\end{align}

We will now construct a  perturbation $\phi$ of $\psi$ and use it to create a subsolution $\tilde{u}$. We will show  $\tilde{u}\in\S$ and that $\tilde{u}$ can be constructed so that it is strictly larger than $w$ at points near $(x_0,t_0)$, which will be a contradiction.  To this end, we let $\Phi=\Phi_1$ be the function in Remark \ref{remAuxOp:bump-function}, take parameters $\del>0$ and $\alpha>0$, to be determined, and define
\begin{align*}
    \phi(x,t) = \psi(x,t) +\del - \alpha \left( \Phi_1(x-x_0) + |t-t_0|^2 \right).
\end{align*}

Using the invariance of $H$ by the addition of constants, followed by  Theorem \ref{thmAuxOp:HPreciseLipStatementC1Gam}, we find, for a positive constant $C_1$, independent of $c_0$, $\alpha$, and  $\delta$,
\begin{align}
\nonumber
|H(\psi(\cdot, t_0), x_0)- H(\phi(\cdot, t_0), x_0)| &=  |H(\psi(\cdot, t_0), x_0)-H(\psi(\cdot, t_0)- \alpha  \Phi_1(\cdot-x_0), x_0)|\\
\label{eq:Hpsi-Hphi} & \leq C_1\norm{ \alpha \Phi_1 }_{C^{1,\gam}(\real^d)}\leq \frac{c_0}{8},
\end{align}
where the final inequality follows by choosing 
\begin{align*}
    \alpha = \frac{c_0}{8(C_1+1)\norm{ \Phi_1 }_{C^{1,\gam}(\real^d)}}.    
\end{align*}
Next, the continuity of $\partial_t\phi$ and Proposition \ref{propViscSol:ContinuityOfH} imply that there exists $r_1>0$ such that, on $B_{r_1}(x_0)\times (t_0-r_1, t_0+r_1)$, 
\begin{align}
\label{eq:phit-H cont}
|\partial_t\phi(x_0, t_0)-H(\phi(\cdot, t_0), x_0)-\partial_t\phi(x, t)-H(\phi(\cdot, t), x)| \leq \frac{c_0}{8}.
\end{align}
Let $r=\min\{r_0, r_1, \frac{t_0}{2}\}$. Then $t_0-r>0$, and on $B_{r}(x_0)\times (t_0-r, t_0+r)$, we have,
\begin{align} \nonumber 
\partial_t\phi(x, t)-H(\phi(\cdot, t), x)&\leq \partial_t\phi(x_0, t_0)-H(\phi(\cdot, t_0), x_0)+\frac{c_0}{8}\\
&\label{eq:phi subsol}\leq \partial_t\psi(x,t)-2\alpha (t-t_0) - H(\psi(\cdot, t_0), x_0)+\frac{c_0}{4}\leq -\frac{c_0}{2},
\end{align} 
where the first inequality follows from (\ref{eq:phit-H cont}), the second from (\ref{eq:Hpsi-Hphi}), and the third from (\ref{eqViscSol:perron-assume-contrary-supersol}) as well as by our choice of $\alpha$.

Now, let us fix $\delta = \frac{1}{2}\left(\alpha\frac{r}{1+r} +r\right)$. Using with the  fact that $\psi-\phi$ is radially increasing, we find,
\begin{align*}
\inf_{(\real^d\times[0,T])\setminus(B_r(x)\times (t_0-r,t_0+r))} (\psi-\phi)\geq 	\inf_{\partial(B_r(x)\times (t_0-r,t_0+r))} \left(  \psi - \phi \right) =-\delta +\alpha\frac{r}{1+r} +r>\frac{\delta}{2}>0.
\end{align*}
Combining this with the inequality from (\ref{eq:psi<w}) yields,
\begin{align}
\label{eq:w>phi}
w^*(x,t)\geq w(x,t)\geq w_*(x,t)\geq \psi(x,t)> \phi(x,t) \text{ on }(\real^d\times[0,T])\setminus(B_r(x)\times (t_0-r,t_0+r)).
\end{align}
We  now define $\tilde{u}$ via
\begin{align*}
	\tilde u = \max( w^*, \phi).
\end{align*}
We claim that $\tilde{u}$ is a subsolution of \eqref{eqViscSol:HeleShawHJB} on all of $\real^d\times (0,T]$. Indeed, recall that $w^*$ is a subsolution of \eqref{eqViscSol:HeleShawHJB} on $\real^d\times (0,T]$. The inequality 
 (\ref{eq:w>phi}) implies that $\tilde{u}\equiv w^*$ outside of $B_r(x)\times (t_0-r,t_0+r)$; therefore, $\tilde{u}$ is a subsolution outside of $B_r(x)\times (t_0-r,t_0+r)$. On the other hand, we have shown in (\ref{eq:phi subsol}) that $\phi$ is a subsolution of \eqref{eqViscSol:HeleShawHJB} on $B_r(x)\times (t_0-r,t_0+r)$; since a maximum of two subsolutions is a subsolution, we conclude that $\tilde{u}$ is a subsolution on $B_r(x)\times (t_0-r,t_0+r)$ as well.  
 
Finally, we recall that our choice of $r$ implies $t_0-r>0$. Together with (\ref{eq:w>phi}), this yields $\tilde{u}\equiv w^*$ in a neighborhood of $\real^d\times\{0\}$.  Since we have already shown that $w^*$ obeys the local uniform initial condition in (\ref{eqViscSol:Perron-SubSolInitialDataModulus}), we deduce that $\tilde{u}$ does as well, and thus we conclude  $\tilde u\in \S$, as desired,

In order to obtain the desired contradiction, we will find points nearby $(x_0,t_0)$ where $\tilde u > w$, which will contradict the definition of $w$, as $\tilde u\in S$.  Indeed, we observe that by the definition of test function, $\psi(x_0,t_0) = w_*(x_0,t_0)$. 
From the definition of $w_*$, let us take a sequence, $(y_n,s_n)$, so that
\begin{align*}
	\lim_{n\to\infty} w(y_n,s_n) = w_*(x_0,t_0).
\end{align*}
Thus, taking $n$ large enough, 
\begin{align*}
	w(y_n,s_n) \leq w_*(x_0,t_0) + \frac{\del}{4}.
\end{align*}
By continuity of $\phi$, and the definition of $\tilde u$, we see that for $n$ large enough, 
\begin{align*}
	\tilde u(y_n,s_n) \geq \phi(y_n,s_n) \geq \phi(x_0,t_0) - \frac{\del}{4} 
	= w_*(x_0,t_0) + \frac{3\del}{4} > w(y_n,s_n).
\end{align*}
Thus, we have contradicted the maximality of $w$.  This concludes that $w_*$ is a supersolution of (\ref{eqViscSol:HeleShawHJB}).

\paragraph{\textbf{Step 4.}} Conclusion: $w_* = w^* = w$ is a solution of (\ref{eqViscSol:HeleShawHJB}) with $w(\cdot,0)=f_0$.

We have already shown that $w^*$ is a subsolution that also enjoys (\ref{eqViscSol:Perron-SubSolInitialDataModulus}).  Since for all $x_0$ and $\ep>0$, we know that $\psi^-_{x_0,\ep}\in\S$, we conclude,  just as was done in Step 2, that there is some modulus with 
\begin{align*}
	w_*(y,t)\geq f_0(x) - \om_{w_*}(\abs{x-y}+t).
\end{align*}
Thus, we have that $w^*$ is a subsolution, $w_*$ is a supersolution, and they obey
\begin{align*}
	w^*(x,s)\leq w_*(y,t) + \om_{w^*}(\abs{x-y}+s) + \om_{w_*}(\abs{x-y}+t).
\end{align*}
Thus, $w_*$ and $w^*$ have the required local uniform initial condition to apply the comparison result, Proposition \ref{propViscSol:Comparison-HeleShawHJB}.  We then conclude that $w^*\leq w_*$,
which was the goal.
\end{proof}


\subsection{Preservation of modulus} An immediate consequence of the comparison principle and the translation invariance of $H$ is the preservation of a modulus of continuity for solutions. This is a classical result for viscosity solutions when the equation is translation invariant.

\begin{proposition}[Modulus]\label{propViscSol:preservation-modulus} 
	
	Let $f_0$ be bounded and uniformly continuous on $\mathbb{R}^d$ with a modulus $\omega$. Let $f\in C^0(\real^d\times[0,T))$ be the unique viscosity solution of \eqref{eqViscSol:HeleShawHJB} with initial condition $f(\cdot, 0)=f_0$. Then  $f(\cdot, t)$ has the same modulus $\omega$ for $t\in [0,T)$.
\end{proposition}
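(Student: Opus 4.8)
The plan is to propagate the modulus by the standard combination of translation invariance, invariance under addition of constants, and the comparison principle. Fix $z\in\real^d$ and set
\[
g(x,t) = f(x+z,t), \qquad h(x,t) = f(x,t) + \omega(\abs{z}).
\]
Since $f$ solves \eqref{eqViscSol:HeleShawHJB} and $H(\tau_z\phi,x)=H(\phi,x+z)$ by Proposition \ref{propAuxOp:HProperties}(iii), the function $g$ is again a viscosity solution of \eqref{eqViscSol:HeleShawHJB}, hence in particular a subsolution. Since $H(\phi+c,x)=H(\phi,x)$ for every constant $c$ by Proposition \ref{propAuxOp:HProperties}(iv), and a test function touching $h$ from below, shifted by $-\omega(\abs{z})$, is a test function touching $f$ from below, the function $h$ is also a viscosity solution of \eqref{eqViscSol:HeleShawHJB}, hence a supersolution.

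Next I would verify the local uniform initial ordering \eqref{eqViscSol:InitialDataLocalUniformOrdering} for the pair $g$ (subsolution) and $h$ (supersolution). From the Perron construction in the proof of Proposition \ref{propViscSol:Existence} — namely that $f=w^*=w_*$ obeys \eqref{eqViscSol:Perron-SubSolInitialDataModulus} from above and the analogous bound from below — there is a modulus $\omega_0$ with $\abs{f(x,t)-f_0(x)}\le\omega_0(t)$ for all $x\in\real^d$ and $t\in[0,T)$. Given $\alpha>0$, choose $\delta>0$ with $\omega(\delta)<\alpha/3$ and $\omega_0(\delta)<\alpha/3$, which is possible since $\omega(0^+)=\omega_0(0^+)=0$. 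Then for $\abs{x-y}<\delta$, $\abs{s}<\delta$, $\abs{t}<\delta$,
\[
g(x,s) = f(x+z,s) \le f_0(x+z) + \omega_0(s) \le f_0(x) + \omega(\abs{z}) + \omega_0(s),
\]
\[
h(y,t) = f(y,t) + \omega(\abs{z}) \ge f_0(y) - \omega_0(t) + \omega(\abs{z}) \ge f_0(x) - \omega(\abs{x-y}) - \omega_0(t) + \omega(\abs{z}),
\]
so $g(x,s) - h(y,t) \le \omega_0(s)+\omega_0(t)+\omega(\abs{x-y}) < \alpha$, establishing \eqref{eqViscSol:InitialDataLocalUniformOrdering}.

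With this in hand, the comparison result Proposition \ref{propViscSol:Comparison-HeleShawHJB} gives $g\le h$ on $\real^d\times[0,T)$, i.e. $f(x+z,t)\le f(x,t)+\omega(\abs{z})$ for all $x\in\real^d$, $t\in[0,T)$, and $z\in\real^d$. Given $x,y\in\real^d$ and $t\in[0,T)$, applying this with $z=y-x$ and then with $z=x-y$ yields $\abs{f(x,t)-f(y,t)}\le\omega(\abs{x-y})$, i.e. $f(\cdot,t)$ has modulus $\omega$ for every $t\in[0,T)$.

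The only step that requires genuine care is the verification of \eqref{eqViscSol:InitialDataLocalUniformOrdering}: because $H$ is only locally (not globally) Lipschitz on $C^{1,\gam}$, the comparison principle used here demands more than pointwise ordering of the initial traces, so one must extract from the existence proof that $f$ attains $f_0$ uniformly in $x$ as $t\to0^+$. Everything else is the routine translation-plus-constant-plus-comparison argument for propagation of a modulus for a translation-invariant evolution.
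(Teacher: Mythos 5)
Your proposal is correct and follows essentially the same route as the paper: apply translation invariance to produce the subsolution $f(\cdot+z,\cdot)$, invariance under constants to produce the supersolution $f+\omega(\abs{z})$, and then invoke the comparison principle (Proposition \ref{propViscSol:Comparison-HeleShawHJB}). The one place where you are more explicit than the paper is the verification of the local uniform initial ordering \eqref{eqViscSol:InitialDataLocalUniformOrdering}: the paper simply appeals to the Perron construction as giving $f$ this property and notes it transfers, whereas you correctly extract the two-sided modulus bound $\abs{f(x,t)-f_0(x)}\le\omega_0(t)$ from \eqref{eqViscSol:Perron-SubSolInitialDataModulus} and its lower analogue and then check the inequality directly — that is the right way to fill in the detail, and your $\alpha/3$-splitting estimate is sound.
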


\begin{proof}[Proof of Proposition \ref{propViscSol:preservation-modulus}]

	Let $h\in \mathbb{R}^d$ be fixed. Due to the translation invariance of $H$, we have, 
\begin{align*}
    \partial_ t f(x+h, t) = H(f(\cdot, t), x+h) =  H(\tau_hf(\cdot, t), x);
\end{align*}
so, we obtain that $(x,t)\mapsto \tau_hf(x,t)$ is a viscosity solution of \eqref{eqViscSol:HeleShawHJB} with initial data 
\begin{align*}
    \tau_hf(x,0) = f_0(x+h) \leq f_0(x) + \omega(|h|).    
\end{align*}
Let $\phi(x,t) = f(x,t) + \omega(|h|)$.  Due to the invariance by the addition of constants we see that $\phi$ is a viscosity solution to \eqref{eqViscSol:HeleShawHJB} with initial data 
\begin{align*}
    \phi(x,0) = f_0(x) + \omega(|h|) \geq f_0(x+h) = \tau_hf(x,0).
\end{align*}
By the construction in Proposition \ref{propViscSol:Existence}, $f$ satisfies the condition (\ref{eqViscSol:InitialDataLocalUniformOrdering}), and hence also does $\phi$.
Thus, by the comparison result (Proposition \ref{propViscSol:Comparison-HeleShawHJB}) we obtain 
\begin{align*}
    \tau_hf(x,t) = f(x+h, t) \leq \phi(x,t) = f(x,t) + \omega(|h|)
\end{align*}
for all $(x,t)\in \mathbb{R}^d\times [0,T)$. We can repeat the argument with $\psi(x,t) = -\omega(|h|) + f(x,t)$ to deduce that $\abs{ f(x+h,t) - f(x,t) } \leq \omega(|h|)$. 
\end{proof}


\subsection{Concluding existence and uniqueness for (\ref{eqViscSol:HeleShawHJB})}

It will be useful to collect each of the main results in this section into one theorem that concisely states the existence and uniqueness for (\ref{eqViscSol:HeleShawHJB}).  Combining Propositions \ref{propViscSol:Comparison-HeleShawHJB} (comparison), \ref{propViscSol:Existence} (existence), and \ref{propViscSol:preservation-modulus} (modulus), we obtain the following.

\begin{theorem}\label{thmViscSol:HeleShawCollectedResults}
	Given any $f_0\in BUC(\real^d)$, there exists a unique viscosity solution, $f\in C^0(\real^d\times [0,T))$, to 
	\begin{align}\label{eqViscSol:HeleShawHJB-WithInitialCondition}
		\begin{cases}
			\partial_t f = H(f)\ &\text{in}\ \real^d\times(0,T)\\
			f(\cdot,0) = f_0\ &\text{on}\ \real^d\times\{0\}.
		\end{cases}
	\end{align}
	Furthermore, if $f_0$ has a modulus of continuity, $\om$, then for each $t\in[0,T)$, $f(\cdot, t)$ has the same modulus of continuity.
\end{theorem}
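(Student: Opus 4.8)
The statement is essentially a corollary of the three main results of this section, so the plan is one of assembly, with a single point that needs genuine care. First I would invoke Proposition~\ref{propViscSol:Existence}: for $f_0\in BUC(\real^d)$ the Perron construction produces $w$ with $w^*=w_*=w$ solving \eqref{eqViscSol:HeleShawHJB}. By Definition~\ref{defViscSol:ViscositySolutions} this $w$ is continuous on $\real^d\times(0,T)$, and by Steps~2 and~4 of the proof of Proposition~\ref{propViscSol:Existence} both $w^*$ and $w_*$ satisfy the one-sided uniform initial bounds $w^*(y,t)\le f_0(x)+\al$ and $w_*(y,t)\ge f_0(x)-\al$ whenever $|x-y|+t<\del(\al)$. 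Hence $w$ extends continuously to $\real^d\times\{0\}$ with $w(\cdot,0)=f_0$, so $f:=w\in C^0(\real^d\times[0,T))$ is the desired solution of \eqref{eqViscSol:HeleShawHJB-WithInitialCondition}. In particular $f$, paired with itself, satisfies the local uniform initial ordering \eqref{eqViscSol:InitialDataLocalUniformOrdering}.

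For uniqueness, suppose $g\in C^0(\real^d\times[0,T))$ is another viscosity solution with $g(\cdot,0)=f_0$; I want $f=g$. Being bounded and continuous, $g$ is both a subsolution and a supersolution. Comparing the subsolution $f$ with the supersolution $g$ via Proposition~\ref{propViscSol:Comparison-HeleShawHJB} requires checking \eqref{eqViscSol:InitialDataLocalUniformOrdering} for the pair $(f,g)$; I would do this by splitting, for $|x-y|<\del$ and $0\le s,t<\del$,
\[
f(x,s)-g(y,t)=\bigl(f(x,s)-f_0(x)\bigr)+\bigl(f_0(x)-f_0(y)\bigr)+\bigl(f_0(y)-g(y,t)\bigr),
\]
bounding the middle term by the modulus of $f_0$ and the two outer terms by uniform attainment of the initial datum. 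For $f$ this uniform attainment is the bound recorded above; for $g$ one obtains it by sandwiching $g$ between the explicit smooth barriers $\psi^{\pm}_{x_0,\ep}$ from the proof of Proposition~\ref{propViscSol:Existence} (classical sub/supersolutions with $L_{x_0,\ep}\le f_0\le U_{x_0,\ep}$), here using crucially that $f_0\in BUC(\real^d)$. Once \eqref{eqViscSol:InitialDataLocalUniformOrdering} is in hand, Proposition~\ref{propViscSol:Comparison-HeleShawHJB} gives $f\le g$, and the symmetric argument gives $g\le f$, so $f=g$.

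The modulus-of-continuity statement is then precisely Proposition~\ref{propViscSol:preservation-modulus}: if $f_0$ has modulus $\om$, then using the translation invariance and invariance under addition of constants of $H$ (Proposition~\ref{propAuxOp:HProperties}) together with the comparison principle, the functions $\tau_h f$ and $f+\om(|h|)$ are ordered for each $h\in\real^d$, and likewise $-\om(|h|)+f$ lies below $\tau_h f$, so $|f(x+h,t)-f(x,t)|\le\om(|h|)$ for all $(x,t)\in\real^d\times[0,T)$.

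The only delicate point is not in any of the three propositions but in the verification of the initial-ordering hypothesis \eqref{eqViscSol:InitialDataLocalUniformOrdering} when comparing two continuous solutions: plain continuity up to $t=0$ does not by itself force the datum to be attained uniformly in $x$, so one must route the argument through the explicit barriers $\psi^{\pm}_{x_0,\ep}$ constructed in the existence proof and exploit $f_0\in BUC(\real^d)$ --- exactly the subtlety highlighted in Remark~\ref{rem:initial condition}. Everything else is bookkeeping, since the hard analytic work (the comparison principle for the merely locally Lipschitz operator $H$, the Perron construction, and the modulus argument) has already been carried out.
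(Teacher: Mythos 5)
Your assembly is the same as the paper's (the theorem is stated as an immediate corollary of Propositions \ref{propViscSol:Existence}, \ref{propViscSol:Comparison-HeleShawHJB}, and \ref{propViscSol:preservation-modulus}), and the existence and modulus parts are handled correctly. You also correctly isolate the one non-trivial point: verifying the initial-ordering hypothesis \eqref{eqViscSol:InitialDataLocalUniformOrdering} when comparing the Perron solution $f$ with an \emph{arbitrary} continuous competitor $g$.

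However, your proposed resolution of that point is circular. You want to establish the uniform initial attainment for $g$ by ``sandwiching $g$ between the explicit smooth barriers $\psi^{\pm}_{x_0,\ep}$.'' But the only comparison tool available is Proposition \ref{propViscSol:Comparison-HeleShawHJB}, and invoking it for the pair $(g,\psi^{+}_{x_0,\ep})$ (or $(\psi^{-}_{x_0,\ep},g)$) again requires checking \eqref{eqViscSol:InitialDataLocalUniformOrdering} for that pair. Writing, for instance,
\begin{align*}
	g(x,s)-\psi^{+}_{x_0,\ep}(y,t)
	= \big(g(x,s)-g(x,0)\big) + \big(f_0(x)-U_{x_0,\ep}(x)\big) + \big(\psi^{+}_{x_0,\ep}(x,0)-\psi^{+}_{x_0,\ep}(y,t)\big),
\end{align*}
the second summand is $\leq 0$ and the third is uniformly small because $\psi^{+}_{x_0,\ep}$ is Lipschitz, but the first summand is uniformly small in $x$ for small $s$ if and only if $g$ attains $f_0$ uniformly --- which is exactly the conclusion the sandwich is supposed to deliver. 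Pointwise membership of $g$ in $C^0(\real^d\times[0,T))$ with $g(\cdot,0)=f_0$ does not, on an unbounded domain, yield this uniformity. Note that Lemma \ref{lemViscSol:ComparisonPart1-OrderingSupInfConvolution} (which does \emph{not} require \eqref{eqViscSol:InitialDataLocalUniformOrdering}) does not rescue the argument either: bounding $\sup_{\real^d}\big(g^\ep(\cdot,0)-(\psi^{+}_{x_0,\ep})_\ep(\cdot,0)\big)$ as $\ep\to 0$ again reduces to a uniform modulus of $g$ at $t=0$. A correct route either restricts the uniqueness class to solutions obeying \eqref{eqViscSol:Perron-SubSolInitialDataModulus} together with its supersolution counterpart (equivalently, solutions attaining $f_0$ uniformly as $t\to 0^+$), as Remark \ref{rem:initial condition} signals and as the Perron solution satisfies by construction, or else supplies an independent argument that every viscosity solution in the stated class attains the datum uniformly; merely citing the barriers, as you do, does not close this loop.
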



\subsection{Proof of Theorem \ref{thmIntro:MuskatExistUnique}}\label{subsecVS:ProofOfMainTheorem}

As suggested in Section \ref{sec:MainObservation}, we will establish Theorem \ref{thmIntro:MuskatExistUnique} by showing that there is an equivalence between viscosity solutions of (\ref{eqIntro:MuskatHJB}) and (\ref{eqViscSol:HeleShawHJB-WithInitialCondition}).  We use Definition \ref{defViscSol:ViscositySolutions} with $J=M$ for the definition of solution to (\ref{eqIntro:MuskatHJB}). We recall that $M$ is defined in (\ref{eqIntro:M def}).

Towards Theorem \ref{thmIntro:MuskatExistUnique}, we will establish the following.

\begin{proposition}\label{propViscSol:EquivalenceSolsHeleShawMuskat}
	If $f_0\in BUC(\real^d)$, $f\in C^0(\real^d\times[0,T))$, and $g(x,t)=f(x,t)+t$, then $f$ is a viscosity solution to (\ref{eqIntro:MuskatHJB}) if and only if $g$ is a viscosity solution of (\ref{eqViscSol:HeleShawHJB-WithInitialCondition}).
\end{proposition}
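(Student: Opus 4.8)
The plan is to reduce the statement to two facts already in hand: the pointwise identity $M(f)=H(f)-1$ recorded in (\ref{eqIntro:CorrespondenceHandGff}), and the invariance of $H$ under the addition of constants from Proposition \ref{propAuxOp:HProperties}(iv). Granting these, the map $\Lambda\colon\phi\mapsto\phi+t$ turns test functions for $f$ (in the sense of Definition \ref{defViscSol:test-functions}) into test functions for $g=f+t$, preserves the ``touches from above/below'' relation, and carries the differential inequality defining a viscosity sub/supersolution of (\ref{eqIntro:MuskatHJB}) exactly onto the one defining a viscosity sub/supersolution of (\ref{eqViscSol:HeleShawHJB-WithInitialCondition}). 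Since $g(\cdot,0)=f(\cdot,0)=f_0$, the initial conditions correspond as well, and the proposition follows.

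In more detail, I would first observe that $\Lambda$ is a bijection on the set of test functions at a given $(x_0,t_0)$: the function $(x,t)\mapsto t$ is smooth with continuous time derivative and lies in $C^{1,\gamma}(B_r(x_0))\cap C^{0,1}(\R^d)$ for every $r$, so $\phi$ is a test function at $(x_0,t_0)$ if and only if $\phi+t$ is. Moreover $\Lambda$ preserves boundedness, upper and lower semicontinuity, and continuity, and if $\phi$ touches $f$ from above (resp.\ below) at $(x_0,t_0)$, then, evaluating at a nearby point $(y,s)$, $(\phi+t)(y,s)=\phi(y,s)+s\geq f(y,s)+s=g(y,s)$ with equality at $(x_0,t_0)$, so $\phi+t$ touches $g$ from above (resp.\ below) there; the converse is identical. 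Next, for such a test function, $\partial_t(\phi+t)(x_0,t_0)=\partial_t\phi(x_0,t_0)+1$, while, since $(\phi+t)(\cdot,t_0)=\phi(\cdot,t_0)+t_0$ differs from $\phi(\cdot,t_0)$ by the constant $t_0$,
\begin{align*}
H\big((\phi+t)(\cdot,t_0),x_0\big)=H\big(\phi(\cdot,t_0)+t_0,x_0\big)=H\big(\phi(\cdot,t_0),x_0\big)
\end{align*}
by Proposition \ref{propAuxOp:HProperties}(iv), and $M(\phi(\cdot,t_0),x_0)=H(\phi(\cdot,t_0),x_0)-1$. Hence $\partial_t\phi(x_0,t_0)\le M(\phi(\cdot,t_0),x_0)$ holds if and only if $\partial_t(\phi+t)(x_0,t_0)\le H((\phi+t)(\cdot,t_0),x_0)$, and likewise with the reversed inequalities. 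Running this equivalence over all test functions shows that $f$ is a viscosity subsolution (resp.\ supersolution) of (\ref{eqIntro:MuskatHJB}) if and only if $g$ is a viscosity subsolution (resp.\ supersolution) of (\ref{eqViscSol:HeleShawHJB-WithInitialCondition}); intersecting the two classes handles solutions, and the initial traces agree.

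The one point that genuinely needs care is that the identity $M=H-1$ in (\ref{eqIntro:CorrespondenceHandGff}) was derived for $f\in C^{1,\gamma}(\R^d)$, whereas the functions to which we must apply it are the time slices $\phi(\cdot,t_0)$, which lie only in $C^{1,\gamma}(B_r(x_0))\cap C^{0,1}(\R^d)$. I would therefore first extend both the definition of $M(\cdot,x_0)$ and the identity $M(\cdot,x_0)=H(\cdot,x_0)-1$ to this class: the Introduction's computation goes through verbatim once one notes that $W_f-\ell$ is the unique bounded harmonic function in $D_f$ with boundary value $f$ on $\Gamma_f$ (hence the natural $\Phi_{f,f}$) and that the one-sided normal derivative at $X_0=(x_0,\phi(x_0,t_0))$ exists classically, which is precisely Corollary \ref{corPtWise:NormalDeriv} (applied, if needed, after adding a constant so the slice exceeds $1$). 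This extension is routine given Section \ref{sec:AuxiliaryOperatorProperties}; with it in place, the argument above is pure bookkeeping, and there is no deeper obstacle. Combined with Theorem \ref{thmViscSol:HeleShawCollectedResults}, this proposition yields Theorem \ref{thmIntro:MuskatExistUnique}.
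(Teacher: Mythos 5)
Your proof is correct and follows essentially the same route as the paper: you translate test functions by $\phi\mapsto\phi+t$, use invariance of $H$ under addition of constants and the identity $M=H-1$, and check the differential inequalities line up. The extension of $M=H-1$ beyond $C^{1,\gamma}(\R^d)$ that you flag as the delicate point is precisely what the paper isolates into its Lemma \ref{lemViscSol:MuskatAndHeleShawOperatorsEquivalent}(iii) (stated for $f\in C^{0,1}(\R^d)$ at points where $H(f,x_0)$ exists), so your instinct there is sound and already accommodated.
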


Proposition \ref{propViscSol:EquivalenceSolsHeleShawMuskat} will be immediate from the definitions of viscosity solutions, combined with the following observation.

\begin{lemma}\label{lemViscSol:MuskatAndHeleShawOperatorsEquivalent}
	If $f\in C^{0,1}(\real^d)$, $W_f$ is from Definition \ref{defAuxOp:DefOfWf}, $\Phi_{f,f}$ is as in (\ref{eqIntro:DtoNPotential}), and $\ell(x) = -x_{d+1}$, then 
	\begin{enumerate}[(i)]
		\item $\displaystyle \Phi_{f,f} = W_f - \ell$,
		
		\item for $X_0\in\Gam$, 
		\begin{align*}
			\partial_\nu W_f(X_0)\ \textnormal{exists}\ \ \iff\ \ 
			\partial_\nu \Phi_{f,f}(X_0)\ \textnormal{exists},
		\end{align*} 
		
		\item for all $x_0$ such that $H(f,x_0)$ exists,
		\begin{align*}
			M(f,x_0) = H(f,x_0) - 1.
		\end{align*}
	\end{enumerate}
	
\end{lemma}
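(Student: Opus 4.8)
The plan is to follow the short computation already indicated in the introduction around (\ref{eqIntro:CorrespondenceHandGff}), while being careful that all the normal derivatives involved are the one-sided, pointwise ones, so that the identity holds at any $x_0$ where $f$ is punctually $C^{1,\gam}$ rather than only for $f\in C^{1,\gam}(\real^d)$.

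For (i), I would set $\Psi:=W_f-\ell$ on $D_f$. Since $\ell(x,x_{d+1})=-x_{d+1}$ is affine it is harmonic, so $\Delta\Psi=0$ in $D_f$; at a boundary point $(x,f(x))\in\Gam_f$ we have $\Psi(x,f(x))=W_f(x,f(x))-\ell(x,f(x))=0+f(x)=f(x)$; and $\norm{\Psi}_{L^\infty(D_f)}=\norm{W_f-\ell}_{L^\infty(D_f)}<\infty$ by Definition \ref{defAuxOp:DefOfWf}. Thus $\Psi$ is a bounded harmonic function on $D_f$ whose trace on $\Gam_f$ is $f$, i.e., it solves the problem defining $\Phi_{f,f}$ in (\ref{eqIntro:DtoNPotential}); uniqueness of bounded harmonic functions with continuous boundary data (Proposition \ref{propAppendix:weak-max-principle}) then forces $\Psi=\Phi_{f,f}$, which is (i).

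For (ii), I would use (i) together with the affineness of $\ell$. Fix $X_0=(x_0,f(x_0))\in\Gam_f$ with $\grad f(x_0)$ existing, so $\nu:=\nu_f(X_0)$ is defined and $X_0+s\nu\in D_f$ for small $s>0$. For such $s$,
\begin{align*}
	\frac{\Phi_{f,f}(X_0+s\nu)-\Phi_{f,f}(X_0)}{s}
	=\frac{W_f(X_0+s\nu)-W_f(X_0)}{s}-\frac{\ell(X_0+s\nu)-\ell(X_0)}{s},
\end{align*}
and because $\ell$ is affine the last difference quotient equals, for every $s$, the constant $\grad\ell\cdot\nu=-e_{d+1}\cdot\nu$; using $\nu_f=-n=\bigl(\grad f(x_0),-1\bigr)/\sqrt{1+\abs{\grad f(x_0)}^2}$ this constant is exactly $1/\sqrt{1+\abs{\grad f(x_0)}^2}$. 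Hence the one-sided limit of the left-hand quotient as $s\to0^+$ exists if and only if that of the first quotient on the right does --- which is the claimed equivalence --- and, when they exist,
\begin{align*}
	\partial_\nu\Phi_{f,f}(X_0)=\partial_\nu W_f(X_0)-\frac{1}{\sqrt{1+\abs{\grad f(x_0)}^2}}.
\end{align*}

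For (iii), I would simply assemble these. If $H(f,x_0)$ is defined then, by Definition \ref{defAuxOp:TheOperatorH} and Corollary \ref{corPtWise:NormalDeriv}, $\grad f(x_0)$ and $\partial_\nu W_f(X_0)$ exist, so by (ii) $\partial_\nu\Phi_{f,f}(X_0)$ exists and $M(f,x_0)$ is well defined. Writing $n=-\nu$, the definition $M(f)=-G(f)f$ together with (\ref{eqIntro:G-NonlinDtoN-NonNormalized}) gives $M(f,x_0)=-\bigl(\sqrt{1+\abs{\grad f(x_0)}^2}\bigr)\partial_n\Phi_{f,f}(X_0)=\bigl(\sqrt{1+\abs{\grad f(x_0)}^2}\bigr)\partial_\nu\Phi_{f,f}(X_0)$; substituting the displayed relation from (ii) and recalling $H(f,x_0)=\bigl(\sqrt{1+\abs{\grad f(x_0)}^2}\bigr)\partial_\nu W_f(X_0)$ yields $M(f,x_0)=H(f,x_0)-1$. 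I do not expect a genuine obstacle here: the only points needing care are the orientation bookkeeping between the inward normal $\nu_f$ and the outward normal $n$ (and the matching signs in the definitions of $G$, $M$, $H$), the observation that differentiating the affine $\ell$ contributes a term that is constant in $s$ and hence harmless for the existence of one-sided limits, and checking that the uniqueness used in (i) is available at the regularity $f\in C^{0,1}(\real^d)$ assumed here --- which is precisely the content of the Appendix results cited in Definition \ref{defAuxOp:DefOfWf}.
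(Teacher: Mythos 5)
Your proof is correct and follows essentially the same route the paper takes: the paper's own proof simply cites the identity $\Phi_{f,f}=W_f-\ell$ and the consequent computation already displayed in Section \ref{sec:MainObservation}, deferring existence and uniqueness of $W_f$ and $\Phi_{f,f}$ to Propositions \ref{propAppendix:weak-max-principle} and \ref{propAppendix:existence-Perron}; you have supplied the same argument, just with the uniqueness check for (i), the affine-difference-quotient observation for (ii), and the sign bookkeeping for (iii) written out explicitly.
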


\begin{proof}[Proof of Lemma \ref{lemViscSol:MuskatAndHeleShawOperatorsEquivalent}]
	We note that for $f\in C^{0,1}(\real^d)$, the existence and uniqueness of $W_f$ and $\Phi_{f,f}$ is given in the Appendix, Propositions \ref{propAppendix:weak-max-principle} and \ref{propAppendix:existence-Perron}.  Since $\ell$ is $C^2$, parts (i), (ii), (iii) are immediate from the presentation given in Section \ref{sec:MainObservation}.
\end{proof}

Now we can give the proof of Proposition \ref{propViscSol:EquivalenceSolsHeleShawMuskat}.

\begin{proof}[Proof of Proposition \ref{propViscSol:EquivalenceSolsHeleShawMuskat}]
	Given the definitions of viscosity sub and super solutions, it suffices to establish the equivalence for test functions. This is because $f$ and $g$ differ by the addition of a smooth function.  So, for any test function, $\phi$, as in Definition \ref{defViscSol:test-functions}, we see that $\phi$ is a test function for $f$ if and only if $\phi + t$ is test function for $g$.  Thus, for example, the subsolution property for $f$ implies $\partial_t \phi \leq M(\phi)$ and thus
	\begin{align*}
		\partial_t (\phi +t)\leq M(\phi+t) +1 = H(\phi + t). 
	\end{align*}
	The reverse implication is immediate.  The property for supersolutions is also immediate.
\end{proof}

We can now give the proof of Theorem \ref{thmIntro:MuskatExistUnique}.

\begin{proof}[Proof of Theorem \ref{thmIntro:MuskatExistUnique}]
	This is immediate from Theorem \ref{thmViscSol:HeleShawCollectedResults} and Proposition \ref{propViscSol:EquivalenceSolsHeleShawMuskat}.
\end{proof}


\section{ Further discussion of related results }\label{sec:ComparisonWithOtherResults}

In the first subsection below, we demonstrate that the potential $\Phi_{f,g}$, which is central to our work here, is the same one as appears in several other works on the Muskat problem. In subsection \ref{ss:NotionsOfSolutions}, we discuss the various notions of solutions and how they are related.
In the final two subsections, we provide a more detailed account of where two of the main elements of our proof --- the GCP and the punctual evaluation property --- appear previously in the literature.

\subsection{The potential $\Phi_{f,g}$ in the literature}
\label{ss:potential}

As described in Section \ref{ssIntro:Muskat}, the works \cite{AlazardMeunierSmets-2020LyapounovCauchyProbHeleShaw-CMP},  \cite{DongGancedoNguyen-2023MuskatWP3D-arXiv}, \cite{DongGancedoNguyen-2023MuskatOnePhase2DWellPose-CPAM} establish well-posedness results for integro-differential equations that arise from the reduction of 
 the one-phase Muskat problem  to the equation for the graph of the boundary.
 In this section, we demonstrate that the operators defined in those works are the same as the operator $G(f)g$ used here. Indeed, in the present work, the operator $G(f)g$ is defined via the potential $\Phi_{f,g}$, the unique solution to (\ref{eqIntro:DtoNPotential}). The integro-differential operators studied in  \cite{AlazardMeunierSmets-2020LyapounovCauchyProbHeleShaw-CMP},  \cite{DongGancedoNguyen-2023MuskatWP3D-arXiv}, \cite{DongGancedoNguyen-2023MuskatOnePhase2DWellPose-CPAM} are defined via the potential $\Psi_{f,g}$, the unique variational solution that satisfies
\begin{align}\label{eqOtherResults:PotentialDGN}
	\Delta \Psi_{f,g} = 0\ \text{in}\ D_f,\ \ \ \Psi_{f,g}=g \text{ on }\Gamma_f,\ \ \  
	\Psi_{f,g}\in L^1_{\textnormal{loc}}(D_f),\ \ \ \textnormal{and}\ \ \ \norm{\grad \Psi_{f,g}}_{L^2(D_f)}<\infty, 
\end{align}
(see \cite[Proposition 2.6]{DongGancedoNguyen-2023MuskatOnePhase2DWellPose-CPAM}). We now demonstrate that, for periodic $f$, $g$ (the setting of \cite{AlazardMeunierSmets-2020LyapounovCauchyProbHeleShaw-CMP},  \cite{DongGancedoNguyen-2023MuskatWP3D-arXiv}, \cite{DongGancedoNguyen-2023MuskatOnePhase2DWellPose-CPAM}), the potentials $\Phi_{f,g}$ and $\Psi_{f,g}$ agree.

\begin{proposition}\label{propOtherResults:SamePotentials}
	If $f\in C^{0,1}(\real^d)$ and $g\in C^0(\Gam_f)$ are both periodic, $\Phi_{f,g}$ is defined as in (\ref{eqIntro:DtoNPotential}), and $\Psi_{f,g}$ is the corresponding potential defined in (\ref{eqOtherResults:PotentialDGN}), then
	\begin{align*}
		\Phi_{f,g} = \Psi_{f,g}.
	\end{align*}
\end{proposition}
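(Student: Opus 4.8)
The plan is to prove that the finite-energy potential $\Psi_{f,g}$ is \emph{bounded}, and then to deduce $\Psi_{f,g}=\Phi_{f,g}$ from uniqueness of the bounded solution, Proposition~\ref{propAppendix:weak-max-principle} (applied to $\pm(\Psi_{f,g}-\Phi_{f,g})$). It is worth stressing that one should \emph{not} attempt the reverse, namely to show that $\Phi_{f,g}$ has finite Dirichlet energy: for merely continuous $g$ the gradient of $\Phi_{f,g}$ need not be square-integrable near $\Gam_f$. Also, since $f$ and $g$ are periodic, the condition $\norm{\grad\Psi_{f,g}}_{L^2(D_f)}<\infty$ in \eqref{eqOtherResults:PotentialDGN} is to be understood per fundamental cell; equivalently, after rescaling so that the period lattice is $\integer^d$, the relevant object is the harmonic function on the torus domain $\widehat D_f:=(\mathbb{T}^d\times\real)\intersect\{x_{d+1}<f(x)\}$, with $\grad\Psi_{f,g}\in L^2(\widehat D_f)$, where $\mathbb{T}^d=\real^d/\integer^d$.

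First I would note that both potentials are periodic. Translating $\Phi_{f,g}$ by a lattice vector produces a bounded harmonic function on $D_f$ equal to $g$ on $\Gam_f$ (as $f,g$ are periodic), hence it equals $\Phi_{f,g}$ by uniqueness of the bounded solution; the same argument, using uniqueness in the variational class, shows $\Psi_{f,g}$ is periodic. Thus both descend to $\widehat D_f$ and it suffices to argue there; write $\Psi:=\Psi_{f,g}$ and $m_0:=\min_{\mathbb{T}^d}f$.

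The technical core is to show that $\Psi$ converges to a constant as $x_{d+1}\to-\infty$. For $s<m_0$ the slice $\mathbb{T}^d\times\{s\}$ lies in $\widehat D_f$, and since $\Psi$ is harmonic (hence smooth) on the slab $\mathbb{T}^d\times(-\infty,m_0)$, the slice-mean $\mu(s):=\int_{\mathbb{T}^d}\Psi(x,s)\,dx$ satisfies $\mu''(s)=\int_{\mathbb{T}^d}\partial_s^2\Psi(x,s)\,dx=-\int_{\mathbb{T}^d}\Delta_x\Psi(x,s)\,dx=0$ by periodicity, so $\mu$ is affine on $(-\infty,m_0)$. Because $|\mu'(s)|^2\le C\int_{\mathbb{T}^d}|\partial_s\Psi(x,s)|^2\,dx$ and the right-hand side is integrable over $(-\infty,m_0)$ by the finite-energy bound, $\mu'\equiv0$, i.e.\ $\mu\equiv c$ for a constant $c$. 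The Poincar\'e--Wirtinger inequality on $\mathbb{T}^d$ applied to $\Psi(\cdot,s)-c$, together with the vanishing of the energy tail, then gives $\norm{\Psi-c}_{L^2(\mathbb{T}^d\times(-R-1,-R))}\to0$ as $R\to\infty$; an interior $L^2$-to-$L^\infty$ estimate for the harmonic function $\Psi-c$ on the slabs $\mathbb{T}^d\times(-R-2,-R+1)$ upgrades this to $\sup_{\mathbb{T}^d}|\Psi(\cdot,x_{d+1})-c|\to0$ as $x_{d+1}\to-\infty$. In particular $\Psi$ is bounded on $\widehat D_f\intersect\{x_{d+1}\le-R_0\}$ for some large $R_0>m_0$.

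It remains to bound $\Psi$ on $\Omega_{R_0}:=\widehat D_f\intersect\{x_{d+1}>-R_0\}$, which for $R_0$ large is a bounded Lipschitz domain whose boundary consists of the graph $\widehat\Gam_f$, where $\Psi=g$ is continuous (hence bounded), and the flat base $\mathbb{T}^d\times\{-R_0\}$, where $\Psi$ is bounded by the previous step. Since bounded Lipschitz domains are regular for the Dirichlet problem, $\Psi\in C^0(\overline{\Omega_{R_0}})$ and the maximum principle bounds $\Psi$ on $\Omega_{R_0}$; hence $\Psi$ (lifted back to $\real^{d+1}$) is a bounded function in $C^2(D_f)\intersect C^{0}(\overline{D}_f)$, harmonic in $D_f$, with $\Psi=g$ on $\Gam_f$. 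Applying Proposition~\ref{propAppendix:weak-max-principle} to $\pm(\Psi-\Phi_{f,g})$ yields $\Psi=\Phi_{f,g}$. The main obstacle is the decay step of the third paragraph: one must extract boundedness of $\Psi$ at $x_{d+1}=-\infty$ from the finite-energy bound and periodicity alone, and this --- rather than anything happening near $\Gam_f$ --- is the delicate part; the rest is bookkeeping with standard elliptic estimates.
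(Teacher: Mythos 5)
Your proposal is correct, and it takes a genuinely different route from the paper. Both arguments reduce to the same pivot: show that the finite-energy potential $\Psi_{f,g}$ is in fact bounded, then invoke the uniqueness of bounded solutions (Proposition~\ref{propAppendix:weak-max-principle}) applied to $\pm(\Psi_{f,g}-\Phi_{f,g})$. The difference is in how boundedness is extracted from $\nabla\Psi_{f,g}\in L^2$. The paper first proves an auxiliary sublinear-growth bound $|\Psi_{f,g}(x,y)|\le C+\sqrt{|y|}\,\|\nabla\Psi_{f,g}\|_{L^2}$ (Lemma~\ref{lemma:SublinearGrowthHarmonic}), then decomposes $\Psi_{f,g}=v+w$ on the half-slab $\mathbb{H}$, where $v$ is the bounded Poisson extension of $\Psi_{f,g}|_{\partial\mathbb{H}}$ and $w$ vanishes on $\partial\mathbb{H}$; it reflects $w$ across $\partial\mathbb{H}$ to a harmonic function $\tilde w$ on all of $\mathbb{T}^d\times\real$, and combines sublinear growth with the interior gradient estimate to run a Liouville-type argument forcing $\tilde w\equiv 0$, so $\Psi_{f,g}=v$ is bounded. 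You instead argue directly and prove the stronger statement that $\Psi_{f,g}$ converges uniformly to a constant as $x_{d+1}\to-\infty$: the slice-mean $\mu(s)$ is affine by periodicity, then constant because $\mu'\in L^2(-\infty,m_0)$ is constant; Poincar\'e--Wirtinger plus the vanishing energy tail give $L^2$ decay of $\Psi_{f,g}-c$ on far slabs; and an interior $L^2$-to-$L^\infty$ estimate upgrades this to uniform convergence. Your route avoids the reflection and the Liouville step entirely and lives purely in the energy framework; it also yields as a byproduct the asymptotic normalization of $\Psi_{f,g}$ at $x_{d+1}=-\infty$, which the paper obtains only implicitly (through the decay of the bounded Poisson extension $v$). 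The paper's route is a bit more modular, since Lemma~\ref{lemma:SublinearGrowthHarmonic} is a clean one-liner and the Liouville argument for periodic-in-$x$, sublinear-in-$y$ harmonic functions is standard. Both are complete; your version is self-contained and slightly sharper, the paper's is shorter on the page because the slice-in-$y$ estimate is packaged separately. One small remark: you correctly observe that the $L^2$-gradient condition in \eqref{eqOtherResults:PotentialDGN} must be read per fundamental cell, and that periodicity of $\Psi_{f,g}$ follows from uniqueness in the variational class; the paper leaves this implicit but uses it via the torus formulation in Lemma~\ref{lemma:SublinearGrowthHarmonic}.
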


We have the following immediate corollary:

\begin{corollary}
	The normalized versions of the operators in \cite{AlazardMeunierSmets-2020LyapounovCauchyProbHeleShaw-CMP}, \cite{DongGancedoNguyen-2023MuskatOnePhase2DWellPose-CPAM}, \cite{DongGancedoNguyen-2023MuskatWP3D-arXiv} all coincide with $G(f)g$ defined here.
\end{corollary}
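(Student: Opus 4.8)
The plan is to read this off directly from Proposition \ref{propOtherResults:SamePotentials} once the normalization conventions are aligned. First I would recall the two ingredients. On the one hand, by \eqref{eqIntro:G-NonlinDtoN-NonNormalized} the operator $G(f)g$ of the present paper is nothing but a boundary derivative of $\Phi_{f,g}$ along $\Gamma_f$ --- concretely the directional derivative along $N=(-\nabla f,1)$, equivalently $\sqrt{1+|\nabla f|^2}\,\partial_n\Phi_{f,g}$ on $\Gamma_f$. On the other hand, the Dirichlet-to-Neumann operators appearing in \cite{AlazardMeunierSmets-2020LyapounovCauchyProbHeleShaw-CMP}, \cite{DongGancedoNguyen-2023MuskatOnePhase2DWellPose-CPAM}, \cite{DongGancedoNguyen-2023MuskatWP3D-arXiv} are, by their definitions, boundary derivatives (written in \cite{DongGancedoNguyen-2023MuskatOnePhase2DWellPose-CPAM} as a principal-value singular integral, which is the standard representation of such a boundary normal derivative) of the variational potential $\Psi_{f,g}$ solving \eqref{eqOtherResults:PotentialDGN}. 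Up to the geometric factor $\sqrt{1+|\nabla f|^2}$ --- which is exactly what ``normalized version'' records --- all of these quantities are the same functional of their respective potentials.

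Second, I would invoke Proposition \ref{propOtherResults:SamePotentials}: in the periodic setting of those works, with $f\in C^{0,1}(\real^d)$ and $g\in C^0(\Gamma_f)$ periodic, one has $\Phi_{f,g}=\Psi_{f,g}$ on all of $D_f$. For $f,g\in C^{1,\gam}$, the remark following Definition \ref{defAuxOp:DefOfWf} gives $C^{1,\gam}$ regularity of both potentials up to $\Gamma_f$, so their boundary normal derivatives at each point of $\Gamma_f$ agree. Multiplying through by the common factor $\sqrt{1+|\nabla f|^2}$ then yields that the normalized operator of each of \cite{AlazardMeunierSmets-2020LyapounovCauchyProbHeleShaw-CMP}, \cite{DongGancedoNguyen-2023MuskatOnePhase2DWellPose-CPAM}, \cite{DongGancedoNguyen-2023MuskatWP3D-arXiv} coincides with $G(f)g$ defined here.

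The corollary therefore carries no analytic content beyond Proposition \ref{propOtherResults:SamePotentials}; the only thing to be done is conventions bookkeeping --- checking, reference by reference, whether the stated operator already includes the Jacobian, whether it is presented as a kernel singular integral or as a normal derivative, and whether the orientation of the normal matches \eqref{eqIntro:NormalVector}. I expect this bookkeeping, rather than any estimate, to be the main (and essentially only) obstacle.
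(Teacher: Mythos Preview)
Your proposal is correct and matches the paper's approach: the paper states this as an immediate corollary of Proposition \ref{propOtherResults:SamePotentials} with no proof, and your reasoning---that the operators on both sides are the same boundary-derivative functional applied to potentials that Proposition \ref{propOtherResults:SamePotentials} shows are equal---is exactly what ``immediate'' means here. The normalization bookkeeping you flag is indeed the only content, and the paper leaves it implicit.
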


For the convenience of the reader, we now provide the proof of Proposition \ref{propOtherResults:SamePotentials}, using well-known, classical techniques. First we recall the following fact and its proof:

\begin{lemma}\label{lemma:SublinearGrowthHarmonic}    
        Let $\mathbb{H}=\{(x,x_{d+1}) \in \mathbb{T}^d\times \mathbb{R}\ :\ x_{d+1}<0\}$ and $u\in C^2(\mathbb{T}^{d}\times \mathbb{R})$ such that 
    $\nabla u\in L^2(\mathbb{H})$ where $X = (x,x_{d+1})\in \mathbb{T}^d\times \mathbb{R}$, then
    	\begin{align}\label{eq:lemma:SublinearGrowthHarmonic:def-sublinear}
		\lim_{|y|\to \infty}\left( \sup_{x\in \mathbb{T}^d} \frac{|u(x,y)|}{|y|} \right) = 0.
	\end{align}
\end{lemma}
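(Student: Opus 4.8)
The plan is to reduce the assertion to a one-variable estimate along vertical segments, using only the fundamental theorem of calculus and Cauchy--Schwarz, the sole genuinely $(d+1)$-dimensional ingredient being a bound on the ``vertical energy'' of $u$ that is \emph{uniform in $x$}. Since $\nabla u\in L^2(\mathbb{H})$ constrains $u$ only on the lower half-space $\mathbb{H}=\{x_{d+1}<0\}$, the content of (\ref{eq:lemma:SublinearGrowthHarmonic:def-sublinear}) is the behavior as $y\to-\infty$, and that is what I would prove.

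First I would fix $y<-1$ and $x\in\mathbb{T}^d$ and write, by the fundamental theorem of calculus,
\[
u(x,y)=u(x,-1)+\int_{-1}^{y}\partial_{x_{d+1}}u(x,s)\,ds ,
\]
so that, by Cauchy--Schwarz,
\[
|u(x,y)|\le \sup_{\mathbb{T}^d}|u(\cdot,-1)|+(|y|-1)^{1/2}\left(\int_{y}^{-1}|\partial_{x_{d+1}}u(x,s)|^2\,ds\right)^{1/2}.
\]
The first term is a finite constant $C_0$, independent of $x$, because $u(\cdot,-1)$ is continuous on the compact torus. Everything then reduces to controlling the vertical energy appearing on the right uniformly in $x$ (and $y$).

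The key step is the estimate
\[
\sup_{x\in\mathbb{T}^d}\int_{-\infty}^{-1}|\partial_{x_{d+1}}u(x,s)|^2\,ds\le C(d)\,\|\nabla u\|_{L^2(\mathbb{H})}^2 ,
\]
which I would obtain from interior estimates for the harmonic function $u$. Since each $\partial_i u$ is harmonic, $|\nabla u|^2$ is subharmonic, and for every $(x,s)$ with $s\le-1$ the Euclidean ball $B_{1/2}(x,s)$ lies inside $\mathbb{H}$; hence
\[
|\partial_{x_{d+1}}u(x,s)|^2\le|\nabla u(x,s)|^2\le\frac{1}{|B_{1/2}|}\int_{B_{1/2}(x,s)}|\nabla u|^2 .
\]
Integrating in $s$ over $(y,-1)$, applying Tonelli's theorem, and using that for each $z$ the set $\{s\in(y,-1):|s-z_{d+1}|<\tfrac12\}$ has length at most $1$ and is empty unless $z_{d+1}\in(-\infty,0)$, the iterated integral collapses to a fixed dimensional multiple of $\int_{\mathbb{H}}|\nabla u|^2$, with no dependence on $x$ or $y$. (If the torus has side length below $1$, one replaces the radius $\tfrac12$ by a value below the injectivity radius, which only changes the constant.)

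Putting the two displays together gives
\[
\frac{|u(x,y)|}{|y|}\le\frac{C_0}{|y|}+\frac{(|y|-1)^{1/2}}{|y|}\,C(d)^{1/2}\,\|\nabla u\|_{L^2(\mathbb{H})},
\]
and the right-hand side is independent of $x$ and tends to $0$ as $|y|\to\infty$; taking the supremum over $x\in\mathbb{T}^d$ and letting $|y|\to\infty$ completes the argument. I expect the main obstacle to be precisely the uniformity in $x$ in the key step: a pointwise bound on $\partial_{x_{d+1}}u$ is not available from $u\in C^2$ and $\nabla u\in L^2$ alone, and it is harmonicity --- via the sub-mean-value property of $|\nabla u|^2$ --- that supplies it; the rest is routine one-variable calculus.
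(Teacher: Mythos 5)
Your argument follows the same route as the paper's proof --- the fundamental theorem of calculus along the vertical segment $\{x\}\times[y,0]$ followed by Cauchy--Schwarz --- but you have put your finger on exactly the step that the paper's one-line proof passes over. The paper's final inequality bounds the line integral $\bigl(\int_y^0 |u_{x_{d+1}}(x,\xi)|^2\,d\xi\bigr)^{1/2}$ by $\|\nabla u\|_{L^2(\mathbb{H})}$ with no integration over $x$; that is not a valid pointwise inequality. Fubini gives it only after averaging over $\mathbb{T}^d$, which controls the line integral for almost every $x$ with a constant that may depend on $x$ --- not enough to control the $\sup_x$ sitting inside the limit. Your fix, using the sub-mean-value property of $|\nabla u|^2$ on balls of fixed radius contained in $\mathbb{H}$ together with Tonelli, supplies exactly the missing uniformity in $x$ (and in $y$), and the verification is sound.

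One thing to flag: your repair requires $u$ to be harmonic in $\mathbb{H}$ (otherwise $|\nabla u|^2$ need not be subharmonic), and the lemma as written does not state this. The hypothesis is implicit --- the lemma is applied only to harmonic functions, namely $\Psi_{f,g}$ and the reflected extension $\tilde w$ in Proposition~\ref{propOtherResults:SamePotentials} --- but it is genuinely needed: without some such pointwise control on $\nabla u$, the statement with $\sup_x$ inside the limit is false for $d\ge 2$. For example, take $u=\sum_n u_n$ with $u_n(x,y)=n\,\phi\!\left(\tfrac{x-x_n}{r_n}\right)\psi\!\left(\tfrac{y+n}{r_n}\right)$ for fixed bump functions $\phi,\psi$ and $r_n\to0$ chosen so that $\sum_n n^2 r_n^{d-1}<\infty$; then $u$ is smooth with disjointly supported summands, $\nabla u\in L^2(\mathbb{H})$, yet $\sup_x|u(x,-n)|/n=1$ for every $n$. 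So your proof is correct and in fact repairs a gap in the paper's argument, provided harmonicity of $u$ in $\mathbb{H}$ is added to the hypotheses; you also rightly observed that, since $\nabla u$ is controlled only on $\mathbb{H}$, the stated limit is meaningful only as $y\to-\infty$.
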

\begin{proof} We have, for any $x\in \real^d$,
\begin{align*}
	|u(x,y)| 
	&\leq |u(x,0)| + \int_{y}^0 |u_{x_{d+1}}(x,\xi)|\ d\xi  \leq C + \left(\int_{y}^0 |u_{x_{d+1}}(x,\xi)|^2\ d\xi\right)^{1/2} \left(\int_y^0 d\xi\right)^{1/2} \\ &= C+ \sqrt{|y|}\left(\int_{y}^0 |u_{x_{d+1}}(x,\xi)|^2\ d\xi\right)^{1/2} \leq C+\sqrt{|y|} \|\nabla u\|_{L^2(\mathbb{H})},
\end{align*}
from which the conclusion (\ref{eq:lemma:SublinearGrowthHarmonic:def-sublinear}) follows.
\end{proof}

\begin{proof}[Proof of Proposition \ref{propOtherResults:SamePotentials}] By Lemma \ref{lemma:SublinearGrowthHarmonic}, we observe that $\Psi_{f,g}$ exhibits sublinear behavior, as specified in \eqref{eq:lemma:SublinearGrowthHarmonic:def-sublinear}. To streamline our discussion, we assume, without loss of generality, that $f\geq 1$. This assumption implies $\mathbb{H} = \{(x,x_{d+1}): x\in \mathbb{T}^d, x_{d+1} < 0\}\subset D_f$. Consequently, both $\Phi_{f,g}$ and $\Psi_{f,g}$ are elements of $C^2(\overline{\mathbb{H}})$. Let us introduce a new function, denoted as $v$, which satisfies the equation:
\begin{align*}
	\Delta v = 0\;\textnormal{in}\;\mathbb{H}, \qquad v = \Psi_{f,g}\;\textnormal{on}\;\partial\mathbb{H}, \qquad \textnormal{and}\qquad  \Vert v\Vert_{L^\infty(\mathbb{H})} < \infty.
\end{align*}
The existence and uniqueness of such a function $v$ follows from Propositions \ref{propAppendix:weak-max-principle} and \ref{propAppendix:existence-Perron}, or by just using the convolution with the Poisson kernel for half-space. Define 
\begin{align*}
    w = \Psi_{f,g} - v\ \ \text{in}\;\overline{\mathbb{H}},
\end{align*}
and note $w = 0$ on $\partial \mathbb{H}$. We can then use reflection to extend $w$ to a harmonic function $\tilde{w}$ on $\mathbb{T}^d\times \mathbb{R}$, where $\tilde{w} = w$ in $\mathbb{H}$. Notably, $\tilde{w}$ is sublinear due to the boundedness of $v$ and Lemma \ref{lemma:SublinearGrowthHarmonic} applied to $\Psi_{f,g}$. The classical gradient estimate for harmonic functions yields:
\begin{align*}
	|\nabla \tilde{w}(X)| \leq \frac{C}{R} \Vert \tilde{w}\Vert_{L^\infty(B_R(X))}
\end{align*}
for any $X \in \mathbb{T}^d\times \mathbb{R}$ and $R>0$. Let $R\to \infty$ we conclude that $\tilde w$ must be a constant due to its sublinear growth at infinity. Consequently, $\tilde{w} = 0$. This leads to the deduction that $\Psi_{f,g} = w + v$ is bounded, establishing $\Psi_{f,g}\equiv \Phi_{f,g}$ by the uniqueness property in Proposition \ref{propAppendix:weak-max-principle} for bounded solutions.
\end{proof}

Finally, we remark on  several instances of the use of $G(f)g$ in the literature on water waves. In the work \cite{Lannes-2005WellPoseWaterWave-JAMS}, the corresponding potential $\Phi_{f,g}\in H^{k+2}$ is defined as a variational solution for $f\in H^{k+3/2}$, $g\in H^{k+1/2}$.  In the work \cite{Wu-1999WellPosedWaterWaveInSobolevSpace-JAMS}, the potential $\Phi_{f,g}$, defined for $f\in C^{0,1}(\real^d)$ and $g\in L^2(\Gam_f)$, is the unique smooth solution whose boundary values are determined by the non-tangential maximal function, given by \cite{Dahlberg-1977EstimatesHarmonicMeasARMA}, \cite{JerisonKenig-1982BoundaryValueProblemsLipDomains-Chapter}.

\subsection{Various notions of solutions among \cite{AlazardKoch-2023HeleShawSemiFlow-ArXiV}, \cite{AlazardMeunierSmets-2020LyapounovCauchyProbHeleShaw-CMP}, \cite{DongGancedoNguyen-2023MuskatOnePhase2DWellPose-CPAM}}
\label{ss:NotionsOfSolutions}

We now expand upon the discussion 
initiated in Remark \ref{remark} on the two notions of weak solutions available for the one-phase Muskat problem: viscosity solutions and variational solutions.
There is a more complete list of references on this topic in Sections \ref{sss:HS} and \ref{ssIntro:Muskat}.

We note that, in this work and  \cite{AlazardKoch-2023HeleShawSemiFlow-ArXiV}, it is used that, in the graph setting, classical solutions to the Muskat and Hele-Shaw problems, (\ref{eqIntro:MuskatInTermsOfOmegatNormalVelocity}) and (\ref{eqIntro:HS1}),  differ by a linear function. In addition, each of the two problems has an associated Hamilton-Jacobi-Bellman equation: (\ref{eqIntro:MuskatHJB}) and (\ref{eqIntro:HeleShawHJB}), respectively. Although viscosity solutions are defined  for (\ref{eqIntro:HS1}) and (\ref{eqIntro:HeleShawHJB}), it is 
 crucial to note that these are two different notions of solution.  Indeed, on the one hand, (\ref{eqIntro:HS1}) is a free boundary problem whose spatial variable is in $\real^{d+1}$, and the notion of viscosity solution is designed to be compatible with the free boundary condition (e.g. that given in \cite{Kim-2003UniquenessAndExistenceHeleShawStefanARMA}).  On the other hand, (\ref{eqIntro:HeleShawHJB}) is a nonlinear integro-differential equation with no free boundary condition (a nonlinear version of the $1/2$-heat equation, as was made precise in \cite{ChangLaraGuillenSchwab-2019SomeFBAsNonlocalParaboic-NonlinAnal}), and so viscosity solutions are based on the existing integro-differential theory (e.g. \cite{BaIm-07}, \cite{CaSi-09RegularityIntegroDiff}, \cite{Silv-2011DifferentiabilityCriticalHJ}, \cite{ChangLaraGuillenSchwab-2019SomeFBAsNonlocalParaboic-NonlinAnal}).  An equivalence between the different notions of viscosity solution to (\ref{eqIntro:HS1}) and (\ref{eqIntro:HeleShawHJB}) was given in \cite{ChangLaraGuillenSchwab-2019SomeFBAsNonlocalParaboic-NonlinAnal}.  Although this relationship between different notions of viscosity solutions has not been extended to the one-phase Muskat problem, namely \eqref{eqIntro:MuskatInTermsOfOmegatNormalVelocity} and  (\ref{eqIntro:MuskatHJB}), the standard notion of viscosity solutions for integro-differential equations (closest to those in \cite{Silv-2011DifferentiabilityCriticalHJ})   were used for (\ref{eqIntro:MuskatHJB}) in \cite{DongGancedoNguyen-2023MuskatOnePhase2DWellPose-CPAM}, \cite{DongGancedoNguyen-2023MuskatWP3D-arXiv}, as well as in the present work.  We note that one important property of viscosity solutions to either (\ref{eqIntro:HS1}) or (\ref{eqIntro:MuskatHJB})  is that the equation can be evaluated in a pointwise sense whenever the viscosity solution is locally regular enough (see subsection \ref{ss:punctually} below, for the case of (\ref{eqIntro:MuskatHJB})).

A different notion of solution to (\ref{eqIntro:HS1}) (which as above also gives solutions to (\ref{eqIntro:MuskatInTermsOfOmegatNormalVelocity})), going back to \cite{ElliottJanovsky-1981VariationalApproachHeleShaw}, relies upon an auxiliary problem that is variational in nature, utilizing an obstacle problem, which pre-dates the viscosity solutions of, e.g. \cite{Kim-2003UniquenessAndExistenceHeleShawStefanARMA}.  When solutions are regular enough, a solution of (\ref{eqIntro:HS1}) can be used to construct a solution to (\ref{eqIntro:HeleShawHJB}), and hence also (\ref{eqIntro:MuskatHJB}).      This is the variational inequality / obstacle problem approach is used in \cite{AlazardKoch-2023HeleShawSemiFlow-ArXiV}.  A connection between the variational solutions constructed in \cite{ElliottJanovsky-1981VariationalApproachHeleShaw} and viscosity solutions in \cite{Kim-2003UniquenessAndExistenceHeleShawStefanARMA} is utilized for a homogenization problem in \cite{KimMellet-2009HomogHeleShawARMA}.

These variational solutions to the Hele-Shaw problem (\ref{eqIntro:HS1}) are constructed using the transformation, 
\begin{align}
\label{eq:EJ}
w(x,t) = \int_0^t p(x,s)ds,
\end{align}
where $p$ is the pressure of the fluid, as in (\ref{eqIntro:HS1}).  If one had $w$ and it was regular enough, then $p$ can be recovered from $\partial_t w$.    This is the approach in \cite{AlazardKoch-2023HeleShawSemiFlow-ArXiV}. In one direction, \cite{AlazardKoch-2023HeleShawSemiFlow-ArXiV} establishes that if $p$ is regular (classical), then $w$ is a distributional solution of the obstacle problem
\begin{align}\label{eqOtherResults:ObstacleEqW}
	\Delta w = \Indicator_{\{w>0\}\intersect A}\ \ \ \text{where}\ \ \ 
	A = U\setminus \{ p(\cdot,0) >0 \},
\end{align}
where $U$ is a fixed background set that is open and connected (\cite[Proposition 4.3]{AlazardKoch-2023HeleShawSemiFlow-ArXiV}). In fact, given a candidate $p$ to solve, (\ref{eqIntro:HS1}), \cite[Definition 3.6]{AlazardKoch-2023HeleShawSemiFlow-ArXiV} asks that the corresponding $w$ in (\ref{eq:EJ}) should be a distributional solution of (\ref{eqOtherResults:ObstacleEqW}).   Then,  \cite{AlazardKoch-2023HeleShawSemiFlow-ArXiV}   uses variational techniques to solve this  obstacle problem for $w$ and define the solution $f$ to  (\ref{eqIntro:MuskatHJB}) implicitly by
\begin{align}\label{eqOtherResults:DefOfF-AlazardKoch}
	\{ (x,x_{d+1})\ :\ w(x,x_{d+1},t) >0  \} = 
	\{  (x,x_{d+1})\ : x_{d+1} < f(x,t)\}.
\end{align}
This relation can be found in \cite[Theorem 5.4]{AlazardKoch-2023HeleShawSemiFlow-ArXiV}.  This definition of $f$ gives the solutions referred to as a semi-flow for (\ref{eqIntro:MuskatHJB}) in \cite{AlazardKoch-2023HeleShawSemiFlow-ArXiV}.  There it is shown that if $w$, and hence $f$, are globally regular enough (i.e.  $f\in H^s(\real^d)$ with $s>\frac{d}{2}+1$), then $f$ indeed solves (\ref{eqIntro:MuskatHJB}). Furthermore, \cite[Theorem 2.3]{AlazardKoch-2023HeleShawSemiFlow-ArXiV} states that, in the case that the sub-zero sets of the auxiliary function $w$ are not  globally smooth enough to evaluate (\ref{eqIntro:MuskatHJB}) pointwise, there exists a unique semi-flow map from bounded LSC functions to bounded LSC functions, as well as from uniformly continuous data into the space of uniformly continuous functions.  The solution of (\ref{eqIntro:MuskatHJB}) is then interpreted as this semi-flow map acting on the given data.  Furthermore, after some regularizing time, these auxiliary functions, $w(t)$, become classical (their corresponding free boundary is $C^{1,\al}$), and then the correspondence between the three different solutions to three different equations can be used interchangeably to obtain one from the other, i.e. for the Hele-Shaw setting, $w$ produces $p$, which, in turn produces $f$ as the function whose graph gives the free boundary, and $f$ solves (\ref{eqIntro:HeleShawHJB}) (which, as above is equivalent to solving the corresponding equation related the the one-phase Muskat problem (\ref{eqIntro:MuskatHJB})).  For data $f(\cdot,0)\in BUC(\real^d)$ (or even bounded and $LSC(\real^d)$), there will be a time for which $f$ is not a classical solution, and  when the free boundary of $w$ is not classical, and here it is not clear in what sense such an $f$ solves (\ref{eqIntro:HeleShawHJB}) (which, as used above gives a solution to (\ref{eqIntro:MuskatHJB})), and this is not addressed in \cite{AlazardKoch-2023HeleShawSemiFlow-ArXiV}.

It is not known whether the  solutions of (\ref{eqIntro:MuskatHJB}) introduced in \cite{AlazardKoch-2023HeleShawSemiFlow-ArXiV} correspond to the viscosity solutions of (\ref{eqIntro:MuskatHJB}) given here.  Thus, it seems reasonable to pursue the study of both solutions independently.  Given the correspondence proved between variational and viscosity solutions to the Hele-Shaw problem (\ref{eqIntro:HS1}) in \cite{KimMellet-2009HomogHeleShawARMA}, we would guess that there are some situations in which  variational solutions and  viscosity solutions of  (\ref{eqIntro:MuskatHJB}) coincide.

\subsection{The GCP}
\label{ss:GCP}

The  GCP (Definition \ref{defAuxOp:GCP})  is fundamental to all works on viscosity solutions, including the foundational ones of  \cite{CrandallLions-83ViscositySolutionsHJE-TAMS}, \cite{CrandallEvansLions-1984SomePropertiesOfViscositySolutionsTAMS}. In particular, the comparison property is at the core of the definition of viscosity solutions for the Hele-Shaw problem (\ref{eqIntro:HS1}) and similar free boundary problems; see \cite{Caffarelli-1987HarnackInqualityApproachFBPart1RevMatIbero}, \cite{AthanaCaffarelliSalsa-1996RegFBParabolicPhaseTransitionACTA}, \cite{Kim-2003UniquenessAndExistenceHeleShawStefanARMA}.

In the setting of nonlinear integro-differential operators, the GCP was established in \cite{ChangLaraGuillenSchwab-2019SomeFBAsNonlocalParaboic-NonlinAnal} for a class of operators including the Hele-Shaw operator $I$  (\ref{eqIntro:I}). It was established that viscosity solutions to the corresponding Hamilton-Jacobi-Bellman equation exist, are unique, and correspond to those of the associated free boundary problem, which, for $I$, is the Hele-Shaw problem (\ref{eqIntro:HS1}).  A key element of the proof were the min-max formulas for elliptic operators established in \cite{GuSc-2019MinMaxNonlocalCALCVARPDE}, \cite{GuSc-2019MinMaxEuclideanNATMA},  which provided a roadmap for establishing well-posedness.

 Subsequently, the GCP for $M$ was utilized in \cite{DongGancedoNguyen-2023MuskatOnePhase2DWellPose-CPAM}, \cite{DongGancedoNguyen-2023MuskatWP3D-arXiv} to build viscosity solutions for the Muskat equation.  The GCP is implicitly used in \cite[Propositions 2.4 and 2.6]{AlazardMeunierSmets-2020LyapounovCauchyProbHeleShaw-CMP}.  Furthermore, in the somewhat long history of viewing free boundaries a parabolic operators, one can also see the GCP in the works that use the positivity of the Raleigh-Taylor coefficient, which \emph{for the one-phase problem} is the quantity $1-\partial_{e_{d+1}}\Phi_{f,f}$.  Indeed, verifying $1-\partial_{e_{d+1}}\Phi_{f,f}>0$ in fact gives ellipticity or parabolicity of the corresponding equation, as in  \cite[Proposition 4.3 and Section 9.2]{AlazardMeunierSmets-2020LyapounovCauchyProbHeleShaw-CMP}, \cite[Lemma 4.2, Proposition 4.3]{NguyenPausader-2020ParadifferentialWellPoseMuskatARMA}.

\subsection{Pointwise  evaluation property}
\label{ss:punctually}

A very convenient result that is used in the viscosity solutions theory for integro-differential equations is that the equation holds whenever the test function is merely punctually $C^{1,\gam}$; we informally refer to this as the pointwise evaluation property.  This immediately gives that solutions will satisfy the equation in the pointwise sense when they enjoy enough regularity locally, which we state in the following lemma.

\begin{lemma} If $f$ is a viscosity solution of \eqref{eqIntro:MuskatHJB}, and there is $r_0$ for which $f$ has more regularity in the sense that
	\begin{align*}
		f \in C^{1,\gamma}(x_0,t_0)\cap C^{0,1}(\R^d\times (t_0-r_0, t_0+r_0)),
	\end{align*}
then classically, $f$ satisfies at $(x_0,t_0)$,
\begin{align*}
	\partial_t f(x_0, t_0) = M(f(\cdot, t_0), x_0).
\end{align*}

\end{lemma}

\begin{proof} From Proposition \ref{propViscSol:EquivalenceSolsHeleShawMuskat}, $g(x,t) = f(x,t) + t$ for $(x,t)\in \R^d\times [0,T)$ is a viscosity solution of \eqref{eqViscSol:HeleShawHJB-WithInitialCondition}, and also $g\in C^{1,\gamma}(x_0,t_0)$ as well. By Proposition \ref{propViscSol:pointwise-testing}, we can use $g $ as a test function against itself at $(x_0, t_0)$, yielding $\partial_t g(x_0, t_0) \leq H(g(\cdot, t_0), x_0)$. Applying a similar argument to the supersolution case in Proposition \ref{propViscSol:pointwise-testing}, we obtain  
\begin{equation*}
	\partial_t g(x_0, t_0)\geq H(g(\cdot, t_0), x_0).
\end{equation*}
Since $M(f, x_0) = H(f, x_0) - 1$ and $H(f + c, x) = H(f, x)$ (by (iii) of Proposition \ref{propAuxOp:HProperties}), the desired result follows. 
\end{proof}

Interestingly (and possibly not surprisingly, a posteriori), this matches the much earlier theory for free boundary problems, which is closely related; see, for instance, \cite[Lemma 11]{Caffarelli-1987HarnackInqualityApproachFBPart1RevMatIbero} and \cite[Lemma 11.17]{CaffarelliSalsa-2005GeometricApproachtoFB}.  We note that the analog for second order equations is not true, namely punctual $C^{1,1}$ regularity of a test function does not ensure that it satisfies the equation.  

The use of this convenient feature goes back at least to \cite[Proposition 2]{BaIm-07} and \cite[Lemma 4.3]{CaSi-09RegularityIntegroDiff}.  We note that in the explicitly integro-differential works, like \cite{BaIm-07} and \cite{CaSi-09RegularityIntegroDiff} (there are, of course, \emph{many} works on the topic), the equations always are assumed to take the form of a min-max of linear integro-differential operators, and as a consequence, the pointwise evaluation property follows in a much more straightforward fashion than it did in this work.  Furthermore, it appears as though the pointwise  evaluation result was a convenience, but not a necessity in these earlier works ---  meaning the comparison results for viscosity solutions could have been proved without it.  

For the Hele-Shaw operator $I$, mentioned here in (\ref{eqIntro:I}), the pointwise  evaluation property was proven in \cite[Lemma 5.11, Corollary 5.12]{ChangLaraGuillenSchwab-2019SomeFBAsNonlocalParaboic-NonlinAnal}, which is a small modification of \cite[Lemma 11.17]{CaffarelliSalsa-2005GeometricApproachtoFB}.  Subsequently, this was developed via a different argument and used in \cite[Corollary 2.14, Proposition 6.2]{DongGancedoNguyen-2023MuskatOnePhase2DWellPose-CPAM}, \cite[Corollary 3.4, Proposition 3.7]{DongGancedoNguyen-2023MuskatWP3D-arXiv}.  Interestingly, in  contrast to the earlier works on integro-differential equations that were not focused on free boundary problems, it is not clear whether the proofs here and in \cite{ChangLaraGuillenSchwab-2019SomeFBAsNonlocalParaboic-NonlinAnal}, \cite{DongGancedoNguyen-2023MuskatOnePhase2DWellPose-CPAM}, \cite{DongGancedoNguyen-2023MuskatWP3D-arXiv} could be completed without the pointwise  evaluation property.


\appendix

\section{Background results about Harmonic functions}\label{sec:Background-HarmonicFunctions}

We now provide some  well-known result for existence and uniqueness of harmonic functions in  unbounded domains of the form $D_f$:
\begin{align}\label{eqAppendix:DirichletProplemInDf}
	\begin{cases}
		\Delta u_{f,g} = 0\ &\text{in}\ D_f,\\
		u_{f,g} = g\ &\text{on}\ \Gam_f.
	\end{cases}
\end{align}
 We could not find an immediate and easily available reference, and so we included that argument here for completeness.  One reference is \cite{Ishii-1989UniqueViscSolSecondOrderCPAM}, but that work treats fully nonlinear elliptic equations on possibly unbounded domains, and as such, the presentation is notably more complicated than is needed for harmonic functions.

The first result is the weak maximum principle for (\ref{eqAppendix:DirichletProplemInDf}). We say that $u\in C^2(D_f)$ is subharmonic in $D_f$ if $-\Delta u \leq 0$ in $D_f$, and $u$ is superharmonic if $-u$ is subharmonic in $D_f$.

\begin{proposition}[Weak maximum principle on subgraph domains for bounded functions]\label{propAppendix:weak-max-principle} Let $f\in C(\mathbb{R}^d)\cap L^\infty(\mathbb{R}^d)$ and $u\in C^2(D_f)\cap C(\overline{D}_f)$ be bounded from above. Assume that $u\leq 0$ on $\Gamma_f$ and $u$ is subharmonic in $D_f$. Then $u\leq 0$ in $D_f$. 
\end{proposition}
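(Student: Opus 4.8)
\textbf{Proof plan for Proposition \ref{propAppendix:weak-max-principle}.}

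The plan is to reduce the unbounded subgraph domain $D_f$ to a bounded one by cutting it off at a far-away horizontal hyperplane, and to control the error on that artificial boundary using the boundedness of $u$ together with a small multiple of a harmonic barrier that decays in $x_{d+1}$. Since $f\in L^\infty(\R^d)$, say $\|f\|_{L^\infty}\le F$, for any $R>F$ the truncated domain $D_f\cap\{x_{d+1}>-R\}$ has boundary consisting of the graph piece $\Gam_f$ (where $u\le 0$ by hypothesis) and the flat piece $\{x_{d+1}=-R\}$ (where we only know $u\le\sup_{D_f}u=:S<\infty$). On the flat bottom we have no sign control, so we must absorb the constant $S$ by a barrier. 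First I would handle the trivial case $S\le 0$ directly, and otherwise assume $S>0$.

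The key step is the choice of barrier. For $\ep>0$ consider the function
\begin{align*}
	b_R(X) = \ep\,\frac{x_{d+1}+R}{\,?\,}
\end{align*}
— more precisely, I would use the affine harmonic function $b(X)=-\delta\, x_{d+1}$ for a small $\delta>0$, which is harmonic in $\R^{d+1}$, is nonnegative on $\Gam_f$ up to a controlled amount (since $x_{d+1}=f(x)\in[-F,F]$ there, so $b\ge -\delta F$), and on the bottom slice $\{x_{d+1}=-R\}$ equals $\delta R$, which can be made larger than $S$ by taking $R$ large once $\delta$ is fixed. Then I would apply the classical weak maximum principle for the bounded Lipschitz domain $\Omega_R := D_f\cap\{x_{d+1}>-R\}$ to the subharmonic function $u - b$: on $\partial\Omega_R$ we have $u-b\le \delta F$ on the graph part and $u-b\le S-\delta R\le 0$ on the flat part for $R$ large, hence $u-b\le \delta F$ throughout $\Omega_R$. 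Fixing any point $X_0\in D_f$ and letting $R\to\infty$ gives $u(X_0)-b(X_0)\le\delta F$, i.e. $u(X_0)\le \delta(F - (X_0)_{d+1})$. Since $\delta>0$ was arbitrary (and $F-(X_0)_{d+1}$ is a fixed finite number once $X_0$ is fixed), we conclude $u(X_0)\le 0$.

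The main obstacle — really the only subtle point — is making sure the bottom term genuinely dominates: we need $\delta R \ge S$ with $R\to\infty$, which is automatic for \emph{fixed} $\delta$, but one must be careful about the order of quantifiers, fixing $\delta$ first, then sending $R\to\infty$, and only at the very end letting $\delta\to 0$. A secondary technical point is that $\Omega_R$ is a bounded domain with merely Lipschitz (indeed $C^0$) boundary, so I would either invoke the weak maximum principle in the form valid for continuous-up-to-the-boundary subharmonic functions on arbitrary bounded open sets (which requires no boundary regularity at all — it follows from the strong maximum principle applied on each component, or directly from the mean value inequality), or note that $u-b$ is subharmonic and continuous on $\overline{\Omega_R}$ and hence attains its maximum on $\partial\Omega_R$. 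No boundary regularity beyond continuity of $f$ is needed, which matches the hypothesis $f\in C(\R^d)$.
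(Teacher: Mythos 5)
There is a genuine gap in the central step. The truncated domain $\Omega_R = D_f \cap \{x_{d+1} > -R\}$ is \emph{not} bounded: boundedness of $f$ bounds it in the vertical direction, but $x$ still ranges over all of $\R^d$, so $\Omega_R$ is an infinite slab-like region. The classical weak maximum principle for bounded domains therefore does not apply to $\Omega_R$, and your linear barrier $b(X) = -\delta x_{d+1}$ gives no control whatsoever on any lateral boundary you would have to introduce. The horizontal unboundedness is precisely the difficulty the proposition is about; a bounded subharmonic function on a horizontal slab that is $\leq 0$ on the two boundary pieces is indeed $\leq 0$ inside, but that is a Phragm\'en--Lindel\"of type statement --- essentially the content of the proposition you are trying to prove --- not the bounded-domain weak maximum principle. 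Invoking it by calling $\Omega_R$ ``a bounded Lipschitz domain'' is where the argument breaks.

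The paper's proof has to confront exactly this. It argues by contradiction (assume $\sup_{D_f} u = m > 0$), picks a near-maximizer $\bar X$, and perturbs $u$ to
\begin{align*}
	w(X) = u(X) + \delta x_{d+1} + \varepsilon\,\psi(X) + 2m\,\eta\bigl(t(X-\bar X)\bigr),
\end{align*}
where $\delta x_{d+1}$ plays the same role as your $b$ (it makes $w$ small at depth $-m/\delta$), the term $\varepsilon\psi$ with $\psi(X)=e^{-\alpha|x_{d+1}-2K|^2}$ is \emph{strictly} subharmonic on the relevant strip so that an interior maximum of $w$ gives $0\geq\Delta w \geq C_{A,K}\varepsilon - O(t^2) > 0$ for small $t$, and --- this is the piece your argument is missing --- the compactly supported bump $2m\,\eta(t(X-\bar X))$ lifts $w$ by $2m$ at $\bar X$ while vanishing on the sphere $|X-\bar X|=t^{-1}$, forcing the maximum of $w$ to occur strictly inside a \emph{bounded} region where the contradiction can be extracted. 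Sending $t\to 0$ removes the error from the bump. To repair your proof you would need an analogue of this lateral localization: either a barrier that grows as $|x|\to\infty$ while staying controlled on $\Gamma_f$ and the bottom (not obvious, since $u$ is subharmonic and natural growing candidates are too), or the bump-function trick the paper uses.
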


Using the maximum principle, we can employ the classical Perron's method to construct the solutions to the Dirichlet problem \eqref{eqAppendix:DirichletProplemInDf} (see \cite[Theorems 2.12, 2.14]{GilbargTrudinger-98BookEllipticPDE}). We note that as the argument is local, the fact that we are working on an unbounded domain does not create any issues, as long as we have comparison in Proposition \ref{propAppendix:weak-max-principle}.  We thus omit the proof of Proposition \ref{propAppendix:existence-Perron}.

\begin{proposition}[Existence of bounded solutions]\label{propAppendix:existence-Perron}
	Given $f\in C^{0,1}(\mathbb{R}^d)$ and $g\in C^{0,1}(\Gamma_f)$, there exists a unique classical, bounded solution, $u_{f,g}\in C^2(D_f)\intersect C(\overline D_f)$, to (\ref{eqAppendix:DirichletProplemInDf}).
\end{proposition}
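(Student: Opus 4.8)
The plan is to get uniqueness at once from Proposition \ref{propAppendix:weak-max-principle} and existence by Perron's method, run on the unbounded domain $D_f$ exactly as one would on a bounded Lipschitz domain, the only genuinely global input being the a priori boundedness of the Perron envelope --- which is precisely what Proposition \ref{propAppendix:weak-max-principle} supplies. For uniqueness: if $u_1,u_2\in C^2(D_f)\cap C(\overline{D}_f)$ are bounded and both solve \eqref{eqAppendix:DirichletProplemInDf}, then $w=u_1-u_2$ is bounded, harmonic in $D_f$, and vanishes on $\Gamma_f$, so Proposition \ref{propAppendix:weak-max-principle} applied to $w$ and to $-w$ gives $w\equiv0$. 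For existence, I would first reparametrize the data: since $f\in C^{0,1}(\mathbb{R}^d)$, the map $x\mapsto(x,f(x))$ is a bi-Lipschitz bijection $\mathbb{R}^d\to\Gamma_f$, so $g\in C^{0,1}(\Gamma_f)$ corresponds to a bounded Lipschitz function $\tilde g(x)=g(x,f(x))$ on $\mathbb{R}^d$ with $\|\tilde g\|_{L^\infty}=\|g\|_{L^\infty(\Gamma_f)}=:M$ (here $C^{0,1}$ includes the $L^\infty$ norm). Then I would take the Perron family
\begin{align*}
	\mathcal{F}=\big\{\,v\in C(\overline{D}_f)\ :\ v\ \text{subharmonic in}\ D_f,\ \ v\le g\ \text{on}\ \Gamma_f,\ \ \sup_{D_f}v<\infty\,\big\},
\end{align*}
note $-M\in\mathcal{F}$ so $\mathcal{F}\neq\emptyset$, and observe (applying Proposition \ref{propAppendix:weak-max-principle} to $v-M$) that every $v\in\mathcal{F}$ satisfies $v\le M$ in $D_f$; hence $u(X):=\sup_{v\in\mathcal{F}}v(X)$ is well defined with $-M\le u\le M$.

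\textbf{Harmonicity of the envelope.} I would then verify $\Delta u=0$ in $D_f$ by the classical Perron argument, which is entirely local and hence insensitive to the unboundedness of $D_f$: given $X_0\in D_f$ and a ball $B=B_r(X_0)$ with $\overline{B}\subset D_f$, the harmonic replacement in $B$ of any $v\in\mathcal{F}$ is again in $\mathcal{F}$ and $\ge v$; choosing $v_k\in\mathcal{F}$ with $v_k(X_0)\to u(X_0)$, performing these replacements, and using $|v_k|\le M$ together with interior gradient and Harnack estimates, one extracts a locally uniform limit that is harmonic in $B$, equals $u(X_0)$ at $X_0$, and lies below $u$; the standard comparison step then forces equality with $u$ on $B$. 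This gives $u\in C^\infty(D_f)$ with $\Delta u=0$, $u$ bounded.

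\textbf{Boundary regularity via barriers.} Next I would show $u$ extends continuously to $\overline{D}_f$ with $u=g$ on $\Gamma_f$. Since $D_f=\{x_{d+1}<f(x)\}$ is the strict subgraph of a function with Lipschitz constant $L$, at every $X_0\in\Gamma_f$ the complement of $\overline{D}_f$ contains a truncated upward cone with vertex $X_0$ and aperture depending only on $L$, i.e. $D_f$ satisfies a uniform exterior cone condition; hence every $X_0\in\Gamma_f$ is a regular boundary point, with a barrier $b_{X_0}\ge0$ that is superharmonic in $D_f\cap B_\rho(X_0)$ for some $\rho=\rho(L)>0$, vanishes at $X_0$, and satisfies $b_{X_0}(X)\ge c\,|X-X_0|^\lambda$ near $X_0$ for $c,\lambda>0$ depending only on $L$ (built in the usual way from a homogeneous harmonic function on the spherical cap complementary to the exterior cone). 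Combining $b_{X_0}$ with the Lipschitz modulus $|\tilde g(x)-\tilde g(x_0)|\le L'|x-x_0|$, one checks that for $C$ large the functions $g(X_0)\pm C\,b_{X_0}$ are, respectively, a local super- and subsolution lying above/below every member of $\mathcal{F}$ near $X_0$, whence $|u(X)-g(X_0)|\le C\,b_{X_0}(X)\to0$ as $X\to X_0$. This yields $u\in C(\overline{D}_f)$ with $u=g$ on $\Gamma_f$, completing existence; together with the uniqueness noted above, the proposition follows.

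\textbf{Main obstacle.} I expect the only real subtlety to be the noncompactness of $D_f$ and $\Gamma_f$. The interior harmonicity argument is local and unaffected, but two global facts are essential. First, the Perron envelope must be known a priori to be \emph{bounded}: without a growth restriction the Dirichlet problem \eqref{eqAppendix:DirichletProplemInDf} on $D_f$ is not uniquely solvable --- already on $\{x_{d+1}<0\}$ the function $x_{d+1}$ is a nonzero harmonic function vanishing on the boundary --- so it is crucial that each $v\in\mathcal{F}$ obeys $|v|\le M$, which is exactly Proposition \ref{propAppendix:weak-max-principle}. Second, no condition is imposed ``at $x_{d+1}=-\infty$'' precisely because attention is confined to bounded solutions, among which Proposition \ref{propAppendix:weak-max-principle} already furnishes uniqueness --- which is why that proposition is set up first. (An alternative to Perron would be to exhaust $D_f$ by $D_f\cap B_n(0)$, solve there with boundary data $\tilde g$ on $\Gamma_f\cap B_n$ and a fixed bounded extension on $\partial B_n\cap D_f$, and pass to the limit using Proposition \ref{propAppendix:weak-max-principle}; Perron's method is cleaner as it avoids that artificial data.) Finally, $g\in C^{0,1}$ may be relaxed to $g\in C^0(\Gamma_f)\cap L^\infty$, the Lipschitz hypothesis being needed only for the explicit linear modulus at $\Gamma_f$.
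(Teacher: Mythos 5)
Your proposal is correct and follows exactly the approach the paper itself has in mind: the paper omits the proof, citing Gilbarg--Trudinger and noting precisely the two points you flesh out --- that the Perron argument is local so unboundedness of $D_f$ causes no trouble once Proposition~\ref{propAppendix:weak-max-principle} bounds the envelope, and that boundary regularity follows from the exterior cone condition available for Lipschitz $\Gamma_f$. Your added observations (the half-space counterexample to uniqueness without a boundedness restriction, and that $g\in C^0\cap L^\infty$ would suffice) are accurate and consistent with the paper's intent.
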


We note that the Perron's solution achieves boundary values at $x\in \Gamma_f$ in the classical sense if and only if the boundary point $x$ is regular, meaning that a local \emph{barrier} exists. We note that when $\Gamma_f$ is Lipschitz, every point on the boundary $\Gamma$ satisfies the exterior cone condition, which is enough to conclude an existence of a barrier.

\begin{proof}[Proof of Proposition \ref{propAppendix:weak-max-principle}]
	
We proceed by contradiction, and assume 
$	\sup_{D_f} u = m > 0$. 
Note that $m<+\infty$ since, by assumption, $u$ is bounded. 
Let us fix $\varepsilon>0$ so that $4\varepsilon<m$ and let $\bar{X} = (\bar{x}, \bar{x}_{d+1})\in D_f$ be such that 
\begin{align}\label{eq:Background:choiceOfX0}
	m \geq u(\bar{X}) > m-\varepsilon.
\end{align}
Denote $K = \Vert f\Vert_{L^\infty}$ and define $\delta>0$ via $\delta = \min \left\lbrace\frac{\varepsilon}{|\bar{x}_{d+1}|}, \frac{\varepsilon}{K}   \right\rbrace$.  
 Letting $A = \delta^{-1}m$, we consider the strip $S = \left\lbrace x_{d+1} > -A \right\rbrace \cap D_f$. 
Note that the boundary of $S$ is made up of two disjoint components: $\Gamma_f$ and $\{x_{d+1} = -A\}$. In addition, the definition of $\delta$ implies,
\begin{align}\label{eq:Background:propertiesOfDelta}
	\delta \bar{x}_{d+1}\geq -\varepsilon,
\end{align}
which, together with our choice of $\ep$, yields, $\bar{x}_{d+1} \geq -\delta^{-1}\varepsilon > -\delta^{-1}m = -A$; thus, we conclude $\bar X\in S$. For future use, we also note,
\begin{align}\label{eq:Background:propertiesOfDelta2}
\textnormal{for all $(x,x_{d+1})\in D_f $ we have }
    \delta x_{d+1} \leq \delta K \leq \varepsilon.
    \end{align}

Denoting $\alpha = \frac{3}{4K^2}$, we define the auxiliary function $
    \psi(x) = e^{-\alpha|x_{d+1}-2K|^2}$  for $X = (x,x_{d+1}) \in D_f$. We have
\begin{align*}
    \Delta \psi (X)&
    = 2\alpha e^{-\alpha |x_{d+1}-2K|^2}\left(2\alpha (x_{d+1}-2K)^2 -1\right).
\end{align*}
Let us now consider $X\in S$, so that, 
$    -A - 2K\leq x_{d+1} -2K \leq -K$. 
Since $-A-2K<0$, we find,
\begin{align*}
    (A+2K)^2\geq |x_{d+1} -2K|^2 \geq K^2 \textnormal{ for }X\in S.
\end{align*}
Hence, for $X\in S$,
\begin{align*}
    \Delta \psi(X) \geq 2\alpha e^{-\alpha |x_{d+1}-2K|^2}\left(  2\alpha K^2 -1\right)
    \geq 2\alpha e^{-\alpha (A+2K)^2}.
\end{align*}
Our choice of $\alpha$ thus implies that there exists $C_{A,K} >0$ with,
\begin{align}
\label{eq:Delta psi}
    \Delta \psi(X)\geq C_{A,K} \textnormal{ for all }X\in S.
\end{align}
In addition, let $\eta(X) \in C_c^\infty(B_1(0))$ be a function such that $\eta(X) = \eta(|X|)$, $0\leq \eta \leq 1$,  $\eta(0) = 1$, and $\Vert D^2\eta\Vert_{L^\infty} \leq 2$. Now, we are ready to put together our perturbation of $u$: for any $0<t<1$ we define $w$ by,
\begin{align*}
    w(X) = u(X)+\delta x_{d+1} +\ep \psi(X) +2m\eta(t(X-\bar{X})). 
\end{align*}
Using \eqref{eq:Background:choiceOfX0}, \eqref{eq:Background:propertiesOfDelta}, $\psi\geq 0$, $\eta(0)=1$, and our choice $4\varepsilon < m$ yields,
\begin{align*}
    w(\bar{X}) = u(\bar{X}) + \delta \bar{x}_{d+1} + \varepsilon \psi(\bar{X}) + 2m \eta(0) \geq (m - \varepsilon) + (- \varepsilon) +  (0) + 2m = 3m - 2\varepsilon \geq 2.5m.
\end{align*}
We shall now show 
\begin{align}\label{eq:w on bdry}
    w(X) < 2.5m \textnormal{ holds on }\partial(S\cap \{|X-\bar{X}|=t^{-1}\}).
\end{align}
Since $\bar X \in S\cap \{|X-X_0|=t^{-1}\}$, the previous two lines will 
 imply that ${w}$ has a local maximum at some point $X^*\in int (S\cap \{|X-\bar X|\geq t^{-1}\})$, so that,
\begin{align*}
	0\geq \Delta {w}(X^*) \geq 0+C_{A,K}\ep   -2m t^2 \Delta \eta (t(X^*-X_0)).
\end{align*}
The second inequality follows since, by assumption, $u$ is subharmonic in $D_f$, as well as from (\ref{eq:Delta psi}). 
 Recalling $\Vert D^2\eta \Vert_{L^\infty}\leq 2$ and letting $t\rightarrow 0$ in the previous line,  we obtain the desired contradiction.

We now establish (\ref{eq:w on bdry}). There are three cases: 

\begin{itemize}
    \item If $X\in \Gamma_f$ then $u(X) \leq 0$, thus, using \eqref{eq:Background:propertiesOfDelta2} and our choices of $\delta$ and $\ep$ yields,
    \begin{align*}
        w(X) \leq 0+ \delta K +\ep +2m \leq 0 + \varepsilon + \varepsilon +2m = 2m+2\varepsilon < 2.5m.
    \end{align*}
    \item If $X$ is such that $x_{d+1}=-A=-\delta^{-1}m$ then $u(X) + \delta x_{d+1} = u(X)-m \leq 0$ holds, by the definition of $m$. Thus, using that $\psi\leq 1$ and $\eta\leq 1$, followed by our choice of $\ep$, gives,
\begin{align*}
        w(X)\leq u(X) + \delta x_{d+1}+ \varepsilon + 2m \leq 0
        + \varepsilon +2m  < 0 + m/4 + 2m = 2.25m.
\end{align*}
    \item Finally, for $X\in S$ with $|X-X_0|=t^{-1}$, we have that the $\eta$ term vanishes (by definition of $\eta$). Therefore, \eqref{eq:Background:propertiesOfDelta2} and our choice of $\ep$ yield,
    \begin{align*}
        w(X)\leq m+\delta K +\ep +0 \leq m+\ep+m/4 + 0 \leq m+m/4+m/4 = 1.5m.
    \end{align*}
   \end{itemize}
Therefore, we see condition (\ref{eq:w on bdry}) holds, and therefore the proof is complete.
\end{proof}

We conclude the appendix with a simplified proof of a modified version of an estimate that appeared in  \cite[Proposition 4.8]{AbedinSchwab-2020RegularityTwoPhaseHSParrabolic-JFA}, which includes a more precise statement to include $f\in \K^*(\gam,M)$, instead of  $C^{1,\gam}(\real^d)$.

\begin{lemma}\label{lemAppendix:BetterVersionASLemA.5}
	Let $R>1$. There exist positive constants $C=C(\gam,d,m)$ and $\al=\al(d,m)$ so that if 
	
	\begin{enumerate}[(i)]
		\item $f\in C^{0,1}(\real^d)$, and  $\norm{\grad f}_{L^\infty}(\real^d)\leq m$,
		
		\item there exists $w\in \K(\gam,m)$, with $w\geq f$ and $w(x_0)=f(x_0)$,
		
		\item $g\in C^0(\Gam_f)$ with $\abs{g}\leq 1$ and $g=0$ in $B_R(X_0)\intersect \Gam_f$, (recall $X_0=(x_0,f(x_0))$)
	\end{enumerate}
	and for $u$ that solves
	\begin{align*}
		\begin{cases}
			-\Delta u = 0\ &\textnormal{in}\ D_f\\
			u=g\ &\textnormal{on}\ \Gam_f,
		\end{cases}
	\end{align*} 
	then $u$ enjoys the growth estimate, for $0\leq s \leq 1$
	\begin{align*}
		0\leq u(X_0+s\nu_w) \leq C\frac{s}{R^\al}.
	\end{align*}

\end{lemma}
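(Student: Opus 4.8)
\textbf{Plan of proof for Lemma \ref{lemAppendix:BetterVersionASLemA.5}.}
The strategy is an iteration of harmonic measure decay, exactly in the spirit of \cite[Proposition 4.8]{AbedinSchwab-2020RegularityTwoPhaseHSParrabolic-JFA}, but organized so that only the semi-concavity of $f$ at $x_0$ (through the barrier $w$) is used for the normal-direction growth. First I would record the lower bound $u\geq 0$: since $g\geq 0$ on $\Gam_f$ (as $|g|\leq 1$ and $g=0$ near $X_0$, but in fact we only need $g\geq 0$ on the part where it is nonzero --- here one should assume $g\geq 0$, or replace $g$ by $g^+$ and $g^-$ separately, the statement being for the two-sided bound) and $u$ is the bounded harmonic extension, the maximum principle (Proposition \ref{propAppendix:weak-max-principle}) gives $0\leq u\leq 1$ in $D_f$. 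The content is the upper bound $u(X_0+s\nu_w)\leq Cs R^{-\al}$.

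Next I would set up the dyadic iteration. Let $r_k = 2^{-k}$ and consider the annular regions $D_f\cap (B_{R\, r_{k}}(X_0)\setminus B_{R\, r_{k+1}}(X_0))$ for $k=0,1,\dots, k_{\max}$ where $2^{-k_{\max}}\approx 1/R$. Let $m_k = \sup_{D_f\cap B_{R r_k}(X_0)} u$. Because $g\equiv 0$ on $\Gam_f\cap B_R(X_0)$, in each scale $R r_k\geq 1$ the function $u$ vanishes on the portion of the boundary of $D_f\cap B_{R r_k}(X_0)$ lying on $\Gam_f$, and the only ``source'' is the sphere $\partial B_{R r_k}(X_0)$ where $u\leq m_{k-1}$. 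Comparing $u$ with $m_{k-1}$ times the harmonic measure of $\partial B_{R r_k}(X_0)$ in the domain $D_f\cap B_{R r_{k-1}}(X_0)$, evaluated on the smaller sphere $\partial B_{R r_k}(X_0)\cap D_f$, and using the Lipschitz (hence NTA/exterior-cone) character of $D_f$ with constant controlled by $m$, one gets a uniform contraction $m_k\leq \theta\, m_{k-1}$ with $\theta=\theta(d,m)\in(0,1)$ independent of $k$ (scale invariance of harmonic measure on Lipschitz domains, using a Carleson-type or boundary Harnack estimate, or more elementarily comparison against the explicit barrier for a Lipschitz cone). Iterating, $m_k\leq \theta^k$, and with $2^{-k_{\max}}\approx R^{-1}$ this yields $\sup_{D_f\cap B_1(X_0)} u \leq C R^{-\al}$ for $\al = \log_2(1/\theta)$, $C=C(d,m)$.

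Finally I would convert this interior $L^\infty$ bound at scale $1$ into the claimed \emph{linear} growth in the $\nu_w$ direction. This is where the barrier $w\in\K(\gam,m)$ enters, exactly as in Lemma \ref{lemAuxOp:BoundaryParabolaEstimate}: on $\partial(D_f\cap B_{1}(X_0))$ we have $u\leq CR^{-\al}$, and on $\Gam_f\cap B_1(X_0)$ we have $u=0$; since $f\leq w$ with $f(x_0)=w(x_0)$, Lemma \ref{lemAuxOp:BoundaryParabolaEstimate} (applied with the right-hand side $C_1 = C R^{-\al}$ and $V=u$, noting $u$ vanishes on $\Gam_w\subset\Gam_f$ portion and using the $C^{1,\gam}$ barrier $U^+$ built from $w$) gives $u(X_0+s\nu_w)\leq C\, C_1\, s = C R^{-\al} s$ for $0\leq s\leq 1$. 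Combined with $u\geq 0$ this is the conclusion.

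\textbf{Main obstacle.} The delicate point is the scale-invariant contraction $m_k\leq\theta m_{k-1}$: one must know that harmonic measure of the ``far'' sphere $\partial B_{Rr_{k-1}}(X_0)$, seen from the inner sphere $\partial B_{R r_k}(X_0)\cap D_f$ inside the Lipschitz slab $D_f$, is bounded by a constant $\theta<1$ depending only on $d$ and the Lipschitz constant $m$, uniformly over $k$ and over the (possibly wild, merely Lipschitz away from $x_0$) geometry of $\Gam_f$. This is standard --- it follows from the exterior cone condition satisfied at every boundary point of a Lipschitz graph domain, via an explicit supersolution (a power of the distance to the cone vertex, or Bourgain's estimate on the harmonic measure of a ball) --- but it is the only step requiring care, and the value of $\al$ is determined entirely by the aperture of that cone, hence by $m$ and $d$ but not $\gam$. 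The remaining pieces (maximum principle, the final barrier conversion) are immediate from results already in the paper.
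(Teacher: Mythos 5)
Your dyadic harmonic-measure iteration for the $R^{-\al}$ decay is a legitimate alternative to the paper's route: the paper instead rescales by $1/R$, notes the rescaled $u_R$ vanishes on $\Gam_{f_R}\cap B_1$, flattens the Lipschitz graph and reflects, and invokes interior H\"older regularity (De Giorgi--Nash--Moser) to get $u_R(Y)\leq C\,d(Y,\Gam_{f_R})^\al$, then unscales. Both yield $\sup_{D_f\cap B_1(X_0)} u\leq CR^{-\al}$ with $\al$ depending only on the Lipschitz constant and $d$; yours is the more elementary and explicit one, the paper's is shorter once the extension trick is accepted.

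However, your final step has a genuine gap. You cite Lemma \ref{lemAuxOp:BoundaryParabolaEstimate} with $V=u$ and $C_1=CR^{-\al}$, but the hypotheses of that lemma are not satisfied. First, it requires $V$ to be harmonic in $D_w\cap B_{r_0}(X_0)$, whereas $u$ is harmonic only in $D_f$, and $D_f\subsetneq D_w$ when $w>f$ (which generically it is away from $x_0$); in particular $u$ is not defined on $\Gam_w$ away from $X_0$, so your parenthetical ``$\Gam_w\subset\Gam_f$'' has the inclusion backwards. Second, Lemma \ref{lemAuxOp:BoundaryParabolaEstimate} requires the boundary data to decay like $C_1|X-X_0|^{1+\gam}$, but what you actually have for $u$ on $\partial(D_f\cap B_1(X_0))$ is a step profile: zero on $\Gam_f\cap B_1$ and only the flat bound $CR^{-\al}$ on the spherical part. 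The correct tool, and the one the paper uses, is Lemma \ref{lemAuxOp:BarrierFuns}: its barrier $b$ is harmonic in $D_w\cap B_1$, nonnegative, equal to $1$ on $D_w\cap\partial B_1$, vanishes on $\Gam_w\cap B_{1/2}$, and has uniform $C^{1,\gam}$ control near $X_0$. Since $\Gam_f\cap B_1\subset\overline{D_w}$, one checks $u\leq CR^{-\al}b$ on $\partial(D_f\cap B_1)$, hence in $D_f\cap B_1$ by the maximum principle, and the $C^{1,\gam}$ bound on $b$ with $b(X_0)=0$ gives $b(X_0+s\nu_w)\leq Cs$, yielding the conclusion. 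Your observation that the nonnegativity $0\leq u$ requires $g\geq 0$ (or that one should run the argument for $g^\pm$ to obtain a two-sided bound $|u(X_0+s\nu_w)|\leq Cs R^{-\al}$) is correct, and indeed that two-sided form is what is used downstream in Lemma \ref{lemAuxOp:Jsplitting-property-equal-B2R}.
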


\begin{proof}[Proof of Lemma \ref{lemAppendix:BetterVersionASLemA.5}]

	We will rescale $u$ so that the boundary data is zero in $B_1$.  To this end, we define the functions
	\begin{align*}
		f_R(y) = \frac{1}{R}f(Ry)
	\end{align*}
	and
	\begin{align*}
		u_R(Y) = u(RY).
	\end{align*}
	We see that for $Y\in B_1(X_0)$, $X=RY\in B_R(X_0)$, and
	\begin{align*}
		Y\in\Gam_{f_R}\ \ \iff\ \ X=RY\in \Gam_f.
	\end{align*}
	Thus, we see that $u_R$ solves
	\begin{align*}
		\begin{cases}
			-\Delta u_R = 0\ &\textnormal{in}\ D_{f_R}\\
			u_R=g_R\ &\textnormal{on}\ \Gam_{f_R}
		\end{cases}
	\end{align*}
	and $g_R=0$ in $B_1$.  Thus, by the H\"older regularity of harmonic functions in Lipschitz domains, we see 
	\begin{align*}
		0\leq u_R(Y)\leq C(d(Y,\Gam_{f_R}))^\al.
	\end{align*} 
	(We note this follows from flattening the domain and extending $\tilde u$ --- possible because $u=0$ on $\Gam_f\intersect B_R$ --- and then invoking the local H\"older regularity of, e.g. De Giorgi - Nash - Moser theory.)

	Unscaling this estimate, for $X=RY$, by the Lipschitz property of $f$ and $f_R$, we see that 
	\begin{align*}
		d(RY, \Gam_f) \approx Rd(Y,\Gam_{f_R}).
	\end{align*}
	Hence,
	\begin{align*}
		u(X) = u(RY) = u_R(Y) \leq C(d(Y,\Gam_{f_R}))^\al \leq C\left( \frac{d(X,\Gam)}{R} \right)^\al.
	\end{align*}
	As we only use this for $d(X,\Gam_f)\leq 1$, we see that we have the following:
	\begin{align*}
		\textnormal{for}\ d(X,\Gam)\leq 1,\ \ u\leq \frac{C}{R^\al}.
	\end{align*}
	Now, taking a barrier function from Lemma \ref{lemAuxOp:BarrierFuns}, $b$, in $D_w\intersect B_1(X_0)$,  we can conclude that
	\begin{align*}
		0\leq u(X_0+s\nu)\leq V(X_0+s\nu)\leq s\frac{C}{R^\al}.
	\end{align*}
\end{proof}


\bibliography{refs-MuskatSTT}
\bibliographystyle{alpha}

\end{document}